\newtheorem{lm}{Lemma}[section]
\newtheorem{thm}{Theorem}[section]
\newtheorem{pro}{Proposition}[section]
\newtheorem{exmp}{Example}
\newtheorem{rem}{Remark}[section]
\title{\bf  The discontinuous limit case of an  archetypal oscillator with constant excitation and van der Pol damping: A single equilibrium}
\author{
{Xiuli Cen}$^1$,~
{Hebai Chen}$^1$,~
{Yilei Tang}$^{2}$, ~{Zhaoxia Wang}$^3$
	\footnote{ Email: cenxiuli2010@163.com (X. Cen), ~cenxiuli@csu.edu.cn (X. Cen), ~chen\_hebai@csu.edu.cn (H. Chen), ~mathtyl@sjtu.edu.cn (Y. Tang), ~zxwang@uestc.edu.cn (Z. Wang, corresponding author)
	}
		\\
	{\footnotesize  $^1$ School of Mathematics and Statistics, HNP-LAMA, Central South University,}\\
	{\footnotesize Changsha, Hunan 410083, China}
	\\
	{\footnotesize $^2$ School of Mathematical Sciences, CMA-Shanghai, Shanghai
	Jiao Tong University, Shanghai, 200240, China}
\\
	{\footnotesize  $^3$ School of Mathematical Sciences, University of Electronic Science and Technology of China,}\\
	{\footnotesize Chengdu, Sichuan 611731, China}
}
\date{}
\begin{document}
\maketitle

\begin{abstract}
This paper investigates the global dynamics of the discontinuous limit case of an  archetypal oscillator with constant excitation that exhibits a single equilibrium.
For parameter regions in which this oscillator possesses two or three equilibria, the global bifurcation diagram and the corresponding phase portraits on the Poincar\'e disc have been presented in [Phys. D, 438 (2022) 133362].
The present  work completes the global structure of the discontinuous limit case of an archetypal oscillator with constant excitation.
Although the dynamical phenomena are less rich compared to systems with more than one equilibrium,
the presence of a single equilibrium gives rise to additional limit cycles surrounding it,
thereby enriching the overall dynamics and making the analysis substantially more intricate than in the previously studied cases.

  \vskip 0.2cm
  {\bf Keywords}: Archetypal oscillator; discontinuous dynamical system; limit cycle; grazing cycle; nonsmooth bifurcation.

  \vskip 0.2cm
  {\bf AMS (2020) Classification}: 34A36; 34A26; 34C05; 34C25; 34C60; 49J52.

\end{abstract}

\baselineskip 16pt
\parskip 10pt

\numberwithin{equation}{section}


\section{Introduction and main results}

Many models in the natural sciences, engineering and economics are described by nonsmooth differential equations.
Examples include impact and collision models in mechanical systems, Coulomb friction models, threshold--fire neuron models, and switched circuit models in power electronics--none of which can be captured by smooth differential equations.
Nonsmooth differential systems have attracted sustained and intensive attention from a large number of researchers, leading to a wealth of monographs and scholarly works (see, e.g., \cite{AEG, BBCK, BBCKNOP, BCT, CJ, Jeff, JH, ML, SM, SST} and the references therein).
It is worth noting that nonsmooth differential systems can exhibit dynamical phenomena that are impossible in their smooth counterparts. For instance, while linear systems possess no limit cycles, even a planar continuous piecewise-linear system with a single switching line can support a limit cycle \cite{LOP}.
Moreover, in the study of nonsmooth differential systems, many classical tools and theories from smooth dynamical systems--such as linearization, center manifold reduction, and normal form theory--no longer apply, which poses significant theoretical and practical challenges. Furthermore,
the investigation of classical nonsmooth differential models not only resolves the practical problems embodied in these models themselves, but also advances the theory and methodology of nonsmooth differential systems.

A model that describes the motion of an elastic arch rigidly supported at both ends and subjected to external forces
is proposed by Cao et al. in \cite{Cao06,Cao08}, as shown in Fig. \ref{Intro} (a).
This system can be reduced to a single degree of freedom nonlinear oscillator if only the symmetric vibration mode of the elastic arch is considered.
As shown in Fig.  \ref{Intro} (b),
a lumped mass $m$ is subjected to an external force $p(t)$ and is symmetrically suspended between two identical,
linearly elastic springs of stiffness $k$ and unstressed length $L$, with their outer ends are pinned to rigid supports.
Let $X$ denote the displacement of the mass and $l$ the half distance between two rigid supports.
By Newton's second Law,  the equation of this motion is
\begin{eqnarray}
m\frac{d^2X}{dt^2}+2kX\left(1-\frac{L}{\sqrt{X^2+l^2}}\right)=p(t).
\label{SD1}
\end{eqnarray}
Introducing
\[
x=\frac{X}{L},\ \tau=t\sqrt{\frac{2k}{m}}, \ ~\alpha=\frac{l}{L},\  ~p_1(t)=\frac{p(t)}{2kL},
\]
equation \eqref{SD1} is changed into
\begin{eqnarray}
\frac{d^2x}{d\tau^2}+x\left(1-\frac{1}{\sqrt{x^2+\alpha^2}}\right)=p_1(t).
\label{SD2}
\end{eqnarray}
Equation \eqref{SD2}
is referred to as smooth and discontinuous (SD for short) oscillator,  because it is smooth for $\alpha\neq0$ and discontinuous for $\alpha=0$.
The SD oscillator can  be derived from the shallow elastic arch system studied by Thompson and Hunt \cite{TH},
which has wide application in aerospace, high-speed rail vibration isolation, and other engineering fields.

\begin{figure}[!htb]
	\centering
	\subfloat[an elastic arch consisting of a curved and tapered beam pinned to rigid abutments]
	{\includegraphics[scale=0.22]{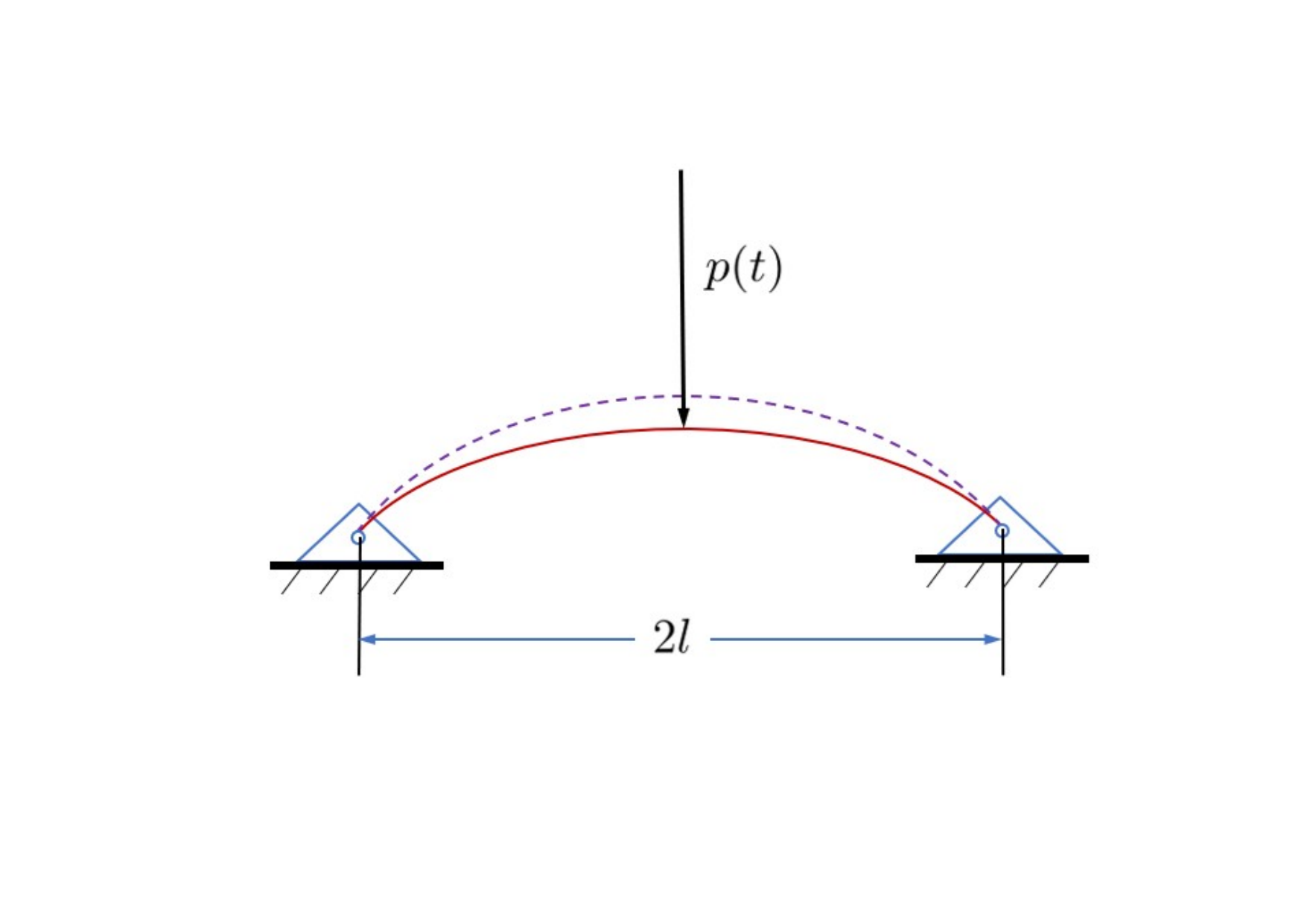}}
\hskip 2cm
	\subfloat[a vertical oscillator with a pair of springs pinned to rigid supports]
	{\includegraphics[scale=0.22]{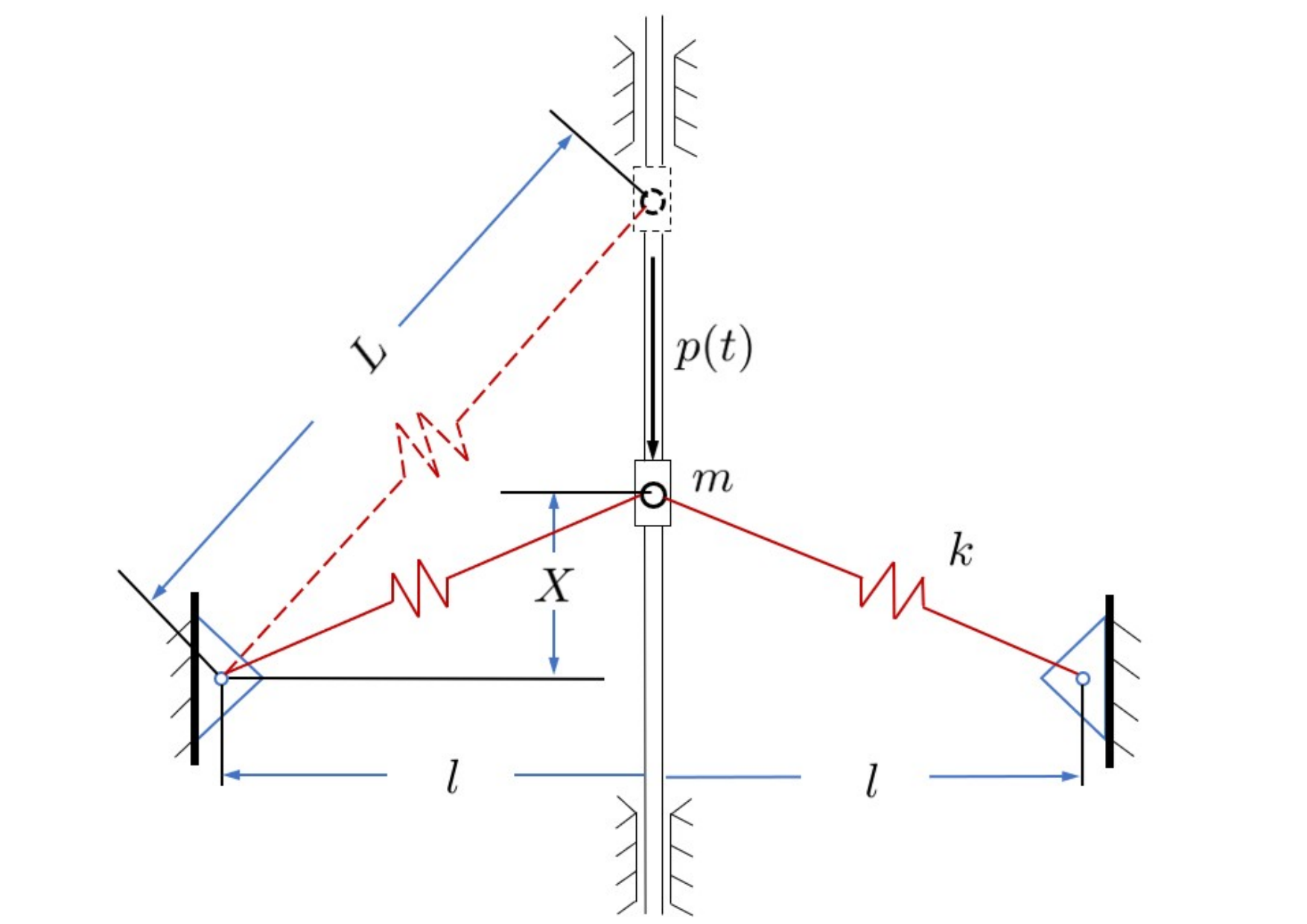}}
	\caption{{\footnotesize A shallow elastic arch and the simplified oscillator.}}
	\label{Intro}
\end{figure}

Consider equation \eqref{SD2} with  van der Pol damping
\begin{eqnarray}
\ddot{x}+\delta( x^2+b )\dot{x}+x\left(1-\frac{1}{\sqrt{x^2+\alpha^2}}\right)=p_1(t),
\label{SDv}
\end{eqnarray}
where   $\alpha,\delta,  b \in \mathbb{R}$ and  $p_1(t)$ is a continuous  function in $t$.
In the absence of external forces,  equation \eqref{SDv} can be expressed as
\begin{eqnarray}
\ddot{x}+\delta( x^2+b )\dot{x}+x\left(1-\frac{1}{\sqrt{x^2+\alpha^2}}\right)=0.
\label{SD-0}
\end{eqnarray}
Local bifurcation diagrams and phase portraits of equation \eqref{SD-0} are presented  in \cite{TCY} for small $|\alpha-1|$ and $|b|$.
For $\alpha=0$, the global bifurcation diagram and global phase portraits in the Poincar\'e disc of  equation \eqref{SD-0}
have been given in \cite{Chen}.
For $\alpha\ne0$,
the global bifurcation diagram and global phase portraits in the Poincar\'e disc are provided in \cite{CLT} except for
the exact number of limit cycles surrounding a single equilibrium over a wide parameter range.
Subsequently, Sun and Liu in \cite{SL} show that there are at most two limit cycles of equation \eqref{SD-0} surrounding the same single equilibrium.
Chen, Tang and Zhang in \cite{CTZ} extend the results on the number of limit cycles to the full parameter range
and yield a complete characterization of global dynamics of equation \eqref{SD-0}
by  the theory of rotated vector fields.

Equation  \eqref{SDv} with a constant  external force can  describe
the motion of an elastic arch that is rigidly supported at both ends, either carrying a suspended mass or being subjected to a constant external load.
Consider the discontinuous limit case of equation \eqref{SDv} with constant external excitation $p_1(t)=a$,
\begin{eqnarray}
\ddot{x}+\delta( x^2+b )\dot{x}+x-{\rm sgn}(x)-a=0.
\label{SD-x}
\end{eqnarray}
Clearly, equation \eqref{SD-x} can be rewritten as
\begin{eqnarray}
\dot{x}=y-F(x), ~~~~~~~\dot{y}=-g(x),
\label{SD}
\end{eqnarray}	
by introducing $y=\dot{x}+F(x)$, where
\[
F(x):=\int_0^x f(t) \mathrm{d}t=\delta\left(\frac{x^3}{3}+b x\right),\hskip 1cm f(x):=\delta( x^2+b ),\hskip 1cm g(x):=x-{\rm sgn}(x)-a.
\]
Since system \eqref{SD} is invariant by the transformations
$(x,y,t,\delta,a,b)\rightarrow(x,-y,-t,-\delta,a,-b)$ and
$(x,y,t,\delta,a,b)\rightarrow(-x,y,-t,-\delta,-a,b)$,
and
the global dynamic behaviours of system \eqref{SD} with $\delta=0$ and system  \eqref{SD} with $a=0$ are studied in \cite{Chen},
it is enough to consider  system \eqref{SD} with $\delta>0$ and $a> 0$.
System \eqref{SD} with
$0<a\leq1$ is considered in
the companion paper \cite{CTW} and  we continue to consider the case $a>1$ in this paper.

For simplicity, let
\[
\mathcal{G}:=\{(a,b,\delta)\in\mathbb{R}^+\times \mathbb{R}\times \mathbb{R}^+: a>1\}.
\]

\begin{thm}
 The global bifurcation diagram of system \eqref{SD} in the parameter region $\mathcal{G}$ consists of the following bifurcation
	surfaces :{\small
	\begin{description}
		\item[(a)]  Hopf bifurcation surface $H:=\{(a,b,\delta)\in\mathcal{G}:  b=-(a+1)^2 \}$;

	    \item[(b)]  grazing bifurcation surface
                       $G:=\{(a,b,\delta)\in\mathcal{G}: b=\varphi_1(a, \delta)\}$;

       	\item[(c)]  double crossing limit cycle bifurcation surfaces
                       $DL_1:=\{(a,b,\delta)\in\mathcal{G}:  b=\varrho_1(a, \delta), \,a<a_0\}$
                    and
                    $DL_2:=\{(a,b,\delta)\in\mathcal{G}_1: b=\varrho_2(a, \delta), \,a<a_0\}$,
	\end{description}
   } where
    $a_0>1$ is a constant,
    $\varphi_1(a, \delta)$ is a decreasing $\mathcal{C}^{\infty}$ function with respect to $a>1$,  and $\varrho_1(a, \delta)$ and $\varrho_2(a, \delta)$ are $\mathcal{C}^{0}$ functions for $1<a<a_0$ such that $-3(a+1)^2<\varrho_2(a, \delta)<\varphi_1(a, \delta), \varrho_1(a, \delta)<-(a+1)^2$
    and $\varrho_1(a, \delta)>\varphi_1(a, \delta)$ as $a-1$ is sufficiently small.
    \label{mr1}
\end{thm}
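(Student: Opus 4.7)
My plan splits the argument into three blocks, one for each bifurcation surface listed in (a)--(c), plus a concluding step ruling out any other codimension-one bifurcation in $\mathcal{G}$.

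For the Hopf surface $H$, I first observe that system \eqref{SD} with $a>1$ has the unique equilibrium $P=(a+1,F(a+1))$, which lies in the half-plane $\{x>0\}$, so the linearization there is smooth with trace $-\delta((a+1)^2+b)$ and determinant $1$. The trace vanishes precisely on $b=-(a+1)^2$, yielding the candidate surface $H$. A Liapunov-coefficient (focal-value) computation identifies the first nonzero coefficient and shows that a unique small-amplitude limit cycle bifurcates from $P$ as $b$ crosses $-(a+1)^2$.

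For the grazing surface $G$, I would exploit the identity $\partial F/\partial b=\delta x$, which makes \eqref{SD} a family of rotated vector fields in $b$ on $\{x>0\}$. As $b$ decreases below $-(a+1)^2$ the Hopf cycle $\Gamma_b$ expands monotonically until it first becomes tangent to the switching line $\{x=0\}$; taking $\varphi_1(a,\delta)$ to be this critical value of $b$ and applying the implicit function theorem to the tangency condition at a regular return point gives smoothness, while differentiating implicitly in $a$ with signs controlled by the rotated-field inequality produces the claimed monotone decrease of $\varphi_1$ in $a$.

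The main difficulty is establishing the double crossing limit cycle surfaces $DL_1$ and $DL_2$. For $b<\varphi_1(a,\delta)$, crossing limit cycles appear; by the Sun--Liu bound of \cite{SL} at most two can coexist, and I would capture them as fixed points of a Poincar\'e return map $\Pi_b$ on the switching line built by composing the two half-maps. Since the rotated-field structure in $b$ remains valid globally, the displacement function $\Pi_b(y)-y$ depends monotonically on $b$ away from its zeros, so saddle-node bifurcations of crossing limit cycles occur on two continuous surfaces $b=\varrho_1(a,\delta)$ and $b=\varrho_2(a,\delta)$ that bound the two-cycle region. A perturbative expansion near $a=1^+$, exploiting the proximity of $P$ to the switching line, yields the strict inequalities $-3(a+1)^2<\varrho_2<\varphi_1$, $\varrho_1<-(a+1)^2$ and $\varrho_1>\varphi_1$ listed in the statement. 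The sharpest point of the argument, and what I expect to be the main obstacle, is proving that $\varrho_1$ and $\varrho_2$ coalesce at a finite $a_0>1$ and that the two-cycle region genuinely closes there; this should require a quantitative second-order asymptotic analysis of the two half-maps in the spirit of \cite{CTZ}, ruling out an unbounded persistence of the two-cycle region as $a$ grows.
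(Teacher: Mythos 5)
Your parts (a) and (b) are broadly consistent with the paper: the Hopf surface is obtained exactly as you describe (trace condition plus a first focal-value computation, Lemma \ref{lm-fi-e} and Proposition \ref{hopfbif}), and the grazing value $\varphi_1(a,\delta)$ is indeed pinned down by the growth of the small cycle up to tangency with $x=0$, although the paper implements this not by an implicit-function argument on a tangency condition but by comparing the first intersections $x^0_{A^+},x^0_{A^-}$ of the forward and backward orbits through the origin and using their strict monotonicity in $\tilde b$ (Lemma \ref{lm-mono}, Proposition \ref{pro-1}), with smoothness and monotonicity in $a$ inherited from the companion paper. The serious problem is in part (c). Your key structural input, ``by the Sun--Liu bound of \cite{SL} at most two can coexist,'' is false in the present setting: that bound is proved for the smooth oscillator \eqref{SD-0}, not for the discontinuous system \eqref{SD} with constant excitation, and the central result of this paper is precisely that \eqref{SD} with $a>1$ admits up to \emph{three} crossing limit cycles (Theorem \ref{mr2}, region ${\cal IV}$; Lemma \ref{three-lc}). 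With an at-most-two bound your picture of $DL_1$ and $DL_2$ as the two saddle-node boundaries of a ``two-cycle region'' cannot reproduce the actual diagram, in which $\varrho_1$ and $\varrho_2$ (together with the grazing curve) bound a region of exactly three crossing cycles, $DL_1$ corresponding to an internally unstable/externally stable double cycle and $DL_2$ to an internally stable/externally unstable one (Propositions \ref{pro-2}--\ref{pro-4}).

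Two further steps of your plan would not go through as stated. First, the inequality $-3(a+1)^2<\varrho_2(a,\delta)$ is asserted for all $1<a<a_0$, so it cannot be extracted from a perturbative expansion near $a=1^+$; in the paper it follows from global comparison arguments — uniqueness and stability of the crossing cycle in the strip $x\ge-\sqrt{-b}$ when $b\le-3(a+1)^2$ (Lemma \ref{lm-ag1clu}) together with the divergence-integral comparison of nested crossing cycles (Lemma \ref{lm-div-a>1}), which excludes an internally stable, externally unstable double cycle reaching $x=-\sqrt{-b}$. Second, you correctly identify the existence of the finite threshold $a_0$ and the closing of the three-cycle window as the main obstacle, but you leave it unresolved; the paper settles it by a two-stage argument you would need to supply: a complete first-order Melnikov analysis of $M(h)$ in \eqref{M} for $\delta$ small (Lemmas \ref{lm3-9}--\ref{lm3-18}, producing the curves $b_1,b_2,b_3$ and the region with three simple zeros), followed by the rotated-vector-field/monotonicity bookkeeping of Lemma \ref{lm-mono} to transport the count of crossing cycles and the uniqueness of the three-cycle interval to arbitrary $\delta>0$ (Lemma \ref{three-lc}). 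Without a replacement for these ingredients, your construction of $DL_1$, $DL_2$, of $a_0$, and of the stated inequalities does not constitute a proof.
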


The grazing bifurcation surface $G$ intersects the double crossing limit cycle bifurcation surface  $DL_1$ at a curve $P$.
Define
        \begin{eqnarray*}
        P:&=&\{(a,b,\delta)\in\mathcal{G}:~ b=\varphi_1(a, \delta)=\varrho_1(a, \delta)\},
        \\
         Q:&=&\{(a,b,\delta)\in\mathcal{G}:~ b=\varrho_1(a, \delta)=\varrho_2(a, \delta)\},
        \\
        G_{1}:&=&\{(a,b,\delta)\in\mathcal{G}:~ b=\varphi_1(a, \delta), ~\varphi_1(a, \delta)<\varrho_1(a, \delta)\},
        \\
        G_{2}:&=&\{(a,b,\delta)\in\mathcal{G}:~ b=\varphi_1(a, \delta), ~\varphi_1(a, \delta)>\varrho_1(a, \delta)\},
        \\
        DL_{11}:&=&\{(a,b,\delta)\in\mathcal{G}:~ b=\varrho_1(a, \delta), ~\varphi_1(a, \delta)>\varrho_1(a, \delta) \},
        \\
		DL_{12}:&=&\{(a,b,\delta)\in\mathcal{G}:~ b=\varrho_1(a, \delta), ~\varphi_1(a, \delta)<\varrho_1(a, \delta) \}.
	    \end{eqnarray*}

 \begin{figure}[!htb]
		\centering
		\subfloat[the open regions divided by bifurcation curves]
		{\includegraphics[scale=0.25]{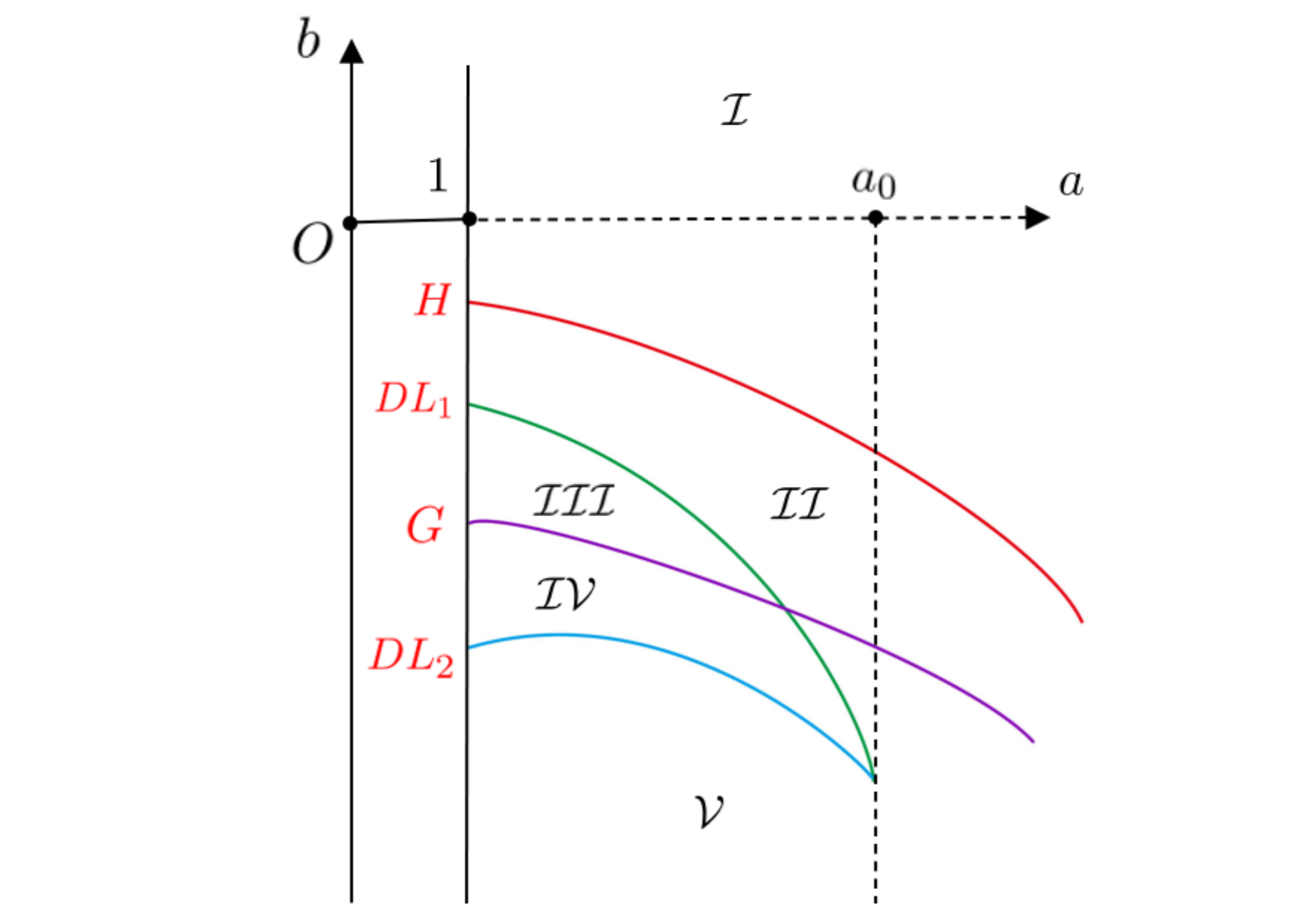}}\hspace{10pt}
		\subfloat[ bifurcation curves]
		{\includegraphics[scale=0.25]{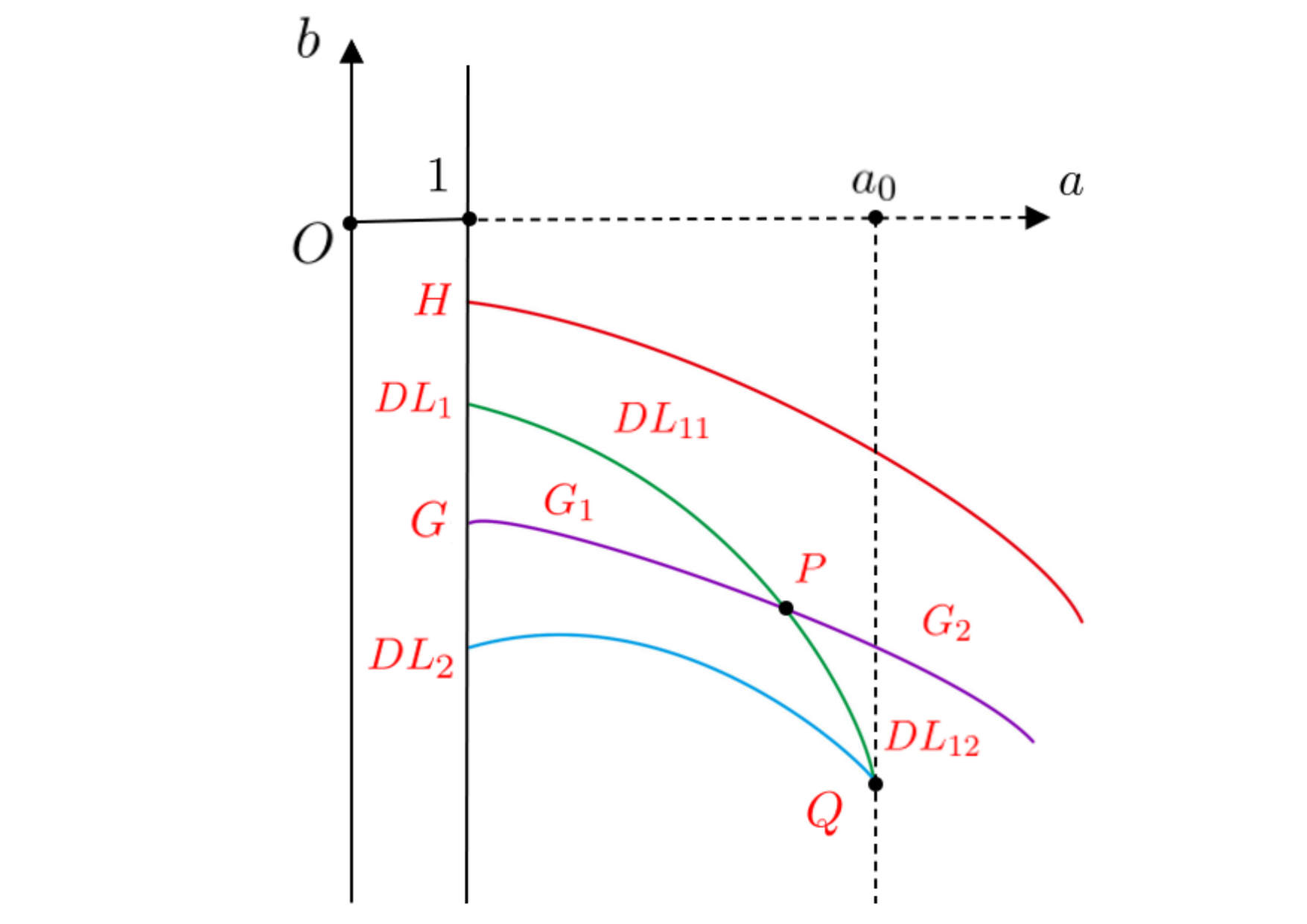}}
		\caption{{\footnotesize The slice $\delta=\delta_*$ of the global bifurcation diagram in $\mathcal{G}$.}}
		\label{global-1}
	   \end{figure}

\begin{thm}
	For a fixed  $\delta_*>0$, the slice $\delta=\delta_*$ of the global bifurcation diagram and
	global phase portraits in the Poincar\'e disc of \eqref{SD} are given in  {\rm Figs. \ref{global-1}} and {\rm \ref{global-2}},
	where
	   \begin{eqnarray*}
		{\cal I}&:=&\{(a,b,\delta)\in\mathcal{G}:~\delta=\delta_*,  ~b>-(a+1)^2\},
		\\
		{\cal II}&:=&\{(a,b,\delta)\in\mathcal{G}:~\delta=\delta_*, ~a<a_0, ~\max\{\varphi_1(a, \delta_*), \varrho_1(a, \delta_*)\}<b<-(a+1)^2\}
        \\
                 &&\cup \{(a,b,\delta)\in\mathcal{G}:~\delta=\delta_*, ~a>a_0, ~\varphi_1(a, \delta_*)<b<-(a+1)^2\},
		\\
        {\cal III}&:=&\{(a,b,\delta)\in\mathcal{G}:~\delta=\delta_*, ~a<a_0, ~\varphi_1(a, \delta_*)<b<\varrho_1(a, \delta_*)\},
		\\
        {\cal IV}&:=&\{(a,b,\delta)\in\mathcal{G}:~\delta=\delta_*, ~a<a_0, ~\varrho_2(a, \delta_*)<b<\min\{\varphi_1(a, \delta_*), \varrho_1(a, \delta_*)\}\},
		\\
        {\cal V}&:=&\{(a,b,\delta)\in\mathcal{G}:~\delta=\delta_*, ~a<a_0, ~b<\varrho_2(a, \delta_*)\}
        \\
                &&\cup  \{(a,b,\delta)\in\mathcal{G}:~\delta=\delta_*, ~a<a_0, ~\varrho_1(a, \delta_*)<b< \varphi_1(a, \delta_*)\}
        \\
                &&\cup  \{(a,b,\delta)\in\mathcal{G}:~\delta=\delta_*, ~a>a_0, ~b< \varphi_1(a, \delta_*)\}.
       \end{eqnarray*}
Moreover, there are at most three limit cycles of system \eqref{SD}.
       \begin{figure}[!htb]
       \centering
	   \includegraphics[scale=0.55]{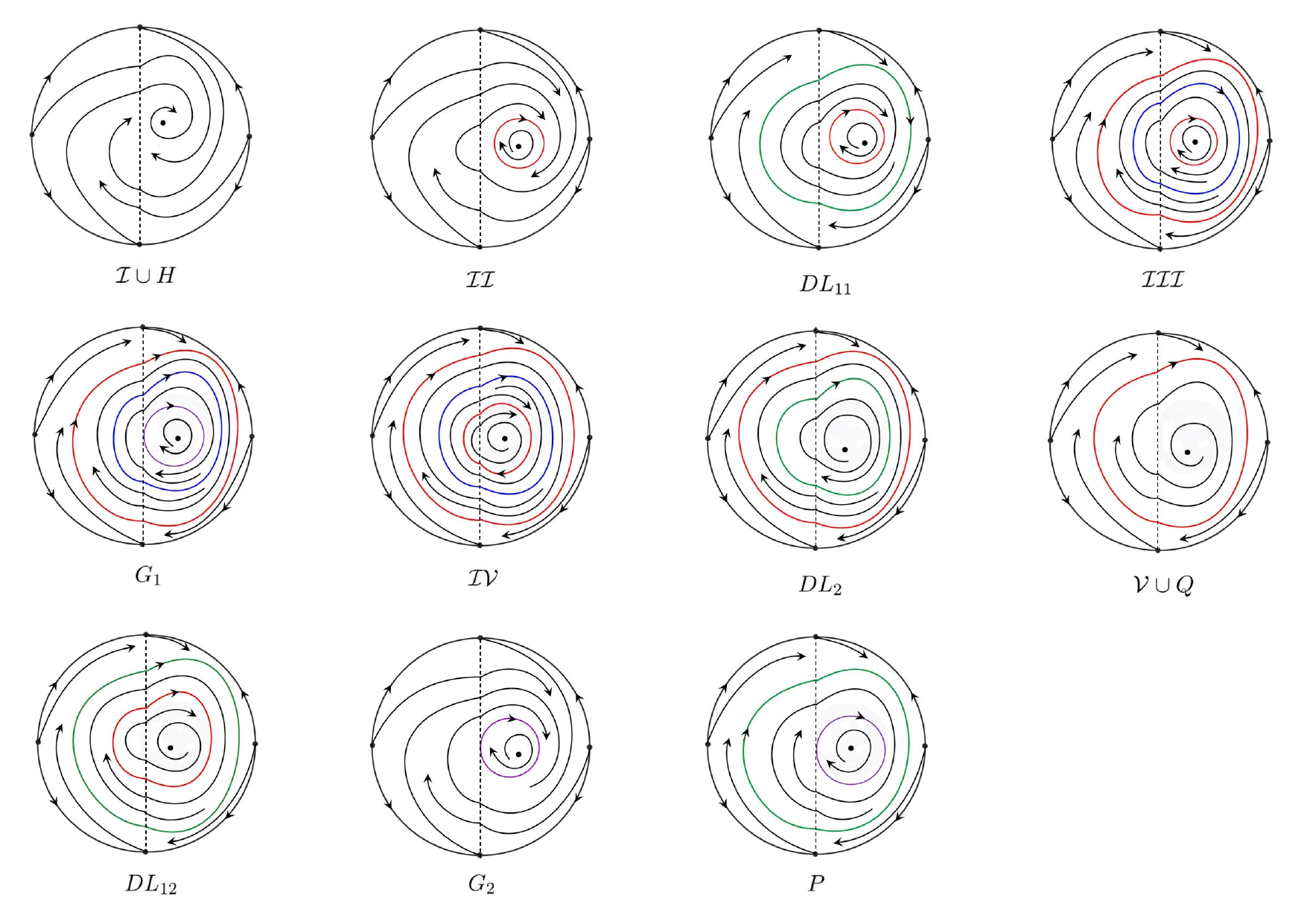}
	  \caption{\footnotesize{ The global phase portraits in $\mathcal{G}$.}}
	  \label{global-2}
      \end{figure}

	\label{mr2}
\end{thm}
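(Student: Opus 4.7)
The plan is to treat Theorem~\ref{mr1} as a complete enumeration of the bifurcation surfaces meeting the slice $\delta=\delta_\ast$, so that the topological type of the phase portrait is locally constant on each of the open regions $\mathcal{I}$--$\mathcal{V}$. It then suffices to (i) determine the portrait at one representative parameter triple in each region, and (ii) verify that the qualitative change across every adjacent pair of regions matches the bifurcation type of the separating surface. The upper bound of three limit cycles is proved in a separate step using the rotated vector field property together with a half-plane Li\'enard analysis.

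First, I would record the features of \eqref{SD} that are independent of $b$. The unique finite equilibrium $E:=(1+a,F(1+a))$ has Jacobian trace $-\delta((1+a)^2+b)$ and determinant $1$, so $H=\{b=-(a+1)^2\}$ is the (nonsmooth) Andronov--Hopf surface. On the switching line $\{x=0\}$ the second equation is continuous while $\dot x$ merely jumps by $2$, so a standard Filippov analysis rules out a sliding segment and guarantees transverse crossing outside a compact portion of the line. Compactification on the Poincar\'e disc yields singularities at infinity whose local type is fixed by the leading cubic of $F$ and is therefore the same throughout $\mathcal{G}$. Together these items pin down the local pictures near $E$, near $\{x=0\}$, and on the equator of the disc, reducing the problem to locating and counting limit cycles.

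Next I would traverse the five regions. In $\mathcal{I}$, $E$ is locally stable; a Bendixson-type divergence argument combined with the behaviour at infinity promotes it to a global attractor. Crossing $H$, the nonsmooth Hopf bifurcation produces a small non-crossing cycle around $E$ (region $\mathcal{II}$). Decreasing $b$, this cycle grows monotonically thanks to the rotated vector field property verified in \cite{CTZ}, becomes tangent to $\{x=0\}$ on $G$, and for parameters below $G$ has turned into a crossing cycle (region $\mathcal{III}$). Surface $DL_1$ then creates a pair of crossing cycles by a nonsmooth saddle--node of periodic orbits, yielding the three nested cycles in $\mathcal{IV}$, and surface $DL_2$ annihilates the outer pair, leaving a single crossing cycle in $\mathcal{V}$. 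The ordering conditions $\varrho_2<\varphi_1$ and $\varrho_1<-(a+1)^2$ from Theorem~\ref{mr1} are precisely what is required for this sequence to close up consistently around the curves $P$ and $Q$ where several surfaces meet.

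The main technical obstacle is the sharp bound of three limit cycles. I would partition cycles into non-crossing cycles (contained in $\{x>0\}$ or $\{x<0\}$) and crossing cycles. A non-crossing cycle in $\{x>0\}$ lives in a smooth Li\'enard system with linear restoring force $g(x)=x-1-a$, to which the uniqueness results of \cite{SL,CTZ} apply after an affine change of coordinates, giving at most one such cycle; and $\{x<0\}$ contains no equilibrium, hence no non-crossing cycle at all. For crossing cycles, I would introduce a displacement function on the positive $y$-axis whose zeros parametrise them, express its derivative in $b$ as a line integral of $f$ around the cycle, and use the strict sign of this integral (the rotation property) together with the bifurcation surfaces $DL_1$, $DL_2$ to conclude that it has at most two zeros. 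Summing these bounds yields at most three limit cycles, and feeding this count back into the region-by-region analysis produces the global phase portraits shown in Fig.~\ref{global-2}.
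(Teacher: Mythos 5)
Your outline breaks down at the step that carries all the weight: the bound of three limit cycles. You propose to split cycles as ``at most one non-crossing plus at most two crossing'', but that split is false for this system. In the paper's region ${\cal IV}$ (i.e.\ $\varrho_2<b<\min\{\varphi_1,\varrho_1\}$, see Proposition \ref{pro-4}\,{\bf (b)}) there are exactly three \emph{crossing} limit cycles and no small one, and in region ${\cal III}$ there are two crossing cycles coexisting with a small cycle. So a displacement-function argument designed to show ``at most two zeros'' for crossing cycles cannot succeed; moreover the rotation property (sign of $\partial/\partial b$ of the return map) controls how cycles move as $b$ varies, not how many zeros the displacement function has at fixed parameters, and invoking $DL_1,DL_2$ at this point is circular, since those surfaces are defined (Theorem \ref{mr1}, Propositions \ref{pro-3}--\ref{pro-4}) only after the at-most-three bound is available. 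The paper's actual route is essential here: a first-order Melnikov analysis of $M(h)$ in \eqref{M} (Lemmas \ref{lm3-9}--\ref{lm3-18} and the Appendix) gives the exact count, at most three zeros, for $\delta$ small, and then Lemma \ref{three-lc} transports a hypothetical fourth crossing cycle at general $\delta$ down to small $\delta$ by alternately decreasing $\delta$ and increasing $\tilde b$, using the monotonicity Lemma \ref{lm-mono} together with Lemmas \ref{one-sl}, \ref{lm-ag1clu} and \ref{lm-div-a>1}. None of this machinery appears in your plan, and without it neither the bound of three nor the uniqueness of the parameter interval with three cycles is obtained.

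Two further problems. First, treating Theorem \ref{mr1} as a ``complete enumeration'' so that the portrait is locally constant on each open region is unjustified (and in the paper's logic, backwards): ruling out additional limit-cycle bifurcations inside ${\cal I}$--${\cal V}$ is precisely what Propositions \ref{pro-1}--\ref{pro-4} and Lemma \ref{three-lc} prove; one cannot get it from a one-point check per region, especially in a nonsmooth setting where no generic structural-stability theorem is being invoked. Second, your traversal misorders the surfaces and mislabels the regions: for $a$ near $1$ (where $\varrho_1>\varphi_1$) the sequence going down in $b$ is ${\cal II}\to DL_{11}\to{\cal III}\to G_1\to{\cal IV}\to DL_2\to{\cal V}$, so ${\cal III}$ has three limit cycles (one small, two crossing), not a single crossing cycle produced by grazing as you describe; when instead $\varrho_1<\varphi_1$ the strip between $G$ and $DL_{12}$ belongs to ${\cal V}$. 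Getting these transitions right requires exactly the case analysis of Proposition \ref{pro-3} (the three subcases $\varrho_1\gtrless\varphi_1$ and $\varrho_1=\varphi_1$), which your argument skips.
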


\begin{rem}
The orbit tangent to the $y$-axis has two possible ways of connecting the saddles at infinity in the global phase portraits of ${\cal I}\cup H$ and ${\cal II}$.
Then, there may be more subcases in ${\cal I}\cup H$ and ${\cal II}$ of Theorem \ref{mr2}, as shown in {\rm Fig. \ref{fig-IG}}.
\end{rem}

    \begin{figure}[!htb]
		\centering
		\subfloat[$({\cal I}\cup H)_1$]
		{\includegraphics[scale=0.138]{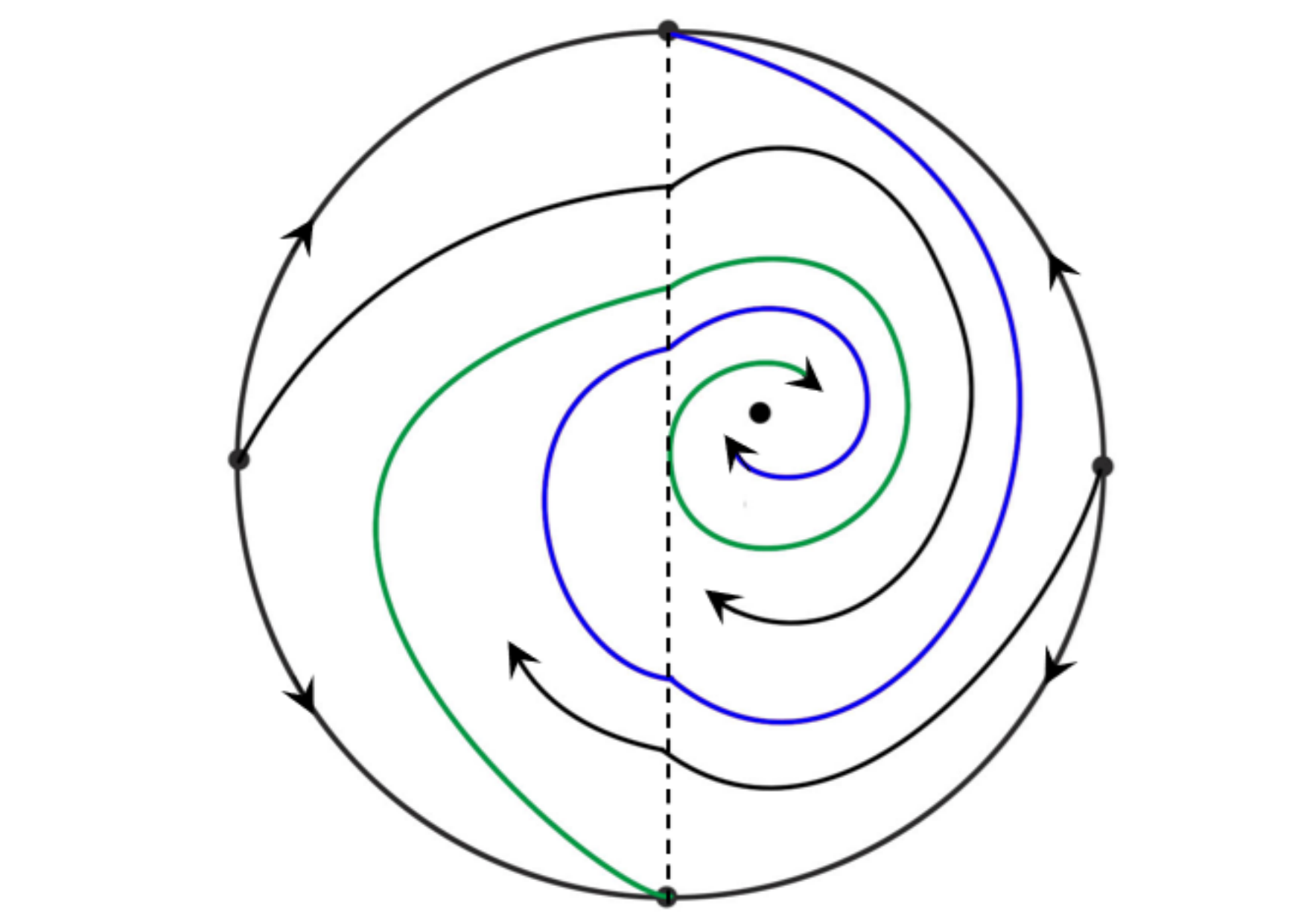}}
		\subfloat[$({\cal I}\cup H)_2$]
		{\includegraphics[scale=0.138]{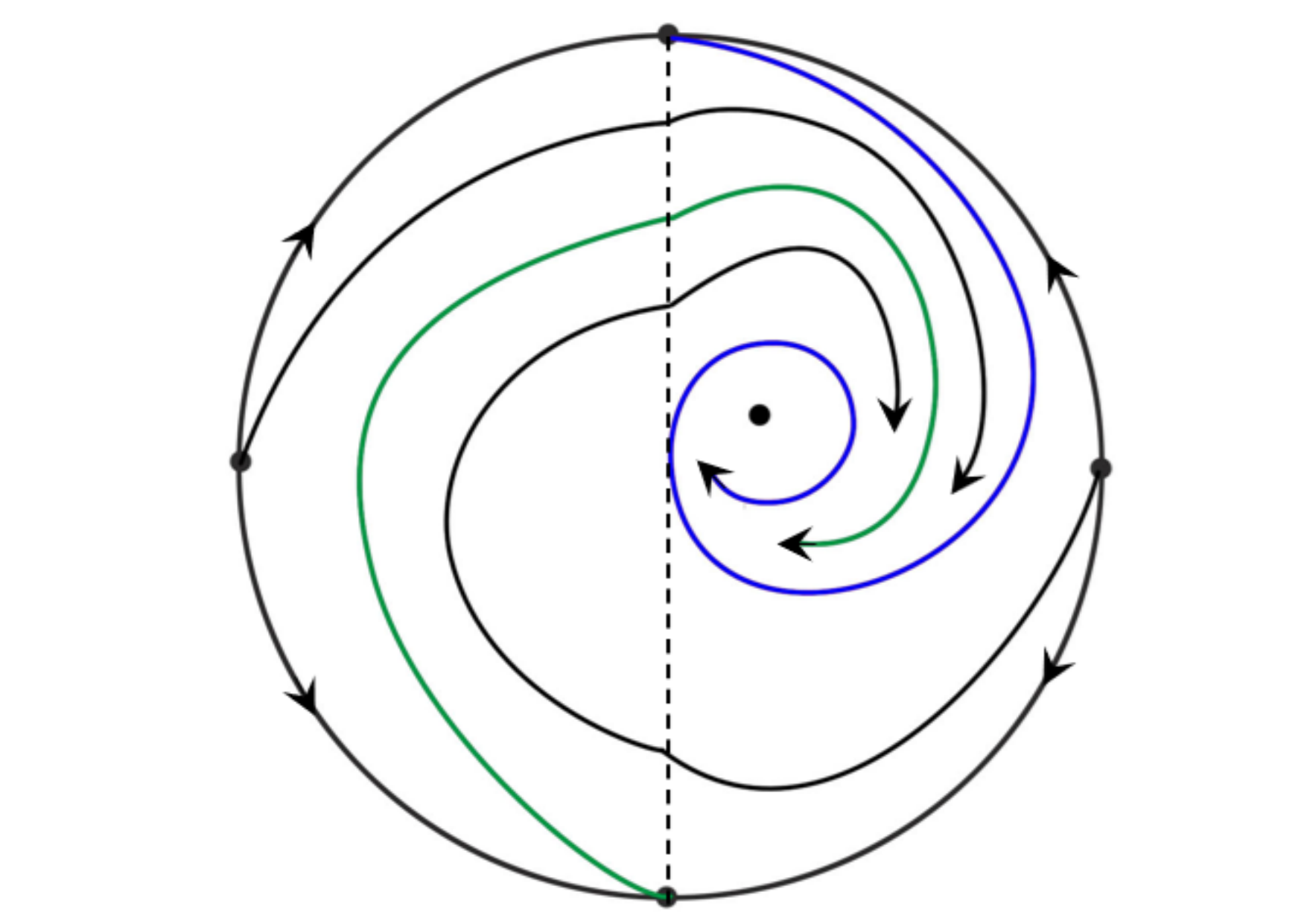}}
		\subfloat[${\cal II}_1$]
		{\includegraphics[scale=0.138]{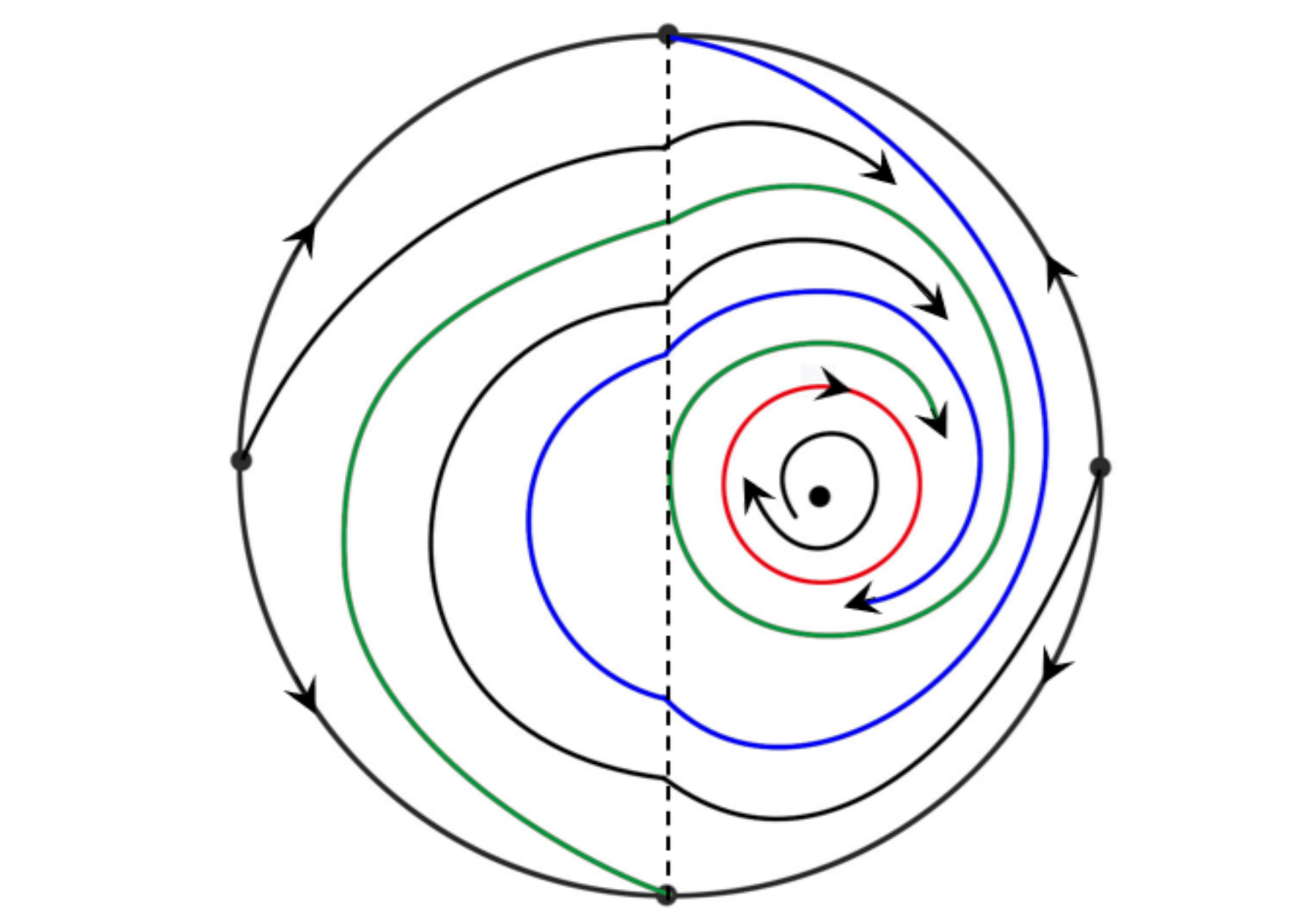}}
		\subfloat[${\cal II}_2$]
		{\includegraphics[scale=0.138]{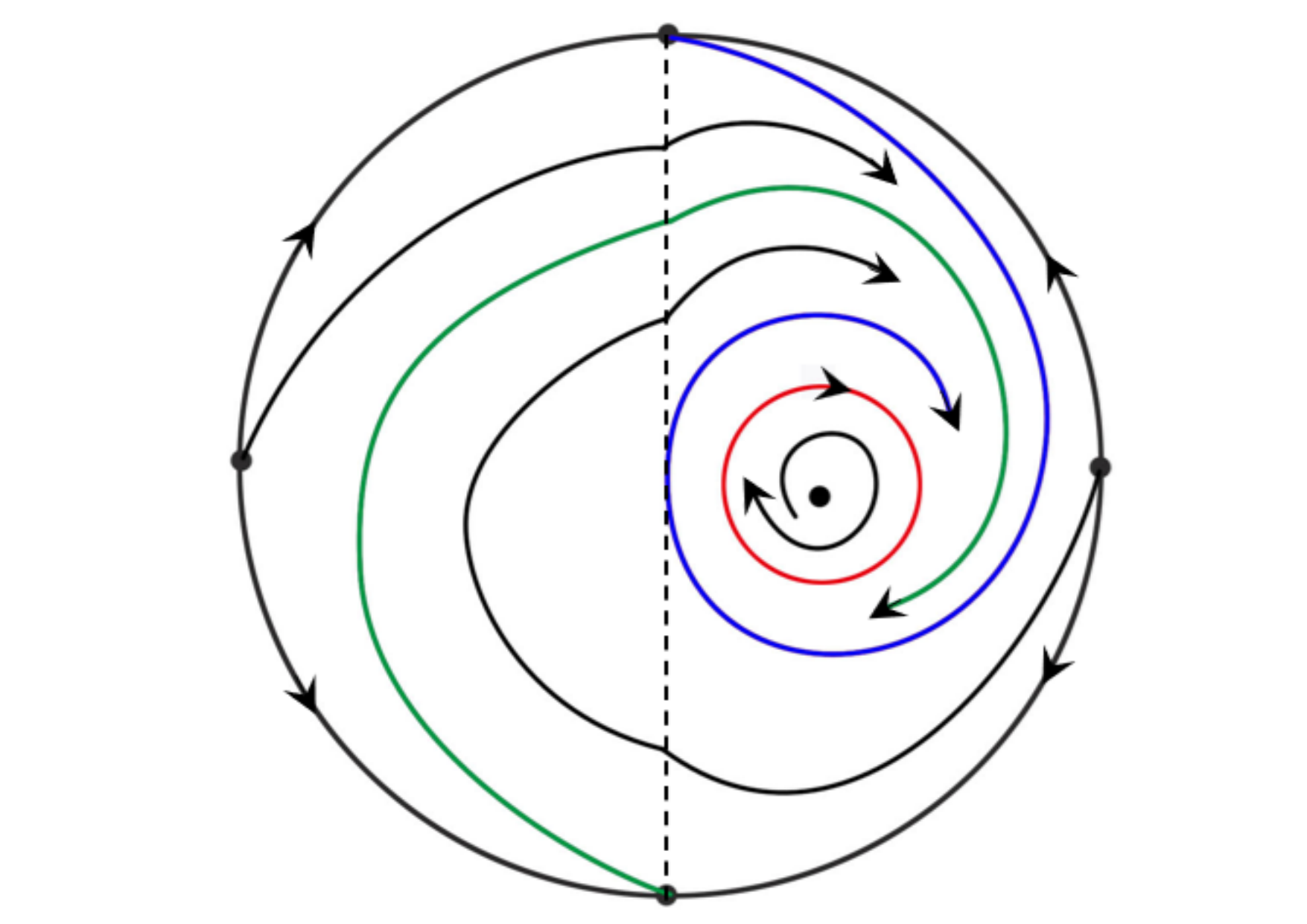}}
		\caption{{\footnotesize  Possible connections of the grazing orbit and the saddles at infinity.}}
		\label{fig-IG}
	\end{figure}

This paper is organised as follows. We give the qualitative properties of equilibria and  local bifurcations of system \eqref{SD} in section 2.
The limit cycles and  global bifurcations of system \eqref{SD} are shown in section 3.
The proof of main results 
is given in section 4.
The numerical simulations illustrate our analytical results in section 5.
Some conclusions are presented in section 6.

\section{Equilibria and  local bifurcations of system \eqref{SD}}

  Solving  $\dot x=\dot y=0$ for system \eqref{SD}, we can get that
  system  \eqref{SD} exhibits a unique equilibrium  $E_r:(a+1, F(a+1))$.
  Moreover, qualitative properties of $E_r$ are shown  in the following lemma.

\begin{lm}
    $E_r$ is a source when
    $b<-(a+1)^2$   and
    it is a sink when
    $b\geq-(a+1)^2$.
    \label{lm-fi-e}
\end{lm}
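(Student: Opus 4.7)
The plan is to decouple the lemma into two regimes: the generic (hyperbolic) case, where standard linearisation settles stability, and the critical case $b=-(a+1)^2$, which requires a weak-focus computation tailored to the Liénard structure of \eqref{SD}.

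First I would observe that the $x$-coordinate of $E_r$ is $a+1>1$, so on a neighbourhood of $E_r$ the function $g$ coincides with the smooth expression $x-1-a$; in particular \eqref{SD} is $\mathcal{C}^{\infty}$ at $E_r$, and ordinary linear stability theory applies. Computing the Jacobian gives
\begin{equation*}
J(E_r)=\begin{pmatrix} -f(a+1) & 1 \\ -1 & 0 \end{pmatrix},
\qquad \det J = 1>0, \qquad \operatorname{tr} J = -\delta\bigl((a+1)^2+b\bigr).
\end{equation*}
Since the determinant is strictly positive, $E_r$ is either a node or a focus, and its stability is governed entirely by the sign of the trace: $b>-(a+1)^2$ forces $\operatorname{tr}J<0$ (sink), while $b<-(a+1)^2$ forces $\operatorname{tr}J>0$ (source). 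This disposes of both strict inequalities in the statement.

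Second, I would treat the boundary case $b=-(a+1)^2$, where the eigenvalues are purely imaginary and linearisation is inconclusive. I would translate $E_r$ to the origin via $u=x-(a+1)$, $v=y-F(a+1)$, obtaining the Li\'enard form $\dot u = v-\tilde F(u)$, $\dot v = -u$, where, using $F'(a+1)=f(a+1)=\delta((a+1)^2+b)=0$,
\begin{equation*}
\tilde F(u) = \tfrac{1}{2}F''(a+1)\,u^2+\tfrac{1}{6}F'''(a+1)\,u^3 = \delta(a+1)\,u^2+\tfrac{\delta}{3}u^3.
\end{equation*}
To establish stability I would compute the first focal value $V_3$ either directly, by expanding the Poincar\'e return map in polar coordinates $u=r\cos\varphi$, $v=r\sin\varphi$ and integrating over one period (the quadratic term $\delta(a+1)u^2$ contributes zero by parity, and only the cubic term survives), or by citing the classical Li\'enard weak-focus formula. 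Either route shows that $\operatorname{sgn}(V_3)=-\operatorname{sgn}(F'''(a+1))=-\operatorname{sgn}(2\delta)<0$, so the origin is a stable weak focus. Therefore $E_r$ is a sink at $b=-(a+1)^2$, which together with the hyperbolic analysis yields the lemma exactly as stated.

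The only nontrivial step is the weak-focus computation at $b=-(a+1)^2$; everything else is a two-line application of trace/determinant criteria. The care needed there is essentially bookkeeping of signs and the verification that the quadratic nonlinearity $\delta(a+1)u^2$ gives no net contribution to $V_3$, so that the cubic term $\tfrac{\delta}{3}u^3$ alone determines the sign.
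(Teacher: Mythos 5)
Your proposal is correct and takes essentially the same route as the paper: trace--determinant analysis for the hyperbolic cases and a first focal value computation for the Li\'enard form at $b=-(a+1)^2$ (the paper merely reverses time first, obtaining $g_3=\pi\delta/4>0$ for the reversed system, which is equivalent to your direct conclusion that the focal value is negative and $E_r$ is a stable weak focus). The only caveat is that ``the quadratic term contributes zero by parity'' is not by itself a complete argument, since quadratic terms generally enter the first Lyapunov quantity at second order; however, for this Li\'enard structure the classical formula you also cite gives the focal value as a negative multiple of $F'''(a+1)=2\delta$, so your conclusion stands.
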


\begin{proof}
  The Jacobian matrix for system \eqref{SD}
  at $E_r$ is
	\begin{eqnarray*}
    J_{r}=
	\left(
	\begin {array}{cc}
	-\delta((a+1)^2+b) &  1 \\
	\noalign{\medskip}
	-1 & 0
	\end {array}
	\right).
	\end{eqnarray*}
  Then, ${\rm det}J_r=1$ and ${\rm tr}J_r=-\delta((a+1)^2+b)$.
  Moreover, $E_r$ is a source (resp. sink) if $b<-(a+1)^2$ (resp. $b>-(a+1)^2$).
  In the case $b=-(a+1)^2$,
  system \eqref{SD} can be rewritten as
    \begin{eqnarray}
	\dot{x}=-y+\delta(a+1)x^2+\frac{\delta}{3} x^3,\hskip 0.5cm \dot{y}=x-{\rm sgn}(x+a+1)+1,
	\label{Er-O}
    \end{eqnarray}
  by  the transformation  $(x,y,t)\rightarrow(x+(a+1), y+F(a+1),-t)$.
  Notice that ${\rm sgn}(x+a+1)=1$ near $x=0$ for any $a>1$.
  Then, system \eqref{Er-O} has the same topological structure as
	\begin{eqnarray}
	\dot{x}=-y+\delta(a+1)x^2+\frac{\delta}{3} x^3,\hskip 0.5cm \dot{y}=x
	\label{Err-O}
    \end{eqnarray}
  near the origin.
  One can compute that the first focal value of system \eqref{Err-O} is $g_{3}=\pi\delta/4>0$,
  identifying that the origin of system \eqref{Err-O} is an unstable weak focus of order 1.
  Then, $E_r$ is  a stable weak focus of order 1 when $b=-(a+1)^2$,
  which completes the proof.
\end{proof}

By Lemma \ref{lm-fi-e}, the
bifurcations from the unique equilibrium can be obtained in the following proposition.

\begin{pro}
	There is a unique limit cycle of system \eqref{SD}, which is stable, occurring in a small neighborhood of  $E_r$
    if $b$ varies from
	$b=-(a+1)^2$ to $b=-(a+1)^2-\epsilon$
	and no limit cycles exist in any small neighborhood of  $E_r$
	if $-(a+1)^2\le b<-(a+1)^2+\epsilon$, where $\epsilon>0$ is sufficiently small.
	\label{hopfbif}
\end{pro}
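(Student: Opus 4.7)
The plan is to reduce the claim to a direct application of the classical (smooth) Hopf bifurcation theorem, leveraging the local smoothness of system \eqref{SD} near $E_r$ and the data already computed in Lemma \ref{lm-fi-e}.

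First I would observe that the bifurcation is local to $E_r=(a+1,F(a+1))$, and that for $a>1$ the first coordinate satisfies $a+1>2$, so $\mathrm{sgn}(x)\equiv 1$ throughout a small neighborhood of $E_r$. Hence in this neighborhood system \eqref{SD} agrees with the smooth system \eqref{Err-O} (up to the translation and time reversal used in the proof of Lemma \ref{lm-fi-e}), and no special tools from the theory of nonsmooth systems are needed here. I would then verify the hypotheses of the Andronov--Hopf theorem at $b=-(a+1)^2$: the eigenvalues of $J_r$ are a conjugate purely imaginary pair (since $\det J_r=1$ and $\mathrm{tr}\,J_r=0$), and the transversality condition
\[
\frac{d}{db}\bigl(\mathrm{tr}\,J_r\bigr)\Big|_{b=-(a+1)^2}=-\delta\neq 0
\]
holds. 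Lemma \ref{lm-fi-e} already provides the remaining nondegeneracy input: the first focal value at the critical parameter is $g_3=\pi\delta/4>0$ for the time-reversed system \eqref{Err-O}, which translates to a strictly stable weak focus of order $1$ at $E_r$ for the original system.

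Next I would read off the stability exchange. Since $\mathrm{tr}\,J_r=-\delta(b+(a+1)^2)$, decreasing $b$ through $-(a+1)^2$ moves $E_r$ from a sink through the stable weak focus to a source. Combined with the negativity of the first Lyapunov coefficient of \eqref{SD} at the bifurcation point (stable weak focus), the standard supercritical Hopf scenario gives a unique stable limit cycle bifurcating into the parameter region $b<-(a+1)^2$, of amplitude $O(\sqrt{|b+(a+1)^2|})$, and no limit cycle in a small neighborhood for $b\ge -(a+1)^2$. Uniqueness in a neighborhood of $E_r$ follows from the order-$1$ property of the weak focus together with the local bifurcation diagram supplied by the Hopf theorem.

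The only mild obstacle is a bookkeeping one: Lemma \ref{lm-fi-e} was phrased after a time reversal $t\mapsto -t$, so I must be careful that the sign $g_3>0$ (unstable in reversed time) corresponds to a \emph{stable} weak focus, and hence to a supercritical Hopf producing a \emph{stable} cycle on the \emph{source} side $b<-(a+1)^2$. Once this sign correspondence is made explicit, the proposition follows directly from the classical statement.
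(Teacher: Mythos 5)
Your proposal is correct and follows essentially the same route as the paper: the paper's proof likewise invokes Lemma \ref{lm-fi-e} (stable weak focus of order $1$ at $b=-(a+1)^2$, source for $b<-(a+1)^2$) and then applies the classical Hopf bifurcation theorem to obtain the unique stable cycle for $b=-(a+1)^2-\epsilon$ and nonexistence for $-(a+1)^2\le b<-(a+1)^2+\epsilon$. Your extra explicit checks (local smoothness near $E_r$, transversality of $\mathrm{tr}\,J_r$ in $b$, and the time-reversal sign bookkeeping) are consistent with, and already implicit in, the paper's argument.
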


\begin{proof}
    Due to Lemma \ref{lm-fi-e}, $E_r$ is a stable weak focus of order $1$ when $b=-(a+1)^2$,
	and $E_r$ is unstable when  $b<-(a+1)^2$.
	By the classical Hopf bifurcation theorem, there is a unique limit cycle, which is stable, bifurcated from  $E_r$
	when  $b$ varies from $b=-(a+1)^2$ to $b=-(a+1)^2-\epsilon$ and no limit cycles exist in any small neighborhood of  $E_r$
	if $-(a+1)^2\le b<-(a+1)^2+\epsilon$.
\end{proof}

For the equilibria of system \eqref{SD} with $a>1$ at infinity, the analysis is totally the same as the case $0<a<1$ (see \cite{CTW}).
System \eqref{SD} exhibits the equilibria $I_x^+$, $I_x^-$ lying on the $x$-axis and $I_y^+$, $I_y^-$  lying on the $y$-axis.
Moreover, the properties of $I_x^+$, $I_x^-$, $I_y^+$ and $I_y^-$ are shown in the following lemma.

\begin{lm}
For system \eqref{SD}, all orbits are positively bounded.
Moreover, the small neighborhoods of both $I_x^+$ and $I_x^-$ each consist of a single parabolic sector,
and those of both $I_y^+$ and $I_y^-$ each consist of two hyperbolic sectors, as shown in {\rm Fig. \ref{fig-inf-d}}.
	\begin{figure}[!htb]
	\centering
	\includegraphics[scale=0.28]{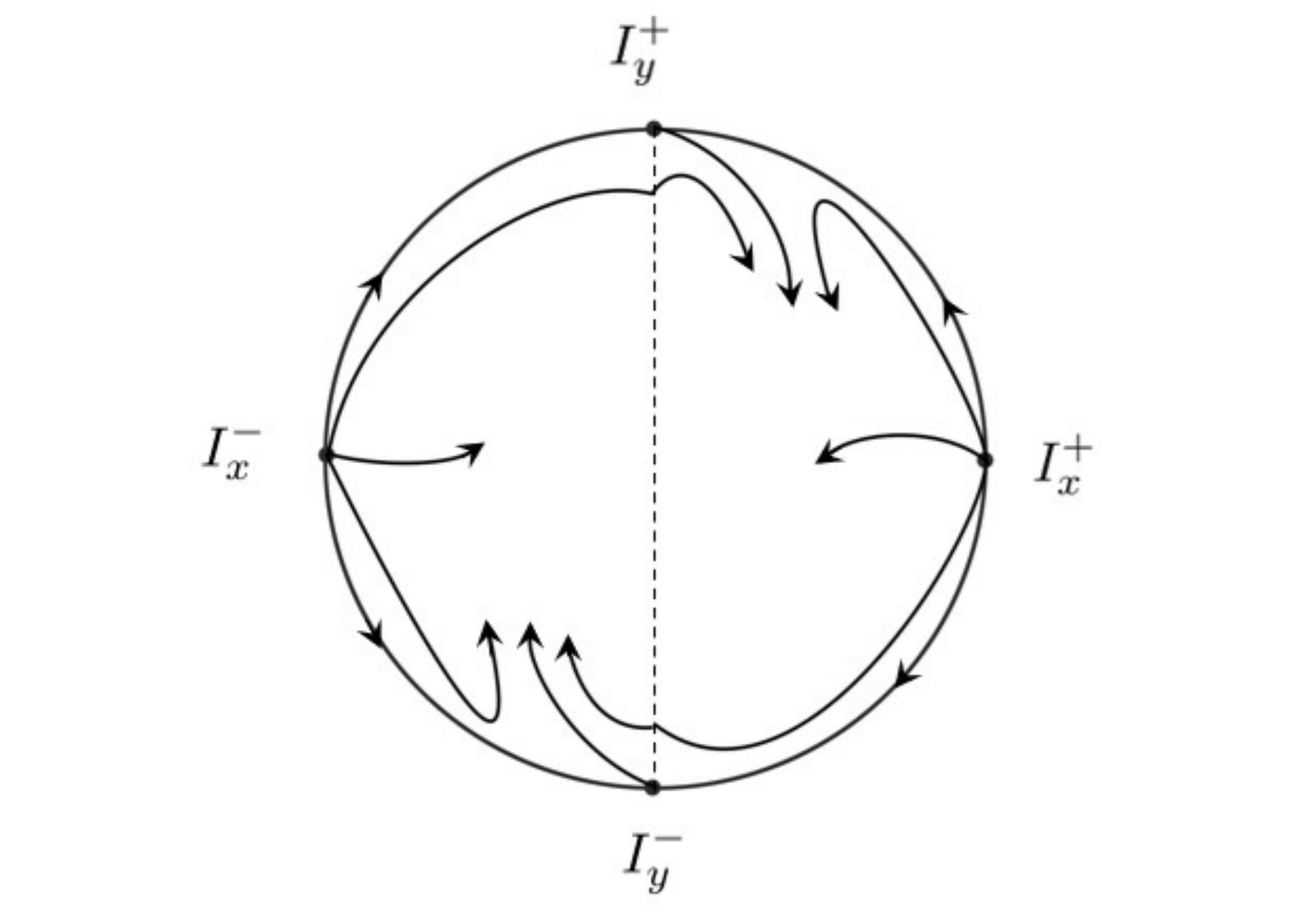}
	\caption{\footnotesize{ Dynamical behaviors near $I_x$ and $I_y$.}}
	\label{fig-inf-d}
    \end{figure}
\label{lm-infi}
\end{lm}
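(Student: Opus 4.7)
The plan is to combine a Liénard-type Lyapunov argument with the Poincaré compactification, treating the two half-planes $\{x>0\}$ and $\{x<0\}$ separately so that the discontinuity of $\mathrm{sgn}(x)$ does not interfere.

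For positive boundedness I would take $V(x,y)=\tfrac12 y^2+G(x)$ with $G(x)=\int_0^x g(s)\,ds=\tfrac12 x^2-|x|-ax\to+\infty$ as $|x|\to\infty$. Along orbits of \eqref{SD},
\[
\dot V \;=\; -g(x)F(x) \;=\; -\delta\bigl(x-\mathrm{sgn}(x)-a\bigr)\bigl(\tfrac{x^3}{3}+bx\bigr) \;\sim\; -\tfrac{\delta}{3}x^4 \quad\text{as } |x|\to\infty,
\]
so $\dot V<0$ outside a bounded $x$-strip. For $a>1$ one also has $\dot y\big|_{x=0^\pm}>0$, hence the switching line $\{x=0\}$ is crossed transversally with no sliding motion; a standard Liénard argument then produces a sublevel set $\{V\le R_0\}$ that is positively invariant for all $R_0$ sufficiently large, proving that every forward orbit is bounded.

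For the behavior at the $x$-axis infinity, I would pass in each half-plane to the chart $u=y/x,\ v=1/x$ and multiply by $v^2$ (a legitimate time reparametrisation since $v^2\ge 0$). A direct computation gives
\begin{align*}
u' &= \tfrac{\delta}{3}u + v^2\bigl[-1 + (\mathrm{sgn}(x)+a)v - u^2 + \delta b u\bigr], \\
v' &= \tfrac{\delta}{3}v + v^3\bigl[-u + \delta b\bigr].
\end{align*}
For both signs of $\mathrm{sgn}(x)$ the Jacobian at the origin is $(\delta/3)\,\mathrm{Id}$ with $\delta>0$, so each of $I_x^{\pm}$ is an unstable proper node and its small neighborhood is a single parabolic sector.

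For the $y$-axis infinity I would work in the chart $u=x/y,\ v=1/y$ (again multiplied by $v^2$), yielding
\begin{align*}
u' &= -\tfrac{\delta}{3}u^3 + v^2\bigl[1+u^2-\delta b u - (\mathrm{sgn}(u)+a)uv\bigr], \\
v' &= v^3\bigl[u-(\mathrm{sgn}(u)+a)v\bigr].
\end{align*}
The linearisation at the origin vanishes, so $I_y^{\pm}$ is degenerate. I would resolve it by the quasi-homogeneous blow-up with weights $(2,3)$ that balance the principal monomials $u^3$ and $v^2$, carried out separately on $u>0$ and $u<0$ where $\mathrm{sgn}(u)$ is constant; the exceptional divisor then carries exactly two hyperbolic equilibria whose pull-back produces the two hyperbolic sectors at $I_y^{\pm}$ drawn in Fig.~\ref{fig-inf-d}. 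The hard part is this final step: because the linear part is zero, the sector structure is not read off from the principal part of the vector field, and one must verify that the blow-up generates neither elliptic nor parabolic sectors and that the invariant rays on the exceptional divisor really match the required separatrices. As the authors observe, this computation is formally identical to the one carried out for $0<a<1$ in~\cite{CTW}, and none of its intermediate steps exploit $a<1$, so the argument transfers verbatim.
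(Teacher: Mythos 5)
Your computations in the chart $u=y/x$, $v=1/x$ are correct, and the conclusion that $I_x^{\pm}$ are unstable star nodes (hence a single parabolic sector each) agrees with the paper; note, however, that the paper gives no proof of this lemma at all---it appeals to \cite[Lemma 2.3]{CTW}---so the real question is whether your self-contained argument closes the remaining steps, and in two places it does not. First, the boundedness step fails as stated: the sublevel sets $\{V\le R_0\}$ of $V=\tfrac12y^2+G(x)$ are \emph{never} positively invariant, however large $R_0$ is. Indeed $\dot V=-g(x)F(x)$ depends only on $x$, and for every admissible parameter value there is a fixed nondegenerate $x$-interval on which $g$ and $F$ have opposite signs (for instance $0<x<a+1$ when $b\ge0$, and $-\sqrt{-3b}<x<0$ when $b<0$), so $\dot V>0$ there; every level curve $\{V=R_0\}$ with $R_0$ large crosses this strip, and orbits leave the sublevel set through it. Positive boundedness is true, but the standard Li\'enard argument obtains it from a positively invariant region bounded by arcs of orbits (or by showing that the gain of $V$ across the strip---bounded because $\dot V$ is independent of $y$, while the crossing time shrinks as $|y|\to\infty$---is dominated by the loss outside it); your invariance claim is exactly the step that would have to be replaced.

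Second, at $I_y^{\pm}$ the $(2,3)$ quasi-homogeneous blow-up does not produce ``two hyperbolic equilibria on the exceptional divisor.'' With weights $(2,3)$ the principal part of your chart system is $\bigl(v^2-\tfrac{\delta}{3}u^3\bigr)\partial_u$, whose $\partial_v$-component vanishes identically; carrying out the blow-up (say $u=\bar u s^{2}$, $v=s^{3}$, dividing by $s^{4}$) gives $\bar u'=1-\tfrac{\delta}{3}\bar u^{3}+O(s^{2})$ and $s'=\tfrac13\bar u s^{5}+O(s^{6})$, so the interior divisor equilibrium $\bar u=(3/\delta)^{1/3}$ has eigenvalues $-\delta(3/\delta)^{2/3}$ and $0$: it is only semi-hyperbolic, and a center-manifold (or secondary blow-up) argument is still required before elliptic or parabolic sectors can be excluded; only the corner points where the divisor meets the equator come out as hyperbolic saddles. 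So the hard step you yourself flagged is not actually carried out, and the specific claim made about it is incorrect. Your closing appeal to \cite{CTW}---that the computation there nowhere uses $a<1$---is legitimate and is in fact all the paper itself does, but then the proof rests on that citation rather than on the blow-up sketch; a genuinely self-contained proof must also address the fact that the switching line $x=0$ lands exactly at $I_y^{\pm}$, gluing the two half-plane analyses along $u=0$, where fortunately $u'=v^{2}+O(|u|v^{2})>0$ gives transversal crossing.
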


The proof is just like the proof of \cite[Lemma 2.3]{CTW}, so we omit it.


\section{Limit cycles and and  global  bifurcations of system \eqref{SD}}

Consider Li\'enard system
\begin{eqnarray}
\dot{x}=y-\hat F(x), \quad \dot{y}=-\hat g(x),
\label{hat-Fg}
\end{eqnarray}
where  $\hat F(x)\in C^1(\alpha,\beta)$ with $\hat F(0)=0$,
$\hat g(x)$ has $m$ jump discontinuities at $a_1, a_2, \cdots, a_m\in (\alpha,\beta)$
and is continuous at any  $x\in (\alpha, \beta)\backslash \{a_1, a_2, \cdots, a_m\}$,
$\alpha\in\mathbb{R}\cup \{-\infty\}$, $\beta\in\mathbb{R}\cup \{+\infty\}$ and $m\ge0$.

 \begin{lm}
   There is at most one crossing limit cycle in the strip $\alpha<x<\beta$ for system \eqref{hat-Fg} if the following conditions are satisfied,
    \begin{description}
		\item{\bf (i)} $\hat{f}(x)=\hat{F}'(x)$ has a unique zero $x_0<0$;
              $\hat{f}(x)<0$ (resp. $>0$) as $\alpha<x<x_0$ (resp. $x_0 < x < \beta$);

		\item{\bf (ii)} $\hat{F}(0) = 0$, $\hat{F}(\xi_0) = 0$ for $\alpha < \xi_0 < x_0$;

        \item{\bf (iii)} $\hat{g}(x)$ has a unique jump discontinuity point at $x=a_1\in(0, \beta)$ and
              $\lim_{x\rightarrow a_1^+}\hat{g}(x)<\lim_{x\rightarrow a_1^-}\hat{g}(x)$,
              $x\hat{g}(x)>0$ for $x\in (\alpha, \beta)\backslash\{0,a_1\}$;

        \item{\bf (iv)} the simultaneous equations
               \begin{eqnarray}
               \hat{F}(x_1) = \hat{F}(x_2), \quad \lambda(x_1) = \lambda(x_2)
               \label{Flam}
               \end{eqnarray}
             have no solutions such that $x_1 < x_0 < 0 < x_2<a_1$
             and have at most one solution such that $x_1 < x_0 < 0<a_1< x_2$, where $\lambda(x)$ is defined as
               $\lambda(x)= \hat{g}(x)/\hat{f}(x)$;

        \item{\bf (v)}  the function $\hat{F}(x)\hat{f}(x)/\hat{g}(x)$ is decreasing on the interval $(\alpha, x_0)$.
    \end{description}
    Moreover, the crossing limit cycle is unique, unstable and hyperbolic if it exists.
	\label{uni-lc-F}
  \end{lm}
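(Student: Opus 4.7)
The plan is to adapt the classical Zhang--Zhifen uniqueness theorem for Li\'enard systems to this piecewise smooth setting, arguing by contradiction. By (i)--(iii), the unique equilibrium in the strip is the origin, and any crossing limit cycle $\Gamma$ of \eqref{hat-Fg} must surround it and meet the discontinuity line $\{x=a_1\}$ transversally at two points; transversality follows because $\dot x=y-\hat F(x)$ is continuous across $x=a_1$ and vanishes only on the graph $y=\hat F(x)$, which $\Gamma$ crosses at isolated interior points.

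First I would express the characteristic exponent of a crossing limit cycle $\Gamma$ as
\begin{equation*}
\chi(\Gamma) \;=\; -\oint_{\Gamma}\hat f(x(t))\,dt \;=\; \oint_{\Gamma}\frac{dy}{\lambda(x)},
\end{equation*}
using $\dot y=-\hat g(x)$ on arcs where $\hat g(x)\neq 0$, and then decompose $\Gamma$ by the vertical lines $x=0$ and $x=a_1$. Assuming for contradiction two nested crossing limit cycles $\Gamma_1\subset\Gamma_2$, I would carry out the standard ``horizontal strip'' comparison between them. On the left branch $\alpha<x<0$, where $\hat F$ is non-monotone, the monotonicity of $\hat F\hat f/\hat g$ in (v) yields the classical Massera-type strict inequality between the contributions of $\Gamma_1$ and $\Gamma_2$. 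On the right branches, pairing points $(x_1,y)$ on the lower arc and $(x_2,y)$ on the upper arc with $\hat F(x_1)=\hat F(x_2)$ reduces the sign of the difference to the sign of $\lambda(x_1)-\lambda(x_2)$, which is governed by condition (iv): the ``no solution'' clause for $0<x_2<a_1$ and the ``at most one solution'' clause for $x_2>a_1$. The downward jump $\lim_{x\to a_1^+}\hat g<\lim_{x\to a_1^-}\hat g$ contributes boundary terms of the favourable sign at the two transverse crossings of $\{x=a_1\}$.

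Summing these strict contributions should give $\chi(\Gamma_i)>0$ for $i=1,2$, so both cycles would be hyperbolic and unstable. Between two nested unstable limit cycles the Poincar\'e--Bendixson theorem forces an intermediate stable limit cycle, which the same estimate (applied to that intermediate cycle versus a sufficiently small loop around the origin) would in turn render unstable, a contradiction. Hence there is at most one crossing limit cycle, and the very same computation shows it is unstable and hyperbolic.

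The main obstacle I expect is tracking the precise sign contributed by the discontinuity of $\hat g$ at $x=a_1$ and verifying that the weaker ``at most one solution'' clause of (iv) (rather than ``no solution'') still suffices for $x_2>a_1$: the single additional $y$-level at which $\lambda(x_1)=\lambda(x_2)$ could occur must be compensated exactly by the favourable jump contribution coming from (iii), and also one must rule out any sliding mode along $\{x=a_1\}$. Making this sign matching quantitative, and combining it with the smooth Zhang--Zhifen estimates on the left branch and the two smooth right sub-branches, is where the technical work concentrates.
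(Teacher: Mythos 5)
Your overall architecture does match the paper's: show that the divergence integral along any crossing limit cycle is positive (hence every such cycle is unstable and hyperbolic), then exclude two of them by Poincar\'e--Bendixson in the annulus between two unstable cycles. However, the central step is where your sketch breaks down. You frame the estimate as a Zhang--Zhifen ``horizontal strip'' comparison between two nested cycles $\Gamma_1\subset\Gamma_2$, but such a comparison can only deliver a \emph{relative} inequality between $\chi(\Gamma_1)$ and $\chi(\Gamma_2)$; it cannot yield the \emph{absolute} statement $\chi(\Gamma_i)>0$ that your contradiction requires. To get positivity one must compare the two branches of a \emph{single} cycle at equal values of $\hat F$, and this is exactly the machinery the paper builds and your proposal omits: pass to the $wy$-plane via $w=\hat F(x)$ (with the two inverse branches giving $\lambda_1(w)$, $\lambda_2(w)$), show by a Green's-formula area argument that the two sub-orbits of $\Gamma$ must intersect, use condition (iv) --- no solution of \eqref{Flam} with $0<x_2<a_1$, at most one with $x_2>a_1$, together with the downward jump of $\hat g$ at $a_1$ --- to show they intersect \emph{exactly once}, at some $w^*>\hat F(a_1)$ determined by the unique solution of \eqref{Flam}, and then split the divergence integral at $w=w^*$. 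To the left of $w^*$ the ordering of the sub-orbits gives a positive contribution directly; to the right the two sub-orbits reach different $w$-extents, and the paper handles this mismatch with the rescaling $\bar w=\mu(w-w^*)+w^*$, $\mu=(w_G-w^*)/(w_C-w^*)>1$, where condition (v) (equivalently, monotonicity of $\lambda_2(w)/w$) is what makes the rescaled comparison close. In your sketch condition (v) is instead assigned to a Massera-type inequality between two different cycles on the left branch, which is not how it can be used here, and the two issues you yourself flag --- the sign bookkeeping at the jump $x=a_1$ and why ``at most one solution'' in (iv) suffices --- are precisely the content of the exactly-one-intersection and $w^*$-splitting arguments; without them the proof does not go through.

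A smaller point: the positivity argument must also cover the degenerate closed orbit that merely touches the right boundary of the strip (the paper notes this case separately at the end of Lemma \ref{lm-ag1clu}-type applications), and the within-cycle comparison, not the two-cycle one, is again what handles it. So the gap is not the final Poincar\'e--Bendixson step, which is fine, but the absence of a workable mechanism for proving $\oint_\Gamma\hat f\,dt<0$ along a single crossing cycle under hypotheses (iv)--(v).
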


  \begin{proof}
    Due to condition  {\bf (i)},
    \[
	{\rm div}(y-\hat{F}(x), -\hat{g}(x))=-\hat{f}(x)<0
	\]
    when $x> x_0$.
    By Bendixson-Dulac Criterion, system \eqref{hat-Fg} exhibits no limit cycles in the strip $x>x_0$.
    According to  conditions  {\bf (ii)} and  {\bf (iii)}, the origin $O$ is the unique equilibrium for system  \eqref{hat-Fg}.
    Assume that there is a crossing limit cycle $\Gamma$ for system \eqref{hat-Fg}.
    Then, $\Gamma$ crosses the $y$-axis, the vertical line $x=a_1$, the graph of $y=\hat{F}(x)$, the vertical line $x=x_0$ successively.
    Denote the intersections by $A$, $B$, $C$, $D$, $E$, $F$, $G$, $H$ respectively, as shown in Fig. \ref{fig-wF} (a).

     \begin{figure}[!htb]
		\centering
		\subfloat[in the $xy$-plane]
		{\includegraphics[scale=0.25]{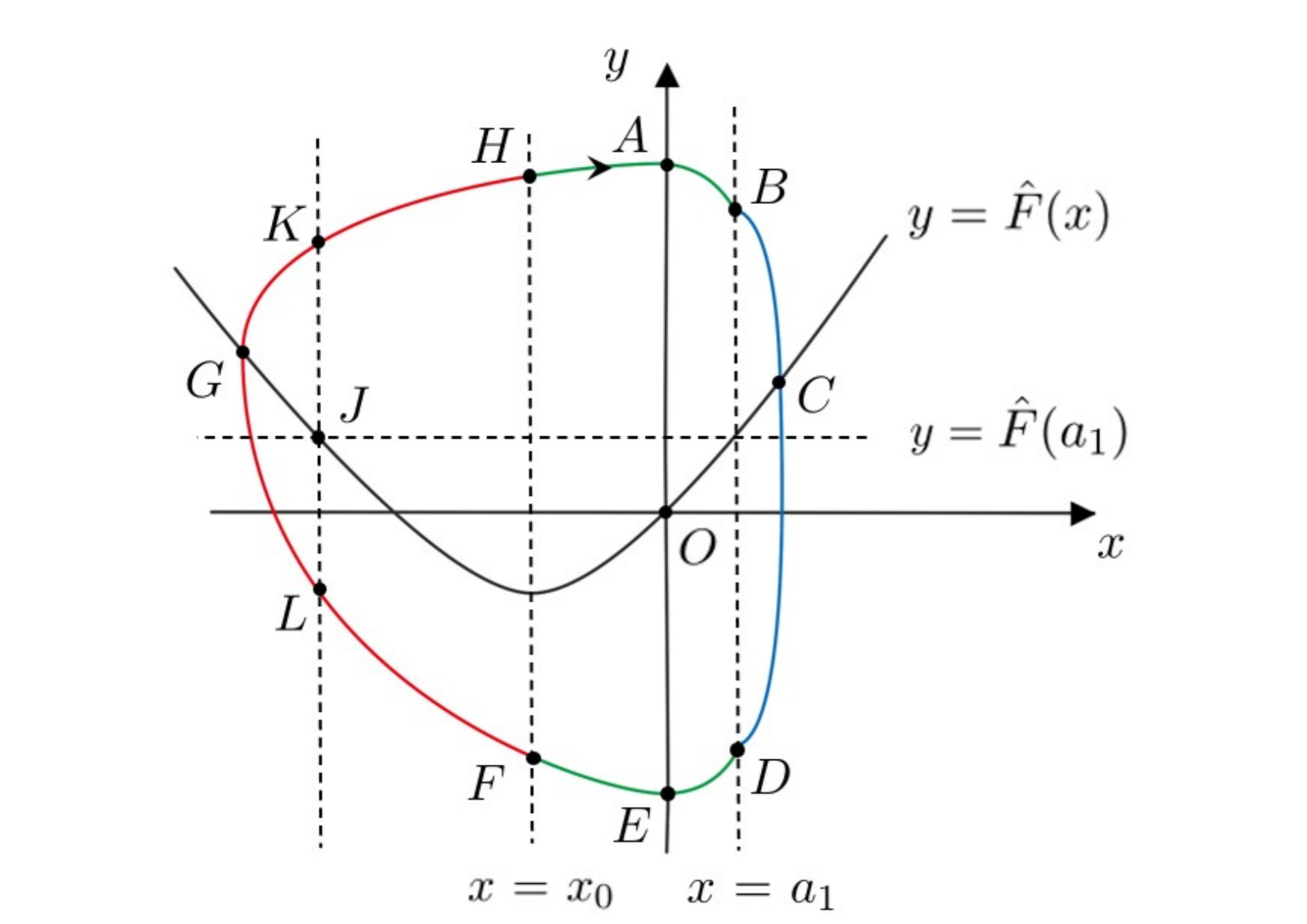}}\hspace{10pt}
        \hskip 0.2cm
		\subfloat[in the $wy$-plane]
		{\includegraphics[scale=0.25]{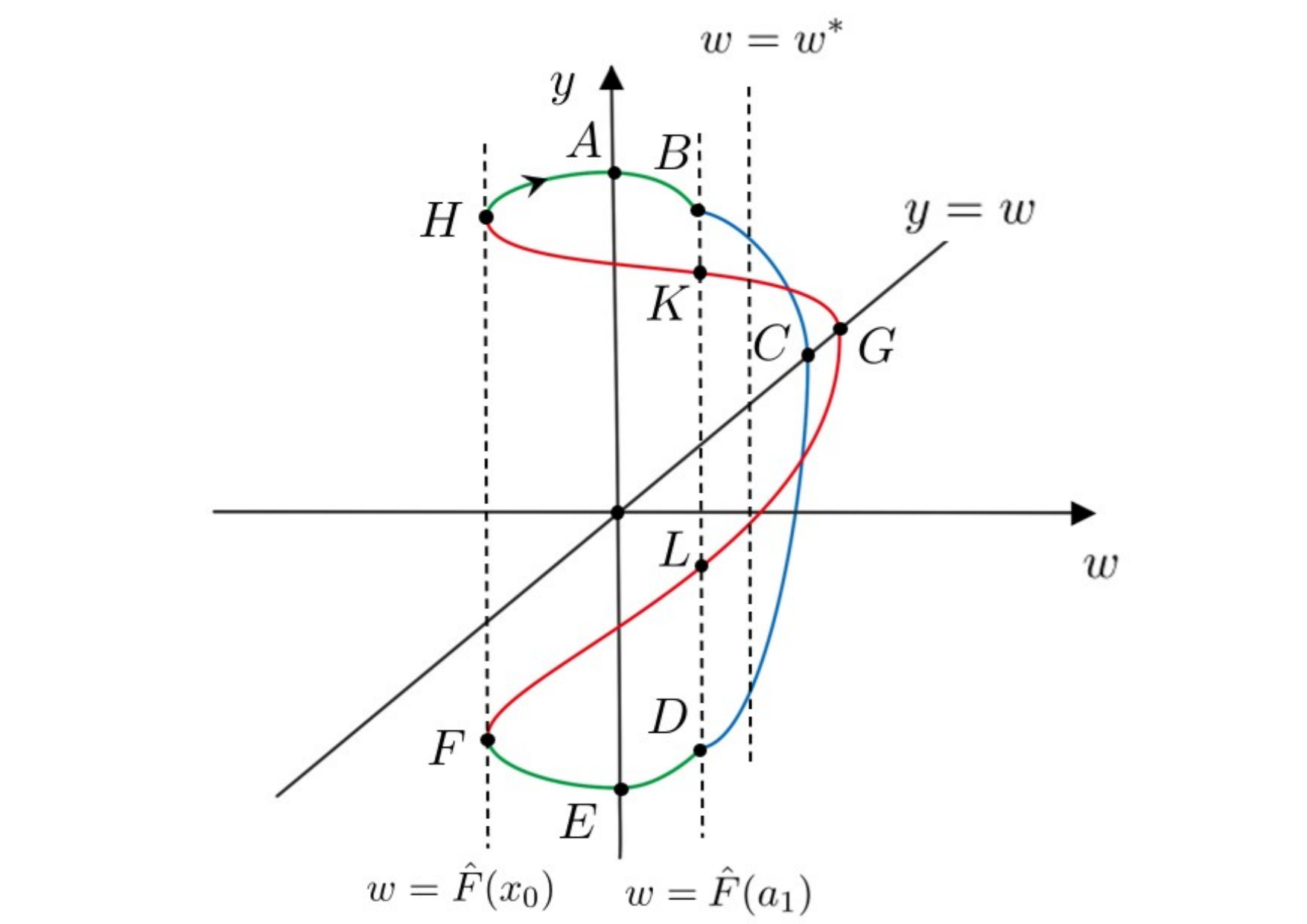}}
		\caption{{\footnotesize A limit cycle of system \eqref{hat-Fg}.}}
		\label{fig-wF}
	\end{figure}

    Let $w=\hat{F}(x)$ for $x\in (\alpha,\beta)$. By condition {\bf (i)}, $w=\hat{F}(x)$ has two inverse functions
    $x=F_1(w)$, $ x_0\le x< \beta$ and $x=F_2(w)$, $\alpha < x\le x_0$.
    System \eqref{SD} can be rewritten as
	\begin{eqnarray}
	\frac{\mathrm{d}y}{\mathrm{d}w}=\frac{\lambda_i(w)}{-y+w}, \hskip 0.5cm {\rm where}\hskip 0.2cm\lambda_i(w)=\frac{\hat{g}(F_i(w))}{\hat{f}(F_i(w))},~~i=1,2.
	\label{wy-cop}
	\end{eqnarray}
    Thus, the limit cycle $\Gamma$ in the $xy$-plane will be changed into two sub-orbits $\widehat{HBCDF}$ and $\widehat{FGH}$ in the $wy$-plane.

        \begin{figure}[!htb]
		\centering
		\subfloat[in the $wy$-plane]
		{\includegraphics[scale=0.25]{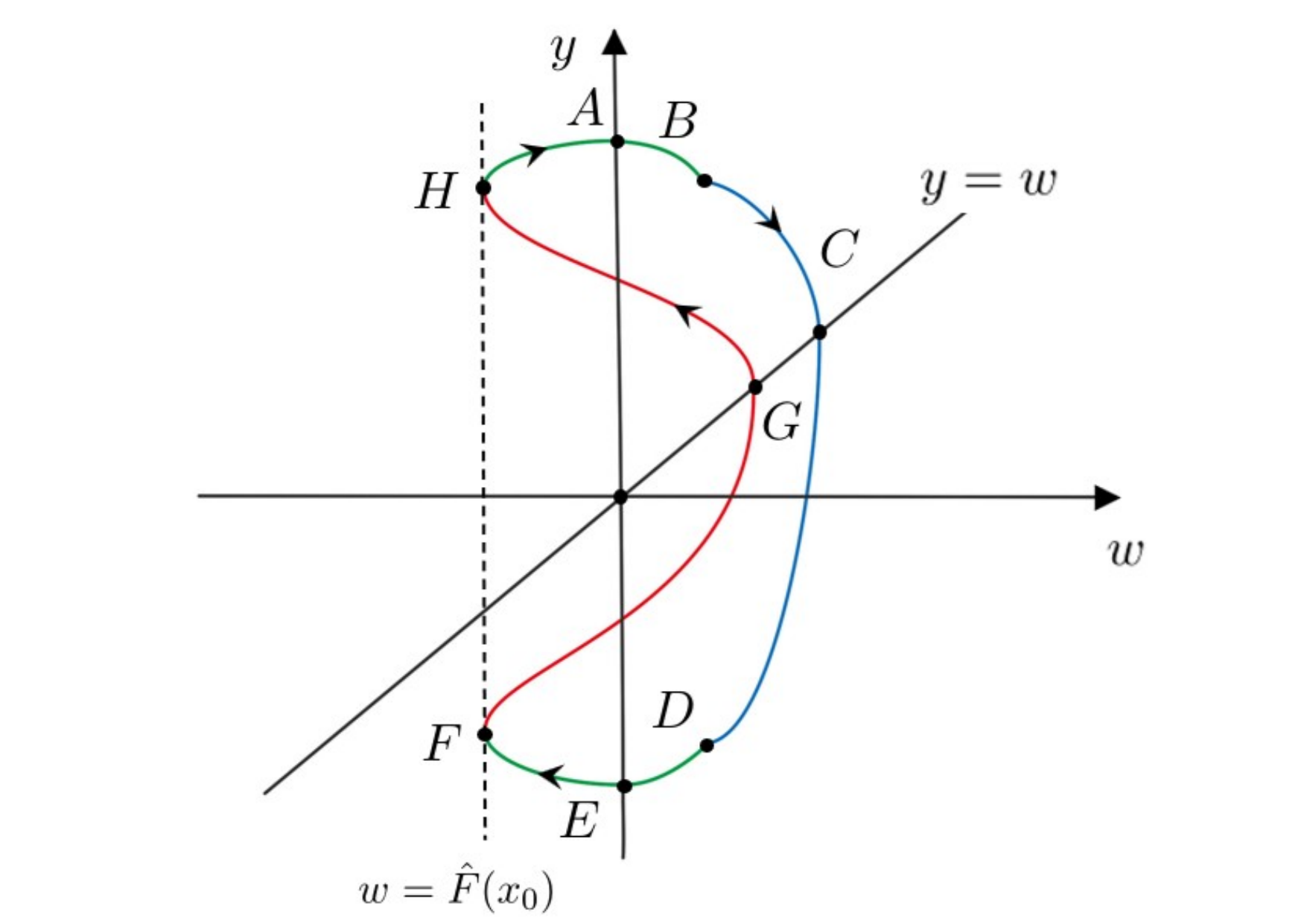}}\hspace{10pt}
        \hskip 0.2cm
		\subfloat[in the $wy$-plane]
		{\includegraphics[scale=0.25]{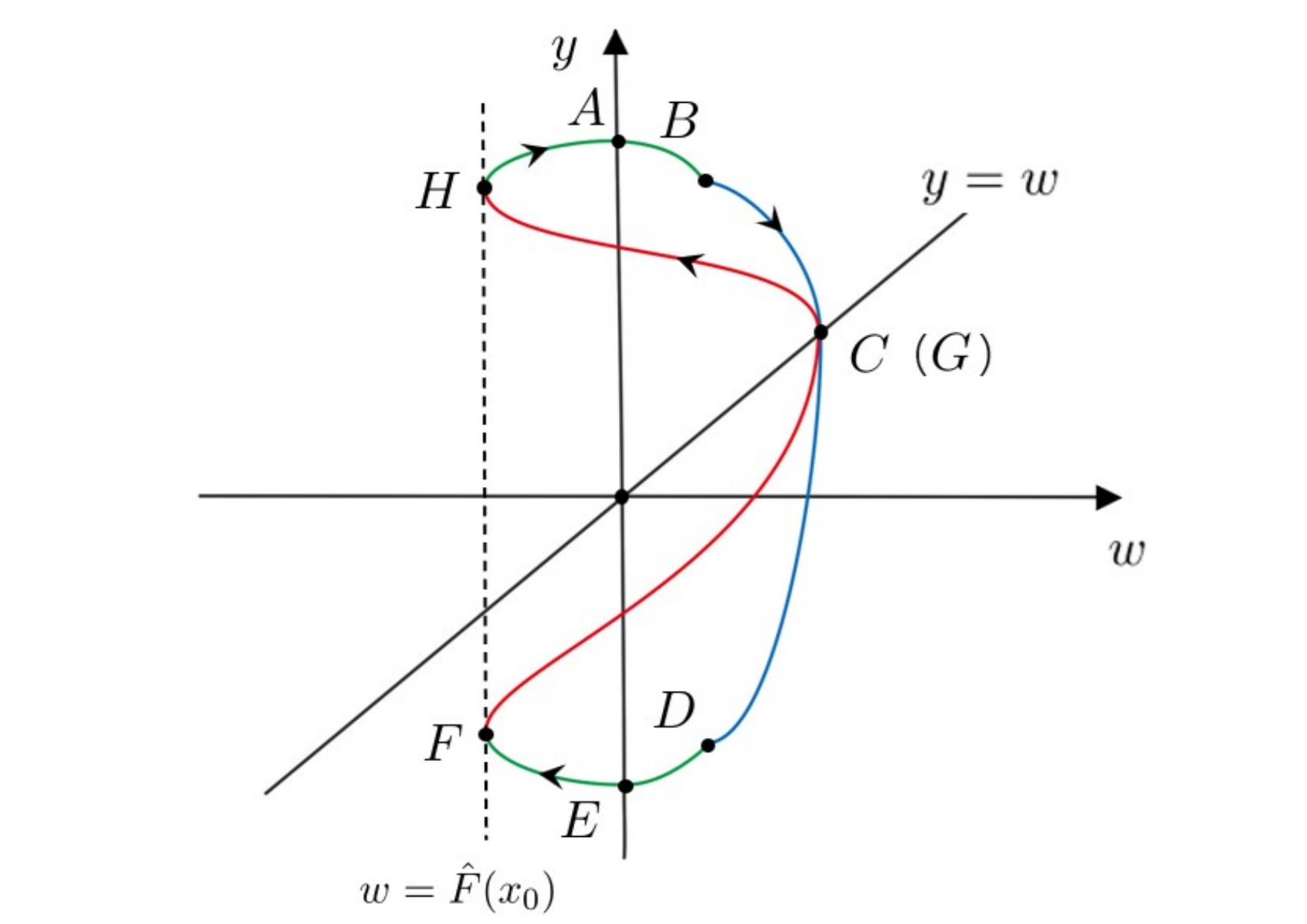}}
		\caption{{\footnotesize A limit cycle of system \eqref{hat-Fg}.}}
		\label{fig-wFN}
	    \end{figure}

    Firstly, we claim that in the $wy$-plane except at the points $H$ (resp. $F$ ) and $C$, the sub-orbit $\widehat{GH}$ (resp. $\widehat{FG}$) must
    intersect the sub-orbit $\widehat{HBC}$ (resp. $\widehat{CDF}$ )
    at least once.
    If not, the sub-orbit $\widehat{FGH}$ is on the left of $\widehat{HBCDF}$ in the $wy$-plane
    because
    \begin{eqnarray}
    \lim_{x\rightarrow x_0^+}\lambda(x)=-\infty,~~~~~~\lim_{x\rightarrow x_0^-}\lambda(x)=+\infty.\label{lamx0}
    \end{eqnarray}
    Then, $\widehat{HBCDF}$ and $\widehat{FGH}$  clockwise bound a region or a union of two regions,
    as shown in Fig. \ref{fig-wFN} (a) or (b).
    Denote the interior region
    of $\Gamma$ in $xy$-plane by $\Omega_{xy}$ and the one of $\widehat{HBCDF}\cup\widehat{FGH}$ in the $wy$-plane by $\Omega_{wy}$.
    By Green's formula,
	\begin{eqnarray*}
		0=\oint_{\Gamma}-\hat{g}(x)\mathrm{d}x-(y-\hat{F}(x))\mathrm{d}y=-\iint_{\Omega_{xy}}\hat{f}(x)\mathrm{d}x\mathrm{d}y
		=-\iint_{\Omega_{wy}}\mathrm{d}w\mathrm{d}y<0,
	\end{eqnarray*}
	which is a contradiction.

    Secondly, we will prove that the sub-orbit $\widehat{GH}$ (resp. $\widehat{FG}$) is below (resp. above)
    the sub-orbit $\widehat{HBC}$ (resp. $\widehat{CDF}$) on the left of $w=w^*>\hat{F}(a_1)$ and they cross each other
    exactly once on the right of $w=w^*$,
    where $w^*=\hat{F}(x_1^*)=\hat{F}(x_2^*)$ and $(x_1^*, \,x_2^*)$ is the unique solution of simultaneous equations \eqref{Flam}
    such that $x_1^* < x_0 < 0<a_1< x_2^*$.

    By condition  {\bf (iv)}, the simultaneous equations \eqref{Flam}  have no solutions such that $x_1 < x_0 < 0 < x_2<a_1$.
    It follows from  \eqref{wy-cop} and \eqref{lamx0}
    that the sub-orbit $\widehat{GH}$ (resp. $\widehat{FG}$) is below (resp. above)
    the sub-orbit $\widehat{HBC}$ (resp. $\widehat{CDF}$) on the left of $w=\hat{F}(a_1)$ in the $wy$-plane.
    The horizontal line $y=\hat{F}(a_1)$ intersects $y=\hat{F}(x)$ at two points in the $xy$-plane, denoted the left one by $J$.
    If $J$ is on $\Gamma$ or outside of the region enclosed by $\Gamma$,
    the sub-orbit $\widehat{GH}$ (resp. $\widehat{FG}$) does not
    intersect the sub-orbit $\widehat{HBC}$ (resp. $\widehat{CDF}$) except at the point $H$ (resp. $F$).
    That conflicts the existence of $\Gamma$.
    Passing through $J$ the vertical line crosses
    $\Gamma$ at $K$ and $L$, from top to bottom.
    Clearly,
    \begin{eqnarray}
    y_K<y_B,~~~~~ y_L>y_D,
    \label{a0-ws}
    \end{eqnarray}
    where $y_B$, $y_D$, $y_K$ and $y_L$ are ordinates of $B$, $D$, $K$ and $L$, respectively.
    On account of condition {\bf (iv)} and \eqref{lamx0}, we have
    $
    \lim_{x\rightarrow a_1^-}\lambda(x)\le\lambda(x_J),
    $
    where $x_J$ is the abscissa of $J$.
    By condition {\bf (iii)} and $\hat{f}(a_1)>0$,
    one can get that
    $
    \lim_{x\rightarrow a_1^+}\lambda(x)<\lim_{x\rightarrow a_1^-}\lambda(x)\le\lambda(x_J)
    $.
    Then, 
    $\lambda_1(w)<\lambda_2(w)$ for $w<w^*$.
    It follows from \eqref{wy-cop} and \eqref{a0-ws}
    that the sub-orbit $\widehat{GH}$ (resp. $\widehat{FG}$) is below (resp. above)
    the sub-orbit $\widehat{HBC}$ (resp. $\widehat{CDF}$) on the left of $w=w^*$.

    Assume that
    the sub-orbit $\widehat{GH}$  intersects
    the sub-orbit $\widehat{HBC}$  more than once on the right of $w=w^*$.
    Denote the abscissas of the two leftmost intersections by $w_1$ and $w_2$ successively.
    Then,  $\lambda_1(w_*)<\lambda_2(w_*)$,  $\lambda_1(w_1)>\lambda_2(w_1)$ and  $\lambda_1(w_2)<\lambda_2(w_2)$,
    indicating that there are at least two solutions of the simultaneous equations \eqref{Flam} such that  $x_1 < x_0 < 0<a_1< x_2$.
    Then,  the sub-orbits $\widehat{GH}$ and $\widehat{HBC}$  intersect
    exactly once on the right of $w=w^*$.
    So do $\widehat{FG}$ and  $\widehat{CDF}$.

    Thirdly, we compare the integrals of divergence along $\widehat{HBCDF}$ and $\widehat{FGH}$.
    Let $y=y_1(w)$ and $y=y_2(w)$ be the functions of  $\widehat{HBC}$ and $\widehat{GH}$ respectively.
    \begin{eqnarray}
    \hskip -2cm
    &&\int_{\widehat{HBC}|_{w<w^*}}{\rm div}(y-\hat{F}(x), -\hat{g}(x))\mathrm{d}t+\int_{\widehat{GH}|_{w<w^*}}{\rm div}(y-\hat{F}(x), -\hat{g}(x))\mathrm{d}t\nonumber
    \\
    &&=
    \int_{\widehat{HBC}|_{w<w^*}}-\hat{f}(x)\mathrm{d}t+\int_{\widehat{GH}|_{w<w^*}}-\hat{f}(x)\mathrm{d}t\nonumber
    \nonumber\\&&
    =
	\int_{\hat{F}(x_0)}^{w^*}
	\left(
	\frac{1}{w-y_1(w)}-\frac{1}{w-y_2(w)}
	\right)\mathrm{d}w
	\nonumber\\&&=
	\int_{\hat{F}(x_0)}^{w^*}
	\frac{y_1(w)-y_2(w)}{(w-y_1(w))(w-y_2(w))}\mathrm{d}w>0.\label{WL1}
	\end{eqnarray}
    Similarly,
    \begin{eqnarray}
    \int_{\widehat{CDF}|_{w<w^*}}{\rm div}(y-\hat{F}(x), -\hat{g}(x))\mathrm{d}t+\int_{\widehat{FG}|_{w<w^*}}{\rm div}(y-\hat{F}(x), -\hat{g}(x))\mathrm{d}t>0.
    \label{WL2}
	\end{eqnarray}
    Since the sub-orbits  $\widehat{GH}$ and  $\widehat{HBC}$  intersect
    exactly once, we get $w_G>w_C>w_*$,
    where $w_C$, $w_G$ are abscissas of $C$, $G$, respectively.
    Then,
    \begin{eqnarray}
    \hskip -2cm
    &&\int_{\widehat{HBC}|_{w>w^*}}{\rm div}(y-\hat{F}(x), -\hat{g}(x))\mathrm{d}t+\int_{\widehat{GH}|_{w>w^*}}{\rm div}(y-\hat{F}(x), -\hat{g}(x))\mathrm{d}t
    \nonumber
    \\&&
    =\int_{w^*}^{w_C}
	\frac{1}{w-y_1(w)}\mathrm{d}w-\int_{w^*}^{w_G}
	\frac{1}{w-y_2(w)}\mathrm{d}w
    \nonumber
    \\&&
    =\int_{w^*}^{w_G}
	\frac{1}{\bar{w}-\bar{y}_1(\bar{w})}\mathrm{d}\bar{w}-\int_{w^*}^{w_G}
	\frac{1}{w-y_2(w)}\mathrm{d}w
	\nonumber\\&&=
	\int_{w^*}^{w_G}
	\frac{\bar{y}_1({w})-y_2(w)}{(w-\bar y_1(w))(w-y_2(w))}\mathrm{d}w,\label{WR}
	\end{eqnarray}
    where
    $\bar{y}_1(\bar{w})$  and $y_2(w)$ satisfy
    \begin{eqnarray}
	\frac{\mathrm{d}\bar{y}_1}{\mathrm{d}\bar{w}}=\frac{\mu\lambda_1((\bar{w}-w_*)/\mu+w_*)}{-\bar{y}_1+\bar{w}},~~~~~~~\bar{y}_1(w_G)=w_G
	\label{wy-bar}
	\end{eqnarray}
    and
   \begin{eqnarray}
	\frac{\mathrm{d}y_2}{\mathrm{d}w}=\frac{\lambda_2(w)}{-y_2+w},~~~~~~~{y_2}(w_G)=w_G
	\label{wy-cop2}
	\end{eqnarray}
   respectively,  $\bar{w}=\mu(w-w^*)+w^*$, $\bar{y}_1=\mu(y_1-w^*)+w^*$
   and $\mu=(w_G-w_*)/(w_C-w_*)>1$.
   Rewrite \eqref{wy-bar} and \eqref{wy-cop2} as
   \begin{eqnarray}
	\frac{\mathrm{d}{y}}{\mathrm{d}{w}}=\frac{\mu\lambda_1(({w}-w_*)/\mu+w_*)}{-{y}+{w}}= \frac{\lambda_1(({w}-w_*)/\mu+w_*)}{({w}-w_*)/\mu+w_*}\cdot \frac{{w}+(\mu-1) w_*}{-{y}+{w}}
   \label{wy-c1}
	\end{eqnarray}
   and
   \begin{eqnarray}
	\frac{\mathrm{d}y}{\mathrm{d}w}=\frac{\lambda_2(w)}{-y+w}=\frac{\lambda_2(w)}{w}\cdot\frac{w}{-y+w}.
    \label{wy-c2}	
   \end{eqnarray}
   Since $\mu>1$ and $y>w$ along $\widehat{GH}$ and $\widehat{HBC}$,
   \[
   \frac{{w}+(\mu-1) w_*}{-{y}+{w}}<\frac{w}{-y+w}.
   \]
   Notice that $\lambda_1(w)>\lambda_2(w)$ for $w>w^*$.
   By condition {\bf (v)},
   \[
    \frac{\lambda_1(({w}-w_*)/\mu+w_*)}{({w}-w_*)/\mu+w_*}> \frac{\lambda_2(({w}-w_*)/\mu+w_*)}{({w}-w_*)/\mu+w_*}>\frac{\lambda_2(w)}{w}.
   \]
   From \eqref{WR}, \eqref{wy-c1} and \eqref{wy-c2}, we get that $\bar{y}_1(\bar{w})>y_2(\bar{w})$
   and
   \begin{eqnarray}
    \int_{\widehat{HBC}|_{w>w^*}}{\rm div}(y-\hat{F}(x), -\hat{g}(x))\mathrm{d}t+\int_{\widehat{GH}|_{w>w^*}}{\rm div}(y-\hat{F}(x), -\hat{g}(x))\mathrm{d}t
    >0.
    \label{WR1}
	\end{eqnarray}
    Similarly,
    \begin{eqnarray}
    \int_{\widehat{CDF}|_{w>w^*}}{\rm div}(y-\hat{F}(x), -\hat{g}(x))\mathrm{d}t+\int_{\widehat{FG}|_{w>w^*}}{\rm div}(y-\hat{F}(x), -\hat{g}(x))\mathrm{d}t>0.
    \label{WR2}
	\end{eqnarray}
    From \eqref{WL1}, \eqref{WL2}, \eqref{WR1} and \eqref{WR2}
    \begin{eqnarray*}
    \int_{\widehat{CDF}}{\rm div}(y-\hat{F}(x), -\hat{g}(x))\mathrm{d}t+\int_{\widehat{FG}}{\rm div}(y-\hat{F}(x), -\hat{g}(x))\mathrm{d}t>0.
	\end{eqnarray*}
    Therefore, $\Gamma$ is unstable and hyperbolic.
	Furthermore, the two limit cycles are both unstable if system \eqref{hat-Fg} exhibits two limit cycles.
	By the Poincar\'e-Bendixson Theorem, there is a stable limit cycle which lies in the annulus of the two unstable limit cycles.
	This is a contradiction.
	The uniqueness of limit cycles is proven.
  \end{proof}

The nonexistence of closed orbits  for system \eqref{SD} can be obtained by Bendixson-Dulac Criterion
when $b\ge 0$.

\begin{lm}
	There are no closed orbits  for system \eqref{SD} when $b\ge 0$.
	\label{lm-bg0}
\end{lm}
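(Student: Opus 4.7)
The approach is a Bendixson--Dulac argument adapted to the discontinuity of $g$ at $x=0$. The divergence of the vector field $(y-F(x),-g(x))$ is
\[
\mathrm{div}=-F'(x)=-f(x)=-\delta(x^2+b),
\]
which, for $\delta>0$ and $b\ge 0$, is nonpositive on $\mathbb{R}^2$ and vanishes at most on the set $\{x=0\}$, which has measure zero.

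Before integrating, I would observe that the switching line $\{x=0\}$ is purely a crossing set: since $F$ is continuous at $0$, the component $\dot{x}=y-F(x)$ is continuous across $x=0$, so no sliding segment can occur off the origin. Consequently every hypothetical closed orbit $\Gamma$ of \eqref{SD} either lies entirely in one of the half-planes $\{x>0\}$, $\{x<0\}$, or crosses $\{x=0\}$ transversally at isolated points with $(x(t),y(t))$ continuous and only $\dot{y}$ suffering a jump.

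In the non-crossing case, $g$ is smooth on the interior $D$ of $\Gamma$, so Green's formula yields
\[
0=\oint_\Gamma (y-F(x))\,dy+g(x)\,dx=\iint_D -f(x)\,dx\,dy,
\]
which contradicts $-f\le 0$ with strict inequality almost everywhere. In the crossing case I would split $D=D^+\cup D^-$ along the $y$-axis and apply Green's formula on each piece separately. The contributions from the arcs of $\Gamma$ still vanish by the orbit identity $\dot x\,dy-\dot y\,dx=0$, while on the common boundary segment on the $y$-axis the 1-form reduces, using $F(0)=0$, to $y\,dy$ from either side. The induced orientations on $\partial D^+$ and $\partial D^-$ traverse this segment in opposite directions, so the two contributions cancel and one recovers $\iint_D -f(x)\,dx\,dy=0$, again a contradiction.

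The main obstacle is purely the orientation bookkeeping on the switching line; this is what forces the use of $F(0)=0$. Once this cancellation is written out carefully, the argument closes cleanly, and the ruling out of sliding explained above ensures that no other orbit geometry needs separate treatment.
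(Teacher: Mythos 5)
Your proposal is correct and follows essentially the same route as the paper, which simply computes $\mathrm{div}=-\delta(x^2+b)\le 0$ for $b\ge 0$ and invokes the Bendixson--Dulac criterion. The extra care you take (ruling out sliding on $x=0$ and splitting the Green's formula argument across the switching line, where the boundary terms cancel because $y-F(x)$ is continuous) merely spells out the justification the paper leaves implicit, and it is sound.
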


\begin{proof}
  The divergence of system (\ref{SD}) is given by
	\[
	{\rm div}(y-F(x), -g(x))=-f(x)=-\delta( x^2+b )\le0
	\]
  when $b\ge0$.
  By Bendixson-Dulac Criterion, system (\ref{SD}) exhibits no closed orbits.
\end{proof}

It is similar to the proof of the nonexistence of closed orbits for \eqref{SD} with $a=1$ and $-4\le b<0$ in \cite{CTW}.
We can use \cite[Lemma 3.1]{CTW} to get that
system \eqref{SD}  exhibits no closed orbits  when $-(a+1)^2\le b<0$.

\begin{lm}
	There are no closed orbits  for system \eqref{SD} when $-(a+1)^2\le b<0$.
	\label{lm-b0H}
\end{lm}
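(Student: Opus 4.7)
The plan is to reduce Lemma \ref{lm-b0H} to a direct application of the nonexistence criterion \cite[Lemma 3.1]{CTW} established in the companion paper for the analogous case $a = 1$, $-4 \le b < 0$, since the hypotheses of that lemma are parameter-uniform in $a > 1$. The boundary value $b = 0$ is already covered by Lemma \ref{lm-bg0} above, so it suffices to handle the strict range $-(a+1)^2 \le b < 0$. In this range, $\sqrt{-b} \le a+1$, so the unique equilibrium $E_r$ at $x = a+1$ lies on or to the right of the critical line $x = \sqrt{-b}$ where the divergence ${\rm div}(y - F(x), -g(x)) = -\delta(x^2 + b)$ vanishes.

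First I would verify that the structural hypotheses of \cite[Lemma 3.1]{CTW} translate without change to the present setting. These concern the shape of the cubic Li\'enard function $F$ with critical points $\pm\sqrt{-b}$ and zeros $0, \pm\sqrt{-3b}$; the sign pattern of the damping $f(x) = \delta(x^2 + b)$, negative exactly on $|x| < \sqrt{-b}$; the discontinuity of $g$ at $x = 0$ with jump direction $\lim_{x\to 0^+} g(x) - \lim_{x\to 0^-} g(x) = -2 < 0$; and the location of the unique equilibrium $E_r$ at $x = a+1 \ge \sqrt{-b}$, which is a sink by Lemma \ref{lm-fi-e}. Each of these depends only on $\delta > 0$, the sign of $b$, and the single inequality $(a+1)^2 \ge -b$, all of which hold here.

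Then the conclusion follows: any putative closed orbit $\Gamma$ must encircle $E_r$ and, by the Bendixson--Dulac criterion applied in the simply connected strip $\{x > \sqrt{-b}\}$ where the divergence is strictly negative, cannot lie entirely in that strip, so $\Gamma$ must cross $x = \sqrt{-b}$; similarly, $\Gamma$ must cross the discontinuity line $x = 0$. The proof of \cite[Lemma 3.1]{CTW} then compares sub-arcs of $\Gamma$ in Filippov $(w,y)$-coordinates with $w = F(x)$, in the same spirit as the Green's-theorem/orientation argument carried out in the proof of Lemma \ref{uni-lc-F} above, and derives a sign contradiction for the total integral of the divergence along $\Gamma$.

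The main obstacle is the endpoint $b = -(a+1)^2$, at which $\sqrt{-b}$ coincides with the abscissa of $E_r$ and $E_r$ degenerates to a stable weak focus of order one (as computed in Lemma \ref{lm-fi-e}). At this value some of the strict inequalities driving the bulk comparison become equalities along the vertical line through $E_r$, so the Dulac-type estimate alone loses its margin. I would close this gap by combining the bulk argument with the local information from Proposition \ref{hopfbif}, which rules out small-amplitude limit cycles as $b \uparrow -(a+1)^2$, thereby extending nonexistence to the full closed range $-(a+1)^2 \le b < 0$.
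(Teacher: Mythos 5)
You have identified the right tool---the nonexistence criterion \cite[Lemma 3.1]{CTW}---but your account of how it applies leaves the actual content of the proof missing. That criterion compares the two branches of the Li\'enard function on either side of the \emph{equilibrium} (the zero of $\hat g$) at points of equal potential energy $z(x)=\int_0^x\hat g(s)\,\mathrm{d}s$; its hypotheses are not the structural facts you list (zeros of $F$ at $0,\pm\sqrt{-3b}$, sign pattern of $f$, jump of $g$ at $x=0$, position of $E_r$). In the original coordinates the basic requirement $x\,g(x)\ge 0$ already fails, since $g(x)=x-{\rm sgn}(x)-a<0$ on $(0,a+1)$, so the hypotheses do not ``translate without change.'' The paper first translates $E_r$ to the origin via $(x,y)\mapsto(x+(a+1),y+F(a+1))$, so that $\bar g(x)=x+1-{\rm sgn}(x+a+1)$ has its zero at $x=0$ and its jump at $x=-(a+1)$, and then verifies the two comparison conditions by explicit computation: from $z(x_1)=z(x_2)$ with $-(a+1)\le x_2<0<x_1$ one gets $x_2=-x_1$ and
$\bar F(x_1)-\bar F(x_2)=\tfrac{2\delta x_1}{3}\left(x_1^2+3(a+1)^2+3b\right)>0$,
while for energies beyond the jump one needs
$\bar F(a+1)-\bar F(-\sqrt{-b}-a-1)=\tfrac{2\delta}{3}\left(2(a+1)+\sqrt{-b}\right)^2\left(a+1-\sqrt{-b}\right)\ge 0$.
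These inequalities are precisely where the hypothesis $b\ge-(a+1)^2$ enters; your proposal replaces them with an assertion of ``parameter-uniformity,'' so the step that actually proves the lemma is absent. (Your description of the mechanism of \cite[Lemma 3.1]{CTW} as a divergence comparison in $w=F(x)$ coordinates also conflates it with Lemma \ref{uni-lc-F}; the criterion used here is the energy-based one.)

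Two further points are genuine errors. First, your claim that any closed orbit must cross $x=0$ is unjustified: a closed orbit surrounding $E_r$ can remain in $x>0$ while dipping into the region $0<x<\sqrt{-b}$ where the divergence is positive---this is exactly the small-limit-cycle scenario of Lemma \ref{one-sl}, which does occur for $b$ slightly below $-(a+1)^2$---and excluding such orbits for $b\ge-(a+1)^2$ is part of what must be proven; the energy-comparison criterion handles crossing and non-crossing orbits simultaneously. Second, your patch at the endpoint $b=-(a+1)^2$ does not close the gap: Proposition \ref{hopfbif} only excludes limit cycles in a small neighborhood of $E_r$, so it cannot rule out closed orbits of larger amplitude at that parameter value. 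No patch is needed in the paper's argument, because at $b=-(a+1)^2$ the first displayed inequality is still strict and the second becomes an equality, which is all that condition (ii) of \cite[Lemma 3.1]{CTW} requires.
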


\begin{proof}
  By the transformation
	\begin{eqnarray*}
	(x,y)\rightarrow(x+(a+1), y+F(a+1)),
	\end{eqnarray*}
  system \eqref{SD} can be reduced to
	\begin{eqnarray}
	\dot{x}=y-\bar{F}(x),~~~~~~ \dot{y}=-\bar{g}(x),
    \label{F-g-x-z}
	\end{eqnarray}
  where
  $\bar F(x)=\delta\left(x^3/3+(a+1)x^2+((a+1)^2+b)x\right)$
  and   $\bar g(x)=x+1-{\rm sgn}(x+a+1)$.
  The nonexistence of limit cycles  for system \eqref{SD} can be obtained by verifying the
  conditions in \cite[Lemma 3.1]{CTW} with
  $(\alpha,\beta)=(-\infty,+\infty)$.
  Clearly, $\bar g(x)$ has a unique jump discontinuity at $x=-(a+1)$ and a  zero $x=0$.
  One can check that $x\bar g(x) \ge 0$ for all $x\in (-\infty,+\infty)$
  and
  \begin{eqnarray*}
  z(x)=\int_{0}^{x}\bar g(s)\mathrm{d}s=
  \left\{
  \begin{array}{ll}
  {x^2}/{2},~~~~~ &{\rm if} ~~  x\ge-(a+1),\\
  {x^2}/{2}+2x+2(a+1),~~~~~&{\rm if} ~~ x<-(a+1).
  \end{array}
  \right.
  \end{eqnarray*}
  The inverse functions of $z(x)$ are
  $
  x_1(z)= \sqrt{2z}
  $
  and
  \begin{eqnarray*}
  x_2(z)=
  \left\{\begin{array}{ll}
  -\sqrt{2z}, ~~~~ &{\rm if} ~~ 0\le z\le(a+1)^2/2,
  \\
  -2-\sqrt{2z-4a}, ~~~~ &{\rm if} ~~ z>(a+1)^2/2.
  \end{array}
  \right.
  \end{eqnarray*}
  We claim that
  \[
  \bar F(x_1)=\bar F(x_2), ~~~~~~  z(x_1)=z(x_2)
  \]
  have no solutions satisfying $-(a+1)\le x_2<0<x_1$. In fact, from
  $
  z(x_1)-z(x_2)=(x_1-x_2)(x_1+x_2)/2=0
  $
  we get
  $x_2=-x_1$.
  Substituting it into $\bar F(x_1)-\bar F(x_2)$, one can calculate
  \begin{eqnarray}
  \bar F(x_1)-\bar F(x_2)
  =\frac{2\delta x_1}{3}\left(x_1^2+3(a+1)^2+3b\right)>0,
  \label{Fx12}
  \end{eqnarray}
  meaning that $\bar F(x_1(z))\ne\bar F(x_2(z))$ for all $0<z\le z(-(a+1))=(a+1)^2/2$.
  Then, condition {(i)} of \cite[Lemma 3.1]{CTW} holds.

  On the one hand, it follows from \eqref{Fx12} that
  $\bar F(x_1(z))>\bar F(x_2(z))$ for all $0<z\le (a+1)^2/2$.
  On the other hand, one can check that $x_1((a+1)^2/2)=a+1$ and $x_2((a+1)^2/2)=-(a+1)$.
  Since $y=\bar F(x)$ is increasing
  on $x>(a+1)$ and $x_1(z)$ is increasing on $z>0$, we get
  $\bar F(x_1(z))>\bar F(a+1)$ when $z>(a+1)^2/2$.
  Notice that $y=\bar F(x)$ has an absolute maximum value at $x=-\sqrt{-b}-a-1$
  on the interval $(-\infty, 0]$ and  $x_2(z)$ is decreasing on $z>0$.
  Then,
  $\bar F(x_2(z))<\bar F(-\sqrt{-b}-a-1)$ when $z>(a+1)^2/2$.
  By direct calculation,
  \[
  \bar F(a+1)-\bar F(-\sqrt{-b}-a-1)=\frac{2\delta}{3}\left(2(a+1)+\sqrt{-b}\right)^2\left(a+1-\sqrt{-b}\right)\ge0,
  \]
  yielding that $\bar F(x_1(z))\ge\bar F(x_2(z))$ for all $z> (a+1)^2/2$.
  Thus, $\bar F(x_1(z))\ge\bar F(x_2(z))$  for all $z> 0$,
  indicating that condition {(ii)} of \cite[Lemma 3.1]{CTW} holds.
  Therefore, system \eqref{F-g-x-z} exhibits no limit cycles when $b\ge -(a+1)^2$ by \cite[Lemma 3.1]{CTW}.
  Consequently, there are no limit cycles  for system \eqref{SD} when
  $-(a+1)^2\le b<0$.
  \end{proof}

The closed orbit bifurcations may occur when $b<-(a+1)^2$.
Notice that there is a  switching line $x=0$ for system  \eqref{SD}.
For simplicity,
let {\it crossing limit cycles} be the limit cycles crossing the line $x=0$,
{\it grazing limit cycles} be the ones touching the line $x=0$
and {\it small limit cycles} be the ones that only lie on the right hand side of $x=0$.

The following lemma tells us an upper bound on the number of small limit cycles  when $b< -(a+1)^2$.

\begin{lm}
	 For system  \eqref{SD},
   \begin{description}
		\item{\bf (a)} there is at most one small limit cycle  when  $-4(a+1)^2/3 <b<-(a+1)^2$,
   	and the small limit cycle is stable and hyperbolic if it exists;
        \item{\bf (b)} there are no small limit cycles  when $b \leq-4(a+1)^2/3$.
	\end{description}
	\label{one-sl}
\end{lm}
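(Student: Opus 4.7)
My plan is to translate coordinates so that $E_r$ sits at the origin via $(u,v) = (x-(a+1),\,y-F(a+1))$; since $\mathrm{sgn}(u+a+1)=1$ on $u>-(a+1)$, in this half-plane system \eqref{SD} becomes the smooth Li\'enard system
\[
\dot u = v - \tilde F(u), \qquad \dot v = -u,
\]
with $\tilde F(u) = \delta\bigl(u^3/3 + (a+1)u^2 + ((a+1)^2+b)u\bigr)$ and $\tilde f(u) := \tilde F'(u) = \delta\bigl((u+a+1)^2+b\bigr)$. Any small limit cycle of \eqref{SD} lies entirely in $\{u>-(a+1)\}$, so it coincides with a periodic orbit of this Li\'enard system, equivalently of its natural smooth extension to all of $\mathbb{R}^2$ obtained by taking $\tilde g(u)\equiv u$.

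For part (a), I would apply Zhang's uniqueness theorem to the extended smooth system. A direct calculation yields
\[
\frac{d}{du}\!\left(\frac{\tilde f(u)}{u}\right) = \delta\cdot\frac{u^2 - \bigl((a+1)^2+b\bigr)}{u^2},
\]
which is strictly positive for every $u\ne 0$ whenever $b\le -(a+1)^2$. Hence $\tilde f/\tilde g$ is strictly increasing on each of $(-\infty,0)$ and $(0,\infty)$, and Zhang's theorem yields that the extended system admits at most one limit cycle, stable and hyperbolic if it exists. Since every small limit cycle of \eqref{SD} is exactly such a cycle confined to $\{u>-(a+1)\}$, the claimed uniqueness, stability and hyperbolicity follow.

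For part (b), I would argue by contradiction: suppose a small limit cycle $\Gamma\subset\{u>-(a+1)\}$ exists for some $b\le -4(a+1)^2/3$. Rewriting the translated system as $\ddot u + \tilde f(u)\dot u + u = 0$ and integrating $\frac{d}{dt}\bigl(\tfrac12\dot u^2 + \tfrac12 u^2\bigr) = -\tilde f(u)\dot u^2$ around $\Gamma$ gives $\oint \tilde f(u)\dot u^2\,dt = 0$, equivalently
\[
-b = \frac{\oint (u+a+1)^2\,\dot u^2\,dt}{\oint \dot u^2\,dt},
\]
so $-b$ is a nonnegative weighted average of $(u+a+1)^2$ along $\Gamma$. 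Combining this with the companion identity $\oint u\tilde F(u)\,dt = 0$ (from $dE'/dt = -u\tilde F$, where $E' = (u^2+v^2)/2$), with $\oint u^2\,dt = \oint \dot u^2\,dt$ (obtained by multiplying $\ddot u + \tilde f\dot u + u = 0$ by $u$ and integrating by parts on $\Gamma$), and with the constraint $u+a+1>0$ on $\Gamma$, I would derive the sharp bound $-b < 4(a+1)^2/3$, contradicting the hypothesis. The algebraic observation motivating this threshold is that at $b=-4(a+1)^2/3$ one has $\tilde F(a+1) = \tilde F(-(a+1)) = \delta(a+1)^3$ and the uniform average of $(u+a+1)^2$ over $u\in[-(a+1),a+1]$ equals exactly $4(a+1)^2/3$. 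The main obstacle is upgrading the weighted-average identity to a sharp uniform bound independent of $\delta>0$; a promising route exploits the rotated-vector-field property $\det\bigl(\partial_b(\dot u,\dot v),(\dot u,\dot v)\bigr) = -\delta u^2\le 0$, so that by Duff's theorem the unique cycle of the extended system deforms monotonically in $b$, and it suffices to verify that its leftmost point reaches $u=-(a+1)$ no later than $b=-4(a+1)^2/3$, for every $\delta>0$.
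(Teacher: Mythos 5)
Your part (a) is correct but follows a genuinely different route from the paper. You apply Zhang Zhifen's uniqueness theorem to the smooth cubic Li\'enard extension of the translated system, using that
$\frac{\mathrm{d}}{\mathrm{d}u}\bigl(\tilde f(u)/u\bigr)=\delta\bigl(u^2-((a+1)^2+b)\bigr)/u^2>0$ for $u\neq0$ once $b\le-(a+1)^2$; since a small limit cycle of \eqref{SD} lies in the open half-plane $x>0$ where \eqref{SD} coincides with that extension, the uniqueness, stability and (given the strict monotonicity) hyperbolicity transfer. The paper instead reflects and translates and verifies the five hypotheses of \cite[Theorem 2.1]{DR} on the strip, which costs the quartic computation for condition (iv) and the cubic $\kappa$ for condition (v); your monotonicity criterion is shorter and gives the same conclusion on the stated range, provided you quote a version of Zhang's theorem whose conclusion covers hyperbolicity (the strict monotonicity of $\tilde f/\tilde g$ is what makes that safe).

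Part (b), however, has a genuine gap. The identity $\oint\bigl((u+a+1)^2+b\bigr)\dot u^2\,\mathrm{d}t=0$ only says that $-b$ is a $\dot u^2$-weighted average of $(u+a+1)^2$ along the cycle; a small limit cycle is confined to $u>-(a+1)$ on the left but is in no way confined to $u<a+1$ on the right, so nothing prevents that weighted average from exceeding $4(a+1)^2/3$. Your further identities $\oint u\tilde F(u)\,\mathrm{d}t=0$ and $\oint u^2\,\mathrm{d}t=\oint\dot u^2\,\mathrm{d}t$ are correct but involve the unweighted measure $\mathrm{d}t$, and you do not show how they combine with the first to produce the sharp bound $-b<4(a+1)^2/3$; you concede this yourself. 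The fallback through rotated vector fields is circular: ``verify that the leftmost point of the cycle reaches $u=-(a+1)$ no later than $b=-4(a+1)^2/3$ for every $\delta>0$'' is precisely statement (b) in disguise, not a tool for proving it.

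The constant $4/3$ in fact comes from a pointwise comparison at equal energy levels, not from an averaged identity, and this is how the paper closes the argument. With $g(u)=u$ one has $G(u)=u^2/2$, so equal-energy pairs inside the strip $u>-(a+1)$ are $u_2=-u_1$ with $0<u_1<a+1$, and
\begin{equation*}
\tilde F(u_1)-\tilde F(-u_1)=2\delta u_1\Bigl(\tfrac{u_1^2}{3}+(a+1)^2+b\Bigr)\le 0
\quad\text{for all }0<u_1<a+1
\end{equation*}
holds exactly when $b\le-\tfrac{4}{3}(a+1)^2$; the paper then invokes the nonexistence criterion \cite[Proposition 2.1]{CC15} (an energy-comparison criterion of classical type) to exclude any closed orbit contained in the strip. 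Your observation that $\tilde F(a+1)=\tilde F(-(a+1))$ at $b=-\tfrac{4}{3}(a+1)^2$ is exactly the borderline case of this pointwise inequality, so you identified the right mechanism, but the proposal never turns it into the comparison argument that actually yields nonexistence.
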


\begin{proof}
  By the transformation
	\begin{eqnarray*}
	(x,y)\rightarrow(-x+(a+1), y+F(a+1)),
	\end{eqnarray*}
  the equilibrium $E_r$ is  translated to the origin,
  the orbits are reflected about the $y$-axis
  and
  system \eqref{SD} can be rewritten as
	\begin{eqnarray}
	\dot{x}=\tilde{F}(x)-y,~~~~~~ \dot{y}=\tilde{g}(x),
    \label{F-g-x}
	\end{eqnarray}
  where $\tilde{F}(x)=-\delta({x^3}/{3}-(a+1)x^2+((a+1)^2+b) x)$
  and $\tilde{g}(x)=x+{\rm sgn}(a+1-x)-1$.

  Firstly, we prove that system \eqref{F-g-x}
  exhibits  at most one limit cycle in the strip $x\in(-\infty, a+1)$ and it  is stable and hyperbolic if it exists.
  Clearly, $\tilde{g}(x)=x$ for  $x\in(-\infty, a+1)$.
  Notice that both $\tilde{F}(x)$ and $\tilde{g}(x)$ are analytic on $(-\infty, a+1)$.
  We claim that system \eqref{F-g-x}
  satisfies all conditions in \cite[Theorem 2.1]{DR}.
  Note that $\tilde{f}(x)=\tilde{F}'(x)=-\delta\left(x^2-2(a+1)x+(a+1)^2+b\right)$.
  One can check that $\tilde{f}(x)$ has a unique zero $x_0=a-\sqrt{-b}+1$ on the interval $(-\infty,a+1)$
  and $\tilde f(x)<0$ (resp. $>0$) as $x<x_0$  (resp. $x_0<x<a+1$).
  Then, condition (i) of \cite[Theorem 2.1]{DR} holds.
  By calculation,  $\tilde{F}(0)=\tilde{F}(\xi_0)=0$, where
  \begin{eqnarray*}
  \xi_0=\frac{3(a+1)}{2}-\frac{\sqrt{-3(a+1)^2-12b}}{2}
  <a+1-\left(-\frac{\sqrt{-b}}{2}+\frac{\sqrt{3b-12b}}{2}\right)=x_0
  \end{eqnarray*}
  because $b< -(a+1)^2$.
  Then, condition (ii) of \cite[Theorem 2.1]{DR} holds.
  Clearly, $x\tilde{g}(x)=x^2>0$ for $x\in(-\infty,0)\cup(0,a+1)$.
  Then, condition (iii) of \cite[Theorem 2.1]{DR} holds.
  From  $\tilde g(x_1)/\tilde f(x_1)=\tilde g(x_2)/\tilde f(x_2)$, we get
  $x_2=\left((a+1)^2+b\right)/x_1$. Substituting it to $\tilde F(x_1)-\tilde F(x_2)$, one can calculate
  $\tilde F(x_1)-\tilde F(x_2)=-\delta(x_1-x_2)s(x_1)/(3x_1^2)$,
  where
  \[
  s(x):=x^4-3(a+1)x^3+4\left((a+1)^2+b\right)x^2-3(a+1)\left((a+1)^2+b\right)x+\left((a+1)^2+b\right)^2.
  \]
  Since there are at most four zeros for $s(x)$ and
  $s(x_0)=s(a-\sqrt{-b}+1)=2b(a-\sqrt{-b}+1)^2<0$,
  equations
	\begin{eqnarray}
	\tilde F(x_1)=\tilde F(x_2),\hskip 0.5cm \frac{\tilde g(x_1)}{\tilde f(x_1)}=\frac{\tilde g(x_2)}{\tilde f(x_2)}
    \label{Fx1x2}
	\end{eqnarray}
  with $x_1<x_0<0<x_2$ have at most one solution.
  Then, condition (iv) of \cite[Theorem 2.1]{DR} holds.
  One can compute that ${\mathrm{d}\left( {\tilde F(x)\tilde f(x)}/{\tilde g(x)}\right)}/{\mathrm{d}x}={\delta^2}\kappa(x)/3$,
  where
	\[
    \kappa(x):=4x^3-15(a+1)x^2+\left(20(a+1)^2+8b\right)x-9(a+1)\left((a+1)^2+b\right).
    \]
  It is easy to compute that
	\[
	\kappa(\xi_0)=\frac{1}{2}\left((a+1)^2+4b\right)\left(-3(a+1)+\sqrt{-3(a+1)^2-12b}\right)<0
	\]
  because $b< -(a+1)^2$.
  Moreover, $\mathrm{d} \kappa(x)/\mathrm{d}x=0$ has two roots
  \[
  \eta_1=\frac{5(a+1)}{4}-\frac{\sqrt{-15(a+1)^2-96b}}{12},~~~~~~~~
  \eta_2=\frac{5(a+1)}{4}+\frac{\sqrt{-15(a+1)^2-96b}}{12}
  \]
  and $\xi_0<\eta_1<\eta_2$.
  Then, $\kappa(x)<0$ for $x<\xi_0$,
  implying that ${\tilde F(x)\tilde f(x)}/{\tilde g(x)}$ is decreasing on $(-\infty,\xi_0)$.
  Thus, condition (v) of \cite[Theorem 2.1]{DR} holds.
  Therefore,  system \eqref{F-g-x} exhibits at most one limit cycle in the region $x\in(-\infty,a+1)$ by \cite[Theorem 2.1]{DR}.
  Moreover, the limit cycle is stable and hyperbolic if it exists.

  Secondly, we prove that  system \eqref{F-g-x} exhibits no limit cycles in the region $x\in(-\infty, a+1)$  when $b \leq-4(a+1)^2/3$.
  Let $G(x):=\int_{0}^{x} \tilde{g}(x) \mathrm{d}x=x^2/2$.
  From $G(x_1)=G(x_2)$ we can get $x_2=-x_1$.
  Substituting it into $\tilde F(x_1)-\tilde F(x_2)$,
  we have
  \[
  \tilde F(x_1)-\tilde F(x_2)=-2x_1\delta(x_1^2/3+(a+1)^2+b)\ge0
  \]
  for $x_2<0<x_1<a+1$ as $b \leq -4(a+1)^2/3$.
  By \cite[Proposition 2.1]{CC15},
  system \eqref{F-g-x} exhibits no limit cycles in the region  $x\in(-\infty,a+1)$ when $b \leq-4(a+1)^2/3$.

  In summary, system \eqref{F-g-x} exhibits at most one limit cycle in the region $x\in(-\infty,a+1)$ when  $-4(a+1)^2/3 <b<-(a+1)^2$
  and no limit cycles in the region $x\in(-\infty,a+1)$ when $b \leq-4(a+1)^2/3$.
  Moreover, the limit cycle is stable and hyperbolic if it exists.
  Furthermore, there is at most one small limit cycle of system \eqref{SD} when  $-4(a+1)^2/3 <b<-(a+1)^2$, which is stable and hyperbolic if it exists,
  and no small limit cycles  when $b \leq-4(a+1)^2/3$.
\end{proof}

To investigate the number and stability of crossing limit cycles for system \eqref{SD}, we give a necessary condition for the existence of crossing limit cycles.
The following lemma can be regarded as the extension of \cite[Lemma 3.4]{CTW}, which considers system  \eqref{SD} with $a=1$. Although the proof of the following lemma is similar to the proof of \cite[Lemma 3.4]{CTW},
we restate it for completeness.

\begin{lm}
	If there is a crossing limit cycle for system \eqref{SD} when $b< -(a+1)^2$,
    it must surround $(\sqrt{-3b},0)$.
    \label{lm-c-p}
\end{lm}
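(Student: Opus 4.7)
The plan is to argue by contradiction. Suppose $\Gamma$ is a crossing limit cycle of \eqref{SD} for which $P^*:=(\sqrt{-3b},0)$ lies in the exterior of the region bounded by $\Gamma$. Since $b<-(a+1)^2$, I first note that $\sqrt{-3b}>\sqrt{3}(a+1)>a+1$, so $F(\sqrt{-3b})=\frac{\delta}{3}\sqrt{-3b}\bigl((\sqrt{-3b})^2+3b\bigr)=0$ and $g(\sqrt{-3b})=\sqrt{-3b}-1-a>0$. By Lemma \ref{lm-fi-e} the equilibrium $E_r$ is a source; combined with $\dot{x}|_{x=0}=y$, this forces $\Gamma$ to be traversed clockwise.

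The core of the proof is a topological argument using the three half-lines emanating from $P^*$:
\[
R_1=\{(x,0):x>\sqrt{-3b}\},\quad R_2=\{(\sqrt{-3b},y):y>0\},\quad R_3=\{(\sqrt{-3b},y):y<0\}.
\]
Along each $R_i$ the vector field has a uniform quadrant sign pattern---namely $(-,-)$ on $R_1$, $(+,-)$ on $R_2$, and $(-,-)$ on $R_3$---so $\Gamma$ meets each $R_i$ transversely with one fixed sense of crossing. For a clockwise-oriented $\Gamma$ the local interior at a crossing lies to the right of the velocity, i.e.\ in the direction $(\dot{y},-\dot{x})$; a case-by-case check shows that in each of the three configurations the local interior side of $\Gamma$ contains the portion of $R_i$ adjacent to $P^*$. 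Hence any transverse crossing of $\Gamma$ with some $R_i$ would force a further crossing strictly closer to $P^*$, and iterating produces infinitely many crossings on $R_i$---impossible for the compact simple closed curve $\Gamma$. Therefore $\Gamma\cap(R_1\cup R_2\cup R_3)=\emptyset$, and since $\Gamma$ is connected and meets both $\{x<0\}$ and $\{x>0\}$, it must lie in the component $\{x<\sqrt{-3b}\}$ of the complement.

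The main obstacle is the remaining sub-case that $\Gamma$ is strictly contained in $\{x<\sqrt{-3b}\}$. I would rule it out by tracking the orbit $\gamma^*$ through $P^*$: uniqueness of solutions gives $\gamma^*\cap\Gamma=\emptyset$, and since $P^*$ lies in the exterior of $\Gamma$, the entire orbit $\gamma^*$ remains exterior. Positive boundedness from Lemma \ref{lm-infi} then forces $\omega(\gamma^*)$ to be a compact invariant subset of the exterior of $\Gamma$; since the only finite equilibrium $E_r$ lies inside $\Gamma$, Poincar\'e--Bendixson together with the infinity structure from Lemma \ref{lm-infi} would identify $\omega(\gamma^*)$ as a periodic orbit $\Gamma'$ enclosing $E_r$ and hence enclosing $\Gamma$. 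Applying the ray argument to $\Gamma'$ and iterating would then produce an infinite nested family of crossing limit cycles, which contradicts the finiteness of limit cycles of the piecewise-analytic planar system on any bounded region. Matching this $\omega$-limit analysis carefully with the behavior at infinity is the delicate part of the proof.
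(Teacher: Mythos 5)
Your opening topological argument is fine as far as it goes: the sign patterns on the three rays are correct, closed orbits of \eqref{SD} are indeed clockwise, and the orientation check does show that $\Gamma$ can meet each ray at most once, so that either $\Gamma$ encloses $(\sqrt{-3b},0)$ or $\Gamma$ lies entirely in $\{x<\sqrt{-3b}\}$ (you should also dispose of the possibility that $\Gamma$ passes through the point itself, which your exterior/interior dichotomy does not cover). But this only settles the easy half of the lemma. The entire content of the statement is the remaining sub-case you relegate to the last paragraph: ruling out a crossing limit cycle confined to $\{x<\sqrt{-3b}\}$. Your argument there does not produce a contradiction. First, $\omega(\gamma^*)$ lies in the \emph{closure} of the exterior of $\Gamma$, so it may be $\Gamma$ itself; if $\Gamma$ is externally stable, the orbit through $(\sqrt{-3b},0)$ simply spirals onto $\Gamma$, and every step of your reasoning is satisfied with nothing absurd happening. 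Second, even when $\omega(\gamma^*)$ is a larger periodic orbit $\Gamma'$ enclosing $\Gamma$, applying your ray argument to $\Gamma'$ only tells you that $\Gamma'$ either encloses $(\sqrt{-3b},0)$ or lies in $\{x<\sqrt{-3b}\}$; both alternatives are perfectly consistent, and no ``iteration'' is available -- you have only the one orbit $\gamma^*$, and nothing in the construction forces the existence of any further nested cycles, let alone infinitely many (the finiteness of limit cycles you invoke is itself not established in the paper and is not needed once one sees no infinite family is generated). So the proof collapses exactly where the lemma has its teeth.

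The reason a soft Poincar\'e--Bendixson argument cannot close this gap is that the conclusion is quantitative: one must show a crossing cycle cannot stay to the left of the specific abscissa $\sqrt{-3b}$, the positive zero of $F$. The paper does this with the energy function $\mathcal{E}(x,y)=\int_0^x g(s)\,\mathrm{d}s+\tfrac12 y^2$, whose derivative along orbits is $-F(x)g(x)$: assuming the rightmost intersection of $\Gamma$ with $y=F(x)$ satisfies $x_D\le\sqrt{-3b}$, it proves $\int \mathrm{d}\mathcal{E}>0$ on the arcs near the $y$-axis via Green's formula in the $(w,y)=(F(x),y)$ coordinates, and then, splitting into the cases $-\sqrt{-3b}\le x_C<0$ and $x_C<-\sqrt{-3b}$, either gets $\oint_\Gamma \mathrm{d}\mathcal{E}>0$ (impossible for a closed orbit) or compares $\mathcal{E}$ at explicit points $P$, $H$, $Q$ using the monotonicity of $\mathcal{E}$ in $x$ for $x>a+1$. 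Some argument of this quantitative type (energy, divergence, or a comparison estimate that actually uses the value $\sqrt{-3b}$ beyond $F(\sqrt{-3b})=0$) is what your proposal is missing for the decisive sub-case.
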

\begin{proof}
    Assume that there is a crossing limit cycle $\Gamma$ for system \eqref{SD}.
    Then, $\Gamma$ intersects the $y$-axis at two points, denoted by $A$, $B$ from top to bottom
    and  intersects  $y=F(x)$ at two points, denoted by $C$, $D$ from left to right, as shown in Fig. \ref{fig-posin} (a).
    Let $x_C$ and $x_D$ be the abscissas of $C$ and $D$, respectively. Obviously, $x_C<0<x_D$.
    In order to prove that $\Gamma$ surrounds $(\sqrt{-3b},0)$,
    we assume $0<x_D\le\sqrt{-3b}$ and then derive a contradiction.

    Suppose that $0<x_D\le\sqrt{-3b}$.
    Passing through $A$ and $B$ respectively, the lines perpendicular to the $y$-axis cross $\Gamma$ at two points, denoted by $M$ and $N$.
    Then,
    \begin{eqnarray}
    F(x_M)<0~~~~{\rm and}~~~~  F(x_N)<0,
    \label{FxMN}
    \end{eqnarray}
    where  $x_M$ and $x_N$ are the abscissas of $M$ and $N$, respectively.
    Define
    \begin{eqnarray}
    {\mathcal E}(x,y)=\int_{0}^{x}g(s)\mathrm{d}s+\frac{1}{2}y^2=\frac{1}{2}x^2-({\rm sgn}(x)+a)x+\frac{1}{2}y^2.\label{Energy}
    \end{eqnarray}
    Then,
    \begin{eqnarray*}
    \frac{\mathrm{d}{\mathcal E}(x,y)}{\mathrm{d}t}\Big|_{\eqref{SD}}=-\delta\left(\frac{x^3}{3}+b x\right)(x-{\rm sgn}(x)-a)=-F(x)g(x).
    \end{eqnarray*}
    Let $w=F(x)$ for $x\ge0$. The inverse functions of $w=F(x)$ are
    $x=F_1(x)$, $x\ge\sqrt{-b}$ and $x=F_2(x)$, $0<x\le\sqrt{-b}$.
    System \eqref{SD} can be rewritten as
	\begin{eqnarray*}
	\frac{\mathrm{d}y}{\mathrm{d}w}=\frac{\lambda_i(w)}{-y+w}, \hskip 0.5cm {\rm where}\hskip 0.2cm\lambda_i(w)=\frac{g(F_i(w))}{f(F_i(w))},~~i=1,2.
	\end{eqnarray*}
	Thus, the sub-orbits $\widehat{AM}$ and $\widehat{NB}$ in the $xy$-plane will be changed into the ones in the $wy$-plane.
    Similar to \eqref{Fx1x2},
    \begin{eqnarray*}
	 F(x_1)= F(x_2),\hskip 0.5cm \frac{ g(x_1)}{ f(x_1)}=\frac{ g(x_2)}{ f(x_2)}
	\end{eqnarray*}
    with $0<x_1<a+1<\sqrt{-b}<x_2$ have at most one solution.
    It follows from \eqref{FxMN}
    that both sub-orbits $\widehat{AM}$ and $\widehat{NB}$ in the $wy$-plane do not cross itself,
    implying that $\widehat{AM}\cup\overline{MA}$ and $\widehat{NB}\cup\overline{BN}$ are both simple closed curves in the $wy$-plane.
    Notice that $\overline{MA}$ is a horizontal segment in both $xy$-plane and $wy$-plane.
    By Green's formula,
	\begin{eqnarray}
	\int_{\widehat{AM}}\mathrm{d}{\mathcal E}(x,y)=\int_{\widehat{AM}}F(x)\mathrm{d}y=\int_{\widehat{AM}\cup\overline{MA}}F(x)\mathrm{d}y=\int_{\widehat{AM}\cup\overline{MA}}w\mathrm{d}y=\iint_{S_{AM}}\mathrm{d}w\mathrm{d}y>0,
    \label{E-AM}
	\end{eqnarray}
    where  $S_{AM}$ is the region enclosed by $\widehat{AM}\cup\overline{MA}$ in the $wy$-plane.
    Similarly,
    \begin{eqnarray}
	\int_{\widehat{NB}}\mathrm{d}{\mathcal E}(x,y)>0. \label{E-NB}
	\end{eqnarray}

   \begin{figure}[!htb]
		\centering
		\subfloat[ $-\sqrt{-3b}\le x_C<0$]
		{\includegraphics[scale=0.2]{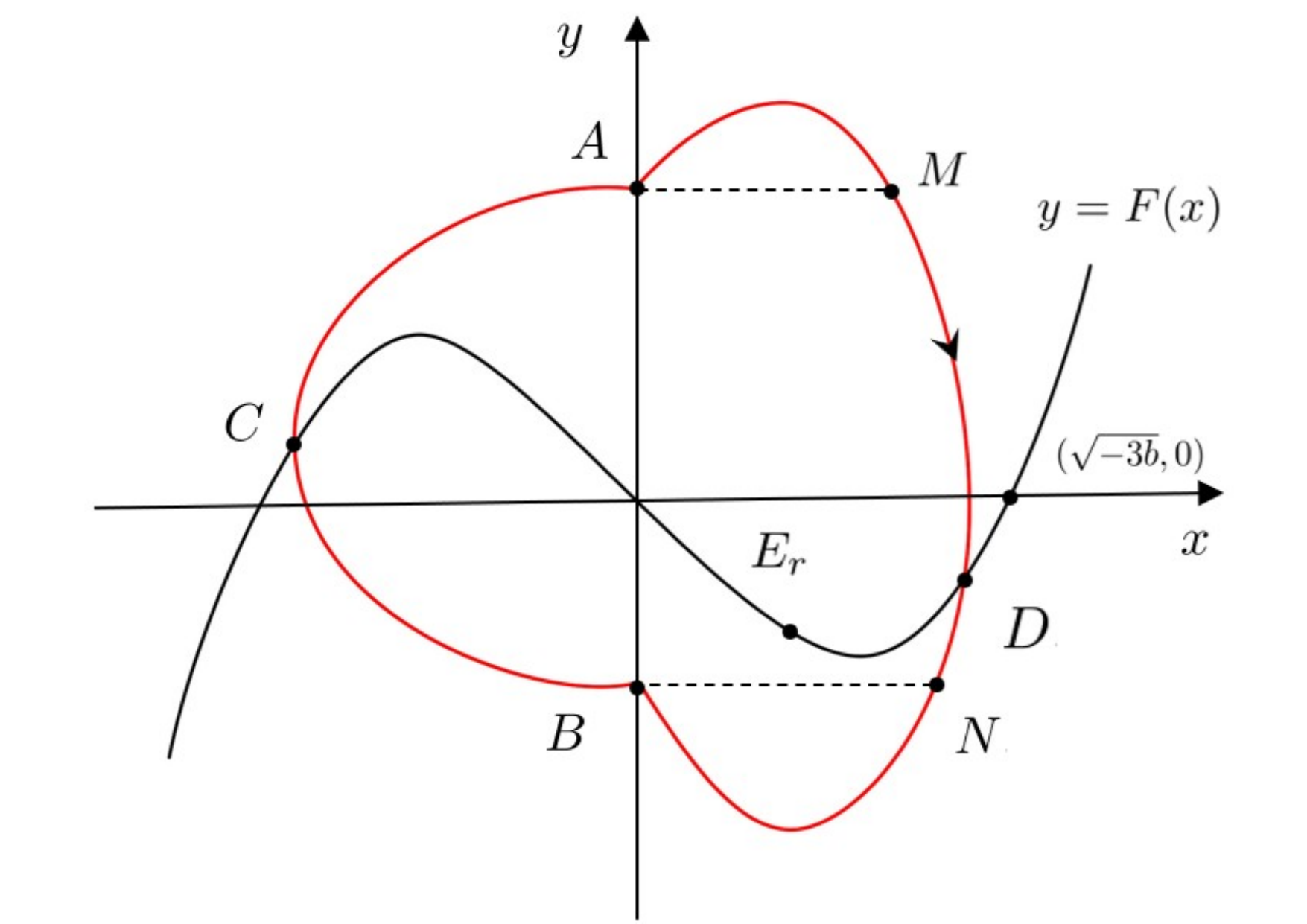}}\hspace{10pt}
        \hskip 1cm
		\subfloat[$x_C<-\sqrt{-3b}$]
		{\includegraphics[scale=0.2]{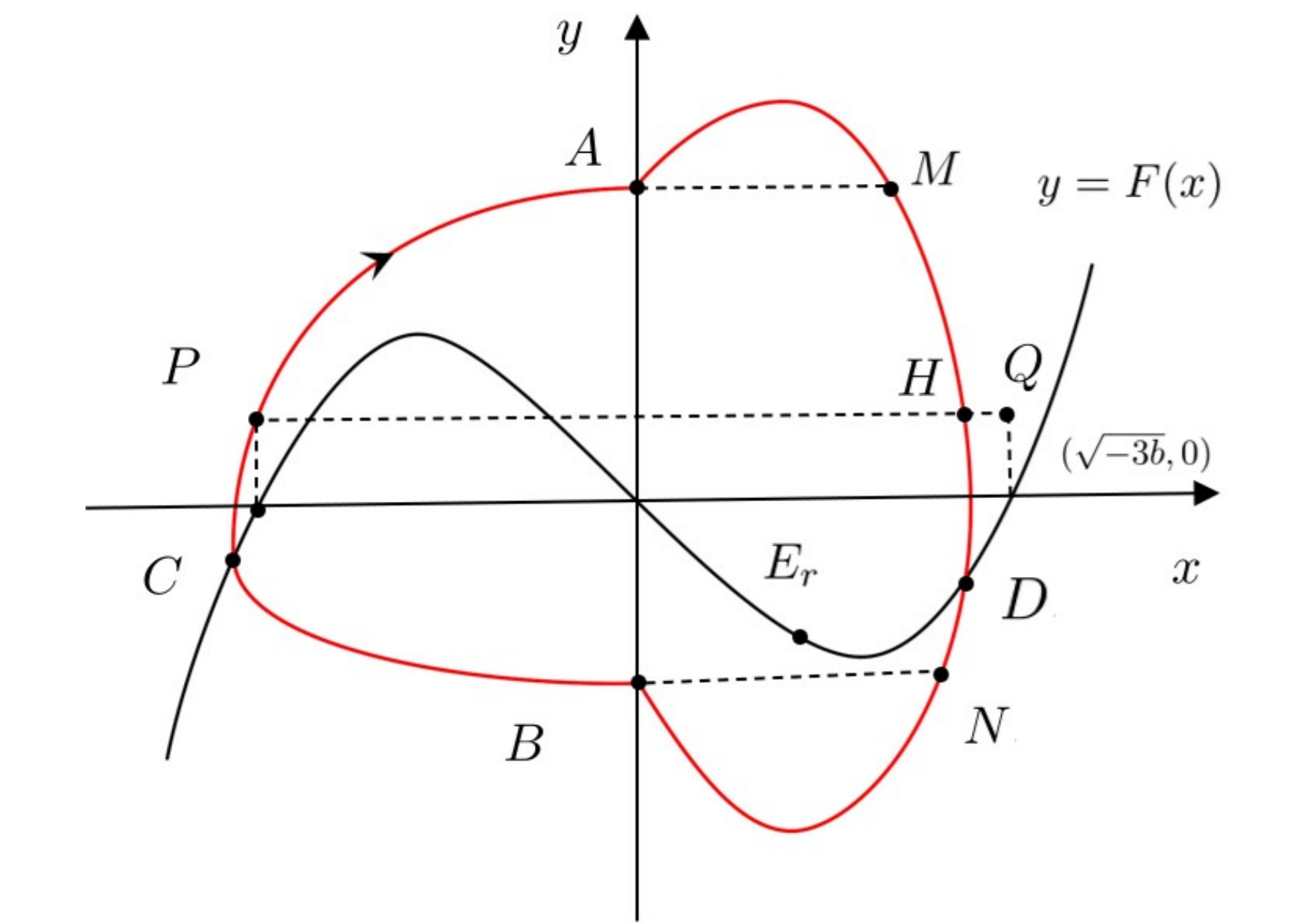}}
		\caption{{\footnotesize Limit cycles in hypothesis.}}
		\label{fig-posin}
	\end{figure}

    In what follows, we discuss two cases $-\sqrt{-3b}\le x_C<0$ and $x_C<-\sqrt{-3b}$.
    Under the assumption $0<x_D\le\sqrt{-3b}$, the contradictions will be deduced, no matter $-\sqrt{-3b}\le x_C<0$ or $x_C<-\sqrt{-3b}$.

    In the case $-\sqrt{-3b}\le x_C<0$, one can check that $F(x)\ge0$ and $g(x)\le0$ along the sub-orbit $\widehat{BCA}$.
    Then,
    \begin{eqnarray}
	\int_{\widehat{BCA}}\mathrm{d}{\mathcal E}(x,y)>0.
    \label{E-BCA}
	\end{eqnarray}
    Since  $0<x_D\le\sqrt{-3b}$, we have $F(x)\le0$ and $g(x)\ge0$ along the sub-orbit $\widehat{MDN}$, implying that
    \begin{eqnarray}
	\int_{\widehat{MDN}}\mathrm{d}{\mathcal E}(x,y)>0.
    \label{E-MND}
	\end{eqnarray}
    It follows from (\ref{E-AM})-(\ref{E-MND}) that
    \begin{eqnarray*}
	\oint_{\Gamma}\mathrm{d}{\mathcal E}(x,y)=\int_{\widehat{AM}\cup\widehat{MDN}\cup\widehat{NB}\cup\widehat{BCA}}\mathrm{d}{\mathcal E}(x,y)>0,
	\end{eqnarray*}
    which is a contradiction.

    In the case $x_C<-\sqrt{-3b}$, passing through $(-\sqrt{-3b},0)$, the line perpendicular to the $x$-axis crosses $\Gamma$ above $x$-axis at a point, denoted by $P$.
    Let $y_{P}$ be the  ordinate of $P$ and denote the point $(\sqrt{-3b}, y_P)$ by $Q$.
    Connect $P$ and $Q$ by a straight segment, which crosses $\Gamma$ at a point, denoted by $H$, as shown in Fig. \ref{fig-posin} (b).
    On the one hand,
    \begin{eqnarray*}
	\int_{\widehat{PA}}\mathrm{d}{\mathcal E}(x,y)>0~~~{\rm and}~~~ \int_{\widehat{MH}}\mathrm{d}{\mathcal E}(x,y)>0
	\end{eqnarray*}
    because  $F(x)g(x)\le0$ along the sub-orbits $\widehat{PA}$ and $\widehat{MH}$.
    From \eqref{E-AM} that
    \begin{eqnarray*}
	\int_{\widehat{PAH}}\mathrm{d}{\mathcal E}(x,y)=\int_{\widehat{PA}\cup\widehat{AM}\cup\widehat{MH}}\mathrm{d}{\mathcal E}(x,y)>0,
	\end{eqnarray*}
    indicating that ${\mathcal E}(H)>{\mathcal E}(P)$.
    On the other hand,
    by \eqref{Energy} we can compute
    ${\mathcal E}(P)=y^2_P/2+(a-1)\sqrt{-3b}-3b/2>y^2_P/2-(a+1)\sqrt{-3b}-3b/2={\mathcal E}(Q)$.
    Since
    \[
    \frac{\partial {\mathcal E}}{\partial x}=g(x)=x-({\rm sgn}(x)+a) >0, ~~~{\rm as}~~~x>a+1,
    \]
    we get ${\mathcal E}(Q)>{\mathcal E}(H)$,
    yielding ${\mathcal E}(P)>{\mathcal E}(H)$.
    This is a contradiction.
    Thus,  $x_D>\sqrt{-3b}$ and $\Gamma$ is  surrounding $(\sqrt{-3b},0)$.
\end{proof}

\begin{lm}
	System \eqref{SD} exhibits at most one crossing limit cycle in the strip $x\ge-\sqrt{-b}$ when $b\le -3(a+1)^2$. 
    Moreover, the limit cycle is stable and hyperbolic if it exists.
	\label{lm-ag1clu}
\end{lm}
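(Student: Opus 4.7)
The plan is to transform system \eqref{SD} into the standard Li\'enard form required by Lemma \ref{uni-lc-F} and then verify its hypotheses. First I apply the transformation $(x,y)\mapsto(-x+(a+1),\,y+F(a+1))$ used in the proof of Lemma \ref{one-sl}, which sends $E_r$ to the origin, reflects across the $y$-axis, and produces system \eqref{F-g-x}. Composing with the time reversal $t\mapsto -t$ converts this into $\dot X=Y-\hat F(X)$, $\dot Y=-\hat g(X)$ with $\hat F=\tilde F$ and $\hat g=\tilde g$; the strip $x\ge-\sqrt{-b}$ maps to $X<a+1+\sqrt{-b}$. Because the reflection preserves stability while the time reversal flips it, Lemma \ref{uni-lc-F}'s conclusion that the limit cycle is unstable and hyperbolic will translate into stability and hyperbolicity for system \eqref{SD}.

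Conditions (i)--(iii) of Lemma \ref{uni-lc-F} are short algebraic checks. For (i), $\hat f(X)=-\delta(X^2-2(a+1)X+(a+1)^2+b)$ is a downward parabola with roots $(a+1)\pm\sqrt{-b}$, so within the strip it has the unique zero $x_0=(a+1)-\sqrt{-b}$; this is negative because $b\le-3(a+1)^2<-(a+1)^2$, and the sign pattern $\hat f<0$ on $(-\infty,x_0)$, $\hat f>0$ on $(x_0,a+1+\sqrt{-b})$ follows. For (ii), $\hat F(0)=0$ and the negative root $\xi_0=(3(a+1)-\sqrt{-3(a+1)^2-12b})/2$ satisfies $\xi_0<x_0$; after squaring, this reduces to $\sqrt{-b}>a+1$, which is implied by the hypothesis. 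For (iii), $\hat g(X)=X+{\rm sgn}(a+1-X)-1$ has the unique jump at $a_1=a+1>0$ with $\hat g(a_1^+)-\hat g(a_1^-)=-2<0$, and $a>1$ gives $a+1>2$ so $X\hat g(X)>0$ on each relevant subinterval.

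The core work lies in condition (iv). On $X_1<x_0<0<X_2<a+1$ both branches of $\hat g$ coincide with $X$, so $\hat g(X_1)/\hat f(X_1)=\hat g(X_2)/\hat f(X_2)$ simplifies to $X_1X_2=(a+1)^2+b$; substituting into $\hat F(X_1)=\hat F(X_2)$ and writing $s=X_1+X_2$ gives the quadratic $s^2-3(a+1)s+2((a+1)^2+b)=0$ whose unique negative root is $s_-=(3(a+1)-\sqrt{(a+1)^2-8b})/2$. The further constraint $X_2<a+1$ is equivalent, after rearranging and squaring, to $4b(3(a+1)^2+b)<0$, which fails strictly under $b\le-3(a+1)^2$ (both factors are nonpositive), so no such solution exists. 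On the other range $X_1<x_0<0<a+1<X_2$ the jump makes $\hat g(X_2)=X_2-2$, and after elimination one obtains a polynomial relation in $(s,p)=(X_1+X_2,X_1X_2)$ together with a constraint coming from $\hat F(X_1)=\hat F(X_2)$. Establishing at most one solution in this range is the main obstacle, and I expect it to require a monotonicity argument (e.g.\ showing that $\lambda_1(w)-\lambda_2(w)$ changes sign at most once after the reparametrization $w=\hat F(X)$), again critically using $b\le-3(a+1)^2$.

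Finally, on $(-\infty,x_0)$ we have $\hat g(X)=X$, so $\hat F\hat f/\hat g=\hat F(X)\hat f(X)/X$ is smooth; differentiating yields $(\delta^2/3)\kappa(X)$ for an explicit quartic $\kappa$ analogous to the one in the proof of Lemma \ref{one-sl}. The sign analysis for $\kappa$ on $(-\infty,x_0)$ is strictly easier than in Lemma \ref{one-sl} under the stronger bound $b\le-3(a+1)^2$, verifying condition (v). Invoking Lemma \ref{uni-lc-F} then gives at most one crossing limit cycle---unstable and hyperbolic---for the transformed time-reversed system, and undoing the two changes of variables delivers the statement for system \eqref{SD}.
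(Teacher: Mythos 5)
Your overall strategy coincides with the paper's: transform via $(x,y,t)\mapsto(-x+(a+1),\,y+F(a+1),\,-t)$ into the Li\'enard form \eqref{F-g-x-1} and verify the hypotheses of Lemma \ref{uni-lc-F} on the strip $x<a+1+\sqrt{-b}$, with the time reversal converting ``unstable'' into ``stable''. Your checks of (i)--(iii) and (v) are fine, and your treatment of the first half of condition (iv) (no solutions with $x_1<x_0<0<x_2<a_1$) is a correct, even somewhat cleaner, alternative to the paper's argument: eliminating via $x_1x_2=(a+1)^2+b$ and the quadratic in $s=x_1+x_2$ does reduce $x_2<a+1$ to $b\bigl(b+3(a+1)^2\bigr)<0$, which fails for $b\le-3(a+1)^2$ (only note that the admissible root is singled out because feasibility forces $s<a+1$ while $s_+\ge 4(a+1)$, not because $s$ must be negative).

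However, there is a genuine gap at the heart of the argument: the second half of condition (iv), namely that the system \eqref{Flam} has \emph{at most one} solution with $x_1<x_0<0<a_1<x_2$, is exactly where the paper's proof does its hard work, and you leave it as an expectation (``I expect it to require a monotonicity argument''). This step cannot be waved through: the paper eliminates $x_2$ by a pseudo-remainder computation, reduces the problem to the explicit quartic $h(x_1)$, and then uses the sign pattern $h(-\infty)>0$, $h(a+1-\sqrt{-b})<0$, $h(a+1)>0$, $h(a+1+\sqrt{-b})<0$, $h(+\infty)>0$ --- each inequality relying precisely on $b\le-3(a+1)^2$ --- to conclude there is exactly one admissible $x_1<a+1-\sqrt{-b}$. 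Without some substitute for this computation, Lemma \ref{uni-lc-F} cannot be invoked and the uniqueness claim is unproven. A second, smaller omission: the lemma concerns the \emph{closed} strip $x\ge-\sqrt{-b}$, while Lemma \ref{uni-lc-F} only controls crossing limit cycles strictly inside $x<a+1+\sqrt{-b}$ after the transformation; the paper separately shows that a closed orbit passing through the boundary point $\bigl(a+1+\sqrt{-b},\,\tilde F(a+1+\sqrt{-b})\bigr)$ also has positive divergence integral, so that two such cycles would again contradict Poincar\'e--Bendixson. Your proposal maps the strip to the open set $X<a+1+\sqrt{-b}$ and never treats this boundary case.
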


\begin{proof}
 By the transformation
	\begin{eqnarray*}
	(x,y,t)\rightarrow(-x+(a+1), y+F(a+1),-t),
	\end{eqnarray*}
  system \eqref{SD} can be rewritten as
	\begin{eqnarray}
	\dot{x}=y-\tilde{F}(x),~~~~~~ \dot{y}=-\tilde{g}(x),
    \label{F-g-x-1}
	\end{eqnarray}
  where $\tilde{F}(x)$ and $\tilde{g}(x)$ are defined in \eqref{F-g-x}.
   We will show that system \eqref{F-g-x-1}
   exhibits  at most one crossing limit cycle in the strip $x\in(-\infty, a+1+\sqrt{-b})$ when $b\le -3(a+1)^2$, and it is unstable and hyperbolic if it exists.
   We will get the conclusions by verifying all the conditions in Lemma \ref {uni-lc-F} for system \eqref{F-g-x-1}.

   It is easy to check that $\tilde{f}(x)$ has a unique zero $x_0=a+1-\sqrt{-b}$ on the interval $(-\infty,a+1+\sqrt{-b})$
   and $\tilde f(x)<0$ (resp. $>0$) as $x<x_0$  (resp. $x_0<x<a+1+\sqrt{-b}$), where $\tilde{f}(x)=\tilde{F}'(x)=-\delta\left(x^2-2(a+1)x+(a+1)^2+b\right)$.
   Then, condition {\bf (i)}  in Lemma \ref {uni-lc-F} holds.
   Notice that all the conditions in \cite[Theorem 2.1]{DR} with $(\alpha,\beta)=(-\infty, a+1)$
   hold for \eqref{F-g-x}, as proven in Lemma \ref{one-sl},
   and conditions {\bf (ii)} and  {\bf (v)} in Lemma \ref {uni-lc-F} with $(\alpha,\beta)=(-\infty, a+1+\sqrt{-b})$
   are totally same with the ones in \cite[Theorem 2.1]{DR} with $(\alpha,\beta)=(-\infty, a+1)$.
   Then, conditions {\bf (ii)} and  {\bf (v)} in Lemma \ref {uni-lc-F} hold.
   In what follows,
   we check conditions {\bf (iii)} and  {\bf (iv)} in Lemma \ref {uni-lc-F} with  $(\alpha,\beta)=(-\infty, a+1+\sqrt{-b})$ for system \eqref{F-g-x-1}.

   Since $a>1$ and
   \begin{eqnarray*}
   \tilde{g}(x)=x+{\rm sgn}(a+1-x)-1=
   \left\{
   \begin{array}{ll}
   x,~~~~~~& {\rm if} ~~x<a+1,\\
   x-1,  & {\rm if} ~~x=a+1,\\
   x-2,  & {\rm if} ~~x>a+1,
   \end{array}
   \right.
   \end{eqnarray*}
   one can calculate $\tilde{g}(x)$ has a unique jump discontinuity point $a_1=a+1\in (0,a+1+\sqrt{-b})$,
   $\lim_{x\rightarrow a_1^+}\tilde{g}(x)=a-1<\lim_{x\rightarrow a_1^-}\tilde{g}(x)=a+1$ and
   $x\tilde{g}(x)>0$ for $x\in (\alpha, \beta)\backslash\{0,a_1\}$.
   Then, condition {\bf (iii)}  in Lemma \ref{uni-lc-F} holds.

   By the proof of Lemma \ref{one-sl},  equations \eqref{Fx1x2}  with $x_1<x_0<0<x_2$ have
   at most one solution on the interval $(\alpha,\beta)=(-\infty, a+1)$.
   Notice that
   $\tilde{F}(a+1)=\tilde{F}(a+1-\sqrt{-3b})$
   and
   \[
   \lambda(a+1)-\lambda(a+1-\sqrt{-3b})=\frac{-3(a+1)+\sqrt{-3b}}{2\delta b}\le0
   \]
   because $b\le-3(a+1)^2$.
   It follows from $\lim_{x\rightarrow x_0^+}\lambda(x)<\lim_{x\rightarrow x_0^-}\lambda(x)$
   that equations \eqref{Flam} in Lemma \ref{uni-lc-F} have no solutions such that $x_1 < x_0 < 0 < x_2<a_1$.
   By the direct calculation, for any $x_1 < x_0 < 0<a_1 < x_2$ we have
   \begin{eqnarray*}
   \tilde F(x_2)-\tilde F(x_1)=\frac{1}{3}\delta (x_1-x_2)p_1,
    ~~~~~~~
  \lambda(x_2)-\lambda(x_1)=\frac{\delta p_2}{\hat{f}(x_1)\hat{f}(x_2)},
   \end{eqnarray*}
   where
   \begin{eqnarray*}
   p_1&=& x_1^2+x_1x_2+x_2^2-3(a+1)x_1-3(a+1)x_2+3(a+1)^2+3b,
   \\
   p_2&=&x_1x_2^2-x_1^2x_2-2x_2^2+(a^2+2a+b+1)x_1-(a^2-2a+b-3)x_2-2(a+1)^2-2b.
   \end{eqnarray*}
   By the method given in \cite[pp.368-369]{Knuth},
   $p_1=p_2=0$ if and
   only if $p_1=0$ and ${\rm prem}(p_2, p_1, x_2)=0$,
   where
   ${\rm prem}(p_2,p_1, x_2)$ is called the pseudo-remainder of $p_2$ divided
   by $p_1$ and
   \begin{eqnarray*}
   {\rm prem}(p_2,p_1, x_2)&=&s_1x_2+s_2,
   \\
   s_1&=&-2x_1^2+(3a+5)x_1-a^2-4a-b-3,
   \\
   s_2&=&-x_1^3+(3a+5)x_1^2-2(a^2+5a+b+4)x_1+4(a+1)^2+4b.
   \end{eqnarray*}
   A necessary condition for $s_1=s_2=0$ is given by the vanishing of  resultants (see \cite{Gelfand}) of $s_1$ and $s_2$.
   Since
   \[
   {\rm res}(s_1,s_2,x_1)=b(9a^2+9b-1)((a-1)^2+b)\ne0,
   \]
   we can calculate $x_2=-s_2/s_1$ from ${\rm prem}(p_2, p_1, x_2)=0$ and substitute it into $p_1$.
   Then,
   \[
   p_1=\frac{3((x_1-(a+1))^2+b)}{s_1^2}h(x_1),
   \]
   where
  \begin{eqnarray*}
   h(x)&=&x^4-(3a+5)x^3+\frac{4}{3}(3a^2+9a+3b+7)x^2-\frac{1}{3}(9a^3+33a^2+9ab+47a+33b+23)x
   \\
   & &+a^4+4a^3+2a^2b+\frac{22}{3}a^2+4ab+b^2+\frac{20}{3}a+\frac{22}{3}b+\frac{7}{3}.
  \end{eqnarray*}
   Since when $b\leq -3(a+1)^2$ and $a>1$,
   \begin{eqnarray*}
   \lim_{x\rightarrow-\infty} h(x)&=&+\infty,\\
   h(a+1-\sqrt{-b})&=&2b(a-\sqrt{-b})(a-1-\sqrt{-b})<0,
   \\
   h(a+1)&=&\frac{1}{3}b((3a-1)^2+3b)>0,
   \\
   h(a+1+\sqrt{-b})&=&2b(a+\sqrt{-b})(a-1+\sqrt{-b})<0,\\
    \lim_{x\rightarrow+\infty} h(x)&=&+\infty,\\
   \end{eqnarray*}
   there is exactly one $x_1$ such that $-\infty<x_1<a+1-\sqrt{-b}$.
   Then,  condition {\bf (iv)}  in Lemma \ref {uni-lc-F} holds.
   By Lemma \ref {uni-lc-F}, system \eqref{F-g-x-1}
   exhibits  at most one crossing limit cycle in the strip $x\in(-\infty, a+1+\sqrt{-b})$ when $b\le -3(a+1)^2$, and it is unstable and hyperbolic if it exists.

    If system  \eqref{F-g-x-1} exhibits a closed orbit which passes through $(a+1+\sqrt{-b}, \tilde F(a+1+\sqrt{-b}))$,
    then the integral of the divergence of \eqref{F-g-x-1} along this closed orbit is also greater than $0$, which can be proven by the similar method of Lemma \ref {uni-lc-F}.
   Therefore, system \eqref{SD} exhibits at most one crossing limit cycle in the strip $x\ge-\sqrt{-b}$. 
    Moreover, the limit cycle is stable and hyperbolic if it exists.
\end{proof}

\begin{lm}
    If there are two crossing limit cycles $L_1$ and $L_2$ of system \eqref{SD} intersecting with $x=-\sqrt{-b}$
    when $b< -(a+1)^2$, then the following assertion is true:
    \begin{eqnarray*}
	\oint_{L_1}{\rm div} (y-F(x), -g(x))\mathrm{d}t>\oint_{L_2}{\rm div} (y-F(x), -g(x))\mathrm{d}t,
	\end{eqnarray*}
    where $L_1$ lies in the region enclosed by $L_2$.
	\label{lm-div-a>1}
\end{lm}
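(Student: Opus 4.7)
The plan is to express each divergence integral as a one-dimensional integral in $x$ and then compare the integrands of $L_1$ and $L_2$, handling the sign change of $-f$ by a rescaling argument in the spirit of Lemma \ref{uni-lc-F}. For $i=1,2$, let $y=y_{u,i}(x)$ and $y=y_{l,i}(x)$ parameterize the upper and lower arcs of $L_i$ on the interval $[x_{C_i},x_{D_i}]$ determined by its tangencies with $y=F(x)$. Using $\mathrm{d}t=\mathrm{d}x/(y-F(x))$ on each monotone arc, a routine calculation gives
\begin{equation*}
\oint_{L_i}{\rm div}(y-F(x),-g(x))\,\mathrm{d}t
=\int_{x_{C_i}}^{x_{D_i}}(-f(x))\left(\frac{1}{M_i(x)}+\frac{1}{N_i(x)}\right)\mathrm{d}x,
\end{equation*}
where $M_i(x):=y_{u,i}(x)-F(x)>0$ and $N_i(x):=F(x)-y_{l,i}(x)>0$. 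Because both cycles cross $x=-\sqrt{-b}$ and surround $(\sqrt{-3b},0)$ by Lemma \ref{lm-c-p}, we have $x_{C_i}<-\sqrt{-b}$ and $x_{D_i}>\sqrt{-3b}>\sqrt{-b}$.

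I would then invoke the nesting $L_1\subset L_2$, which forces $y_{u,1}(x)<y_{u,2}(x)$ and $y_{l,1}(x)>y_{l,2}(x)$ on $[x_{C_1},x_{D_1}]$, hence $M_1<M_2$ and $N_1<N_2$ there. On the middle strip $(-\sqrt{-b},\sqrt{-b})$, where $-f(x)>0$, this immediately gives that the inner integrand is strictly larger. The delicate part is the outer intervals $[x_{C_1},-\sqrt{-b}]$ and $[\sqrt{-b},x_{D_1}]$, on which $-f<0$ and the pointwise ordering of $1/M+1/N$ makes the inner integrand \emph{more negative} than the outer one; this must be compensated by the two extra pieces $[x_{C_2},x_{C_1}]$ and $[x_{D_1},x_{D_2}]$ on which only $L_2$ contributes (and where $-f<0$, producing an additional negative summand for $L_2$ that helps us).

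To push through the estimate on the outer branches I would pass to $w=F(x)$. Since $F'=f$ vanishes at $\pm\sqrt{-b}$, there are three monotone inverse branches $x=F_k(w)$ for $k=1,2,3$ corresponding to $x\le-\sqrt{-b}$, $-\sqrt{-b}\le x\le\sqrt{-b}$ and $x\ge\sqrt{-b}$, and on each branch the flow reads $\mathrm{d}y/\mathrm{d}w=\lambda_k(w)/(-y+w)$ with $\lambda_k(w)=g(F_k(w))/f(F_k(w))$. On branch $k=1$ I would select $\mu>1$ so that the rescaled $L_1$-arc matches the endpoint of the $L_2$-arc at $w=F(-\sqrt{-b})$, and then invoke verbatim the monotonicity of $F(x)f(x)/g(x)$ that was verified inside the proof of Lemma \ref{one-sl} (condition (v) of \cite[Theorem 2.1]{DR}) to produce a dominance of the rescaled inner arc over the outer one, exactly as in the derivation from \eqref{wy-c1}--\eqref{wy-c2} leading to \eqref{WR1}. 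The analogous rescaling on branch $k=3$ handles the right outer interval. Assembling the six arc contributions together with the easy middle-strip comparison yields $\oint_{L_1}{\rm div}\,\mathrm{d}t>\oint_{L_2}{\rm div}\,\mathrm{d}t$.

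The main obstacle is the orchestration of the two rescalings — one on each outer branch — so that the endpoint matches at $w=F(-\sqrt{-b})$ and $w=F(\sqrt{-b})$ are compatible with the same pair of curves $L_1$ and $L_2$. Unlike the proof of Lemma \ref{uni-lc-F}, which compares two sub-arcs of a \emph{single} cycle across one branch point, the present argument must compare arcs of \emph{two distinct} nested cycles across \emph{two} branch points; consequently the bookkeeping of branch labels, arc orientations and signs of $\lambda_k$ has to be executed with care to line up with the sign conventions of \eqref{WL1}--\eqref{WR2}.
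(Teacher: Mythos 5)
Your reduction to one–dimensional integrals and the comparison on the middle strip $|x|<\sqrt{-b}$ (where $-f>0$ and the nesting gives $M_1<M_2$, $N_1<N_2$) is sound and parallels the paper's estimates \eqref{fAM}, \eqref{fNB}, \eqref{fPA}, \eqref{fBQ}. The genuine gap is in your treatment of the outer branches $|x|>\sqrt{-b}$, which is exactly the delicate part. First, the monotonicity you want to invoke ``verbatim'' is not available: what is verified in Lemma \ref{one-sl} (and reused in Lemma \ref{lm-ag1clu}) is condition (v) for the \emph{transformed} system \eqref{F-g-x}, i.e.\ for $\tilde F\tilde f/\tilde g$ on $(-\infty,\xi_0)$, which under $(x,y)\to(-x+(a+1),y+F(a+1))$ corresponds only to the region $x>\sqrt{-b}$ of the original system; on the left outer branch $x<-\sqrt{-b}$, where $g(x)=x+1-a$, no monotonicity of $F(x)f(x)/g(x)$ has been established, and it would require a separate computation. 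Second, and more structurally, the rescaling in Lemma \ref{uni-lc-F} compares two sub-arcs of a \emph{single} orbit that lie on \emph{different} branches of $w=\hat F(x)$, are anchored at a common endpoint on $y=\hat F(x)$, and satisfy ODEs with ordered inhomogeneities $\lambda_1(w)>\lambda_2(w)$. In your setting the two arcs belong to two \emph{distinct} nested cycles, lie on the \emph{same} branch (hence obey the same $\lambda_k$), terminate at different heights at $w=F(\pm\sqrt{-b})$ and at different tangency abscissas $F(x_{C_1})\neq F(x_{C_2})$, so there is no common anchor point and no $\lambda$-ordering to feed into the differential inequality. Your proposal acknowledges this as ``the main obstacle'' but does not supply the mechanism that overcomes it, so the key inequality on the outer arcs is not proved.

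For comparison, the paper avoids rescaling altogether on the outer parts: it closes the pair of outer arcs of $L_1$ and $L_2$ by the vertical segments at $x=\pm\sqrt{-b}$, on which $f\equiv 0$, notes that along orbits $\int f\,\mathrm{d}t=\oint -\tfrac{f(x)}{g(x)}\,\mathrm{d}y$, and applies Green's formula to the region between the two arcs; the only analytic input is
\[
\frac{\mathrm{d}}{\mathrm{d}x}\!\left(\frac{f(x)}{g(x)}\right)>0
\]
on each half-plane (see \eqref{fMN} and \eqref{fQP}), which automatically absorbs the extra extent of $L_2$ beyond $[x_{C_1},x_{D_1}]$. If you want to salvage your route, you would either have to prove the missing monotonicity on $x<-\sqrt{-b}$ and devise a two-cycle analogue of the rescaling comparison, or simply replace the outer-branch step by this Green's-formula argument.
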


\begin{proof}
    Denote the intersection points of $L_i$ and the $y$-axis by $A_i$, $B_i$ from top to bottom.
    Let the intersection points of $L_i$ and $y=F(x)$ be $C_i$, $D_i$ from left to right, $i=1,2$.
    By Lemma \ref{lm-c-p}, $L_i$ crosses $x=\sqrt{-b}$ at two points, denoted by $M_i$ and $N_i$ from top to bottom, $i=1,2$.
    Moreover,
    $L_i$ crosses $x=-\sqrt{-b}$ at two points, denoted by $P_i$ and $Q_i$, $i=1,2$,
    as shown in Fig. \ref{fig-div}.

	\begin{figure}[!htb]
	\centering
	\includegraphics[scale=0.27]{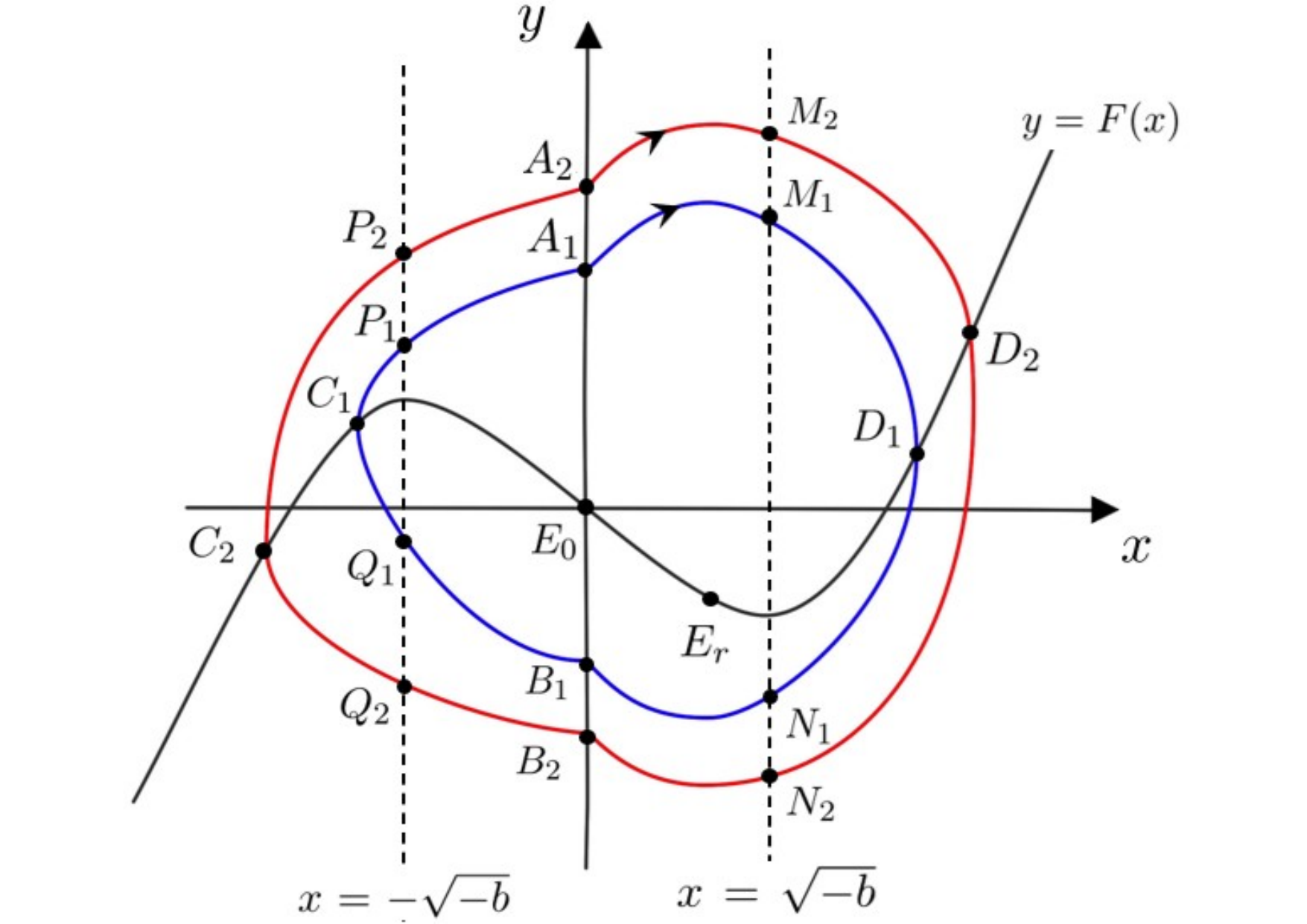}
	\caption{\footnotesize{ Two crossing limit cycles $L_1$ and $L_2$. }}
	\label{fig-div}
    \end{figure}

    Notice that $y-F(x)>0$ along the orbit segments $\widehat{A_1M_1}$ and $\widehat{A_2M_2}$.
    The arc $\widehat{A_iM_i}$ can be regarded as the graph of the function $y=y_i(x)$, where $0\le x\le \sqrt{-b}$ and $i=1,2$.
    Thus,
    \begin{eqnarray}
	\int_{\widehat{A_2M_2}}f(x)\mathrm{d}t-\oint_{\widehat{A_1M_1}} f(x)\mathrm{d}t
    &=&\int_{0}^{\sqrt{-b}}\frac{f(x)}{y_2(x)-F(x)}\mathrm{d}x-\int_{0}^{\sqrt{-b}}\frac{f(x)}{y_1(x)-F(x)}\mathrm{d}x
    \nonumber\\
    &=&\int_{0}^{\sqrt{-b}}\frac{f(x)(y_1(x)-y_2(x))}{(y_1(x)-F(x))(y_2(x)-F(x))}\mathrm{d}x>0.
    \label{fAM}
    \end{eqnarray}
    Similar to \eqref{fAM}, we obtain that
    \begin{eqnarray}
	\int_{\widehat{N_2B_2}}f(x)\mathrm{d}t-\int_{\widehat{N_1B_1}} f(x)\mathrm{d}t>0.
    \label{fNB}
    \end{eqnarray}
    Note that $f(x)=0$ along $\overline{M_1M_2}$ and $\overline{N_2N_1}$.
    Let $\gamma$ be the simple closed curve $\widehat{M_2D_2N_2}\cup\overline{N_2N_1}\cup\widehat{N_1D_1M_1}\cup\overline{M_1 M_2}$
    and $\Omega$ be the region enclosed by $\gamma$.
    Since
    \[
    \frac{\mathrm{d}\left(f(x)/g(x)\right)}{\mathrm{d}x}=\frac{\delta((x-a-1)^2-b-(a+1)^2)}{(x-a-1)^2} >0,
    \]
    by Green's formula,
	\begin{eqnarray}
	\int_{\widehat{M_2D_2N_2}}f(x)\mathrm{d}t-\int_{\widehat{M_1D_1N_1}}f(x)\mathrm{d}t
    =\oint_{\gamma}-\frac{f(x)}{g(x)}\mathrm{d}y
    =\iint_{\Omega}\frac{\mathrm{d}\left(f(x)/g(x)\right)}{\mathrm{d}x}\mathrm{d}x\mathrm{d}y>0.
    \label{fMN}
	\end{eqnarray}
    It follows from (\ref{fAM})--(\ref{fMN}) that
    \begin{eqnarray}
	\int_{\widehat{A_2D_2B_2}}f(x)\mathrm{d}t-\int_{\widehat{A_1D_1B_1}}f(x)\mathrm{d}t>0.\label{divADB}
	\end{eqnarray}

    On the one hand, the arc $\widehat{P_i A_i}$ can be considered as the graph of the function $y=\hat{y}_i(x)$, where $-\sqrt{-b}\le x\le0$ and $i=1,2$.
    Thus,
    \begin{eqnarray}
	\int_{\widehat{P_2 A_2}}f(x)\mathrm{d}t-\int_{\widehat{P_1 A_1}} f(x)\mathrm{d}t
    &=&\int^{0}_{-\sqrt{-b}}\frac{f(x)}{\hat{y}_2(x)-F(x)}\mathrm{d}x-\int^{0}_{-\sqrt{-b}}\frac{f(x)}{\hat{y}_1(x)-F(x)}\mathrm{d}x
    \nonumber\\
    &=&\int^{0}_{-\sqrt{-b}}\frac{f(x)(\hat{y}_1(x)-\hat{y}_2(x))}{(\hat{y}_1(x)-F(x))(\hat{y}_2(x)-F(x))}\mathrm{d}x>0.
    \label{fPA}
    \end{eqnarray}
    Similarly,
    \begin{eqnarray}
	\int_{\widehat{B_2 Q_2}}f(x)\mathrm{d}t-\int_{\widehat{B_1 Q_1}} f(x)\mathrm{d}t>0.
    \label{fBQ}
    \end{eqnarray}
    On the other hand, by Green's formula,
	\begin{eqnarray}
	\int_{\widehat{Q_2C_2P_2}}f(x)\mathrm{d}t-\int_{\widehat{Q_1C_1P_1}}f(x)\mathrm{d}t
    =\oint_{\hat{\gamma}}-\frac{f(x)}{g(x)}\mathrm{d}y
    =\iint_{\hat{\Omega}}\frac{\mathrm{d}\left(f(x)/g(x)\right)}{\mathrm{d}x}\mathrm{d}x\mathrm{d}y>0,
    \label{fQP}
	\end{eqnarray}
    because  $f(x)=0$ along $\overline{P_2P_1}$ and $\overline{Q_1Q_2}$ and
    \[
    \frac{\mathrm{d}\left(f(x)/g(x)\right)}{\mathrm{d}x}=\frac{\delta((x-a+1)^2-b-(a-1)^2)}{(x-a+1)^2} >0,
    \]
    where  $\hat{\gamma}$ is the simple closed curve $\widehat{Q_2C_2P_2}\cup\overline{P_2P_1}\cup\widehat{P_1C_1Q_1}\cup\overline{Q_1 Q_2}$
    and $\hat{\Omega}$ is the region enclosed by $\hat{\gamma}$.
    It follows from 
    (\ref{fPA})--(\ref{fQP}) that
    \begin{eqnarray}
    \int_{\widehat{B_2C_2A_2}} f(x)\mathrm{d}t-\int_{\widehat{B_1C_1A_1}} f(x)\mathrm{d}t>0.
    \label{divBCA}
	\end{eqnarray}
   By \eqref{divADB} and \eqref{divBCA},
   {\small
   \begin{eqnarray*}
   & &\oint_{L_2}{\rm div} (y-F(x), -g(x))\mathrm{d}t-\oint_{L_1}{\rm div} (y-F(x), -g(x))\mathrm{d}t
   \\
   & &=-\oint_{\widehat{A_2D_2B_2}\cup\widehat{B_2C_2A_2}}f(x)\mathrm{d}t+\oint_{\widehat{A_1D_1B_1}\cup\widehat{B_1C_1A_1}}f(x)\mathrm{d}t
   \\
   & &<0.
   \end{eqnarray*}}
\end{proof}

By the transformation $(x,y)\rightarrow(x,y+F(x))$,
system \eqref{SD} is rewritten as
\begin{eqnarray}
\dot{x}=y,~~~~\dot{y}
=-(x-{\rm sgn}(x)-a)-\delta x^2y-\tilde{b}y,
\label{ygf}
\end{eqnarray}
where $\tilde{b}=\delta b$.
Clearly, system \eqref{ygf} has the same topological structure as system \eqref{SD}.

Consider the orbit starting at the point $C:(x_c,0)$
and that crosses the positive $x$-axis
regardless of whether time $t\rightarrow \infty$ or $t\rightarrow -\infty$, where $x_c\le0$.
Let $\gamma_{\delta,\tilde{b}}^+(c)$ and $\gamma_{\delta,\tilde{b}}^-(c)$ be the positive and negative orbits of system
\eqref{ygf} that start at $C$, respectively.
Denote the first intersection point of $\gamma_{\delta,\tilde{b}}^+(c)$ (resp. $\gamma_{\delta,\tilde{b}}^-(c)$)
and the positive $x$-axis by $A^+: (x_{A^+}^c(\delta,\tilde{b}), 0)$ (resp. $A^-: (x_{A^-}^c(\delta,\tilde{b}), 0)$).
The following lemma tells us that how $x_{A^+}^c$ and $x_{A^-}^c$ continuously depend on $\delta$ and $\tilde{b}$.

\begin{lm}
	For a fixed $\tilde{b}$,  $x_{A^+}^c$ decreases and $x_{A^-}^c$ increases continuously as $\delta$ increases.
    For a fixed $\delta$, $x_{A^+}^c$ decreases and $x_{A^-}^c$ increases continuously as $\tilde{b}$ increases.
    \label{lm-mono}
\end{lm}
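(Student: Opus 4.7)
The plan is to recognize system \eqref{ygf} as a generalized family of rotated vector fields in each of the parameters $\delta$ and $\tilde b$, and then to read off the monotone dependence of $x_{A^\pm}^c$ from the rotated-vector-fields principle. Writing the right-hand side of \eqref{ygf} as $F=(X,Y)$ with $X=y$ and $Y=-(x-{\rm sgn}(x)-a)-\delta x^2 y-\tilde b\,y$, a direct computation gives
\begin{align*}
X\,\partial_\delta Y-Y\,\partial_\delta X = y(-x^2 y) = -x^2 y^2,\qquad
X\,\partial_{\tilde b}Y-Y\,\partial_{\tilde b}X = y(-y) = -y^2.
\end{align*}
Both wedges are nonpositive, and vanish only on the measure-zero sets $\{y=0\}\cup\{x=0\}$ and $\{y=0\}$ respectively. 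Hence, with the other parameter held fixed, $F$ rotates clockwise at every point off the corresponding exceptional set as $\delta$ (respectively $\tilde b$) increases.

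To obtain the monotonicity of $x_{A^+}^c$, fix $\tilde b$ and take $\delta_1<\delta_2$; let $\gamma^{(i)}$ denote the forward orbit of $F_{\delta_i}$ from $C$ in the closed upper half-plane, terminating at $A_i^+$. Both orbits leave $C$ tangent to the $y$-axis in the upward direction (because $y=0$ at $C$, the vector field there does not depend on $\delta$), and then curve rightward in the upper half-plane. The standard no-crossing lemma of rotated-vector-fields theory states that two orbits issuing from a common non-equilibrium point cannot transversally cross inside the region of strict rotation, and that the one corresponding to the larger parameter value lies on the clockwise side of the other. Thus $\gamma^{(2)}$ stays strictly below $\gamma^{(1)}$ throughout the open upper half-plane and therefore returns to the positive $x$-axis first: $x_{A^+}^c(\delta_2)<x_{A^+}^c(\delta_1)$. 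For the backward orbit I apply the same reasoning to the reversed field $-F$, whose rotation wedge is again $-x^2 y^2$ (the sign is preserved under $F\mapsto -F$). The forward orbit of $-F$ from $C$ for $\delta_2$ lies on the clockwise side of that for $\delta_1$ in the lower half-plane: it probes deeper into the lower half-plane and therefore returns to the positive $x$-axis at a larger abscissa, giving $x_{A^-}^c(\delta_2)>x_{A^-}^c(\delta_1)$. The monotonicity in $\tilde b$ follows by repeating the argument with wedge $-y^2\le 0$. Continuity of $x_{A^\pm}^c$ in $(\delta,\tilde b)$ is a consequence of continuous dependence of Filippov solutions on parameters, together with the implicit function theorem applied to the transversal equation $y(t,\delta,\tilde b)=0$ defining the first-return times.

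The main technical obstacle is the discontinuity of $Y$ across the switching line $x=0$, which prevents direct invocation of the classical smooth rotated-vector-fields theorem. I plan to handle this by splitting each orbit into its two smooth pieces at the transversal crossing of $x=0$ (where $\dot x=y\ne 0$ on a nontrivial orbit) and applying the rotation principle on each smooth piece separately. Because the rotation wedge $-x^2 y^2$ vanishes identically on the switching line, the two pieces match cleanly with no additional ``corner'' contribution, and the clockwise-rotation ordering passes unaffected across $x=0$.
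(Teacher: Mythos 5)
Your rotated-vector-field computation is correct ($X\,\partial_\delta Y-Y\,\partial_\delta X=-x^2y^2\le 0$, $X\,\partial_{\tilde b}Y-Y\,\partial_{\tilde b}X=-y^2\le 0$), your directional conclusions agree with the lemma, and your treatment of the switching line (splitting at the transversal crossing of $x=0$, where the orbits are graphs over $x$) is adequate. The route is genuinely different in flavor from the paper's: the paper does not invoke rotated-vector-field theory at all, but writes each arc as a graph $y^{\pm}_{\delta,\tilde b}(x)$ of the scalar equation $\mathrm{d}y/\mathrm{d}x=-g(x)/y-\delta x^2-\tilde b$, forms the difference $z^{\pm}(x)=y^{\pm}_{\delta+\epsilon,\tilde b}(x)-y^{\pm}_{\delta,\tilde b}(x)$, converts it into a linear integral equation and solves it explicitly, obtaining $z^{\pm}(x)=-\epsilon\int_{x_c}^{x}\tau^{2}\exp\{\int_{\tau}^{x}H_3\}\,\mathrm{d}\tau<0$ for $\epsilon>0$. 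That single formula yields strict monotonicity of $x_{A^{\pm}}^c$ \emph{and} continuity (letting $\epsilon\to0$), whereas you delegate continuity to continuous dependence plus the implicit function theorem at the return point; that is acceptable, but you should note that transversality there holds because $x_{A^{\pm}}^c>a+1$ (the return cannot occur at the equilibrium $(a+1,0)$), so $\dot y=-g\neq0$ at $A^{\pm}$.

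The genuine gap is where you invoke the ``standard no-crossing/ordering lemma for orbits issuing from a common non-equilibrium point.'' The hypotheses of that lemma fail exactly where you need it: the rotation wedge vanishes identically on the whole $x$-axis (indeed the two fields \emph{coincide} there, since the $\delta$- and $\tilde b$-dependent terms carry a factor $y$), and the common point $C$, as well as the return points $A^{\pm}$, lie on this degeneracy locus; moreover both orbits leave $C$ with the \emph{same} vertical tangent. So the assertion that the larger-parameter orbit lies on the clockwise side immediately after $C$ is not delivered by strict rotation at $C$ and must be argued separately; likewise the strict (not merely weak) ordering at the return abscissa needs the strict negativity of the wedge along the open arc. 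The fix is short but is essentially the paper's computation: with $z=y_{\delta_2}-y_{\delta_1}$ one has $z(x_c)=0$ and $z'=\frac{g(x)}{y_1y_2}z-(\delta_2-\delta_1)x^2$, so whenever $z\ge0$ and $x\ne0$ one gets $z'<0$; hence $z<0$ for $x>x_c$ on each smooth piece, and the ordering propagates across $x=0$ by continuity of the graphs. Either add this comparison step (or an integral formula for the parameter derivative of the transition map, whose integrand is the wedge and is strictly negative on the open arc), or simply replace the appeal to the lemma by the explicit estimate; as written, the crucial initial ordering is asserted rather than proved.
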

\begin{proof}

    Let $y=y_{\delta,\tilde{b}}^\pm(x)$ be the function of $\gamma_{\delta,\tilde{b}}^\pm(c)$, where  $x_c<x<x_{A^\pm}^c$.
    Then, $y_{\delta,\tilde{b}}^\pm(x_c)=0$ and $y_{\delta,\tilde{b}}^\pm(x_{A^\pm})=0$.
	Let $z_{\delta,\tilde{b}}^\pm(x)=y_{\delta+\epsilon,\tilde{b}}^\pm(x)-y_{\delta,\tilde{b}}^\pm(x)$,
    $0\le x\le \min\{x_{A^\pm}^c(\delta,\tilde{b}), x_{A^\pm}^c(\delta+\epsilon,\tilde{b})\}$,
    be the vertical distance between
	$\gamma_{\delta+\epsilon,\tilde{b}}^\pm(c)$ and $\gamma_{\delta,\tilde{b}}^\pm(c)$, where $(x_{A^\pm}^c(\delta+\epsilon,\tilde{b}),0)$
    is the first intersection point of
	$\gamma_{\delta+\epsilon,\tilde{b}}^\pm(c)$ and the positive
	$x$-axis and $|\epsilon|$ is sufficiently small.
	One can check that
	\begin{eqnarray}
	z_{\delta,\tilde{b}}^\pm(x)&=&y_{\delta+\epsilon,\tilde{b}}^\pm(x)-y_{\delta,\tilde{b}}^\pm(x)\nonumber\\
	&=&
	\int_{x_c}^{x}
	\left\{
	\left(\frac{-(s-{\rm sgn}(s)-a)}{y_{\delta+\epsilon,\tilde{b}}^\pm(s)}-(\delta+\epsilon)s^2-\tilde{b}\right)
	-\left(\frac{-(s-{\rm sgn}(s)-a)}{y_{\delta,\tilde{b}}^\pm(s)}-\delta s^2-\tilde{b}\right)
	\right\}
	\mathrm{d}s\nonumber
	\\
	&=&H_1(x)+H_2(x) ,\label{mo-zdb}
	\end{eqnarray}
	where
	\[
	H_1(x)=-\epsilon \frac{x^3}{3}+\epsilon \frac{x_c^3}{3},\hskip 0.3cm H_2(x)=\int_{x_c}^{x}z_{\delta,\tilde{b}}^\pm(s)H_3(s)\mathrm{d}s  ~~{\rm and}~~~
	H_3(x)=\frac{x-{\rm sgn}(x)-a}{y_{\delta+\epsilon,\tilde{b}}^\pm(x)y_{\delta,\tilde{b}}^\pm(x)}.
	\]
	From \eqref{mo-zdb}, we get
	\begin{eqnarray}
	z_{\delta,\tilde{b}}^\pm(x)H_3(x)=H_1(x)H_3(x)+H_2(x)H_3(x).
	\label{zH123}
	\end{eqnarray}
	It follows from \eqref{zH123} that
	\[
	\frac{\mathrm{d}H_2(x)}{\mathrm{d}x}-H_2(x)H_3(x)=H_1(x)H_3(x),
	\]
	which is a first order linear differential equation.
	With the initial condition $H_2(x_c)=0$, we get
	\begin{eqnarray}
	H_2(x)=\int_{x_c}^{x}H_1(\tau)H_3(\tau)\exp\left\{\int_{\tau}^{x}H_3(\eta)\mathrm{d}\eta
	\right\}\mathrm{d}\tau.\label{z-H2}
	\end{eqnarray}
	From \eqref{mo-zdb} and \eqref{z-H2},
		\begin{eqnarray}
		z_{\delta,\tilde{b}}^\pm(x)
		&=&H_1(x)+\int_{x_c}^{x}H_1(\tau)H_3(\tau)\exp\left\{\int_{\tau}^{x}H_3(\eta)\mathrm{d}\eta
		\right\}\mathrm{d}\tau\nonumber\\
		&=&H_1(x)-\left[H_1(\tau)\exp\left\{\int_{\tau}^{x}H_3(\eta)\mathrm{d}\eta
		\right\}\right]_{x_c}^{x}
		+\int_{x_c}^{x}H'_1(\tau)\exp\left\{\int_{\tau}^{x}H_3(\eta)\mathrm{d}\eta
		\right\}\mathrm{d}\tau~~~~
		\nonumber\\
		&=&H_1(x_c)\exp\left\{\int_{x_c}^{x}H_3(\eta)\mathrm{d}\eta
		\right\}+\int_{x_c}^{x}H'_1(\tau)\exp\left\{\int_{\tau}^{x}H_3(\eta)\mathrm{d}\eta
		\right\}\mathrm{d}\tau\nonumber\\
		&=&-\epsilon\int_{x_c}^{x}\tau^2\exp\left\{\int_{\tau}^{x}H_3(\eta)\mathrm{d}\eta
		\right\}\mathrm{d}\tau
		<0~({\rm resp.} >0), ~{\rm if}~\epsilon>0 ~({\rm resp.} <0).
        \label{zdb}
		\end{eqnarray}
	Then, $0=y_{\delta+\epsilon,\tilde{b}}^+(x_{A^+}^c(\delta+\epsilon,\tilde{b}))<y_{\delta,\tilde{b}}^+(x_{A^+}^c(\delta+\epsilon,\tilde{b}))$
    and $y_{\delta+\epsilon,\tilde{b}}^-(x_{A^-}^c(\delta,\tilde{b}))<y_{\delta,\tilde{b}}^-(x_{A^-}^c(\delta,\tilde{b}))=0$
    when $\epsilon>0$,
    implying that $x_{A^+}^c(\delta+\epsilon,\tilde{b})<x_{A^+}^c(\delta,\tilde{b})$
    and $x_{A^-}^c(\delta+\epsilon,\tilde{b})>x_{A^-}^c(\delta,\tilde{b})$.
	Moreover, it follows from \eqref{zdb} that $\lim_{\epsilon\rightarrow 0}z_{\delta,\tilde{b}}^\pm(x)=0$.
	That means  $\lim_{\epsilon\rightarrow 0}x_{A^\pm}^c(\delta+\epsilon,\tilde{b})=x_{A^\pm}^c(\delta,\tilde{b})$.
	Thus, $x_{A^+}^c$ decreases and $x_{A^-}^c$ increases continuously as $\delta$ increases.
	
	Let $d_{\delta,\tilde{b}}^\pm(x)=y_{\delta,\tilde{b}+\epsilon}^\pm(x)-y_{\delta,\tilde{b}}^\pm(x)$,
    $0\le x\le \min\{x_{A^\pm}^c(\delta,\tilde{b}), x_{A^\pm}^c(\delta,\tilde{b}+\epsilon)\}$,
    where $(x_{A^\pm}^c(\delta,\tilde{b}+\epsilon),0)$
    is the first intersection point of
	$\gamma_{\delta,\tilde{b}+\epsilon}^\pm(c)$ and the positive
	$x$-axis and $|\epsilon|$ is sufficiently small.
	Then,
	\begin{eqnarray*}
	d_{\delta,\tilde{b}}^\pm(x)&=&y_{\delta,\tilde{b}+\epsilon}^\pm(x)-y_{\delta,\tilde{b}}^\pm(x)\nonumber\\
	&=&
	\int_{x_c}^{x}
	\left\{
	\left(\frac{-(s-{\rm sgn}(s)-a)}{y_{\delta,\tilde{b}+\epsilon}^\pm(s)}-\delta s^2-(\tilde{b}+\epsilon)\right)
	-\left(\frac{-(s-{\rm sgn}(s)-a)}{y_{\delta,\tilde{b}}^\pm(s)}-\delta s^2-\tilde{b}\right)
	\right\}
	\mathrm{d}s\nonumber
	\\
	&=&\hat{H}_1(x)+{H}_2(x) ,
	\end{eqnarray*}
	where $\hat H_1(x)=-\epsilon x+\epsilon x_c$ and $H_2(x)$ is defined as in \eqref{mo-zdb}.
    Similar to the calculation of $z_{\delta,\tilde{b}}^\pm(x)$,
	we get
		\begin{eqnarray}
		d_{\delta,\tilde{b}}^\pm(x)
		=-\epsilon\int_{x_c}^{x}\exp\left\{\int_{\tau}^{x}H_3(\eta)\mathrm{d}\eta
		\right\}\mathrm{d}\tau
		<0~({\rm resp.} >0), ~{\rm if}~\epsilon>0 ~({\rm resp.} <0).
        \label{ddb}
		\end{eqnarray}
	Then, $0=y_{\delta,\tilde{b}+\epsilon}^+(x_{A^+}^c(\delta,\tilde{b}+\epsilon))<y_{\delta,\tilde{b}}^+(x_{A^+}^c(\delta,\tilde{b}+\epsilon))$
    and $y_{\delta,\tilde{b}+\epsilon}^-(x_{A^-}^c(\delta,\tilde{b}))<y_{\delta,\tilde{b}}^-(x_{A^-}^c(\delta,\tilde{b}))=0$
    when $\epsilon>0$,
    implying that $x_{A^+}^c(\delta,\tilde{b}+\epsilon)<x_{A^+}^c(\delta,\tilde{b})$
    and $x_{A^-}^c(\delta,\tilde{b}+\epsilon)>x_{A^-}^c(\delta,\tilde{b})$.
	Moreover, it follows from \eqref{ddb} that $\lim_{\epsilon\rightarrow 0}d_{\delta,\tilde{b}}^\pm(x)=0$.
	That means  $\lim_{\epsilon\rightarrow 0}x_{A^\pm}^c(\delta,\tilde{b}+\epsilon)=x_{A^\pm}^c(\delta,\tilde{b})$.
	Thus, $x_{A^+}^c$ decreases and $x_{A^-}^c$ increases continuously as $\tilde{b}$ increases.
\end{proof}

In the following, we consider the number of crossing limit cycles of system \eqref{SD} for $\delta>0$ sufficiently small and $b<-(a+1)^2$. Thus, we can view system \eqref{SD} as  the perturbed system  of the following system
\begin{eqnarray}
\dot{x}=y, ~~~~~~~\dot{y}=-x+{\rm sgn}(x)+a.
\label{SD0}
\end{eqnarray}	
Obviously, system \eqref{SD0} has a family of piecewise periodic orbits given by
 \begin{eqnarray*}
\Gamma_h&=&\Gamma_h^-\cup\Gamma_h^+\\
&=&\{(x,y)\mid H^-(x,y)=h,\ x<0\}\cup
\{(x,y)\mid H^+(x,y)=h,\ x>0\},\quad 0<h<+\infty,
\end{eqnarray*}	
where
 \begin{eqnarray*}
H^-(x,y)=\frac{y^2}{2}+\frac{x^2}{2}-(a-1)x, \quad \quad  H^+(x,y)=\frac{y^2}{2}+\frac{x^2}{2}-(a+1)x.
\end{eqnarray*}	

Since $H^+(0,y)\equiv H^-(0,y)$ for $y\in(-\infty,+\infty)$, we obtain the expression of the first order Melnikov function of system \eqref{SD} as
follows by \cite[Theorem 1.1]{LH}
\begin{eqnarray}
M(h)=\int_{\Gamma_h^-}\left(\frac{x^3}{3}+bx\right)\mathrm{d}y+\int_{\Gamma_h^+}\left(\frac{x^3}{3}+bx\right)\mathrm{d}y,\quad 0<h<+\infty.
\label{M10}
\end{eqnarray}
Note that the orientation of the orbit $\Gamma_h$ is clockwise. We compute the first order Melnikov function \eqref{M10} directly as follows
\begin{equation}\begin{split}\label{M}
M(h)=&\displaystyle\int_{-\sqrt{2h}}^{\sqrt{2h}}\left(\frac{x^3}{3}+bx\right)\Big|_{x=a-1-\sqrt{2 h+(a-1)^2-y^2}}\mathrm{d}y\\
&+\displaystyle\int_{\sqrt{2h}}^{\sqrt{2 h+(a+1)^2}}\left(\frac{x^3}{3}+bx\right)\Big|_{x=a+1-\sqrt{2 h+(a+1)^2-y^2}}\mathrm{d}y\\
&+\displaystyle\int_{\sqrt{2 h+(a+1)^2}}^{-\sqrt{2 h+(a+1)^2}}\left(\frac{x^3}{3}+bx\right)\Big|_{x=a+1+\sqrt{2 h+(a+1)^2-y^2}}\mathrm{d}y\\
&+\displaystyle\int_{-\sqrt{2 h+(a+1)^2}}^{-\sqrt{2h}}\left(\frac{x^3}{3}+bx\right)\Big|_{x=a+1-\sqrt{2 h+(a+1)^2-y^2}}\mathrm{d}y\\
=&-\frac{1}{4} \pi  (a+1)^2 \left(5 a^2+10 a+4 b+5\right)-\frac{\left(15 a^2+4 b+5\right)\sqrt{h} }{\sqrt{2}}\\
&-\left(3 a^2+6 a+2b+3\right)\pi h -\frac{13}{3}\sqrt{2} h^{3/2}-\pi  h^2\\
&-\frac{1}{4} \left(a^2-2 a+2 h+1\right) \left(5 a^2-10 a+4 b+2 h+5\right) \arctan
   \left(\frac{\sqrt{2h}}{a-1}\right)\\
&+\frac{1}{4} \left(a^2+2 a+2 h+1\right)
   \left(5 a^2+10 a+4 b+2 h+5\right) \arctan\left(\frac{\sqrt{2h}}{a+1}\right).
\end{split}\end{equation}
By \cite[Theorem 1.1]{LH}, the number of simple zeros of $M(h)$ implies the number of crossing limit cycles of system \eqref{SD} for $\delta>0$ sufficiently small.
If $M(h)$ has a zero $h_0$ with multiplicity $k$, then system \eqref{SD} has at most $k$ limit cycles bifurcating from $\Gamma_{h_0}$ for $\delta>0$ sufficiently small. Moreover, if $k$ is odd, then system \eqref{SD} has at least one limit cycle.
Thus, we begin to study the number of zeros of $M(h)$ in \eqref{M}.
A simple calculation of the second order derivative of $M(h)$ shows that
\begin{equation*}
M''(h)=M_{20}(h)+M_{21}(h),
\end{equation*}
where
\begin{equation*}\begin{split}
M_{20}(h)=&\frac{2 \sqrt{2} b \left(2 h-a^2+1\right)}{\sqrt{h} \left(2h+a^2-2 a+1\right) \left(2h+a^2+2
   a+1\right)},\\
M_{21}(h)=&-\frac{2 \left(\left(a^2-\sqrt{2} \sqrt{h}-1\right)^2+\left(4 \pi  \left(a^2+1\right)-2\right) h+(\pi -1) \left(a^2-1\right)^2+4 \sqrt{2} h^{3/2}+4 \pi
   h^2\right)}{\left(a^2-2 a+2 h+1\right) \left(a^2+2 a+2 h+1\right)}\\
   &-2 \arctan\left(\frac{\sqrt{2h}}{a-1}\right)+2 \arctan\left(\frac{\sqrt{2h}}{a+1}\right).
\end{split}\end{equation*}
Obviously, $M_{21}(h)<0$ for $0<h<+\infty$ according to $a>1$ and the monotonicity of $\arctan h$. Note that
$M_{20}(h)\leq0$ when $h\geq\frac{a^2-1}{2}$ by $b<0$, thus we derive that
\begin{equation}\label{M22}
M''(h)<0,\quad \mbox{for}\ \frac{a^2-1}{2}\leq h<+\infty.
\end{equation}
When $0<h<\frac{a^2-1}{2}$, we consider the third order derivative of $M(h)$
\begin{equation}\label{M3}
M'''(h)=M_{30}(h)+M_{31}(h),
\end{equation}
where
\begin{equation*}\begin{split}
M_{30}(h)=&\frac{8 \sqrt{2} \sqrt{h} \left(\left(-a^2+2 h+1\right)^2-4 a^2 \left(a^2-1\right)\right)}{\left(a^2-2 a+2 h+1\right)^2 \left(a^2+2 a+2 h+1\right)^2},\\
M_{31}(h)=&-\frac{\sqrt{2} b
   \left(-\left(a^2-1\right)^3+\left(-14 a^4+4 a^2+10\right) h+\left(28-12 a^2\right) h^2+24 h^3\right)}{h^{3/2} \left(a^2-2 a+2 h+1\right)^2 \left(a^2+2 a+2 h+1\right)^2}.
\end{split}\end{equation*}
It is easy to verify that
\begin{equation}\label{M30}
M_{30}(h)<0, \quad \mbox{for}\ 0<h<\frac{a^2-1}{2},
\end{equation}
owing to $\left(-a^2+2 h+1\right)^2-4 a^2 \left(a^2-1\right)<-(a^2-1)(1 + 3 a^2)<0$. Let
$$f(h)=-\left(a^2-1\right)^3+\left(-14 a^4+4 a^2+10\right) h+\left(28-12 a^2\right) h^2+24 h^3.$$
We can get that
$$f'(h)=10+4a^2-14 a^4+2(28-12 a^2)h + 72h^2.$$
Since the coefficient of $h^2$ is positive, and
\begin{equation*}\begin{split}
f'(0)=&-2 (-1 + a) (1 + a) (5 + 7 a^2)<0,\\
f'\left(\frac{a^2-1}{2}\right)=&-8 (-1 + a)(1 + a)a^2<0,
\end{split}\end{equation*}
we obtain that $f'(h)<0$ for  $0<h<\frac{a^2-1}{2}$, which means that $f(h)$ is monotonically decreasing on the interval $(0,\frac{a^2-1}{2})$.
It follows from $f(0)=-(a^2-1)^3<0$ that $f(h)<0$ on the interval $(0,\frac{a^2-1}{2})$. Further, using $b<0$, we have
$$M_{31}(h)=-\frac{\sqrt{2} bf(h)}{h^{3/2} \left(a^2-2 a+2 h+1\right)^2 \left(a^2+2 a+2 h+1\right)^2}<0,$$
and  $M'''(h)<0$ on the interval $(0,\frac{a^2-1}{2})$ combing with \eqref{M3} and \eqref{M30}. So, $M''(h)$ is monotonically decreasing on the interval $(0,\frac{a^2-1}{2})$.
Note that
$$\lim_{h\rightarrow 0^+}M''(h)=+\infty\quad \mbox{and} \quad M''\left(\frac{a^2-1}{2}\right)=M_{21}\left(\frac{a^2-1}{2}\right)<0,$$
we find that $M''(h)$ has a unique simple zero $h_2\triangleq h_2(a,b)$ on the interval $(0,+\infty)$ combing with \eqref{M22},  more specifically, $h_2\in (0,\frac{a^2-1}{2})$.
Hence, as the first order derivative of $M(h)$, $M'(h)$ is monotonically  increasing on the interval $(0,h_2)$, and monotonically  decreasing on the interval
$(h_2,+\infty)$.

In the following, we will split two cases to study the number of zeros of $M(h)$ given in \eqref{M}.

A direct computation gives that
\begin{equation}\begin{split}\label{M1M0}
M'(0)=&-(3+6a+3a^2+2b)\pi,\\
M(0)=&-\frac{1}{4}(a+1)^2 (5+10a+5a^2+4b)\pi.
\end{split}\end{equation}
Then, we have the first result.
\begin{lm}\label{lm3-9}
If $b\leq-\frac{3}{2}(a+1)^2$, then $M(h)$ has a unique simple zero on the interval $(0,+\infty)$.
\end{lm}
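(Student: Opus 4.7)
The plan is to combine the monotonicity properties of $M'(h)$ that have already been established with a careful inspection of the signs of $M(0)$ and $M'(0)$ at the boundary value $b=-\tfrac{3}{2}(a+1)^2$, together with the leading-order behaviour of $M(h)$ at infinity.

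First, using \eqref{M1M0}, I would rewrite
\begin{equation*}
M'(0)=-\pi\bigl(3(a+1)^{2}+2b\bigr),\qquad
M(0)=-\tfrac{1}{4}\pi(a+1)^{2}\bigl(5(a+1)^{2}+4b\bigr).
\end{equation*}
Under the assumption $b\leq-\tfrac{3}{2}(a+1)^{2}$, one sees immediately that $3(a+1)^{2}+2b\leq 0$ and $5(a+1)^{2}+4b\leq -(a+1)^{2}<0$, so $M'(0)\geq 0$ and $M(0)>0$. Next I would read off from the explicit formula \eqref{M} the asymptotic expansion as $h\to+\infty$: the dominant contribution is $-\pi h^{2}$, which forces $M(h)\to-\infty$ and also $M'(h)\sim-2\pi h\to-\infty$. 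In particular, $M$ must possess at least one zero on $(0,+\infty)$ by the intermediate value theorem.

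To upgrade this to uniqueness and simplicity I would then exploit the structure of $M'$ derived just above the statement. Since $M'$ is increasing on $(0,h_{2})$ and decreasing on $(h_{2},+\infty)$, the inequality $M'(0)\geq 0$ propagates to $M'(h)>0$ on $(0,h_{2}]$; hence $M$ is strictly increasing there and $M(h)\geq M(0)>0$ on that subinterval, ruling out zeros on $(0,h_{2}]$. On $(h_{2},+\infty)$, the function $M'$ is strictly decreasing from the positive value $M'(h_{2})$ to $-\infty$, so it has a unique zero $h_{3}\in(h_{2},+\infty)$. Therefore $M$ increases on $(h_{2},h_{3})$ and decreases on $(h_{3},+\infty)$, attaining its global maximum $M(h_{3})>0$, and then tending to $-\infty$. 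The intermediate value theorem yields exactly one zero $h^{\ast}\in(h_{3},+\infty)$, and this zero is simple because $M'(h^{\ast})<0$.

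The only step requiring real care is justifying $M'(h)\to-\infty$ at infinity. Formally differentiating the closed form \eqref{M} and bounding the $\arctan$ terms (which are uniformly bounded) against the polynomial-in-$h$ and $h^{3/2}$ pieces gives $M'(h)=-2\pi h+O(h^{1/2})$, which suffices. With that in hand, the argument above is completely elementary, and the three qualitative facts that drive it—$M(0)>0$, $M'(0)\geq 0$, and $\lim_{h\to\infty}M(h)=-\infty$—are precisely what the hypothesis $b\leq-\tfrac{3}{2}(a+1)^{2}$ was chosen to guarantee.
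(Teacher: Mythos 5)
Your argument is correct and follows essentially the same route as the paper: from \eqref{M1M0} the hypothesis gives $M(0)>0$ and $M'(0)\geq 0$, the unimodal shape of $M'$ (increasing on $(0,h_2)$, decreasing to $-\infty$ on $(h_2,+\infty)$) yields a unique zero $h_1>h_2$ of $M'$, and then $M$ increases to a positive maximum and decreases to $-\infty$, giving exactly one zero, which is simple since $M'<0$ there. The only addition is your explicit justification of the asymptotics $M(h)\to-\infty$ and $M'(h)\to-\infty$, which the paper asserts without detail; your estimate (the $h^{2}$-coefficients of the two arctan terms cancel in the limit, leaving $-\pi h^{2}$ dominant, and similarly $M'(h)=-2\pi h+O(h^{1/2})$) is accurate.
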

\begin{proof}
Recall that $M'(h)$  increases monotonically on $(0,h_2)$, attains its maximum at $h_2$, and then decreases monotonically on $(h_2,+\infty)$.
When $b\leq-\frac{3}{2}(a+1)^2$, $M'(0)\geq0$ by \eqref{M1M0}, thus
$$M'(h_2)>M'(h)>M'(0)\geq0,\quad \mbox{for} \ 0<h<h_2.$$
And since
$$\lim_{h\rightarrow+\infty}M'(h)=-\infty,$$
we find that $M'(h)$ has a unique simple zero $h_1$, located in the interval $(h_2,+\infty)$. It follows that $M(h)$  is monotonically increasing on the interval $(0,h_1)$, and monotonically decreasing on the interval $(h_1,+\infty)$. In this case, by \eqref{M1M0}, $M(0)>0$, therefore $M(h)$ increases until it reaches its maximum $M(h_1)>M(0)>0$,  then decreases, and finally tends to
$-\infty$ as $h\rightarrow+\infty$. Thus, $M(h)$ has a unique simple zero on the interval $(0,+\infty)$.
\end{proof}

If $-\frac{3}{2}(a+1)^2< b<-(a+1)^2$, then $M'(0)<0$.
Since $M'(h)$ achieves its maximum at $h_2$ and  decreases monotonically to $-\infty$ for $h \in (h_2, +\infty)$, the sign of $M'(h_2) \triangleq M'(h_2(a,b), a, b)$ determines the number of zeros of $M'(h)$, which in turn affects the number of zeros of $M(h)$.
To establish the sign of $M'(h_2)$, we analyze the cases when $b = -\frac{3}{2}(a+1)^2$ and $b = -(a+1)^2$. The bifurcation result is presented as follows.

\begin{lm}\label{lm3-10}
There exists a unique function $b=b_1(a)\in\left(-\frac{3}{2}(a+1)^2,-(a+1)^2\right)$ such that $M'(h_2(a,b),a,b)|_{b=b_1(a)}=0$.
Moreover, the function $b=b_1(a)$ is decreasing on $a$.
\end{lm}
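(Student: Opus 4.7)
The plan is to recast the statement as a standard implicit-function problem by introducing
\[
\Psi(a,b)\;:=\;M'(h_2(a,b),a,b),
\]
defined on the strip $\mathcal{S}:=\{(a,b):a>1,\;-\tfrac{3}{2}(a+1)^2<b<-(a+1)^2\}$. Smoothness of $h_2(a,b)$ on $\mathcal{S}$ follows from the implicit function theorem applied to $M''(h_2,a,b)=0$, since the calculation preceding Lemma \ref{lm3-9} shows that $M''$ has a simple zero at $h_2$ (with $M'''(h_2)<0$). Existence and uniqueness of $b_1(a)$ will follow from a sign change of $\Psi$ in $b$ combined with strict $b$-monotonicity, and its monotonicity in $a$ will come from differentiating the identity $\Psi(a,b_1(a))=0$.

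First I would establish $\partial_b\Psi<0$ on $\mathcal{S}$. Since $M''(h_2)=0$, an envelope identity gives
\[
\frac{\partial\Psi}{\partial b}\;=\;M''(h_2)\,\frac{\partial h_2}{\partial b}+\frac{\partial M'}{\partial b}\bigg|_{h=h_2}\;=\;\frac{\partial M'}{\partial b}\bigg|_{h=h_2}.
\]
Differentiating the explicit formula \eqref{M} in $b$ and then in $h$, the $\sqrt{h}$ and algebraic contributions telescope, leaving
\[
\frac{\partial M'}{\partial b}(h,a,b)\;=\;2\arctan\!\left(\frac{\sqrt{2h}}{a+1}\right)-2\arctan\!\left(\frac{\sqrt{2h}}{a-1}\right)-2\pi,
\]
which is strictly negative for $h>0$ since $0<a-1<a+1$ makes the bracket term negative and the trailing $-2\pi$ pushes it well below zero. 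Hence $\Psi(a,\cdot)$ is strictly decreasing on $\mathcal{S}$, which alone delivers uniqueness of $b_1(a)$.

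For existence I would verify the endpoint signs. At $b=-\tfrac{3}{2}(a+1)^2$, formula \eqref{M1M0} gives $M'(0)=0$, and the strict monotonicity of $M'$ on $(0,h_2)$ established in the derivation of Lemma \ref{lm3-9} forces $\Psi(a,-\tfrac{3}{2}(a+1)^2)=M'(h_2)>0$. At $b=-(a+1)^2$ I need $\Psi(a,-(a+1)^2)<0$, and I expect this to be the main obstacle. The purely qualitative route — invoking Lemma \ref{lm-b0H} and the first-order Melnikov theorem — only shows that $M$ has no simple zero at $b=-(a+1)^2$, which forces $\max_h M\le 0$ but falls short of $\max_h M'<0$. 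My approach is therefore a direct estimate: substituting $b=-(a+1)^2$ into the explicit expression for $M'(h)$ obtained by differentiating \eqref{M}, the polynomial, $\sqrt{h}$, $h^{3/2}$ and arctangent pieces regroup into a form that can be bounded above by a strictly negative function on $(0,\infty)$, which is enough.

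Once the sign change is secured, continuity of $\Psi$ and the intermediate value theorem give the unique zero $b_1(a)\in(-\tfrac{3}{2}(a+1)^2,-(a+1)^2)$. For the final assertion, the implicit function theorem yields
\[
b_1'(a)\;=\;-\frac{\partial_a\Psi}{\partial_b\Psi}\bigg|_{b=b_1(a)},
\]
and since $\partial_b\Psi<0$, the inequality $b_1'(a)<0$ reduces to $\partial_a\Psi<0$. The same envelope identity gives $\partial_a\Psi=\partial_a M'|_{h=h_2}$; differentiating \eqref{M} first in $a$ and then in $h$ produces an expression in which, analogously to the $b$-derivative, the algebraic and $\sqrt{h}$ pieces simplify against the arctangent derivatives. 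The resulting combination is then checked to be negative on the locus $b=b_1(a)$ using $a>1$ and the bound $b_1(a)<-(a+1)^2$, completing the proof.
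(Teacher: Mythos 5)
Your overall architecture is the same as the paper's: the envelope identity $\partial_b\bigl(M'(h_2(a,b),a,b)\bigr)=\partial_b M'\big|_{h=h_2}=2\arctan\bigl(\tfrac{\sqrt{2h}}{a+1}\bigr)-2\arctan\bigl(\tfrac{\sqrt{2h}}{a-1}\bigr)-2\pi<0$ (this matches the paper's \eqref{Mb}), endpoint signs at $b=-\tfrac{3}{2}(a+1)^2$ and $b=-(a+1)^2$, the intermediate value theorem for existence, strict $b$-monotonicity for uniqueness, and implicit differentiation with $\partial_a M'\big|_{h=h_2}<0$ for the decrease of $b_1(a)$. The sign $M'(h_2)>0$ at $b=-\tfrac{3}{2}(a+1)^2$ via $M'(0)=0$ and $M''>0$ on $(0,h_2)$ is also exactly the paper's argument.

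However, there is a genuine gap at precisely the point you flag as ``the main obstacle'': the claim that $M'(h)<0$ for all $h>0$ when $b=-(a+1)^2$ (equivalently $M'(h_2)<0$ there) is asserted, not proved. Saying that the polynomial, $\sqrt{h}$, $h^{3/2}$ and arctangent pieces ``regroup into a form that can be bounded above by a strictly negative function'' is not an argument, and this inequality is where essentially all the work of the lemma lies. In the paper it requires: disposing of $a\ge 5+2\sqrt{6}$ and of $h\ge\tfrac{-1+10a-a^2}{2}$ by the crude bound $\arctan\le\tfrac{\pi}{2}$; then, for $1<a<5+2\sqrt{6}$ and small $h$, replacing each arctangent by $\min\bigl(\tfrac{\sqrt{2h}}{a\mp1},\tfrac{\pi}{2}\bigr)$ to build an auxiliary function $A(h)\ge M'(h)$, splitting further at $\bar a=\tfrac{20+\pi^2+4\sqrt{2(12+\pi^2)}}{4+\pi^2}$, locating the maximizer of $A$ ($h=\hat h$, $h=\tfrac{(a-1)^2\pi^2}{8}$ or $h=\tfrac{-1+10a-a^2}{2}$ depending on the range of $a$, with auxiliary roots $a_1,a_2$), and checking $A<0$ at each candidate maximum. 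Without some version of this quantitative analysis, the sign change of $\Psi(a,\cdot)$, hence the existence of $b_1(a)$, is not established. The secondary claim $\partial_a M'\big|_{h=h_2}<0$ is likewise left as ``checked to be negative''; that one is genuinely short (the paper bounds $6(a+1)\arctan\bigl(\tfrac{\sqrt{2h}}{a+1}\bigr)\le 3\pi(a+1)$ and uses $a>1$, $b<0$ — note it needs only $b<0$, not $b_1(a)<-(a+1)^2$), but it too should be written out. So: correct skeleton, but the decisive estimate is missing.
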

\begin{proof}

Obviously, according to the discussion
in Lemma \ref{lm3-9}, if $b=-\frac{3}{2}(a+1)^2$, then $M'(h_2)>M'(0)=0$. If $b=-(a+1)^2$, then
\begin{equation*}\begin{split}\label{M1h}
M'(h)=&-(a+1)^2\pi-6\sqrt{2h}-2h\pi+(-1+10a-a^2-2h)\arctan\left(\frac{\sqrt{2h}}{a-1}\right)\\
&+(1+2a+a^2+2h)\arctan\left(\frac{\sqrt{2h}}{a+1}\right).
\end{split}\end{equation*}
We will prove that $M'(h)<0$ for all $h>0$ when $b=-(a+1)^2$, consequently we have  $M'(h_2)<0$ when $b=-(a+1)^2$.
When $a\geq 5+2\sqrt{6}$, one has  $-1+10a-a^2\leq0$, then
\begin{equation}\begin{split}\label{M1hq}
M'(h)<&-(a+1)^2\pi-6\sqrt{2h}-2h\pi+(-1+10a-a^2-2h)\arctan\left(\frac{\sqrt{2h}}{a-1}\right)\\
&+(1+2a+a^2+2h)\frac{\pi}{2}\\
= &-\frac{1}{2}(1 + a)^2\pi-6\sqrt{2h}-h\pi+(-1+10a-a^2-2h)\arctan\left(\frac{\sqrt{2h}}{a-1}\right)\\
<&0.
\end{split}\end{equation}
When $1<a<5+2\sqrt{6}$ and $h\geq\frac{-1 + 10 a - a^2}{2}$, one has $-1+10a-a^2-2h\leq0$, then the inequality \eqref{M1hq} also holds.
When $1<a<5+2\sqrt{6}$ and $0<h<\frac{-1 + 10 a - a^2}{2}$, we consider an auxiliary function
\begin{equation*}\begin{split}\label{A}
A(h)=&-(a+1)^2\pi-6\sqrt{2h}-2h\pi+(-1+10a-a^2-2h)\min\left(\frac{\sqrt{2h}}{a-1},\frac{\pi}{2}\right)\\
&+(1+2a+a^2+2h)\min\left(\frac{\sqrt{2h}}{a+1},\frac{\pi}{2}\right),
\end{split}\end{equation*}
which satisfies $M'(h)<A(h)$.
More specific, when $1<a\leq\bar a=\frac{20+\pi ^2+4 \sqrt{2 \left(12+\pi ^2\right)}}{4+\pi ^2}\approx4.06096$,
\begin{equation}\label{A1}
A(h)=\left\{\begin{array}{ll}
\begin{array}{ll}-(a+1)^2\pi-6\sqrt{2h}-2h\pi+(-1+10a-a^2-2h)\frac{\sqrt{2h}}{a-1}\\
+(1+2a+a^2+2h)\frac{\sqrt{2h}}{a+1},\end{array} &\mbox{ $0<h\leq\frac{(a-1)^2\pi^2}{8}$,}
\\[2ex]
\begin{array}{ll}-(a+1)^2\pi-6\sqrt{2h}-2h\pi+\frac{\pi}{2}(-1+10a-a^2-2h)\\
+(1+2a+a^2+2h)\frac{\sqrt{2h}}{a+1},\end{array} &\mbox{ $\frac{(a-1)^2\pi^2}{8}<h<\frac{-1 + 10 a - a^2}{2}$,}
\end{array} \right.
\end{equation}
and when $\bar a<a<5+2\sqrt{6}$,
\begin{equation}\begin{split}\label{A2}
A(h)=&-(a+1)^2\pi-6\sqrt{2h}-2h\pi+(-1+10a-a^2-2h)\frac{\sqrt{2h}}{a-1}\\
&+(1+2a+a^2+2h)\frac{\sqrt{2h}}{a+1}, \quad 0<h<\frac{-1 + 10 a - a^2}{2}.
\end{split}\end{equation}
We now prove that $A(h)<0$ for  $A(h)$ defined in \eqref{A1} and \eqref{A2}.
First, consider
\begin{equation}\label{A00}
A(h)=-(a+1)^2\pi-6\sqrt{2h}-2h\pi+(-1+10a-a^2-2h)\frac{\sqrt{2h}}{a-1}+(1+2a+a^2+2h)\frac{\sqrt{2h}}{a+1}.
\end{equation}
It is easy to obtain that
\begin{equation*}
A'(h)=\frac{2 \left(\sqrt{2} (a+1)^2-\pi  (a-1) (a+1) \sqrt{h}-3 \sqrt{2} h\right)}{\left(a^2-1\right) \sqrt{h}},
\end{equation*}
and it has a unique simple zero  at $h=\hat h=\frac{1}{72} (a+1)^2 \left(\pi-\pi  a+\sqrt{\pi ^2 (a-1)^2+24}\right)^2$.
A direct computation shows that
\begin{equation*}\begin{split}
A_1=&A'\left(\frac{(a-1)^2\pi^2}{8}\right)=\frac{-2 \pi ^2 a^3+\left(8-\pi ^2\right) a^2+8 \left(2+\pi ^2\right) a-5 \pi ^2+8}{\pi  (a-1)^2 (a+1)},\\
A_2=&A'\left(\frac{-1 + 10 a - a^2}{2}\right)=\frac{2 \left((a-5) (5 a-1)-\pi  (a-1) (a+1) \sqrt{-a^2+10 a-1}\right)}{\sqrt{-a^2+10 a-1}
   \left(a^2-1\right)},
\end{split}\end{equation*}
and $A_1$ has a unique zero at $a=a_1$  when $a\in(1,\bar a]$ and $A_2$ has a unique zero at $a=a_2$  when $a\in(\bar a,5+2\sqrt{6})$, where $a_1\in(1.8,1.9)$ and $a_2\in(9.83,9.84)$. Thus, we have $\hat h\geq\frac{(a-1)^2\pi^2}{8}$ when $1<a\leq a_1$, and
$\hat h<\frac{(a-1)^2\pi^2}{8}$ when $a_1<a\leq \bar a$, and $\hat h\leq\frac{-1 + 10 a - a^2}{2}$ when $\bar a <a\leq a_2$, and
$\hat h>\frac{-1 + 10 a - a^2}{2}$ when $a_2<a<5+2\sqrt{6}$.
It follows that when $1<a\leq a_1$, $A(h)$ achieves its maximum at $h=\frac{(a-1)^2\pi^2}{8}$ for $h\in(0,\frac{(a-1)^2\pi^2}{8}]$,
when $a_1<a\leq\bar a$, $A(h)$ achieves its maximum at $h=\hat h$  for $h\in(0,\frac{(a-1)^2\pi^2}{8}]$,
when $\bar a<a\leq a_2$,  $A(h)$ achieves its maximum at $h=\hat h$  for $h\in(0,\frac{-1 + 10 a - a^2}{2})$, and
when $a_2<a<5+2\sqrt{6}$, $A(h)$ achieves its maximum at $h=\frac{-1 + 10 a - a^2}{2}$  for $h\in(0,\frac{-1 + 10 a - a^2}{2})$. By a simple calculation,
\begin{equation*}\begin{split}
A\left(\frac{(a-1)^2\pi^2}{8}\right)=&-\frac{\pi  (a-1)\left( \left(4+\pi ^2\right) a^2+\left(8+\pi ^2\right) a-2 \left(\pi ^2-2\right)\right)}{4 (a+1)}<0, \\
A\left(\frac{-1 + 10 a - a^2}{2}\right)
=&-\frac{6 \left(\left(1+4 \pi ^2\right) a^4+4 \left(2 \pi ^2-3\right) a^3+2 \left(11+2 \pi ^2\right) a^2-12 a+1\right)}{(a+1)\left(2 \pi  a^2+2 \pi  a+(a-1)\sqrt{-a^2+10 a-1}\right)}<0,
\end{split}
\end{equation*}
and when $a_1<a\leq a_2$,
\begin{equation*}
A\left(\hat h\right)=\frac{(a+1)^2 }{54 (a-1)}\left(-\pi ^3 (a-1)^3-90 \pi (a-1)+\left(\pi^2(a-1)^2+24\right)^{\frac{3}{2}}\right)<0.
\end{equation*}
Thus, $A(h)$ defined in \eqref{A00} is negative when $1<a\leq\bar a$ and $0<h\leq\frac{(a-1)^2\pi^2}{8}$, and $\bar a<a<5+2\sqrt{6}$ and $0<h<\frac{-1 + 10 a - a^2}{2}$.
Second, consider the function
\begin{equation}\label{A11}
A(h)=-(a+1)^2\pi-6\sqrt{2h}-2h\pi+\frac{\pi}{2}(-1+10a-a^2-2h)+(1+2a+a^2+2h)\frac{\sqrt{2h}}{a+1}.
\end{equation}
We get that
\begin{equation*}
A'(h)=\frac{6h+(a-4) a-5}{\sqrt{2} (a+1) \sqrt{h}}-3 \pi,
\end{equation*}
which has a unique simple zero at
$$h=\bar h=\frac{1}{12}(a+1) \left(\left(3 \pi ^2-2\right) a+\sqrt{3} \pi  \sqrt{(a+1) \left(3 \pi ^2 a-4 a+3 \pi ^2+20\right)}+3 \pi ^2+10\right).$$
It is easy to verify that when $1<a\leq\bar a$,
\begin{equation*}\begin{split}
A_3=&A'\left(\frac{(a-1)^2\pi^2}{8}\right)=\frac{2 (a-5)}{\pi  (a-1)}-\frac{3 \pi  (a+3)}{2 (a+1)}<0,\\
A_4=&A'\left(\frac{-1 + 10 a - a^2}{2}\right)=-\frac{2 (a^2-13a+4)}{(a+1) \sqrt{-a^2+10a-1}}-3 \pi<0.
\end{split}\end{equation*}
Thus, $\frac{(a-1)^2\pi^2}{8}<\frac{-1 + 10 a - a^2}{2}<\bar h$ and $A(h)$ achieves its maximum at $h=\frac{(a-1)^2\pi^2}{8}$. Note that
$$A\left(\frac{(a-1)^2\pi^2}{8}\right)=-\frac{\pi  (a-1) \left(\left(4+\pi ^2\right) a^2+\left(8+\pi ^2\right) a-2 \pi ^2+4\right)}{4 (a+1)}<0,$$
we have $A(h)$ given in \eqref{A11} is also negative when $1<a\leq\bar a$ and $\frac{(a-1)^2\pi^2}{8}<h<\frac{-1 + 10 a - a^2}{2}$. Hence, we have
$M'(h)<A(h)<0$ for $1<a<5+2\sqrt{6}$ and $0<h<\frac{-1 + 10 a - a^2}{2}$. Based on the previous discussions, $M'(h)<0$ when $b=-(a+1)^2$. Further we obtain that $M'(h_2)<0$ when $b=-(a+1)^2$. Combining the fact that $M'(h_2)>0$ when $b=-\frac{3}{2}(a+1)^2$ and
\begin{equation}\begin{split}\label{Mb}
\frac{\mathrm{\partial}M'(h_2)}{\mathrm{\partial}b}\triangleq&\frac{\mathrm{\partial}M'(h_2(a,b),a,b)}{\mathrm{\partial}b}\\
=&\frac{\mathrm{\partial}M'(h,a,b)}{\mathrm{\partial}h}\Big|_{h=h_2}\frac{\mathrm{\partial}h_2}{\mathrm{\partial}b}
+\frac{\mathrm{\partial}M'(h,a,b)}{\mathrm{\partial}b}\Big|_{h=h_2}\\
=&\frac{\mathrm{\partial}M'(h,a,b)}{\mathrm{\partial}b}\Big|_{h=h_2}\\
=&-2 \left(\arctan\left(\frac{\sqrt{2h_2} }{a-1}\right)-\arctan\left(\frac{\sqrt{2h_2}}{a+1}\right)+\pi \right)<0,
\end{split}\end{equation}
there exists a unique function $b=b_1(a)\in\left(-\frac{3}{2}(a+1)^2,-(a+1)^2\right)$ such that $M'(h_2)|_{b=b_1(a)}=0$.
Moreover, it follows from the fact that $b_1(a)$ satisfies $$M'(h_2(a,b),a,b)=0$$ that
\begin{equation*}
\left(\frac{\mathrm{\partial}M'(h,a,b)}{\mathrm{\partial}h}\frac{\mathrm{\partial}h_2}{\mathrm{\partial}b}
+\frac{\mathrm{\partial}M'(h,a,b)}{\mathrm{\partial}b}\right)\Big|_{h=h_2}\frac{\mathrm{d}b_1}{\mathrm{d}a}+
\left(\frac{\mathrm{\partial}M'(h,a,b)}{\mathrm{\partial}h}\frac{\mathrm{\partial}h_2}{\mathrm{\partial}a}
+\frac{\mathrm{\partial}M'(h,a,b)}{\mathrm{\partial}a}\right)\Big|_{h=h_2}=0.
\end{equation*}
It can be simplified to
\begin{equation}\label{ba}
\frac{\mathrm{\partial}M'(h,a,b)}{\mathrm{\partial}b}\Big|_{h=h_2}\frac{\mathrm{d}b_1}{\mathrm{d}a}+\frac{\mathrm{\partial}M'(h,a,b)}{\mathrm{\partial}a}\Big|_{h=h_2}=0.
\end{equation}
Observe that
\begin{equation*}\begin{split}
\frac{\mathrm{\partial}M'(h,a,b)}{\mathrm{\partial}a}=&\frac{8 \sqrt{2h}a(b-2 h)}{\left(a^2-2 a+2 h+1\right) \left(a^2+2 a+2 h+1\right)}+6 (a+1) \arctan\left(\frac{\sqrt{2h} }{a+1}\right)\\
&+6(1-a) \arctan\left(\frac{\sqrt{2h}}{a-1}\right)-6 \pi  (a+1)\\
\leq &\frac{8 \sqrt{2h}a(b-2 h)}{\left(a^2-2 a+2 h+1\right) \left(a^2+2 a+2 h+1\right)}+6(1-a) \arctan\left(\frac{\sqrt{2h}}{a-1}\right)-3 \pi  (a+1)\\
<&0,
\end{split}\end{equation*}
owing to $a>1$ and $b<0$, one has
\begin{equation}\label{b1a}
\frac{\mathrm{d}b_1(a)}{\mathrm{d}a}=-\left(\frac{\mathrm{\partial}M'(h,a,b)}{\mathrm{\partial}a}\Big|_{h=h_2}\right)\Big{/}
\left(\frac{\mathrm{\partial}M'(h,a,b)}{\mathrm{\partial}b}\Big|_{h=h_2}\right)<0
\end{equation}
from \eqref{ba} and \eqref{Mb}. This means that the function $b=b_1(a)$ is  decreasing on $a$, see the red curve of Fig. \ref{fig-bif}.
\end{proof}

To determine the relative position of $b=-\frac{5}{4}(a+1)^2$ and $b=b_1(a)$,  now, we check the properties of  $M(h)$ and $M'(h)$ when $b=-\frac{5}{4}(a+1)^2$. In this case,
\begin{equation}\label{MM01}
\begin{split}
M(0)&=0,\\
M'(0)&=-\frac{1}{2}(a+1)^2\pi,\\
\lim_{h\rightarrow+\infty} M(h)&=-\infty,\\
\lim_{h\rightarrow+\infty} M'(h)&=-\infty.
\end{split}\end{equation}
Recall that  $M''(h)$ has a unique zero $h_2$ on the interval $(0,+\infty)$, it follows from \eqref{MM01} that $M(h)$ has at most two zeros on the interval $(0,+\infty)$.
To be more precise, we analyze the existence and number of intersection points between the curves $b=-\frac{5}{4}(a+1)^2$ and $b=b_1(a)$.
\begin{lm}\label{lm3-11}
For $b=-\frac{5}{4}(a+1)^2$, there exists a unique zero $a=a_*$ such that $M'(h_2(a_*))=0$, which implies that there exists a unique intersection point $A=(a_*,-\frac{5}{4}(a_*+1)^2)$ between the curves $b=-\frac{5}{4}(a+1)^2$ and $b=b_1(a)$. Moreover, $a_*\in(\frac{7}{5},2)$.
\end{lm}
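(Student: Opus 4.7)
The plan is to reduce the statement to a one-variable problem. Define
\[
\Psi(a) := M'(h_2(a,b),a,b)\Big|_{b=-\frac{5}{4}(a+1)^2},
\]
so that, since $\partial_b M'(h_2,a,b)<0$ by \eqref{Mb}, the condition $\Psi(a)=0$ is equivalent to $b_1(a)=-\frac{5}{4}(a+1)^2$. Thus it suffices to show that $\Psi$ has a unique zero on $(1,+\infty)$ and that this zero lies in $(7/5,2)$.

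For existence, I would evaluate $\Psi$ at the two endpoints $a=7/5$ and $a=2$. At each endpoint I specialize $b=-\frac{5}{4}(a+1)^2$, determine $h_2$ as the unique zero of $M''(h)$ on $(0,(a^2-1)/2)$ provided by the analysis preceding Lemma \ref{lm3-9}, and compute $M'(h_2)$ from the closed form in \eqref{M}. The target inequalities are $\Psi(7/5)>0$ and $\Psi(2)<0$; equivalently, $b_1(7/5)>-\tfrac{36}{5}$ and $b_1(2)<-\tfrac{45}{4}$. Since $\Psi$ is continuous in $a$ (the implicit function theorem applies because $M'''(h_2)<0$, as shown in the discussion before Lemma \ref{lm3-9}), the intermediate value theorem then produces some $a_*\in(7/5,2)$ with $\Psi(a_*)=0$.

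For uniqueness, the natural approach is to show $\Psi$ is strictly monotone. Differentiating and using $\partial_h M'(h_2,a,b)=M''(h_2)=0$ to cancel the implicit contribution from $h_2(a,b)$ yields
\[
\Psi'(a)=\frac{\partial M'}{\partial a}\bigg|_{h_2}-\frac{5}{2}(a+1)\,\frac{\partial M'}{\partial b}\bigg|_{h_2},
\]
where both $\partial_a M'|_{h_2}$ and $\partial_b M'|_{h_2}$ were shown strictly negative in the proof of Lemma \ref{lm3-10}. Since $-\frac{5}{2}(a+1)<0$, the two contributions have opposite signs, and this is the main obstacle: sign determination for $\Psi'$ is not immediate and will require a careful quantitative comparison of explicit bounds on $\partial_a M'|_{h_2}$ and $\partial_b M'|_{h_2}$ in terms of $a$ and $h_2$ (exploiting, for instance, $h_2<(a^2-1)/2$). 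As a backup, if global monotonicity of $\Psi$ proves intractable, one may instead establish that $b_1(a)/(a+1)^2$ is a strictly monotone function of $a$, which also rules out more than one solution to $b_1(a)/(a+1)^2=-5/4$.
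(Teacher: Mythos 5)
Your set-up is the same as the paper's: along $b=-\tfrac54(a+1)^2$ one studies $\Psi(a)=M'(h_2(a),a)$, uses $M''(h_2)=0$ to kill the implicit derivative of $h_2$, checks signs at two values of $a$, and concludes by monotonicity of $\Psi$. The problem is that the one step you defer as ``the main obstacle'' --- proving $\Psi'(a)<0$, i.e.\ that $\partial_a M'|_{h_2}-\tfrac52(a+1)\,\partial_b M'|_{h_2}<0$ --- is precisely the heart of the paper's proof, and nothing in your proposal supplies it. In the paper this is done by substituting $b=-\tfrac54(a+1)^2$ into $M'$ first and then showing $\partial M'(h,a)/\partial a<0$ on the whole strip $0<h<\tfrac{a^2-1}{2}$, $a>1$: the case $a\ge 11$ is direct, and for $1<a<11$ one needs an auxiliary majorant $B(h)$ built from $\min(\sqrt{2h}/(a\mp1),\pi/2)$, reduction to a polynomial $U(h,a)$ (equivalently $\tilde U(t,a)$ with $t=\sqrt h$), a resultant computation showing $\tilde U$ has no interior critical points in the relevant regions, and sign checks of $\tilde U$ on all boundary pieces. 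Your observation that the two terms in $\Psi'$ have opposite signs is exactly why no soft argument works here; without carrying out a comparison of this kind (or your unproven backup claim that $b_1(a)/(a+1)^2$ is strictly monotone, which is essentially the same inequality in disguise), uniqueness of $a_*$ is not established, so the proposal has a genuine gap.

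A smaller point: in the existence step you cannot ``determine $h_2$ and compute $M'(h_2)$ from the closed form,'' since $h_2$ is only implicitly defined as the zero of $M''$. The paper circumvents this: at $a=\tfrac75$ it uses $M'(h_2)=\max_h M'(h)\ge M'(\tfrac15)>0$, and at $a=2$ (and at $a=11+2\sqrt{30}$, which it uses first to locate $a_*$ before refining to $a_*<2$) it bounds $M'(h)<0$ uniformly in $h>0$, which again requires an auxiliary-function estimate rather than a plug-in evaluation. These endpoint fixes are routine, but they should be stated; the missing monotonicity argument is the substantive defect.
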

\begin{proof}
A direct computation shows that
\begin{equation*}\begin{split}
M'(h)\triangleq M'(h,a)=&-\frac{1}{2} \pi  (a+1)^2-6 \sqrt{2h}-2 \pi  h+\frac{1}{2} \left(-a^2+22 a-1-4 h\right) \arctan\left(\frac{\sqrt{2h}}{a-1}\right)\\
&+\frac{1}{2} \left(a^2+2 a+1+4 h\right) \arctan\left(\frac{\sqrt{2h}}{a+1}\right),
\end{split}\end{equation*}
and
\begin{equation*}\begin{split}
\frac{\mathrm{\partial}M'(h)}{\mathrm{\partial}a}=&\frac{\sqrt{h} \left(a^2-22 a+4 h+1\right)}{\sqrt{2} \left(a^2-2 a+2 h+1\right)}-\frac{\sqrt{h} \left(a^2+2 a+4 h+1\right)}{\sqrt{2} \left(a^2+2 a+2 h+1\right)}-\pi  (a+1)\\
   &+(11-a) \arctan\left(\frac{\sqrt{2h}}{a-1}\right)+(a+1) \arctan\left(\frac{\sqrt{2h}}{a+1}\right).
\end{split}\end{equation*}
When $a\geq11$, one has
\begin{equation}\begin{split}\label{dma}
\frac{\mathrm{\partial}M'(h)}{\mathrm{\partial}a}\leq&\frac{\sqrt{h} \left(a^2-22 a+4 h+1\right)}{\sqrt{2} \left(a^2-2 a+2 h+1\right)}-\frac{\sqrt{h} \left(a^2+2 a+4 h+1\right)}{\sqrt{2} \left(a^2+2 a+2 h+1\right)}-\pi  (a+1)+(a+1)\frac{\pi}{2}\\
   =&-\frac{2 \sqrt{2} a \sqrt{h} \left(5 a^2+10 a+8 h+5\right)}{\left(a^2-2 a+2 h+1\right) \left(a^2+2 a+2 h+1\right)}-\frac{1}{2} \pi  (a+1)<0.
\end{split}\end{equation}
When $1<a<11$, we consider an  auxiliary function
\begin{equation*}\begin{split}
B(h)=&\frac{\sqrt{h} \left(a^2-22 a+4 h+1\right)}{\sqrt{2} \left(a^2-2 a+2 h+1\right)}-\frac{\sqrt{h} \left(a^2+2 a+4 h+1\right)}{\sqrt{2} \left(a^2+2 a+2 h+1\right)}-\pi  (a+1)\\
   &+(11-a) \min\left(\frac{\sqrt{2h}}{a-1},\frac{\pi}{2}\right)+(a+1) \min\left(\frac{\sqrt{2h}}{a+1},\frac{\pi}{2}\right)\\
   =&B_1(h)+(11-a) \min\left(\frac{\sqrt{2h}}{a-1},\frac{\pi}{2}\right)+(a+1) \min\left(\frac{\sqrt{2h}}{a+1},\frac{\pi}{2}\right),
\end{split}\end{equation*}
satisfying
\begin{equation}\label{MaB}
\frac{\mathrm{\partial}M'(h)}{\mathrm{\partial}a}\leq B(h),
\end{equation}
where
$$B_1(h)=\frac{\sqrt{h} \left(a^2-22 a+4 h+1\right)}{\sqrt{2} \left(a^2-2 a+2 h+1\right)}-\frac{\sqrt{h} \left(a^2+2 a+4 h+1\right)}{\sqrt{2} \left(a^2+2 a+2 h+1\right)}-\pi  (a+1).$$
We will prove that $B(h)<0$ for $0<h<\frac{a^2-1}{2}$.
When $1<a\leq\frac{\pi^2+4}{\pi^2-4}\approx2.36295$,
\begin{equation}\label{B1}
B(h)=\left\{\begin{array}{ll}
\begin{array}{ll}B_1(h)+(11-a)\frac{\sqrt{2h}}{a-1}+\sqrt{2h}, \end{array} &\mbox{ $0<h\leq\frac{(a-1)^2\pi^2}{8}$,}
\\[2ex]
\begin{array}{ll}B_1(h)+(11-a)\frac{\pi}{2}+\sqrt{2h},\end{array} &\mbox{ $\frac{(a-1)^2\pi^2}{8}<h<\frac{a^2-1}{2}$,}
\end{array} \right.
\end{equation}
and $\frac{\pi^2+4}{\pi^2-4}<a<11$,
\begin{equation}\label{B2}
B(h)=B_1(h)+(11-a)\frac{\sqrt{2h}}{a-1}+\sqrt{2h}, \quad 0<h<\frac{a^2-1}{2}.
\end{equation}
First, consider the function
\begin{equation}\begin{split}\label{Bh1}
B(h)=&B_1(h)+(11-a)\frac{\sqrt{2h}}{a-1}+\sqrt{2h}\\
=&\frac{1}{(a-1) \left((a-1)^2+2 h\right) \left((a+1)^2+2 h\right)}U(h,a)
\end{split}\end{equation}
defined by \eqref{B1} and \eqref{B2} in the regions $\mathcal{D}_1=\{(h,a)\mid 0<h\leq\frac{(a-1)^2\pi^2}{8}, 1<a\leq\frac{\pi^2+4}{\pi^2-4}\}$ and $\mathcal D_2=\{(h,a)\mid 0<h<\frac{a^2-1}{2}, \frac{\pi^2+4}{\pi^2-4}<a<11\}$, where
\begin{equation}\begin{split}\label{Ha}
U(h,a)=&-\pi  \left(a^2-1\right)^3-10 \sqrt{2} (a-1) (a+1)^2 \sqrt{h}-4 \pi  \left(a^4-1\right) h+8 \sqrt{2} \left(3 a^2+2 a+5\right) h^{3/2}\\
&-4 \pi  \left(a^2-1\right) h^2+40\sqrt{2} h^{5/2}.
\end{split}\end{equation}
Let $\sqrt{h}=t$, $U(h,a)$ becomes
\begin{equation*}\begin{split}
\tilde U(t,a)=&-\pi  \left(a^2-1\right)^3-10 \sqrt{2} (a-1) (a+1)^2t-4 \pi  \left(a^4-1\right)t^2+8 \sqrt{2} \left(3 a^2+2 a+5\right) t^3\\
&-4 \pi  \left(a^2-1\right) t^4+40\sqrt{2} t^5,
\end{split}\end{equation*}
and the two regions $\mathcal D_1$ and $\mathcal D_2$ become $\tilde{\mathcal{D}}_1=\{(t,a)\mid 0<t\leq\frac{\sqrt{2}(a-1)\pi}{4}, 1<a\leq\frac{\pi^2+4}{\pi^2-4}\}$ and $\tilde{\mathcal{D}}_2=\{(t,a)\mid 0<t<\frac{\sqrt{2(a^2-1)}}{2}, \frac{\pi^2+4}{\pi^2-4}<a<11\}$.
By a direct computation,
\begin{equation*}
\begin{split}
  \frac{\partial\tilde U(t,a)}{\partial t} =& -10 \sqrt{2} (a-1) (a+1)^2-8 \pi  \left(a^4-1\right) t+24 \sqrt{2} \left(3 a^2+2 a+5\right) t^2\\
  &-16 \pi  (a^2-1)t^3+200 \sqrt{2} t^4,\\
  \frac{\partial\tilde U(t,a)}{\partial a} =&-6 \pi a (a^2-1)^2 -10 \sqrt{2} (a+1) (3 a-1) t -16 \pi  a^3 t^2+16 \sqrt{2} (3 a+1) t^3-8 \pi  a t^4,
\end{split}
\end{equation*}
and the resultant of $\frac{\partial\tilde U(t,a)}{\partial t}$ and  $ \frac{\partial\tilde U(t,a)}{\partial a}$ with respect to $a$, denoted by $\Phi(t)$ is
\begin{equation}\label{Phi}
\Phi(t)={\rm res}\left(\frac{\partial\tilde U(t,a)}{\partial t},\frac{\partial\tilde U(t,a)}{\partial a},a\right)=131072\pi t^5R(t),
\end{equation}
where
\begin{eqnarray*}
R(t)&=&45000 (2 + \pi^2) (375 + 16 \pi^2) +
 3000 \sqrt{2} \pi (56450 + 11747 \pi^2 + 336 \pi^4) t\\
 && +
 300 (1473125 + 2255190 \pi^2 + 207432 \pi^4 +3456 \pi^6) t^2\\
 && +
 300 \sqrt{2} \pi (7666400 + 3011799 \pi^2 + 134688 \pi^4 +
    768 \pi^6) t^3 \\
    &&+
 6 (192643750 + 1399761625 \pi^2 + 195157400 \pi^4 +
    4223488 \pi^6 + 6144 \pi^8) t^4\\
    && +
 2 \sqrt{2} \pi (4680943500 + 4445786815 \pi^2 +
    239484496 \pi^4 + 1287168 \pi^6) t^5 \\
    &&+ (1108535625 +
    51111020840 \pi^2 + 8104828344 \pi^4 + 145305984 \pi^6 +
    208896 \pi^8) t^6 \\
    &&+
 \sqrt{2} \pi (32677962750 + 35979978671 \pi^2 +
    1871600736 \pi^4 + 9398016 \pi^6) t^7\\
    && +
 4 (141750000 + 30756096470 \pi^2 + 5450594003 \pi^4 +
    87435136 \pi^6 + 110848 \pi^8) t^8 \\
    &&+
 2 \sqrt{2} \pi (32406672915 + 30091038442 \pi^2 +
    1562509712 \pi^4 + 7630080 \pi^6) t^9\\
    && +
 8 \pi^2 (12869010693 + 2941897493 \pi^2 + 49102624 \pi^4 +
    54528 \pi^6) t^{10}\\
    && +
 8 \sqrt{2} \pi (7086052044 + 4065513609 \pi^2 +
    277371704 \pi^4 + 1452672 \pi^6) t^{11}\\
    && +
 24 \pi^2 (563317857 + 310229714 \pi^2 + 7966304 \pi^4 +
    8192 \pi^6) t^{12}\\
    && +
 96 \sqrt{2} \pi (283435200 - 33983411 \pi^2 + 4755340 \pi^4 +
    39104 \pi^6) t^{13}\\
    && +
 32 \pi^2 (280810800 - 27940071 \pi^2 + 882864 \pi^4 +
    1024 \pi^6) t^{14} \\
    &&+
 240 \sqrt{2} \pi^3 (2323215 - 152624 \pi^2 + 1536 \pi^4) t^{15}\\
 && +
 19200 \pi^4 (1605 - 52 \pi^2) t^{16} + 320000 \sqrt{2} \pi^5 t^{17}.
\end{eqnarray*}
Note that all the coefficients of $R(t)$ is positive, it follows that $R(t)$ has no zeros on $(0,+\infty)$, further $\Phi(t)$ has no zeros on $(0,+\infty)$ by \eqref{Phi}.
This implies that there does not exist the extreme point of $\tilde U(t,a)$ in the interior of $\tilde{\mathcal{D}}_1$ and $\tilde{\mathcal{D}}_2$. Therefore, $\tilde U(t,a)$ achieves its
maximum at the boundary of  $\tilde{\mathcal{D}}_1$ and $\tilde{\mathcal{D}}_2$. It is easy to verify that
\begin{equation*}
\begin{split}
   \tilde U(0,a)= &-\left(a^2-1\right)^3\pi<0,\\
   \tilde U\left(\frac{\sqrt{2}(a-1)\pi}{4},a\right)=&-\frac{1}{16} \pi  (a-1)^2 \left(\left(4+\pi ^2\right)^2 a^4+\left(32-24 \pi ^2-7 \pi ^4\right) a^3+\left(80+8 \pi ^2+15 \pi ^4\right) a^2\right.\\
   &\left.+\left(128-24 \pi ^2-13 \pi
   ^4\right) a+4 \left(4+\pi ^2\right)^2\right)<0,\quad 1<a\leq\frac{\pi^2+4}{\pi^2-4},\\
   \tilde U\left(t,\frac{\pi^2+4}{\pi^2-4}\right)=&40 \sqrt{2} t^5-\frac{64 \pi ^3 t^4}{(\pi -2)^2 (2+\pi )^2}+\frac{16 \sqrt{2} \left(48-8 \pi ^2+5 \pi ^4\right) t^3}{(\pi -2)^2 (2+\pi )^2}-\frac{128 \pi ^3 \left(16+\pi
   ^4\right) t^2}{(\pi -2)^4 (2+\pi )^4}\\
   &-\frac{320 \sqrt{2} \pi ^4 t}{(\pi -2)^3 (2+\pi )^3}-\frac{4096 \pi ^7}{(\pi -2)^6 (2+\pi )^6}<0, \quad 0<t\leq\frac{2 \sqrt{2} \pi }{\pi ^2-4},\\
   \tilde U\left(\frac{\sqrt{2(a^2-1)}}{2},a\right)=&-2 a \left(a^2-1\right) \left(\sqrt{a^2-1} (1-11 a)+2 \pi  a \left(a^2-1\right)\right)<0,\quad \frac{\pi^2+4}{\pi^2-4}<a<11,\\
   \tilde U(t,11)=&40 \sqrt{2} t^5-480 \pi  t^4+3120 \sqrt{2} t^3-58560 \pi  t^2-14400 \sqrt{2} t-1728000 \pi<0, \\
   &0<t<2 \sqrt{15},
\end{split}
\end{equation*}
thus we obtain that $\tilde U(t,a)<0$ in the regions $\tilde{\mathcal{D}}_1$ and $\tilde{\mathcal{D}}_2$. Further, $U(h,a)<0$ by \eqref{Ha} and $B(h)<0$ by \eqref{Bh1} in the regions $\mathcal D_1$ and $\mathcal D_2$.
Second, we consider the function
\begin{equation}\label{Bh2}
B(h)=B_1(h)+(11-a)\frac{\pi}{2}+\sqrt{2h}
\end{equation}
for $\frac{(a-1)^2\pi^2}{8}<h<\frac{a^2-1}{2}$ when $1<a\leq\frac{\pi^2+4}{\pi^2-4}$ given in \eqref{B1}.
A direct computation shows that
\begin{equation}\label{B21}
  B'(h)=\frac{1}{\sqrt{2h} (1-2a+a^2+2h)^2 (1+2a+a^2+2 h)^2}K(h),
\end{equation}
where
\begin{equation*}
\begin{split}
K(h)=&(a-1)^2 \left(a^2-12 a+1\right) (a+1)^4+8 \left(a^4-3 a^3+14 a^2-3 a+1\right) (a+1)^2 h\\
&+8 \left(3 a^4+7 a^3+32 a^2+7 a+3\right) h^2+32 (a+1)^2 h^3+16 h^4.
\end{split}
\end{equation*}
Observe that all the coefficients of $K'(h)$ are positive for $1<a\leq\frac{\pi^2+4}{\pi^2-4}$, thus we have $K'(h)>0$. Consequently,
\begin{equation*}\begin{split}
K(h)>&K\left(\frac{(a-1)^2\pi^2}{8}\right)\\
=&\frac{1}{256} (a-1)^2 \left(\left(4+\pi ^2\right)^4 a^6-2 \left(4+\pi ^2\right) \left(256-32 \pi ^2+4 \pi ^4+3 \pi ^6\right) a^5\right.\\
&\left.+\left(-10496+2304 \pi ^2+672 \pi ^4-16 \pi
   ^6+15 \pi ^8\right) a^4\right.\\
   &\left.-4 \left(4096-1408 \pi ^2+400 \pi ^4-16 \pi ^6+5 \pi ^8\right) a^3\right.\\
   &\left.+\left(-10496+2304 \pi ^2+672 \pi ^4-16 \pi ^6+15 \pi ^8\right) a^2\right.\\
   &\left.-2
   \left(4+\pi ^2\right) \left(256-32 \pi ^2+4 \pi ^4+3 \pi ^6\right) a+\left(4+\pi ^2\right)^4\right)>0
\end{split}\end{equation*}
for $\frac{(a-1)^2\pi^2}{8}<h<\frac{a^2-1}{2}$, and hence $B'(h)>0$ for $\frac{(a-1)^2\pi^2}{8}<h<\frac{a^2-1}{2}$ by \eqref{B21}. This means that $B(h)$ given in \eqref{Bh2} is an increasing function for $\frac{(a-1)^2\pi^2}{8}<h<\frac{a^2-1}{2}$. It follows from
$$B\left(\frac{a^2-1}{2}\right)=\frac{\sqrt{a^2-1} \left(2 a^2-11 a-1\right)-3 \pi  a \left(a^2-4 a+3\right)}{2 (a-1) a}<0$$
that $B(h)<0$ for $\frac{(a-1)^2\pi^2}{8}<h<\frac{a^2-1}{2}$ when $1<a\leq\frac{\pi^2+4}{\pi^2-4}$. Summarizing  these two results,
we have the auxiliary function $B(h)<0$ for $0<h<\frac{a^2-1}{2}$ when $1<a<11$. Thus we acquire $\frac{\mathrm{\partial}M'(h)}{\mathrm{\partial}a}<0$ for $0<h<\frac{a^2-1}{2}$ when $1<a<11$ from \eqref{MaB}. Combining the result in \eqref{dma}, we get that $\frac{\mathrm{\partial}M'(h)}{\mathrm{\partial}a}<0$ for $0<h<\frac{a^2-1}{2}$ when $a>1$.
Note that $h_2=h_2(a)\in\left(0,\frac{a^2-1}{2}\right)$, we obtain that
\begin{equation*}\begin{split}
\frac{\mathrm{d}M'(h_2)}{\mathrm{d}a}=\frac{\mathrm{d}M'(h_2(a),a)}{\mathrm{d}a}=&\frac{\mathrm{\partial}M'(h,a)}{\mathrm{\partial}h}\Big|_{h=h_2}\frac{\mathrm{d}h_2}{\mathrm{d}a}
+\frac{\mathrm{\partial}M'(h,a)}{\mathrm{\partial}a}\Big|_{h=h_2}
=\frac{\mathrm{\partial}M'(h,a)}{\mathrm{\partial}a}\Big|_{h=h_2}<0.
\end{split}\end{equation*}
When $a=\frac{7}{5}$,
$$M'\left(\frac{1}{5}\right)=\frac{1}{25} \left(-30 \sqrt{10}-82 \pi +82 \arctan\left(\frac{1}{6}\sqrt{\frac{5}{2}}\right)+338 \arctan\left(\sqrt{\frac{5}{2}}\right)\right)\approx0.358647>0,$$
thus we have $M'(h_2)\geq M'\left(\frac{1}{5}\right)>0$.
When $a=11+2\sqrt{30}$,
\begin{equation*}
\begin{split}
M'(h)=&-2 \left(\pi  h+3 \sqrt{2} \sqrt{h}+12 \sqrt{30} \pi +66 \pi \right)-2 h \arctan\left(\frac{\left(\sqrt{\frac{6}{5}}-1\right) \sqrt{h}}{\sqrt{2}}\right)\\
&+2 \left(h+12\sqrt{30}+66\right) \arctan\left(\frac{\sqrt{h}}{\sqrt{2} \left(6+\sqrt{30}\right)}\right)\\
\leq&-\pi  h-6 \sqrt{2} \sqrt{h}-6 \left(11+2 \sqrt{30}\right) \pi-2 h \arctan\left(\frac{\left(\sqrt{\frac{6}{5}}-1\right) \sqrt{h}}{\sqrt{2}}\right)\\
<&0,
\end{split}
\end{equation*}
it follows that $M'(h_2)<0$. Therefore, there exists a unique zero $a=a_*$ such that $M'(h_2(a_*))=0$, which implies that $A=(a_*,-\frac{5}{4}(a_*+1)^2)$ lies on the curve $b=b_1(a)$.

Moreover, using the method of auxiliary function as in Lemma \ref{lm3-10} and the previous proof, it is easy to verify that when $a=2$,
\begin{equation*}\begin{split}
M'(h)=&-\frac{9
   \pi }{2}-6 \sqrt{2h}-2 \pi  h+\frac{1}{2} (4 h+9) \arctan\left(\frac{\sqrt{2h}}{3}\right)+\frac{1}{2} (39-4 h) \arctan \left(\sqrt{2h}\right)\\
   \leq&\frac{1}{3}\pi  \left(9-\pi ^2\right)<0,
\end{split}\end{equation*}
for all $h>0$. We determine that $a_*\in(\frac{7}{5},2)$.
\end{proof}

With the help of Lemma \ref{lm3-11}, two results are obtained.

\begin{lm}\label{lm3-12}
If $\max\{-\frac{5}{4}(a+1)^2,b_1(a)\}\leq b<-(a+1)^2$, then $M(h)$ has no zero on the interval $(0,+\infty)$, see the region $D_1$ with its lower boundary in Fig.  \ref{fig-bif}.
\end{lm}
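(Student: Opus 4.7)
The plan is to combine the monotonicity structure of $M'(h)$ and $M(h)$ established in the preceding analysis with two sign conditions coming directly from the hypotheses. The key observation is that the cutoffs $b=-\tfrac{5}{4}(a+1)^2$ and $b=b_1(a)$ are precisely the thresholds that control the signs of $M(0)$ and $M'(h_2)$, respectively. So the argument will be a short synthesis rather than a new calculation.

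First, I would note that the hypothesis $b\ge\max\{-\tfrac{5}{4}(a+1)^2,b_1(a)\}$ forces in particular $b>-\tfrac{3}{2}(a+1)^2$, so the formula $M'(0)=-(3+6a+3a^2+2b)\pi$ in \eqref{M1M0} gives $M'(0)<0$. Next, by Lemma \ref{lm3-10} and the identity $\partial M'(h_2)/\partial b<0$ from \eqref{Mb}, the inequality $b\ge b_1(a)$ implies
\[
M'(h_2)\;\le\; M'(h_2)\bigl|_{b=b_1(a)}\;=\;0.
\]
Recall $M'(h)$ increases on $(0,h_2)$, attains its maximum at $h_2$, and decreases to $-\infty$ on $(h_2,+\infty)$. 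Since that maximum is nonpositive, we conclude $M'(h)\le 0$ on $(0,+\infty)$, with equality at most at the single point $h=h_2$. Hence $M(h)$ is strictly decreasing on $(0,+\infty)$.

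Second, I would evaluate $M(0)$ using \eqref{M1M0}:
\[
M(0)\;=\;-\tfrac{\pi}{4}(a+1)^2\bigl(5(a+1)^2+4b\bigr).
\]
The hypothesis $b\ge -\tfrac{5}{4}(a+1)^2$ makes the bracket nonnegative, so $M(0)\le 0$. Combining this with the strict monotonicity from the previous step yields
\[
M(h)\;<\;M(0)\;\le\;0\qquad\text{for all }h>0,
\]
which is the claim that $M(h)$ has no zero on $(0,+\infty)$.

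Since Lemmas \ref{lm3-9}--\ref{lm3-11} have already done the work of locating $h_2$, establishing the monotonicity of $M'$ and of $M'(h_2)$ in $b$, and identifying the curve $b=b_1(a)$, there is no real obstacle here: the proof is essentially a bookkeeping exercise combining the sign of $M(0)$ (controlled by the threshold $-\tfrac{5}{4}(a+1)^2$) with the sign of $M'(h_2)$ (controlled by $b_1(a)$). The only minor care needed is to emphasize that $M'(h)\le 0$ vanishes at the single point $h_2$, which still forces $M$ to be strictly decreasing so that the weak inequality $M(0)\le 0$ upgrades to the strict inequality $M(h)<0$ for $h>0$.
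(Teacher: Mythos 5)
Your proof is correct and follows essentially the same route as the paper: the threshold $b\ge b_1(a)$ together with $\partial M'(h_2)/\partial b<0$ gives $M'(h)\le M'(h_2)\le 0$, and $b\ge -\tfrac54(a+1)^2$ gives $M(0)\le 0$, so $M$ decreases and stays negative on $(0,+\infty)$. Your extra remark that $M'$ can vanish only at $h_2$, upgrading weak to strict decrease, is a small but welcome precision over the paper's terser wording.
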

\begin{proof}
If $\max\{-\frac{5}{4}(a+1)^2,b_1(a)\}\leq b<-(a+1)^2$, then $M(0)\leq0$ and $M'(h)\leq M'(h_2)\leq0$ for $h\in(0,+\infty)$. One has that $M(h)$ is monotonically decreasing on the interval $(0,+\infty)$, further $M(h)<0$ for $h\in(0,+\infty)$. This lemma is finished.
\end{proof}

\begin{lm}\label{lm3-13}
If $a>a_*$ and $b_1(a)<b<-\frac{5}{4}(a+1)^2$, then $M(h)$ has a unique zero on $(0,+\infty)$, see the region $D_2$ in Fig. \ref{fig-bif}.
\end{lm}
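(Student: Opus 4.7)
The plan is to combine three facts already in hand: the formula for $M(0)$, the sign of $M'(h_2)$ on $D_2$, and the structural observation (derived just before Lemma \ref{lm3-9}) that $h_2$ is the unique maximizer of $M'$ on $(0,+\infty)$. Together these will show that on $D_2$ the function $M$ is strictly decreasing, starts out positive, and diverges to $-\infty$, so it must possess exactly one simple zero.

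First I would establish that $M(0)>0$ throughout $D_2$. From \eqref{M1M0} one has $M(0)=-\tfrac{\pi}{4}(a+1)^2(5+10a+5a^2+4b)$, and the hypothesis $b<-\tfrac{5}{4}(a+1)^2$ makes the last factor strictly negative. Next I would pin down the sign of $M'(h_2)$ throughout $D_2$. By the defining relation of $b_1(a)$ one has $M'(h_2(a,b),a,b)=0$ at $b=b_1(a)$, and the computation \eqref{Mb} inside the proof of Lemma \ref{lm3-10} gives $\frac{\partial M'(h_2)}{\partial b}<0$. Consequently, for any $b>b_1(a)$ one has $M'(h_2)<0$; in particular this holds throughout $D_2$. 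Since $h_2$ is the unique maximizer of $M'$ on $(0,+\infty)$, it follows that $M'(h)\le M'(h_2)<0$ for every $h>0$, so $M$ is strictly decreasing on $(0,+\infty)$.

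The remaining step is routine: strict monotonicity of $M$ together with $M(0)>0$ and $\lim_{h\to+\infty}M(h)=-\infty$ (the $-\pi h^2$ term in \eqref{M} dominates) yields exactly one zero of $M$ on $(0,+\infty)$, automatically simple since $M'<0$ there. The only point requiring a brief additional check is the nonemptiness of $D_2$ for $a>a_*$: Lemma \ref{lm3-11} shows $M'(h_2)|_{b=-\frac{5}{4}(a+1)^2}<0$ for $a>a_*$, and together with $\frac{\partial M'(h_2)}{\partial b}<0$ this places the zero $b_1(a)$ of $M'(h_2)$ strictly below $-\tfrac{5}{4}(a+1)^2$. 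There is no substantive obstacle in this lemma; the analytic heavy lifting was already carried out in Lemmas \ref{lm3-10} and \ref{lm3-11}.
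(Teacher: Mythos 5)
Your argument is correct and follows essentially the same route as the paper: $M(0)>0$ from \eqref{M1M0} under $b<-\tfrac{5}{4}(a+1)^2$, $M'(h)\le M'(h_2)<0$ via the monotone dependence \eqref{Mb} of $M'(h_2)$ on $b$ together with $M'(h_2)|_{b=b_1(a)}=0$, and then strict decrease plus $\lim_{h\to+\infty}M(h)=-\infty$ gives the unique (simple) zero. Your added remark on the nonemptiness of $D_2$ via Lemma \ref{lm3-11} is a harmless extra check, not a difference in method.
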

\begin{proof}
When $a>a_*$ and $b_1(a)<b<-\frac{5}{4}(a+1)^2$, one has $M(0)>0$ and $M'(h)\leq M'(h_2)<0$ for $h\in(0,+\infty)$. Thus, $M(h)$ is monotonically  decreasing on the interval $(0,+\infty)$, and it follows from $M(0)>0$ and $\lim_{h\rightarrow+\infty}M(h)=-\infty$ that this lemma holds.
\end{proof}

When $-\frac{3}{2}(a+1)^2<b<b_1(a)$, one has $M'(h_2)>0$, which implies that $M'(h)$ has two different simple zeros on the interval $(0,+\infty)$, denoted by $h_{11}\triangleq h_{11}(a,b)$ and $h_{12}\triangleq h_{12}(a,b)$ with $h_{11}<h_{12}$.
Thus, $M(h)$ decreases on $(0,h_{11})$ and achieves its local minimum  at $h_{11}$ first, then increases on $(h_{11},h_{12})$ and reaches its local maximum at $h_{12}$, and at last it monotonically  decreases to $-\infty$ as $h\rightarrow+\infty$.
Note that when $b=b_1(a)$, $h_{11}=h_{12}=h_2$. We study the number of zeros of $M(h)$ case by case. First, an important bifurcation curve is obtained.
\begin{lm}\label{lm3-14}
There exists a unique function $b=b_2(a)$ such that $M(h_{11}(a,b_2(a)),a,b_2(a))=0$, and  $b=b_2(a)$ is monotonically decreasing and has a unique intersection point $B=(a_{**},b_1(a_{**}))$ with $b=b_1(a)$, where $a_{**}>a_{*}$, see  Fig. \ref{fig-bif}.
\end{lm}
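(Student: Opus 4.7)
The plan is to apply the implicit function theorem to $F(a,b) := M(h_{11}(a,b), a, b)$. On the open region $\Omega := \{(a,b) : a > 1,\ -\tfrac{3}{2}(a+1)^2 < b < b_1(a)\}$ where $M'(h_2) > 0$, the smaller root $h_{11}$ of $M'(\cdot,a,b)$ lies on the strictly increasing branch of $M'$, so $M''(h_{11}) > 0$ and $h_{11}$ is $C^1$ in $(a,b)$. Using $M'(h_{11}) \equiv 0$, the chain rule collapses the derivatives of $F$ to
\begin{equation*}
\frac{\partial F}{\partial b} = \frac{\partial M}{\partial b}\bigg|_{h = h_{11}}, \qquad \frac{\partial F}{\partial a} = \frac{\partial M}{\partial a}\bigg|_{h = h_{11}},
\end{equation*}
reducing the whole problem to a sign analysis of $\partial_a M$ and $\partial_b M$.

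Differentiating \eqref{M} gives
\begin{equation*}
\frac{\partial M}{\partial b} = -\pi(a+1)^2 - 2\sqrt{2h} - 2\pi h - \bigl((a-1)^2+2h\bigr)\arctan\!\tfrac{\sqrt{2h}}{a-1} + \bigl((a+1)^2+2h\bigr)\arctan\!\tfrac{\sqrt{2h}}{a+1},
\end{equation*}
and I would prove $\partial_b M < 0$ for all $a>1$, $h>0$ by the piecewise auxiliary-function method of Lemmas \ref{lm3-10}--\ref{lm3-11}: partition the $h$-range at the crossover points of the naive bounds $\arctan x \leq x$ and $\arctan x \leq \pi/2$, and on each piece reduce to a polynomial inequality verifiable by a resultant computation. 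This makes $F(a,\cdot)$ strictly decreasing in $b$ and guarantees uniqueness of any root. For existence I would examine the two endpoints: as $b \downarrow -\tfrac{3}{2}(a+1)^2$ one has $h_{11} \to 0^+$ and $F \to M(0) = \tfrac{1}{4}\pi(a+1)^4 > 0$ by \eqref{M1M0}, while as $b \uparrow b_1(a)$, $h_{11}$ merges with $h_{12}$ at $h_2$, so $F \to g(a) := M(h_2(a,b_1(a)),a,b_1(a))$. At $a = a_*$, Lemma \ref{lm3-11} together with \eqref{MM01} gives $M(0,a_*,b_1(a_*)) = 0$, and since $M'(\cdot, a_*, b_1(a_*)) \leq 0$ with strict inequality off $h_2$, $M$ strictly decreases on $(0,+\infty)$ and $g(a_*) < 0$. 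The IVT and the IFT together then deliver a unique smooth branch $b = b_2(a)$ through $(a_*, b_2(a_*))$, and the monotonicity
\begin{equation*}
\frac{d b_2}{d a} = -\frac{\partial_a M(h_{11}, a, b_2)}{\partial_b M(h_{11}, a, b_2)} < 0
\end{equation*}
follows once $\partial_a M < 0$ on $\Omega$ is established by the same piecewise method.

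For the intersection with $b_1$, I view $g(a)$ as a function of $a$ alone. Because $M'(h_2, a, b_1(a)) \equiv 0$, the total derivative collapses to
\begin{equation*}
g'(a) = \frac{\partial M}{\partial a}(h_2,a,b_1(a)) + \frac{\partial M}{\partial b}(h_2,a,b_1(a)) \cdot b_1'(a),
\end{equation*}
whose two summands have opposite signs since $\partial_a M, \partial_b M < 0$ while $b_1'(a) < 0$ by \eqref{b1a}. A careful asymptotic comparison of the arctangent--polynomial combinations should yield $g'(a) > 0$, so $g$ strictly increases. Combining $g(a_*) < 0$ with $g(a) > 0$ for $a$ sufficiently large (forced by $b_1(a)$ dropping far below $-\tfrac{5}{4}(a+1)^2$ as $a$ grows, so that $M(0) = -\tfrac{\pi}{4}(a+1)^2(5(a+1)^2+4b_1(a)) \to +\infty$ at $b = b_1(a)$, eventually outweighing the strict decrease of $M$ up to $h_2$), the IVT produces a unique $a_{**} > a_*$ with $g(a_{**}) = 0$, equivalently $b_2(a_{**}) = b_1(a_{**})$.

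The chief obstacle is verifying the two sign inequalities $\partial_a M < 0$ on $\Omega$ and $g'(a) > 0$ on $(a_*, +\infty)$. Both are transcendental inequalities in the same family as those treated in Lemmas \ref{lm3-10} and \ref{lm3-11}, but the presence of two arctangents with different arguments and an additional direction in $\Omega$ (compared to the one-parameter situation on the boundary curve $b = b_1(a)$) makes the piecewise partition of the plane and the accompanying resultant computations substantially more intricate than in the earlier lemmas.
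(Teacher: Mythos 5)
Your skeleton coincides with the paper's: collapse the derivatives of $F(a,b)=M(h_{11}(a,b),a,b)$ via $M'(h_{11})=0$, get uniqueness and monotonicity of $b_2$ from $\partial_b M<0$ and $\partial_a M<0$ at $h=h_{11}$ (these are exactly the computations \eqref{h11} and \eqref{M11a} in the paper, and your formula for $\partial_b M$ is correct), and then locate the intersection with $b_1$ by studying $g(a)=M(h_2(a,b_1(a)),a,b_1(a))$, using $g(a_*)<0$, monotonicity of $g$, and positivity of $g$ for some larger $a$. The problem is that the two steps you defer are precisely the substance of the lemma. The claim $g'(a)>0$ is the sum $\partial_a M+\partial_b M\cdot b_1'(a)$ of two terms of \emph{opposite} sign and comparable magnitude, so no ``careful asymptotic comparison'' can settle it generically; the paper has to (i) prove the localization $0<h_2<\tfrac45$ on the strip $-\tfrac32(a+1)^2<b<-\tfrac54(a+1)^2$ via the sign of $M''(\tfrac45)$, (ii) exploit that the $b$-coefficient of $S(h,a,b)$ in \eqref{dMh2} is negative to reduce to the boundary value $S_1(h,a)$ at $b=-\tfrac54(a+1)^2$ as in \eqref{Sh}, and (iii) carry out the shape analysis of $S_1$ in $h$ (the entire Appendix) together with the evaluations $S_1(0,a)<0$ and $S_1(\tfrac45,a)<0$. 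None of this machinery appears in your proposal, so the monotonicity of $g$ — hence both the uniqueness of $B$ and the range of validity of $b_2$ — is unproved.

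The existence of $a_{**}$ as you argue it also does not go through. You claim $g(a)>0$ for large $a$ because $M(0)=-\tfrac{\pi}{4}(a+1)^2\bigl(5(a+1)^2+4b_1(a)\bigr)$ grows and ``outweighs the decrease of $M$ up to $h_2$.'' But along $b=b_1(a)$ one only knows $b_1(a)\in\bigl(-\tfrac32(a+1)^2,-\tfrac54(a+1)^2\bigr)$, so $|M'(0)|$ can be of order $(a+1)^2$, and without a uniform bound on $h_2$ (you never establish $h_2<\tfrac45$; a priori $h_2<\tfrac{a^2-1}{2}$) the drop of $M$ on $(0,h_2)$ could be of order $a^4$, the same order as $M(0)$; the comparison is indeterminate. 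The paper avoids this by the explicit check at $a=3$: $b_1(3)<-22$, $M(h,3,-22)>0$ on $(0,\tfrac45)$, and $\partial_b M<0$ give $g(3)>0$, which combined with $g(a_*)<0$ and $g'>0$ yields a unique $a_{**}\in(a_*,3)$. So while your outline mirrors the published argument, the two inequalities you label as the ``chief obstacle'' are not side computations in the same family as Lemmas \ref{lm3-10}--\ref{lm3-11}; they require the $h_2$-localization and the $S_1$-analysis, and as written your proposal has a genuine gap there.
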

\begin{proof}
Firstly, we show the existence and uniqueness of $b=b_2(a)$.
When $b=-\frac{5}{4}(a+1)^2$, $a\in(1,a_*]$,  one has $M(0)=0$, further we have $M(h_{11})<0$.
When $b=-\frac{3}{2}(a+1)^2$, one has  $M(0)=\frac{1}{4}(1 + a)^4\pi>0$ and  $M'(0)=0$ from \eqref{M1M0}. Hence  $M'(h_2)>0$ and if we define $h_{11}=0$, there exists $h_{12}>h_2$ such that $M'(h_{12})=0$ by using $\lim_{h\rightarrow+\infty} M'(h)=-\infty$. From the continuity of the functions $M(h)$ and $M'(h)$, we obtain that $M(h_{11})>0$ for $b=-\frac{3}{2}(a+1)^2+\varepsilon$ with $0<\varepsilon\ll1$. Therefore, there exists
$b=b_2(a)$ such that $M(h_{11}(a,b_2(a)),a,b_2(a))=0$. Notice that
\begin{equation}\begin{split}\label{h11}
\frac{\mathrm{\partial} M(h_{11}(a,b),a,b)}{\mathrm{\partial}b}=&\frac{\partial M(h,a,b)}{\partial h}\Big|_{h=h_{11}}\frac{\partial h_{11}(a,b)}{\partial b}+\frac{\partial M(h,a,b)}{\partial b}\Big|_{h=h_{11}}\\
=&\frac{\partial M(h,a,b)}{\partial b}\Big|_{h=h_{11}}\\
=&-\pi(a+1)^2-2 \sqrt{2} \sqrt{h_{11}}-2 \pi  h_{11}-\left(a^2-2 a+2 h_{11}+1\right) \arctan\left(\frac{\sqrt{2} \sqrt{h_{11}}}{a-1}\right)\\
&+\left(a^2+2 a+2 h_{11}+1\right) \arctan\left(\frac{\sqrt{2} \sqrt{h_{11}}}{a+1}\right)\\
<&-\frac{\pi}{2}(a+1)^2-2 \sqrt{2} \sqrt{h_{11}}-\pi  h_{11}-\left(a^2-2 a+2 h_{11}+1\right) \arctan\left(\frac{\sqrt{2} \sqrt{h_{11}}}{a-1}\right)\\
<&0,
\end{split}\end{equation}
this means that the function $b=b_2(a)$ is unique.

Secondly, we prove the monotonicity of $b_2(a)$. Similar as the computation of $\frac{\mathrm{d}b_1(a)}{\mathrm{d}a}$ given in \eqref{b1a},
we obtain that
\begin{equation*}\begin{split}
\frac{\mathrm{\partial} M(h_{11}(a,b),a,b)}{\mathrm{\partial}a}=&\frac{\partial M(h,a,b)}{\partial h}\Big|_{h=h_{11}}\frac{\partial h_{11}(a,b)}{\partial a}+\frac{\partial M(h,a,b)}{\partial a}\Big|_{h=h_{11}}\\
=&\frac{\partial M(h,a,b)}{\partial a}\Big|_{h=h_{11}}\\
=&-2 b w_1(h_{11})-5 \pi  a^3-15 \pi  a^2-a \left(6 \pi  h_{11}+20 \sqrt{2} \sqrt{h_{11}}+15 \pi \right)-\pi  (6 h_{11}+5)\\
&-(a-1) \left(5 a^2-10 a+6 h_{11}+5\right) \arctan\left(\frac{\sqrt{2} \sqrt{h_{11}}}{a-1}\right)\\
&+(a+1) \left(5 a^2+10 a+6 h_{11}+5\right) \arctan\left(\frac{\sqrt{2}
   \sqrt{h_{11}}}{a+1}\right)\\
<&-2 \left(\pi  a^3+3 \pi  a^2+a \left(3 \pi  h_{11}+10 \sqrt{2} \sqrt{h_{11}}+3 \pi \right)+3 \pi  h_{11}+\pi \right)\\
&-2 (a-1) \left(a^2-8 a+3 h_{11}+1\right) \arctan\left(\frac{\sqrt{2} \sqrt{h_{11}}}{a-1}\right)\\
&+2 (a+1) \left(a^2+2 a+3 h_{11}+1\right) \arctan\left(\frac{\sqrt{2}\sqrt{h_{11}}}{a+1}\right)\\
<&-\pi  a^3-3 \pi  a^2-a \left(3 \pi  h_{11}+20 \sqrt{2} \sqrt{h_{11}}+3 \pi \right)-\pi  (3h_{11}+1)\\
&-2 (a-1) \left(a^2-8 a+3 h_{11}+1\right) \arctan\left(\frac{\sqrt{2} \sqrt{h_{11}}}{a-1}\right)\\
\triangleq & L(h)\Big|_{h=h_{11}},
\end{split}\end{equation*}
where
\begin{equation}\label{w1}
w_1(h)=(a-1) \arctan\left(\frac{\sqrt{2} \sqrt{h}}{a-1}\right)-(a+1)\arctan\left(\frac{\sqrt{2} \sqrt{h}}{a+1}\right)+\pi(a+1)>0
\end{equation} and $-\frac{3}{2}(a+1)^2<b<0$.
If $a^2-8 a+3 h+1>0$, then $L(h)<0$, and if $a^2-8 a+3 h+1<0$, then
$L(h)<-2 a \left(\pi  \left(a^2-3 a+6\right)+10 \sqrt{2} \sqrt{h}+3 \pi  h\right)<0$.
Thus, we always have
\begin{equation}\label{M11a}
\frac{\mathrm{\partial} M(h_{11}(a,b),a,b)}{\mathrm{\partial}a}< L(h)\Big|_{h=h_{11}}<0.
\end{equation}
Consequently,
\begin{equation}\label{b2a}
\frac{\mathrm{d}b_2(a)}{\mathrm{d}a}=-\left(\frac{\mathrm{\partial}M(h,a,b)}{\mathrm{\partial}a}\Big|_{h=h_{11}}\right)\Big{/}
\left(\frac{\mathrm{\partial}M(h,a,b)}{\mathrm{\partial}b}\Big|_{h=h_{11}}\right)<0
\end{equation}
from \eqref{h11} and \eqref{M11a}. This means that the function $b=b_2(a)$ is  decreasing on $a$.

Finally, we prove the existence and uniqueness of the intersection point of $b_2(a)$ and $b_1(a)$, i.e. there exists a unique intersection point $B=(a_{**}, b_1(a_{**}))$ of the curves $b_1(a)$ and $b_2(a)$. Since $M(h_{11})<0$ at $A=(a_*,-\frac{5}{4}(a_*+1)^2)$, if $b=b_2(a)$ intersects the curve $b=b_1(a)$ at $B$, then $a_{**}>a_{*}$. To begin with, we claim that the simple zero of $M''(h)$ at $h_2$ satisfies $0<h_2<\frac{4}{5}$. When $1<a<\sqrt{\frac{13}{5}}$, by the previous discussion, $0<h_2<\frac{a^2-1}{2}<\frac{4}{5}$. When $a\geq\sqrt{\frac{13}{5}}$,
\begin{equation*}
\begin{split}
M''\left(\frac{4}{5}\right)=&\frac{5 \sqrt{10} \left(13-5 a^2\right) b}{25 a^4+30 a^2+169}-\frac{2 \left(25 \pi  a^4+\left(30 \pi -20 \sqrt{10}\right) a^2+169 \pi +52 \sqrt{10}\right)}{25 a^4+30
   a^2+169}\\
   &-2 \arctan\left(\frac{2 \sqrt{\frac{2}{5}}}{a-1}\right)+2 \arctan\left(\frac{2 \sqrt{\frac{2}{5}}}{a+1}\right).
\end{split}
\end{equation*}
Recall that $-\frac{3}{2}(a+1)^2<b<-\frac{5}{4}(a+1)^2,$ we have
\begin{equation*}
\begin{split}
M''\left(\frac{4}{5}\right)<&M''\left(\frac{4}{5}\right)\Big|_{b=-\frac{3}{2}(a+1)^2}\\
=&\frac{25 \left(3 \sqrt{10}-4 \pi \right) a^4+150 \sqrt{10} a^3-40 \left(\sqrt{10}+3 \pi \right) a^2-390 \sqrt{10} a-13 \left(31 \sqrt{10}+52 \pi \right)}{50 a^4+60
   a^2+338}\\
   &-2 \arctan\left(\frac{2
   \sqrt{\frac{2}{5}}}{a-1}\right)+2 \arctan\left(\frac{2 \sqrt{\frac{2}{5}}}{a+1}\right)\\
<&\frac{25 \left(3 \sqrt{10}-4 \pi \right) a^4+150 \sqrt{10} a^3-40 \left(\sqrt{10}+3 \pi \right) a^2-390 \sqrt{10} a-13 \left(31 \sqrt{10}+52 \pi \right)}{50 a^4+60
   a^2+338}\\
<&0,
\end{split}
\end{equation*}
which implies that $0<h_2<\frac{4}{5}$ when $a\geq\sqrt{\frac{13}{5}}$ due to the fact that $M''(h)$ is monotonically decreasing on the interval $(0,\frac{a^2-1}{2})$. Moreover, along $b=b_1(a)$, $h_{11}=h_{12}=h_2$, and from \eqref{b1a}
\begin{equation}\begin{split}\label{dMh2}
\frac{\mathrm{d} M(h_{2}(a,b_1(a)),a,b_1(a))}{\mathrm{d}a}=&\frac{\partial M(h,a,b_1)}{\partial h}\Big|_{h=h_{2}(a,b_1)}\left(\frac{\partial h_{2}(a,b_1)}{\partial a}+\frac{\partial h_{2}(a,b)}{\partial b}\Big|_{b=b_1}\frac{\mathrm{d}b_1}{\mathrm{d}a}\right)\\
&+\frac{\partial M(h_2,a,b_1)}{\partial a}+\frac{\partial M(h_2,a,b)}{\partial b}\Big|_{b=b_1}\frac{\mathrm{d}b_1}{\mathrm{d}a}\\
=&\frac{\partial M(h_2,a,b_1)}{\partial a}+\frac{\partial M(h_2,a,b)}{\partial b}\Big|_{b=b_1}\frac{\mathrm{d}b_1}{\mathrm{d}a}\\
=&\left(\frac{\partial M(h,a,b)}{\partial a}-\frac{\partial M(h,a,b)}{\partial b}\frac{\mathrm{\partial}M'(h,a,b)}{\mathrm{\partial}a}\Big{/}
\frac{\mathrm{\partial}M'(h,a,b)}{\mathrm{\partial}b}\right)\Big|_{h=h_2, b=b_1}\\
\triangleq&S(h,a,b)\big|_{h=h_2, b=b_1}.
\end{split}\end{equation}
Write
\begin{equation*}\begin{split}
S(h,a,b)=&\left(-2w_1(h)-\frac{8 \sqrt{2} a\sqrt{h} M_b}{M_{1b} \left(a^2-2 a+2 h+1\right)
   \left(a^2+2 a+2 h+1\right)}\right)b\\
   &-5 \pi  a^3-15 \pi  a^2-a \left(6 \pi  h+20 \sqrt{2} \sqrt{h}+15 \pi \right)-\pi  (6 h+5)\\
&-(a-1) \left(5 a^2-10 a+6 h+5\right) \arctan\left(\frac{\sqrt{2} \sqrt{h}}{a-1}\right)\\
&+(a+1) \left(5 a^2+10 a+6 h+5\right) \arctan\left(\frac{\sqrt{2}
   \sqrt{h}}{a+1}\right)\\
&-\frac{M_b}{M_{1b}}\left(-\frac{16 \sqrt{2} a
   h^{3/2}}{\left(a^2-2 a+2 h+1\right)
   \left(a^2+2 a+2 h+1\right)}-6 (a-1) \arctan\left(\frac{\sqrt{2}
   \sqrt{h}}{a-1}\right)\right.\\
   &\left.+6 (a+1) \arctan\left(\frac{\sqrt{2}
   \sqrt{h}}{a+1}\right)-6 \pi  a-6 \pi \right),
\end{split}\end{equation*}
where $M_b=\frac{\mathrm{\partial}M(h,a,b)}{\mathrm{\partial}b}$ and $M_{1b}=\frac{\mathrm{\partial}M'(h,a,b)}{\mathrm{\partial}b}$.
Notice that the coefficient of $b$ is negative by \eqref{Mb}, \eqref{h11} and \eqref{w1}, hence for $a>a_*>\frac{7}{5}$, $0<h<\frac{4}{5}$ and $-\frac{3}{2}(a+1)^2<b<-\frac{5}{4}(a+1)^2$,
\begin{equation}\begin{split}\label{Sh}
S(h,a,b)>S(h,a,b)|_{b=-\frac{5}{4}(a+1)^2}
=-\frac{1}{2} \left(\arctan\left(\frac{\sqrt{2} \sqrt{h}}{a-1}\right)-\arctan \left(\frac{\sqrt{2} \sqrt{h}}{a+1}\right)+\pi \right)^{-1}S_1(h,a),
\end{split}\end{equation}
where
$$S_1(h,a)=\frac{\partial M(h,a,b)}{\partial a}\frac{\mathrm{\partial}M'(h,a,b)}{\mathrm{\partial}b}-\frac{\partial M(h,a,b)}{\partial b}\frac{\mathrm{\partial}M'(h,a,b)}{\mathrm{\partial}a}\Big|_{b=-\frac{5}{4}(a+1)^2}.$$
For each fixed $a>a_*>\frac{7}{5}$, the function $S_1(h,a)$ exhibits one of two behaviors on the interval $(0,\frac{4}{5})$: it is either entirely decreasing or it decreases to a minimum before increasing, see the Appendix for the proof of this property. Therefore, $S_1(h,a)$ achieves its maximum at $h=0$ or $h=\frac{4}{5}$. A direct computation shows that
\begin{eqnarray*}
S_1(0,a)&=&-\pi ^2(a+1)^3<0,\\
S_1\left(\frac{4}{5},a\right)&=&-\frac{1}{5 \left(25 a^4+30 a^2+169\right)}\left(125 \pi ^2 a^7+375 \pi ^2 a^6-75 \left(12 \sqrt{10}-7 \pi \right)\pi  a^5\right.\\
&&\left.+25 \pi  \left(104 \sqrt{10}+23 \pi \right) a^4+5 \left(800+552 \sqrt{10} \pi +259 \pi ^2\right) a^3\right.\\
&&\left.+5 \left(1600+1120 \sqrt{10} \pi +537 \pi ^2\right)a^2+\left(9120-6500 \sqrt{10} \pi +2535 \pi ^2\right) a\right.\\
&&\left.+169 \pi  \left(24 \sqrt{10}+5 \pi \right)\right)\\
&&+2 a \left(a^2+10 a-31\right) \arctan\left(\frac{2 \sqrt{\frac{2}{5}}}{a-1}\right) \arctan\left(\frac{2 \sqrt{\frac{2}{5}}}{a+1}\right)\\
&&+\left(-a^3-17 a^2+17a+1\right) \arctan\left(\frac{2 \sqrt{\frac{2}{5}}}{a-1}\right)^2-(a+1)^3 \arctan\left(\frac{2 \sqrt{\frac{2}{5}}}{a+1}\right)^2\\
&&+\frac{2 }{25 a^2+50a+65}\left(-25 \pi  a^5-300 \pi  a^4+30 \left(3 \sqrt{10}+7 \pi \right) a^3+60 \left(4\sqrt{10}+15 \pi \right) a^2\right.\\
&&\left.+5 \left(74 \sqrt{10}+403 \pi \right) a+156 \sqrt{10}\right) \arctan\left(\frac{2 \sqrt{\frac{2}{5}}}{a-1}\right)\\
&&+\frac{2 }{25 a^2-50 a+65}\left(25 \pi  a^5+25 \pi  a^4-10 \left(9 \sqrt{10}+\pi \right) a^3+10 \left(44 \sqrt{10}+7 \pi \right) a^2\right.\\
&&\left.-5 \left(74 \sqrt{10}-29 \pi \right) a+65 \pi +156 \sqrt{10}\right) \arctan\left(\frac{2 \sqrt{\frac{2}{5}}}{a+1}\right).
\end{eqnarray*}
By careful analysis, $S_1\left(\frac{4}{5},a\right)<0$ for $a>\frac{7}{5}$. Hence, we have $S_1(h,a)<0$ for $a>\frac{7}{5}$ and $0<h<\frac{4}{5}$, further, $S(h,a,b)>0$ by \eqref{Sh}.
It follows from $0<h_2=h_{11}<\frac{4}{5}$ and $-\frac{3}{2}(a+1)^2<b_1(a)<-\frac{5}{4}(a+1)^2$ for $a>a_*>\frac{7}{5}$ that
$$\frac{\mathrm{d} M(h_{11}(a,b_1(a)),a,b_1(a))}{\mathrm{d}a}=\frac{\mathrm{d} M(h_{2}(a,b_1(a)),a,b_1(a))}{\mathrm{d}a}>0$$
by \eqref{dMh2}. Combining the fact that
\begin{equation*}\begin{split}
&M(h_{11}(a,b_1(a)),a,b_1(a))<0, \quad \mbox{when}\ a=a_*,\\
&M(h_{11}(a,b_1(a)),a,b_1(a))>M(h_{11}(3,b_1(3)),3,-22)>0, \quad \mbox{when}\ a=3,
\end{split}\end{equation*}
there exists a unique intersection point $B$ for the curves $b=b_1(a)$ and $b=b_2(a)$. Here we have used $\frac{\partial M(h,a,b)}{\partial b}<0$ from \eqref{h11} and verified  $b_1(3)<-22$ and $M(h,3,-22)>0$ for $0<h<\frac{4}{5}$ by lengthy calculations.
\end{proof}

 In view of this, one has

\begin{lm}\label{lm3-15}
\begin{itemize}
\item[(i)] When $-\frac{3}{2}(a+1)^2<b<b_2(a)$, $1<a\leq a_{**}$,  $M(h)$ has a unique simple zero on the interval $(0,+\infty)$, see the region $D_7$ in Fig. \ref{fig-bif}.

\item[(ii)] When $-\frac{3}{2}(a+1)^2<b<b_1(a)$, $a>a_{**}$, $M(h)$ has a unique simple zero on the interval $(0,+\infty)$, see the region $D_6$ in Fig. \ref{fig-bif}.

\item[(iii)] When $b=b_1(a)$, $a>a_{**}$,  $M(h)$ has a unique zero on the interval $(0,+\infty)$, see the upper boundary of the region $D_6$ in Fig. \ref{fig-bif}.

\item[(iv)] When $b=b_2(a), 1<a<a_{**}$, $M(h)$ has two different zeros on the interval $(0,+\infty)$, one of which is $h_{11}$ with multiplicity $2$, and the other simple zero lies in $(h_{12}, +\infty)$,  see the dark-green curve in Fig. \ref{fig-bif}.
\end{itemize}\end{lm}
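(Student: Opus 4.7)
The plan is to leverage the structural description of $M'(h)$ established earlier in this section. Throughout all four parts one has $-\frac{3}{2}(a+1)^2 < b \leq b_1(a)$, so $M'(h_2) \geq 0$ and hence $M'$ admits exactly two zeros $h_{11} \leq h_{12}$ on $(0,+\infty)$ (coinciding at $h_2$ precisely when $b = b_1(a)$). Consequently $M$ decreases on $(0, h_{11})$, increases on $(h_{11}, h_{12})$, and decreases on $(h_{12}, +\infty)$ to $-\infty$. Thus the number of zeros of $M$ on $(0,+\infty)$ is fully determined by the signs of the critical values $M(h_{11})$ and $M(h_{12})$, and the proof reduces to pinning down these signs in each subregion.

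The first step is to convert the definition of $b_2(a)$ into sign information. Using the inequality $\partial M(h_{11}(a,b), a, b)/\partial b < 0$ from \eqref{h11} together with $M(h_{11})|_{b = b_2(a)} = 0$, I obtain $M(h_{11}) > 0$ when $b < b_2(a)$ and $M(h_{11}) < 0$ when $b > b_2(a)$, throughout the $a$-range where $b_2(a)$ is defined. A companion fact extracted from the proof of Lemma \ref{lm3-14} is that $M(h_{11}(a, b_1(a)), a, b_1(a))$ is strictly increasing in $a$ and vanishes at $a = a_{**}$, hence is positive for $a > a_{**}$ and negative for $a < a_{**}$.

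With these ingredients each part follows by inspection. For (i) and (ii) the hypotheses give $M(h_{11}) > 0$ (in (i) by the definition of $b_2$; in (ii) by the companion fact combined with monotonicity in $b$), so the global minimum of $M$ on $(0, h_{12}]$ is strictly positive, whence $M > 0$ on $(0, h_{12}]$; since $M$ strictly decreases from $M(h_{12}) > 0$ to $-\infty$ on $(h_{12}, +\infty)$, it has exactly one simple zero, located in $(h_{12}, +\infty)$. For (iii), at $b = b_1(a)$ with $a > a_{**}$ one has $h_{11} = h_{12} = h_2$ and $M'(h) \leq 0$ with equality only at $h_2$; thus $M$ is strictly decreasing on $(0,+\infty)$, $M(h_2) > 0$ by the companion fact, and the unique (simple) zero lies in $(h_2, +\infty)$. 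For (iv), at $b = b_2(a)$ with $1 < a < a_{**}$, the relations $M(h_{11}) = M'(h_{11}) = 0$ hold by definition while $M''(h_{11}) > 0$ because $h_{11} < h_2$; hence $h_{11}$ is a zero of multiplicity exactly $2$. The interval $(h_{12}, +\infty)$ contributes one further simple zero by the same monotonicity argument, yielding the two zeros asserted.

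The main obstacle, as is typical for such Melnikov-style counting arguments, is the strict sign control at the boundary curves $b = b_1(a)$ and $b = b_2(a)$. This reduces to two ingredients already available inside the proof of Lemma \ref{lm3-14}: the monotonicity $\partial M(h_{11}(a,b),a,b)/\partial b < 0$, and the $a$-monotonicity of $M(h_{11}(a, b_1(a)), a, b_1(a))$, which together pin down the sign of $M(h_{11})$ on each boundary on either side of $a_{**}$. Once these inequalities are in hand, no further delicate estimate is needed; the rest is a direct bookkeeping of signs driven by the shape of $M$ described in the first paragraph.
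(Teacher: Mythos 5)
Your proposal is correct and follows essentially the same route as the paper: the shape of $M$ forced by the two critical points $h_{11}\le h_{12}$ of $M'$, combined with the sign of $M(h_{11})$ obtained from the definition of $b_2(a)$, the monotonicity $\partial M(h_{11})/\partial b<0$, and the $a$-monotonicity of $M(h_{11})$ along $b=b_1(a)$ from the proof of Lemma \ref{lm3-14}. Your write-up is only slightly more explicit than the paper's (e.g.\ invoking $M''(h_{11})>0$ for the multiplicity-two statement in (iv)), but the substance is identical.
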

\begin{proof}
(i) When $-\frac{3}{2}(a+1)^2<b<b_2(a)$, $1<a\leq a_{**}$, $M(h)$ has a local minimum at $h_{11}$ and a local maximum at $h_{12}$, moreover, $M(h_{11})>0$. Thus $M(h_{12})>M(h_{11})>0$ and $M(h)$ has a unique simple zero on the interval $(0,+\infty)$ which lies in $(h_{12},+\infty)$, owing to
the fact that $M(h)$ is monotonically deceasing on $(h_{12},+\infty)$ and $\lim_{h\rightarrow +\infty}M(h)=-\infty$.

(ii) When $-\frac{3}{2}(a+1)^2<b<b_1(a)$, $a>a_{**}$, $M(h)$ has a local minimum at $h_{11}$ and a local maximum at $h_{12}$, moreover, $M(h_{11})>0$ by using Lemma \ref{lm3-14}. Thus $M(h_{12})>M(h_{11})>0$. Similar to case (i), $M(h)$ has a unique simple zero on the interval $(0,+\infty)$.

(iii) When $b=b_1(a)$, $a>a_{**}$, from Lemma \ref{lm3-14}, $M(h_2)=M(h_{11})=M(h_{12})>0$, thus $M(h)$ has a unique simple zero on the interval $(0,+\infty)$.

(iv) When $b=b_2(a), 1<a<a_{**}$, by the definition of $b=b_2(a)$, $M(h)$ has a local minimum $0$ at $h_{11}$, thus it is a zero with multiplicity $2$. Since  $M(h_{12})>M(h_{11})=0$, $M(h)$ has an extra zero in $(h_{12}, +\infty)$.
\end{proof}

Consider the case when $b=b_1(a), 1<a<a_{*}$,  it follows from $M(0)<0$ and $M'(h_2)=0$ that $M(h_{11})=M(h_{12})=M(h_2)<0$. Combining the proof of Lemma \ref{lm3-15} (iv), there exists a function $b=b_3(a)\in(b_2(a),b_1(a))$ such that $M(h_{12}(a,b_3(a)),a,b_3(a))=0$. Similar to the proofs in \eqref{h11} and \eqref{b2a}, we can obtain
\begin{lm}\label{lm3-16}
The function $b=b_3(a)\in(b_2(a),b_1(a))$ satisfying  $M(h_{12}(a,b_3(a)),a,b_3(a))=0$ is unique and  decreasing on $a$, and it intersects the curves $b=b_2(a)$ and $b=b_1(a)$ at $B$, see the light-green curve in Fig. \ref{fig-bif}.
\end{lm}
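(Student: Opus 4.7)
The plan is to mirror the argument of Lemma \ref{lm3-14} almost verbatim, with $h_{12}$ in place of $h_{11}$. Since $h_{12}(a,b)$ is a critical point of $M'(\cdot,a,b)$, the envelope identity gives
\[
\frac{\partial M(h_{12}(a,b),a,b)}{\partial b}=\left.\frac{\partial M}{\partial b}\right|_{h=h_{12}},\qquad \frac{\partial M(h_{12}(a,b),a,b)}{\partial a}=\left.\frac{\partial M}{\partial a}\right|_{h=h_{12}}.
\]
The estimates in \eqref{h11} and \eqref{M11a} used only $h>0$ and $-\tfrac{3}{2}(a+1)^2<b<0$, never the particular identification $h=h_{11}$, so the identical one-line bounds give $\partial_b M|_{h_{12}}<0$ and $\partial_a M|_{h_{12}}<0$ throughout the relevant parameter range. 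Uniqueness of $b_3(a)$ then follows from the implicit function theorem, and monotonicity from
\[
\frac{db_3}{da}=-\left.\frac{\partial_a M}{\partial_b M}\right|_{h=h_{12}}<0,
\]
exactly as in \eqref{b2a}.

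For existence on $(1,a_{**})$ I would apply the intermediate value theorem in $b$: along $b=b_2(a)$ one has $M(h_{12})>M(h_{11})=0$ since $h_{12}$ is the local maximum of $M$ immediately to the right of the local minimum $h_{11}$, while along $b=b_1(a)$ one has $h_{11}=h_{12}=h_2$ and $M(h_2)<0$ for $1<a<a_{**}$, the latter combining $M(h_2)|_{a=a_{**}}=0$ with the positivity of $S(h,a,b)|_{h=h_2,b=b_1}$ established inside Lemma \ref{lm3-14}. Together with $\partial_b M|_{h_{12}}<0$, this yields a unique $b_3(a)\in(b_2(a),b_1(a))$ solving $M(h_{12}(a,b_3(a)),a,b_3(a))=0$.

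For the intersection claim, at $B=(a_{**},b_1(a_{**}))$ one has $b_1=b_2$ and $h_{11}=h_{12}=h_2$, and the defining relation $M(h_{11})=0$ for $b_2$ therefore forces $M(h_{12})=0$ as well, so $b_3(a_{**})=b_1(a_{**})=b_2(a_{**})$ and all three curves meet at $B$. The main obstacle will be controlling the coalescence $h_{11},h_{12}\to h_2$ as $b\uparrow b_1(a)$: one must verify that $h_{12}(a,b)$ extends continuously to $h_2(a,b_1(a))$ and that the uniform-in-$h$ sign estimates survive at this degenerate critical point, so that $b_3$ reaches $B$ continuously rather than breaking off beforehand. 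The inequalities inherited from \eqref{h11} and \eqref{M11a} handle this cleanly because they are uniform in $h>0$, but the coalescence geometry near $B$ is the one spot where a short direct continuity argument must replace the otherwise routine copy of the Lemma \ref{lm3-14} template.
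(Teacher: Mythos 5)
Your proposal is correct and follows essentially the same route as the paper: the paper also obtains $b_3$ by an intermediate value argument in $b$ between $b_2(a)$ (where $M(h_{12})>M(h_{11})=0$) and $b_1(a)$ (where $M(h_{11})=M(h_{12})=M(h_2)<0$), and then repeats the envelope estimates of \eqref{h11} and \eqref{b2a} with $h_{12}$ in place of $h_{11}$ to get uniqueness and $\frac{\mathrm{d}b_3}{\mathrm{d}a}<0$, with the three curves meeting at $B$. One small correction: the $S$-positivity of Lemma \ref{lm3-14} was proved only for $b<-\frac{5}{4}(a+1)^2$ (i.e.\ along $b_1(a)$ for $a>a_*$), so for $1<a<a_*$ the sign $M(h_2(a,b_1(a)))<0$ should instead be deduced directly from $M(0)<0$ (since $b_1(a)>-\frac{5}{4}(a+1)^2$ there) together with $M'(h)\le M'(h_2)=0$, exactly as the paper does.
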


Moreover, we must emphasize that
\begin{lm}\label{lm3-17}
The curve $b=b_3(a)$ intersects  the curve $b=-\frac{5}{4}(a+1)^2$ at a unique point $C$.
\end{lm}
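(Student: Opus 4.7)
The plan is to reduce the intersection question to a one-variable sign analysis. Along the parabola $b=b^\star(a):=-\tfrac{5}{4}(a+1)^2$ one has $M(0;a,b^\star(a))\equiv 0$ and $M'(0;a,b^\star(a))=-\tfrac{1}{2}(a+1)^2\pi<0$, so $h=0$ is always a simple root of $M(\cdot;a,b^\star(a))$. Define
\begin{equation*}
G(a):=M\bigl(h_{12}(a,b^\star(a)),\,a,\,b^\star(a)\bigr).
\end{equation*}
The curves $b=b_3(a)$ and $b=b^\star(a)$ meet exactly at the zeros of $G$, because $b_3(a)$ is characterized by $M(h_{12}(a,b_3(a)),a,b_3(a))=0$.

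First I would localize the potential intersections to $a\in(1,a_*)$: for $a\in[a_*,a_{**})$ Lemma \ref{lm3-11} and the discussion preceding Lemma \ref{lm3-12} give $b_1(a)\le b^\star(a)$, and since $b_3(a)<b_1(a)$ by Lemma \ref{lm3-16} this forces $b_3(a)<b^\star(a)$ throughout that range, so no intersection can occur there. Hence any $C$ lies in the strip $1<a<a_*$.

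Second, to establish existence on $(1,a_*)$, I would evaluate $G$ at the endpoints. At $a=a_*$ we have $b^\star(a_*)=b_1(a_*)$, so $h_{12}(a_*,b_1(a_*))=h_2(a_*)$; the remark preceding Lemma \ref{lm3-16} gives $M(h_2)\big|_{b=b_1(a)}<0$ for $1<a\le a_*$, whence $G(a_*)<0$. For $a$ sufficiently close to $1$, I would show $G(a)>0$, either by a direct evaluation at a convenient sample point (e.g.\ $a=7/5$) using the closed form \eqref{M}, or by an auxiliary-function estimate in the style of Lemmas \ref{lm3-10}--\ref{lm3-11}. Continuity of $G$ together with the intermediate value theorem then delivers a zero, i.e.\ an intersection $C$.

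Third, for uniqueness I would analyze $G'$ at any zero $a_C$. Using $M'(h_{12})=0$ and the chain rule,
\begin{equation*}
G'(a)=\frac{\partial M}{\partial a}\bigg|_{h=h_{12}}+(b^\star)'(a)\,\frac{\partial M}{\partial b}\bigg|_{h=h_{12}},\qquad (b^\star)'(a)=-\tfrac{5}{2}(a+1).
\end{equation*}
The two partials at $h=h_{12}$ satisfy the same sign estimates as in \eqref{h11} and \eqref{M11a} (the proofs transfer verbatim with $h_{11}$ replaced by $h_{12}$): both are strictly negative, with explicit bounds. At an intersection $a_C$ the extra relation $M(h_{12};a_C,b^\star(a_C))=0$, combined with the closed form \eqref{M}, provides an algebraic constraint that should pin down a definite sign for $G'(a_C)$, say $G'(a_C)<0$ uniformly, which rules out multiple zeros.

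The main obstacle is the strict positivity of $G$ at some $a\in(1,a_*)$: the expression \eqref{M} mixes polynomial and arctangent terms, so obtaining a clean lower bound requires auxiliary-function estimates of the kind developed in Lemma \ref{lm3-11}. A secondary difficulty is extracting an unambiguous sign for $G'$ at every zero, since $\partial_a M|_{h_{12}}$ and $(b^\star)'(a)\,\partial_b M|_{h_{12}}$ push in opposite directions; here one must exploit $M(h_{12})=0$ at the intersection to break the tie.
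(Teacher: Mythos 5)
Your overall framework is the same as the paper's: reduce the question to the zeros of $G(a):=M\bigl(h_{12}(a,b^\star(a)),a,b^\star(a)\bigr)$ along $b^\star(a)=-\tfrac54(a+1)^2$, get existence from sign changes ($G(a_*)<0$ together with a positive value at a concrete $a$ near $1$ -- the paper takes $a=\tfrac65$ and checks $M(\tfrac12)\approx 0.88>0$), and get uniqueness from the sign of $G'(a)=\partial_a M|_{h_{12}}+(b^\star)'(a)\,\partial_b M|_{h_{12}}$, the $\partial_h M$ term dropping out because $M'(h_{12})=0$. The existence half of your plan is sound, modulo carrying out the numerical evaluation.

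The uniqueness half has a genuine gap. Your statement that both partials at $h_{12}$ are negative ``which rules out multiple zeros'' does not follow: since $(b^\star)'(a)=-\tfrac52(a+1)<0$ and $\partial_b M|_{h_{12}}<0$, the two terms of $G'$ have opposite signs, as you yourself concede a few lines later, so the transferred estimates \eqref{h11} and \eqref{M11a} decide nothing. Your fallback -- that the relation $M(h_{12})=0$ at an intersection ``should'' pin down the sign of $G'$ there -- is speculative and is not substantiated by any computation; it is also not what the paper does. The paper proves the stronger fact that $G$ is strictly decreasing on all of $(1,a_*)$: after substituting $b=b^\star(a)$ it writes $G'(a)=J(h)\big|_{h=h_{12}}$ with $J$ an explicit combination of $\sqrt{h}$, $h$ and two arctangent terms, establishes the a priori localization $h_2<h_{12}<\tfrac32$ (by showing $M'(\tfrac32)<0$ for $1<a<a_*<2$), and then proves $J(h)<0$ on $(0,\tfrac32)$ through a careful shape analysis: the numerator $J_2$ of $J''$ is a cubic with exactly one zero there, so $J'$ attains its maximum at the endpoints, $J'(0)=-(a+1)\pi<0$, the sign of $J'(\tfrac32)$ is controlled by a unique threshold $\hat a\in(1,2)$, and finally $J(0)=0$ with $J(\tfrac32)<0$ forces $J<0$. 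This bound on $h_{12}$ and the negativity of $J$ are the technical core of the uniqueness argument, and neither appears in your proposal; without them the uniqueness of the intersection point $C$ is not established.
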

\begin{proof}
Consider $b=-\frac{5}{4}(a+1)^2$.
Note that when $a=\frac{6}{5}$,
$$M\left(\frac{1}{2}\right)=\frac{1}{150} \left(-505-219 \pi +219 \arctan\frac{5}{11}+897 \arctan5\right)\approx0.882421.$$
Since $M(h)$ attains a local maximum at $h_{12}$, we have $M(h_{12})\geq M(\frac{1}{2})>0$ when $a=\frac{6}{5}$.
Moreover, when $a=a_*$, one has $M(h_{11})=M(h_{12})=M(h_2)<0$ owing to $M(0)=0$ and $M'(h)\leq0$, hence the curve $b=b_3(a)$ can intersect the curve $b=-\frac{5}{4}(a+1)^2$ for $a\in(\frac{6}{5},a_*)$. We will prove that the intersect point is unique,  denoted by $C=(a_3,-\frac{5}{4}(a_3^2+1)^2)$.
From Lemma \ref{lm3-11}, when $1<a<a_*<2$, one has $M'(h_2)>0$ and there exist $h_{11}$ and $h_{12}$ such that $M'(h_{11})=M'(h_{12})=0$ by \eqref{MM01}. Obviously, $h_{11}$ satisfies
\begin{equation*}
0<h_{11}<h_2<\frac{a^2-1}{2}<\frac{a_*^2-1}{2}<\frac{3}{2}
\end{equation*}
and $h_{12}>h_2$. It is easy to verify that for $1<a<a_*<2$,
\begin{equation*}\begin{split}
M'\left(\frac{3}{2}\right)=&-6 \sqrt{3}-\frac{1}{2} \pi  \left(a^2+2 a+7\right)+\frac{1}{2} \left(a^2+2 a+7\right) \arctan\left(\frac{\sqrt{3}}{a+1}\right)\\
&+\frac{1}{2} \left(-a^2+22 a-7\right) \arctan\left(\frac{\sqrt{3}}{a-1}\right)\\
<&-6 \sqrt{3}-\frac{1}{2} \pi  \left(a^2+2 a+7\right)+\frac{1}{2} \left(a^2+2 a+7\right)\frac{\sqrt{3}}{a+1}+\frac{1}{2} \left(-a^2+22 a-7\right)\frac{\pi}{2}\\
<&\frac{3 \pi }{4}-\frac{7 \sqrt{3}}{2}<0.
\end{split}\end{equation*}
By the continuity of $M'(h)$, we have
$$h_2<h_{12}<\frac{3}{2}.$$
In the following, we consider
\begin{equation}\begin{split}\label{J}
\frac{\mathrm{d}M(h_{12}(a),a)}{\mathrm{d}a}=&\frac{\mathrm{\partial}M(h,a)}{\mathrm{\partial}h}\Big|_{h=h_{12}}\frac{\mathrm{d}h_{12}}{\mathrm{d}a}
+\frac{\mathrm{\partial}M(h,a)}{\mathrm{\partial}a}\Big|_{h=h_{12}}\\
=&\frac{\mathrm{\partial}M(h,a)}{\mathrm{\partial}a}\Big|_{h=h_{12}}\\
=&5 \sqrt{2} (1-3 a) \sqrt{h_{12}}-\pi  (a+1) h_{12}+(a+1) h_{12} \arctan\left(\frac{\sqrt{2}
   \sqrt{h_{12}}}{a+1}\right)\\
&+\left(15 a^2-20a+5+(11-a)h_{12}\right) \arctan\left(\frac{\sqrt{2} \sqrt{h_{12}}}{a-1}\right)\\
\triangleq &J(h)\Big|_{h=h_{12}}.
\end{split}\end{equation}
For $1<a<2$,
\begin{equation*}\begin{split}
J''(h)=\frac{\sqrt{2}J_2(h)}{\sqrt{h} \left(a^2-2 a+2
   h+1\right)^2 \left(a^2+2 a+2 h+1\right)^2},
\end{split}\end{equation*}
where $J_2(h)=e_3h^3+e_2h^2+e_1h+e_0$ with
\begin{equation*}\begin{split}
e_3=&8 (11 a-5)>0, \\
e_2=&4 \left(26 a^3+25 a^2+16 a-15\right)>0,\\
e_1=&10 \left(3 a^3-a^2+5 a-3\right) (a+1)^2>0,\\
e_0=&-5 (a-1)^2 (a+1)^4<0.
\end{split}\end{equation*}
Since when $1<a<2$, $J_2(h)$ is a polynomial of $h$ with degree 3 and
\begin{equation*}\begin{split}
\lim_{h\rightarrow-\infty}J_2(h)=&-\infty,\\
J_2(-3)=&625 - 1780 a + 725 a^2 + 776 a^3 - 145 a^4 - 100 a^5 - 5 a^6>0,\\
J_2(0)=&-5 (-1 + a)^2 (1 + a)^4<0,\\
J_2\left(\frac{3}{2}\right)=&-320 + 416 a + 320 a^2 + 344 a^3 + 80 a^4 + 35 a^5 - 5 a^6>0,
\end{split}\end{equation*}
we obtain that $J_2(h)$ has only one simple zero on the interval $(0,\frac{3}{2})$, which means that $J''(h)$ has only one simple zero on the interval $(0,\frac{3}{2})$.
Further, $J'(h)$ achieves its maximum at $h=0$ or $h=\frac{3}{2}$ since $J'(h)$ decreases first and then increases on $(0,\frac{3}{2})$.
By a direct computation, $J'(0)=-(1 + a) \pi<0$ and
\begin{equation*}\begin{split}
J'\left(\frac{3}{2}\right)=&-\frac{\pi  a^5+\pi  a^4+2 \left(5 \sqrt{3}+2 \pi \right) a^3+4 \left(5 \sqrt{3}+\pi \right) a^2+2 \left(17 \sqrt{3}+8 \pi \right) a+16 \pi }{a^4+4 a^2+16}\\
&-(a-11) \arctan\left(\frac{\sqrt{3}}{a-1}\right)+(a+1) \arctan\left(\frac{\sqrt{3}}{a+1}\right).
\end{split}\end{equation*}
It is easy to verify that there exists a unique zero $\hat a\in(1,2)$ such that $J'\left(\frac{3}{2}\right)>0$ on $(1,\hat a)$ and $J'\left(\frac{3}{2}\right)<0$ on $(\hat a,2)$ by the monotonicity of $J'\left(\frac{3}{2}\right)$ on $a$. Thus, for $1<a<\hat a$, $J'(h)$ has a unique simple zero on $(0,\frac{3}{2})$, and for $\hat a\leq a<2$, $J'(h)<0$ on $(0,\frac{3}{2})$. From this, we conclude that for $1<a<\hat a$, $J(h)$ decreases first and then increases on $(0,\frac{3}{2})$,
and for $\hat a\leq a<2$, $J(h)$ decreases on $(0,\frac{3}{2})$.
Notice that $J(0)=0$, and
\begin{equation*}\begin{split}
J\left(\frac{3}{2}\right)=&\frac{1}{2} \left(-3 \pi  a-30 \sqrt{3} a-3 \pi +10 \sqrt{3}\right)+\frac{3}{2} (a+1)
   \arctan\left(\frac{\sqrt{3}}{a+1}\right)\\
   &+\frac{1}{2} \left(30 a^2-43 a+43\right) \arctan\left(\frac{\sqrt{3}}{a-1}\right)<0,\ \ 1<a<2.
\end{split}\end{equation*}
It follows that $J(h)<0$ on $(0,\frac{3}{2})$. Therefore, by \eqref{J} and $0<h_{12}<\frac{3}{2}$, along $b=-\frac{5}{4}(a+1)^2$, $M(h_{12}(a),a)$ is monotonically decreasing for $1<a<a_*$.
We conclude that there exists a unique zero $a_3$ such that $M(h_{12}(a_3),a_3)=0$. That is, the intersect point of the curves $b=b_3(a)$ and $b=-\frac{5}{4}(a+1)^2$
is unique.
\end{proof}

Following Lemmas \ref{lm3-16} and \ref{lm3-17}, we have
\begin{lm}\label{lm3-18}
\begin{itemize}
\item[(i)] When $b_2(a)<b<\min\{-\frac{5}{4}(a+1)^2,b_3(a)\}$, $M(h)$ has three simple zeros on $(0,+\infty)$, see the region $D_4$ in Fig. \ref{fig-bif}.
\item[(ii)] When $-\frac{5}{4}(a+1)^2\leq b< b_3(a)$, $1<a<a_3$, $M(h)$ has two simple zeros on $(0,+\infty)$, see the region $D_5$ with its lower boundary in Fig. \ref{fig-bif}.
\item[(iii)]  When $b=b_3(a)$, $1<a\leq a_3$, $M(h)$ has a zero  with multiplicity $2$ on $(0,+\infty)$, see the upper boundary of the region $D_5$ in Fig. \ref{fig-bif}.
\item[(iv)] When $\max\{-\frac{5}{4}(a+1)^2,b_3(a)\}<b<b_1(a)$, $1<a<a_*$, $M(h)$ has no zeros on $(0,+\infty)$, see the region $D_3$ in Fig. \ref{fig-bif}.
\end{itemize}
\end{lm}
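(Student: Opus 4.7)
My plan is to treat all four cases as a single sign-counting exercise applied to $M(h)$, whose qualitative shape has already been pinned down by the preceding lemmas. Every case assumes $b<b_1(a)$, so Lemma \ref{lm3-10} gives $M'(h_2(a,b),a,b)>0$; combined with the fact that $M'$ increases on $(0,h_2)$ and decreases on $(h_2,+\infty)$, this forces $M'$ to have exactly two simple zeros $h_{11}<h_2<h_{12}$. Since $M''(h)>0$ for $h<h_2$ and $M''(h)<0$ for $h>h_2$, the points $h_{11}$ and $h_{12}$ are respectively a strict local minimum and a strict local maximum of $M$. Thus $M$ has a universal profile in each region: decreasing on $(0,h_{11})$, increasing on $(h_{11},h_{12})$, decreasing on $(h_{12},+\infty)$, with $\lim_{h\to+\infty}M(h)=-\infty$. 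The number of zeros of $M$ in $(0,+\infty)$ is then determined entirely by the signs of $M(0)$, $M(h_{11})$ and $M(h_{12})$.

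These three signs are governed by three explicit curves. From \eqref{M1M0}, $M(0)$ has the opposite sign to $5(a+1)^2+4b$, so $M(0)>0$, $M(0)=0$ or $M(0)<0$ according as $b<-\tfrac{5}{4}(a+1)^2$, $b=-\tfrac{5}{4}(a+1)^2$ or $b>-\tfrac{5}{4}(a+1)^2$. From the definitions of $b_2(a)$ and $b_3(a)$ together with the strict inequality $\partial_b M(h_{11}(a,b),a,b)<0$ derived in \eqref{h11}, and the entirely parallel computation at $h=h_{12}$ (the envelope-theorem identity kills the $\partial h_{12}/\partial b$ contribution, and the remaining integral expression for $\partial_b M$ is negative at every $h>0$), I conclude that $M(h_{11})>0$ if and only if $b<b_2(a)$, and $M(h_{12})>0$ if and only if $b<b_3(a)$. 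This is the only genuinely new analytical ingredient required; everything else is sign bookkeeping.

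It remains to read off each case from the signs of $M$ at $h=0,\,h_{11},\,h_{12},\,+\infty$. In case (i), the assumptions give signs $(+,-,+,-)$, yielding three simple zeros by the intermediate value theorem. In case (ii), $M(0)\le 0$ forces $M(h_{11})<M(0)\le 0$ (since $M$ decreases on $(0,h_{11})$), and $b<b_3(a)$ gives $M(h_{12})>0$; the sign sequence then produces exactly two simple zeros, one in $(h_{11},h_{12})$ and one in $(h_{12},+\infty)$. In case (iv), $b>\max\{-\tfrac{5}{4}(a+1)^2,b_3(a)\}$ gives both $M(0)<0$ and $M(h_{12})<0$, hence $M(h_{11})<M(h_{12})<0$, and the monotone profile keeps $M(h)<0$ throughout $(0,+\infty)$. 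In case (iii), $b=b_3(a)$ with $1<a\le a_3$ forces $M(h_{12})=0$ together with $M'(h_{12})=0$; since $h_{12}>h_2$ implies $M''(h_{12})<0$, the multiplicity at $h_{12}$ is exactly two, and the same monotone profile gives $M<0$ on $(0,h_{12})\cup(h_{12},+\infty)$, so $h_{12}$ is the only zero.

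The main obstacle is not analytic but geometric: I must verify that the region definitions of $D_3$, $D_4$, $D_5$ are compatible with the sign implications above. In particular, I need $b_3(a)>-\tfrac{5}{4}(a+1)^2$ precisely for $1<a<a_3$ (supplied by Lemma \ref{lm3-17} together with the monotonicity of $b_3(a)$) and $b_2(a)<b_3(a)$ throughout their common domain (supplied by the construction in Lemma \ref{lm3-16}). Granted these geometric facts from the preceding lemmas, the proof of Lemma \ref{lm3-18} reduces cleanly to the case-by-case sign analysis outlined above.
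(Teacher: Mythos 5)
Your proposal is correct and follows essentially the same route as the paper's proof: the unimodal-then-decreasing profile of $M$ (via the unique zero $h_2$ of $M''$ and the two critical points $h_{11}<h_{12}$ when $b<b_1(a)$) combined with sign bookkeeping of $M(0)$, $M(h_{11})$, $M(h_{12})$ and $M(+\infty)$, with the signs at $h_{11},h_{12}$ tied to the curves $b_2(a),b_3(a)$ through $\partial_b M<0$. Your explicit statement of the monotonicity-in-$b$ equivalences and of the geometric fact $b_3(a)>-\tfrac54(a+1)^2$ for $1<a<a_3$ only makes visible what the paper leaves implicit in Lemmas \ref{lm3-14}--\ref{lm3-17}.
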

\begin{proof}
(i) When $b_2(a)<b<\min\{-\frac{5}{4}(a+1)^2,b_3(a)\}$, $M(h)$ has a local minimum at $h_{11}$ and a local maximum at $h_{12}$, moreover, $M(0)>0,\  M(h_{11})<0$ and $M(h_{12})>0$. Thus there exist three simple zeros on $(0,+\infty)$ owing to $\lim_{h\rightarrow+\infty}M(h)=-\infty$.

(ii) When $-\frac{5}{4}(a+1)^2\leq b< b_3(a)$, $1<a<a_3$, $M(0)\leq0,\ M(h_{11})<0,\ M(h_{12})>0$ and $\lim_{h\rightarrow+\infty}M(h)=-\infty$. Hence there exist two simple zeros on $(0,+\infty)$.

(iii) When $b=b_3(a)$, $1<a\leq a_3$, $M(0)\leq0,\ M(h_{11})<0,\ M(h_{12})=0$ and $\lim_{h\rightarrow+\infty}M(h)=-\infty$. Thus $M(h)$ has only one zero at $h_{12}$ with multiplicity $2$ on $(0,+\infty)$.

(iv) When $\max\{-\frac{5}{4}(a+1)^2,b_3(a)\}<b<b_1(a)$, $1<a<a_*$, one has $M(0)<0,\ M(h_{11})<0,\ M(h_{12})<0$, thus $M(h)$ has no zeros on $(0,+\infty)$.
\end{proof}

Finally, we discuss the cases along the curved triangle $ACB$ and its interior.  Along the curve $\widehat{CA}$ including the end point $A$, $M(0)=0,\ M(h_{11})<0,\ M(h_{12})<0$, it follows that $M(h)$ has no zeros on $(0,+\infty)$. Along the curve $\widehat{AB}$, $M(0)>0,\ M(h_{11})<0,\ M(h_{12})<0$, it follows that $M(h)$ has one simple zero on $(0,+\infty)$. Along the curve $\widehat{CB}$, $M(0)>0,\ M(h_{11})<0,\ M(h_{12})=0$, it follows that $M(h)$ has one simple zero on the interval $(0,h_{11})$ and the other zero at $h_{12}$ with multiplicity $2$. At the point $B$, $M(h)$ has a zero at $h_{2}=h_{11}=h_{12}$ with multiplicity $3$ on $(0,+\infty)$. In the interior of  the curved triangle $ACB$, $M(0)>0,\ M(h_{11})<0,\ M(h_{12})<0$, thus $M(h)$ has one simple zero  on $(0,+\infty)$, see Fig. \ref{fig-bif} for the region $D_8$.

\begin{figure}[!htb]
	\centering
	\includegraphics[scale=0.6]{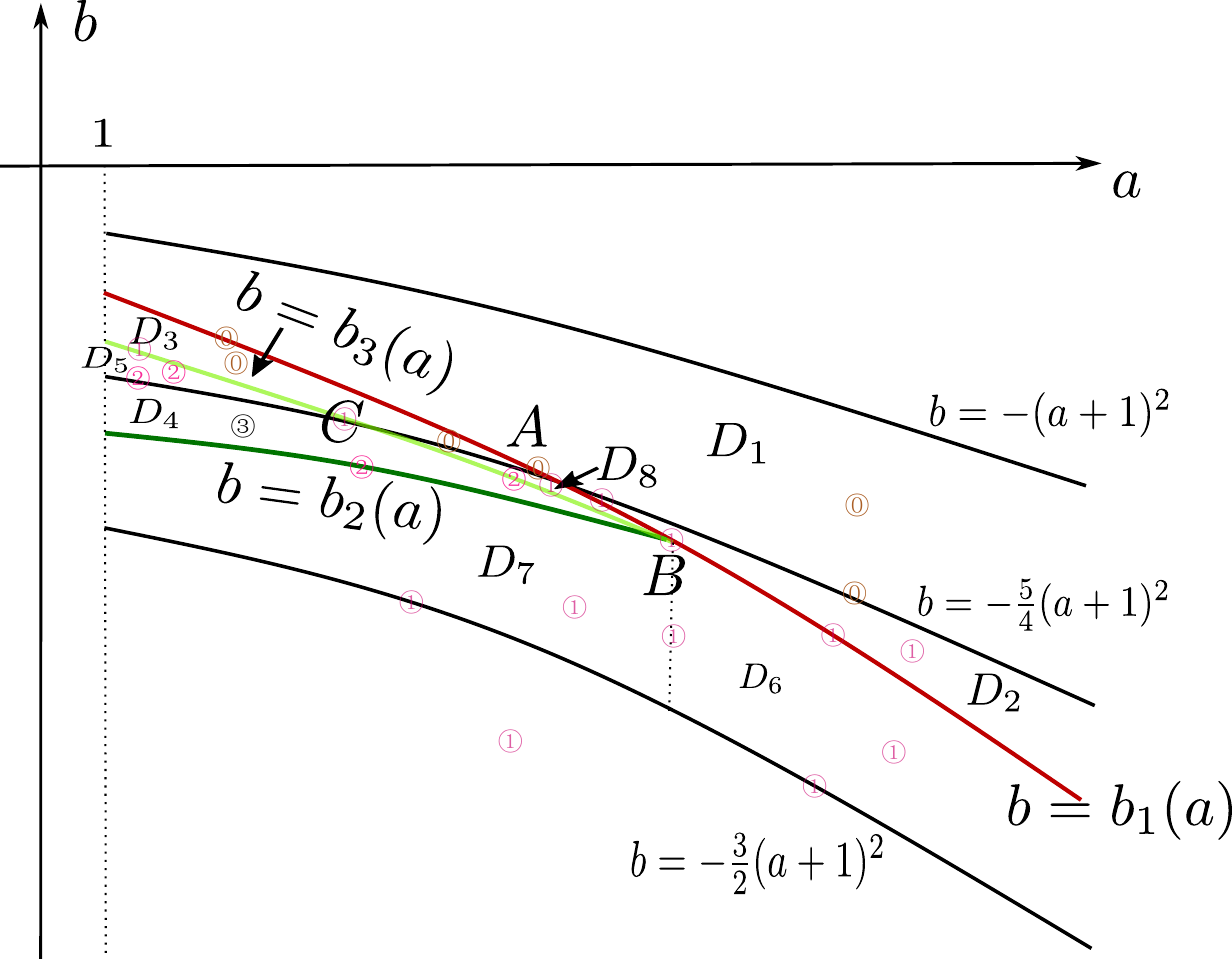}
	\caption{\footnotesize{The bifurcation diagram on the  number of zeros of $M(h)$ with respect to $(a,b)$, where $a>1$, $b<-(a+1)^2$.}}
	\label{fig-bif}
    \end{figure}


\begin{lm}
There is a value $a_0>1$ such that
\begin{description}
		\item[\bf (i)] for any  $1<a<a_0$ and $\delta>0$, system \eqref{SD} exhibits at most three crossing limit cycles when  $b<-(a+1)^2$;
        \item[\bf (ii)] for any $1<a<a_0$ and $\delta>0$, there exists a unique open interval $(c,d)\subset (-\infty, -(a+1)^2)$ of $b$
             such that system \eqref{SD} exhibits  exactly three  crossing limit cycles only when $b\in (c,d)$ and all these three crossing limit cycles are simple;
		\item[\bf (iii)] for any $a>a_0$ and $\delta>0$, system \eqref{SD} exhibits at most one crossing limit cycle when  $b<-(a+1)^2$;
        \item[\bf (iv)] for any $a>a_0$ and $\delta>0$, there exists a unique value $b_0$
             such that system \eqref{SD} exhibits exactly one  crossing limit cycle only when $b\in (-\infty, b_0)$ and this crossing limit cycle is simple.
\end{description}
\label{three-lc}
\end{lm}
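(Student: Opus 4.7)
My plan is to combine the first-order Melnikov analysis of Lemmas \ref{lm3-9}--\ref{lm3-18} (which controls crossing limit cycles only for $0 < \delta \ll 1$) with the rotated-vector-field monotonicity of Lemma \ref{lm-mono} to extend the count to all $\delta > 0$, closing the argument with the divergence inequality of Lemma \ref{lm-div-a>1} and the nonexistence results of Lemmas \ref{lm-ag1clu} and \ref{lm-c-p}. I would take $a_0 := a_{**}$ from Lemma \ref{lm3-14}; the region $D_4$ in Fig. \ref{fig-bif} where $M(h)$ has three simple zeros lies strictly inside $\{1 < a < a_{**}\}$, while for $a > a_{**}$ the function $M(h)$ admits at most one simple zero.

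In the small-$\delta$ regime I would invoke \cite[Theorem 1.1]{LH}: every simple zero of $M(h)$ produces a hyperbolic crossing limit cycle of \eqref{SD}, and a zero of multiplicity $k$ bifurcates into at most $k$ crossing limit cycles. Combined with the bifurcation diagram of Fig. \ref{fig-bif}, this yields the upper bounds (three for $1 < a < a_0$, one for $a > a_0$) and the existence/simplicity assertions in (ii) and (iv) for $0 < \delta \ll 1$. To promote these bounds to arbitrary $\delta > 0$, I would appeal to Lemma \ref{lm-mono}: with $(a,b)$ fixed, \eqref{SD} is a rotated vector field in $\delta$. Consequently each existing crossing limit cycle deforms continuously in $\delta$, and the count can change only through (a) a Hopf bifurcation at $E_r$, (b) a saddle-node of limit cycles, or (c) a grazing contact with $\{x = 0\}$. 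Mechanism (a) is excluded for $b < -(a+1)^2$ by Proposition \ref{hopfbif}. Mechanism (c) and the strip $b \leq -3(a+1)^2$ are handled by Lemma \ref{lm-ag1clu} combined with Lemma \ref{lm-c-p}, the latter forcing any crossing limit cycle to surround $(\sqrt{-3b}, 0)$. Mechanism (b) is incompatible with a fourth crossing cycle being born: the strict ordering of divergence integrals along nested crossing limit cycles intersecting $x = -\sqrt{-b}$ (Lemma \ref{lm-div-a>1}) forces their stability types to alternate, so any saddle-node must first destroy an existing pair before a new pair can appear, and the small-$\delta$ count caps the intermediate total. The interval $(c,d)$ in (ii) is then open by hyperbolicity and connected in $b$ by monotone continuation, while $b_0$ in (iv) is the unique grazing parameter.

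The main obstacle I foresee is the passage from the Melnikov (small-$\delta$) count to a bound uniform in $\delta > 0$. Lemma \ref{lm-mono} by itself only tracks the continuation of existing limit cycles; ruling out the birth of an extra crossing cycle at some intermediate $\delta_0 > 0$ requires coupling the monotone return map with the strict inequality of Lemma \ref{lm-div-a>1} in a nontrivial way, since without it one could in principle have two hyperbolic cycles collide to a degenerate cycle and then resplit into three. Once this incompatibility is established, the remaining bookkeeping on the endpoints of $(c,d)$ and on the value $b_0$ reduces to routine continuity arguments consistent with Fig. \ref{fig-bif}.
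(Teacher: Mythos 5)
Your small-$\delta$ backbone matches the paper: Lemmas \ref{lm3-9}--\ref{lm3-18} together with \cite[Theorem 1.1]{LH} give the counts (at most three zeros of $M(h)$ for $1<a<a_0$, at most one for $a>a_0$, with the three-zero region an interval in $b$), and the identification $a_0=a_{**}$ is consistent with Fig.~\ref{fig-bif}. The genuine gap is exactly the step you flag yourself: the passage from $0<\delta<\delta_0$ to arbitrary $\delta>0$. Your plan is to go forward in $\delta$ and rule out the birth of extra crossing cycles by listing mechanisms (Hopf, saddle-node, grazing) and excluding the saddle-node via Lemma \ref{lm-div-a>1}. That exclusion does not work as stated: a semi-stable crossing limit cycle can be created at an interior value of $\delta$ (in an annulus between, or outside, the existing cycles) and then split, raising the count from three to five without any intermediate configuration that violates your ``cap''; nothing forces an existing pair to be destroyed first. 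Moreover, Lemma \ref{lm-div-a>1} only compares the divergence integrals of two nested crossing cycles that both reach $x=-\sqrt{-b}$; it neither applies to crossing cycles confined to $-\sqrt{-b}<x$ nor prohibits four nested cycles, and Lemmas \ref{lm-ag1clu} and \ref{lm-c-p} do not control grazing transitions (grazing can convert the small limit cycle of Proposition \ref{pro-1} into an additional crossing cycle). So statements (i)--(iv) are not actually established for general $\delta$.

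The paper closes this gap in the opposite direction, by contradiction and backward continuation. Assuming four (or, for $a>a_0$, two) simple crossing limit cycles at some $\delta_*>\delta_0$, it uses the two-parameter monotonicity of Lemma \ref{lm-mono} (in $\delta$ and in $\tilde b=\delta b$, for the Li\'enard-free form \eqref{ygf}) to alternately decrease $\delta$ and increase $\tilde b$ while keeping all four cycles alive: decreasing $\delta$ expands $\Gamma_1,\Gamma_3$ and compresses $\Gamma_2,\Gamma_4$, and when two of them threaten to merge (or $\Gamma_4$ threatens to detach from $x=0$), increasing $\tilde b$ moves them apart again, with $\tilde b<-\delta(a+1)^2$ guaranteed throughout by Lemmas \ref{lm-bg0} and \ref{lm-b0H}. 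The decisive ingredient you are missing is the quantitative termination step: by \cite[Theorem 1.1]{CTZ} together with Lemma \ref{lm-mono}, the cumulative decrement of $\delta$ over these alternating steps diverges, so after finitely many steps $\delta$ drops below $\delta_0$ with four crossing cycles still present, contradicting the Melnikov count. The same alternating scheme (run on two hypothetical parameter intervals, or in the increasing-$\delta$ direction for (iv)) is what gives the uniqueness of the interval $(c,d)$ and of $b_0$; your appeal to ``monotone continuation'' and hyperbolicity does not by itself exclude two disjoint intervals of $b$ with three cycles. Without a substitute for this termination argument, the proposal does not prove the lemma.
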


\begin{proof}
By Lemmas \ref{lm3-9}-\ref{lm3-18},
there exists a value $\delta_0$ such that this lemma holds for system  \eqref{SD} when $0<\delta<\delta_0$.
Now we prove this lemma for system \eqref{ygf} when $\delta>0$ is a general value.

Assume that system \eqref{ygf} exhibits at least four crossing limit cycles when $b<-(a+1)^2$ and $\delta=\delta_*>\delta_0$.
Denote the
outermost four crossing limit cycles from outside to inside by
$\Gamma_1$, $\Gamma_2$, $\Gamma_3$ and  $\Gamma_4$.
Here we can only consider that $\Gamma_1$, $\Gamma_2$, $\Gamma_3$ and  $\Gamma_4$ are all simple.
If not, we can make small adjustments to the values of $\delta$ and $\tilde b$ so that the outermost four crossing limit cycles become simple.
By Lemma \ref{lm-infi},  all the orbits of system \eqref{SD} are positively bounded.
Then,  $\Gamma_1$ is stable,
implying that $\Gamma_2$, $\Gamma_3$ and $\Gamma_4$ are unstable, stable and unstable, respectively.

By Lemma \ref{lm-mono}, $\Gamma_1$, $\Gamma_3$ expand and  $\Gamma_2$, $\Gamma_4$ compress as $\delta$ decreases.
Keep decreasing $\delta$ from $\delta_*$ until $0<\delta<\delta_0$ or at least one of the following three cases happen:
(i) $\Gamma_2$ and $\Gamma_3$ merge into a semi-stable crossing limit cycle;
(ii) $\Gamma_4$ merges with the crossing limit cycle (if it exists) which is enclosed by  $\Gamma_4$ and closet to  $\Gamma_4$;
(iii)  $\Gamma_4$ becomes a small limit cycle.

If $\delta$ decreases to a value less than $\delta_0$, 
then system \eqref{ygf} exhibits at least four crossing limit cycles when $b<-(a+1)^2$ and $\delta<\delta_0$.
That is a contradiction.
If the decreasing of  $\delta$ stop as at least one of the cases (i)-(iii) happens, 
then increase $\tilde b$.
By Lemma \ref{lm-mono}, $\Gamma_1$, $\Gamma_3$ compress and  $\Gamma_2$, $\Gamma_4$ expand as $\tilde b$ increases.
Keep increasing $\tilde b$ until at least one of the following two cases happen:
(iv) $\Gamma_1$ and $\Gamma_2$ merge into a semi-stable crossing limit cycle;
(v) $\Gamma_3$ and $\Gamma_4$ merge into a semi-stable crossing limit cycle.
Clearly, $\tilde b<-\delta (a+1)^2$ throughout its increase,
because system \eqref{SD} exhibits no limit cycles when $b\ge-(a+1)^2$ by Lemmas \ref{lm-bg0} and \ref{lm-b0H}.

Once again decrease $\delta$ and repeat the above steps. From Theorem 1.1 of \cite{CTZ} and Lemma \ref{lm-mono},
the sum of the decrements of $\delta$ will tend to infinity as the number of  iterations tends to infinity.
Thus, after finite steps,  $\delta$  decreases to a value less than $\delta_0$ and at least four crossing limit cycles exist,
which is a contradiction.
Therefore, system \eqref{SD} exhibits at most three crossing limit cycles when $b<-(a+1)^2$. The statement {\bf (i)} is proven.


For $1<a<a_0$ and  $0<\delta<\delta_0$, there is a unique open interval $(c_\delta, d_\delta)\subset(-\infty, -(a+1)^2)$ of $b$
such that system \eqref{SD} exhibits exactly three crossing limit cycles only when $b\in (c_\delta, d_\delta)$
and these crossing limit cycles are simple.
Since the vector field of  system \eqref{SD} is rotated about $b$,
three simple crossing limit cycles of  system \eqref{SD} with $b=b_1$ could arise from the evolution of crossing limit cycles
of system  \eqref{SD} with $b=b_2$ when $b$ is changed continuously from $b_2$ to $b_1$, where $b_1, b_2\in (c_\delta, d_\delta)$.

Assume that for some $1<a<a_0$ and $\delta>0$ there are two open intervals $(c_1,d_1)$ and $(c_2,d_2)$ of $b$ such that
system \eqref{SD} exhibits exactly three crossing limit cycles when $b\in (c_1,d_1)\cup (c_2,d_2)$
and the number of crossing limit cycles is less than three when $b\in [d_2,c_1]$, where $d_2<c_1$.
Then,  at least two crossing limit cycles of system  \eqref{SD} with $b\in (c_1,d_1)$ do not arise from the evolution of crossing limit cycles
of system  \eqref{SD} with $b\in (c_2,d_2)$.
Choose two values $\bar b_1\in (c_1,d_1)$ and $\bar b_2\in (c_2,d_2)$.
Decrease $\delta$ and increase the corresponding $\tilde b_1$, $\tilde b_2$  alternately while ensuring the existence of these crossing limit cycles.
Then, $\tilde b_1$ and $\tilde b_2$ can not both be increased to the values in $(c_\delta, d_\delta)$,
which is a contradiction.
The statement {\bf (ii)} is proven.

Assume that system \eqref{ygf} exhibits at least two crossing limit cycles when $b<-(a+1)^2$, $a>a_0$ and $\delta=\delta_*>\delta_0$
and they are both simple.
It is similar to the proof of the first part of this lemma. Decrease $\delta$ and increase $\tilde b$ alternately while ensuring the existence of these two crossing limit cycles
until $0<\delta<\delta_0$.
Then, a contradiction could be derived.
Thus, system \eqref{SD} exhibits at most one crossing limit cycle when $b<-(a+1)^2$ and $a>a_0$. The statement {\bf (iii)} is proven.

Since for any $a>a_0$ and $\delta>0$ there exists a unique value $b_0$
             such that system \eqref{SD} exhibits exactly one  crossing limit cycle only when $b\in (-\infty, b_0)$ and this crossing limit cycle is simple,
increasing $\delta$ and decreasing $\tilde b$ alternately while ensuring the existence of this crossing limit cycle,
$\delta$ could be increased to any positive value.
The statement {\bf (iv)} is proven.
\end{proof}

\begin{pro}
	For any $\delta>0$, there exists a decreasing $\mathcal{C}^{\infty}$ function  $\varphi_1(a, \delta)$ with respect to $a>1$
    such that
    $-3(a+1)^2/4<\varphi_1(a, \delta)<-(a+1)^2$
    and
	\begin{description}
		\item[\bf (a)] system \eqref{SD} exhibits a grazing  loop surrounding $E_r$ if and only if $b=\varphi_1(a, \delta)$;
		\item[\bf (b)] system \eqref{SD} exhibits  a unique small limit cycle when $\varphi_1(a, \delta)<b<-(a+1)^2$,
                       which is stable;
        \item[\bf (c)] system \eqref{SD} exhibits no small limit cycle when $b<\varphi_1(a, \delta)$;
        \item[\bf (d)] for any sufficiently small $|\varsigma|$ with $\varsigma<0$, system \eqref{SD} exhibits a stable crossing limit cycle which intersects the small interval $(\varsigma,0)$  when $b=\varphi_1(a, \delta)-\varepsilon$,
	\end{description}
    where $\varepsilon>0$ is sufficiently small.
	\label{pro-1}
\end{pro}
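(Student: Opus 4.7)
My plan is to combine the Hopf bifurcation result of Proposition~\ref{hopfbif} with the uniqueness and hyperbolicity of small limit cycles from Lemma~\ref{one-sl} and the rotated-vector-field property of Lemma~\ref{lm-mono}, and then to identify $\varphi_1(a,\delta)$ as the first value of $b$ (moving down from $-(a+1)^2$) at which the small limit cycle grows into a grazing loop.

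First I would argue existence. For $b$ just below $-(a+1)^2$, Proposition~\ref{hopfbif} produces a unique stable small limit cycle $\Gamma_b$ born at $E_r$; by Lemma~\ref{one-sl}(a) it is the only small limit cycle and it is hyperbolic while $-4(a+1)^2/3<b<-(a+1)^2$. Since in the equivalent form~\eqref{ygf} the parameter enters only through $\tilde b=\delta b$ and Lemma~\ref{lm-mono} shows that increasing $\tilde b$ shifts $x_{A^+}^c$ strictly left and $x_{A^-}^c$ strictly right, system~\eqref{SD} is a (generalized) family of rotated vector fields in $b$. Consequently $\Gamma_b$ varies continuously and strictly with $b$, expanding as $b$ decreases. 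Combining this with Lemma~\ref{one-sl}(b), which forbids small limit cycles once $b\le -4(a+1)^2/3$, there must exist a unique first value $\varphi_1(a,\delta)\in (-4(a+1)^2/3,-(a+1)^2)$ at which $\Gamma_{\varphi_1}$ becomes tangent to the switching line $x=0$. By construction this gives the grazing loop of (a), the unique stable small limit cycle of (b), and the absence of small limit cycles in (c).

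For the smoothness of $\varphi_1$ and assertion~(d), I would set up a half-Poincar\'e map on a cross-section of the $y$-axis just above $E_r$. The grazing configuration is characterised by the trajectory issuing from $(0,y_*)$ returning tangentially to the $y$-axis from $x>0$; since $(F,g)$ is real-analytic in $(x,a,\delta)$ on the right half-plane and the hyperbolicity of $\Gamma_b$ from Lemma~\ref{one-sl}(a) provides non-degeneracy of the displacement function in $b$, the implicit function theorem yields $\varphi_1\in \mathcal{C}^{\infty}$ in $a$ (and in $\delta$). Monotonicity in $a$ would follow by differentiating the grazing equation implicitly and using that increasing $a$ shifts the right-hand phase portrait outward through $g(x)=x-{\rm sgn}(x)-a$, so that the grazing value of $b$ must move downward to compensate. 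For (d), when $b=\varphi_1(a,\delta)-\varepsilon$ the rotated vector field forces $\Gamma_b$ to open across $x=0$: the resulting closed orbit is a crossing limit cycle whose leftmost intersection with the $x$-axis tends to $0^-$ as $\varepsilon\to 0^+$, and its stability is inherited from the hyperbolicity of the small cycle through the grazing transition.

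The main obstacle is the $\mathcal{C}^{\infty}$ smoothness of $\varphi_1$ at the grazing bifurcation: near a grazing orbit the full return map of a Filippov system typically exhibits the characteristic $3/2$-power singularity, so choosing the right tangency equation (using the half-return to the $y$-axis rather than the full return across the switching line) and verifying its transversality from the hyperbolicity supplied by Lemma~\ref{one-sl} are the delicate technical steps; the remainder should be routine once the grazing equation is correctly set up.
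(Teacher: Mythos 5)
Your overall strategy (follow the Hopf cycle of Proposition \ref{hopfbif} as $b$ decreases, using the uniqueness and hyperbolicity in Lemma \ref{one-sl} and a rotation argument, until it touches $x=0$) is genuinely different from the paper's, which works instead with the two half-orbits of \eqref{ygf} through the tangency point: since any grazing loop must be tangent to the switching line where $\dot x=0$ on $x=0$, i.e.\ at the origin, the paper compares $x^0_{A^+}$ and $x^0_{A^-}$, shows $x^0_{A^+}<x^0_{A^-}$ at $b=-(a+1)^2$ (no closed orbits, Lemma \ref{lm-b0H}) and $x^0_{A^+}>x^0_{A^-}$ at $b=-4(a+1)^2/3$ (no small cycles, Lemma \ref{one-sl}), and then obtains existence, uniqueness and the ``if and only if'' of (a) simultaneously from the strict monotonicity in Lemma \ref{lm-mono}. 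Your route leaves concrete gaps here. First, Lemma \ref{lm-mono} is about orbits starting at $(x_c,0)$, $x_c\le 0$, and crossing the positive $x$-axis, so it does not by itself give monotone expansion of a small cycle lying entirely in $x>0$; you would have to invoke the classical rotated vector field theory for the smooth right-half system instead. Second, defining $\varphi_1$ as the first grazing value of the Hopf branch yields neither the ``only if'' part of (a) nor statement (c): a small limit cycle at some $b<\varphi_1$ that is not a continuation of the Hopf cycle is not excluded ``by construction''. The paper rules this out by observing that $x^0_{A^+}>x^0_{A^-}$ forces the outermost small cycle (if any) to be externally unstable, contradicting the stability guaranteed by Lemma \ref{one-sl}.

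The most serious gap is (d). Stability is not simply ``inherited through the grazing transition'': the return map of the bifurcating crossing cycle involves the left half-system and has the usual power-law singularity at grazing, so a genuine estimate is required. The paper proves (d) by showing that the integral of the divergence along the grazing loop (viewed as a cycle of \eqref{F-g-x}) is negative, deducing that for $|\varsigma|$ small the orbit through $(\varsigma,0)$ satisfies $x^{\varsigma}_{A^+}<x^{\varsigma}_{A^-}$, while $x^{0}_{A^+}>x^{0}_{A^-}$ when $b=\varphi_1(a,\delta)-\varepsilon$; this gives an annular region, Poincar\'e--Bendixson produces a crossing cycle intersecting $(\varsigma,0)$, and the rotation of the field with respect to $\tilde b$ is then used to extract a stable one. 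Some argument of this type must be supplied in your write-up. Finally, for the $\mathcal C^{\infty}$ smoothness your implicit-function idea is reasonable, but the non-degeneracy you invoke (hyperbolicity of the small cycle from Lemma \ref{one-sl}) is not the relevant quantity; what one needs is that the $b$-derivative of the displacement $x^0_{A^+}-x^0_{A^-}$ is nonzero, which follows from the explicit integral formulas behind Lemma \ref{lm-mono}. The paper defers both the smoothness and the monotonicity of $\varphi_1$ in $a$ to the companion paper's Proposition 3.3; your heuristic that increasing $a$ ``shifts the right-hand phase portrait outward'' so $b$ must decrease is not yet a proof.
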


\begin{proof}
It is enough to prove this proposition for system \eqref{ygf}, which has the same topological structure with system \eqref{SD}.
Clearly, system \eqref{ygf} exhibits a unique equilibrium $(a+1,0)$, which corresponds to $E_r$.
Consider the positive orbit $\gamma^+$ and negative orbit $\gamma^-$ passing through the origin of system \eqref{ygf}.
As defined above Lemma \ref{lm-mono}, $(x^0_{A^+}, 0)$ (resp. $(x^0_{A^-}, 0)$)
is the first intersection point of $\gamma^+$ (resp. $\gamma^-$)
and the positive $x$-axis.

On the one hand,  $E_r$ is a sink when $b=-(a+1)^2$ from Lemma \ref{lm-fi-e}.
By Lemma \ref{lm-b0H}, system \eqref{SD} exhibits no closed orbits when $b=-(a+1)^2$.
Then, $x^0_{A^+}<x^0_{A^-}$ when $\tilde{b}=-\delta(a+1)^2$.
On the other hand,
$E_r$ is a source when $b=-3(a+1)^2/4$ from Lemma \ref{lm-fi-e}.
By Lemma \ref{one-sl}, system \eqref{SD} exhibits no small limit cycle when $b=-3(a+1)^2/4$.
Then, $x^0_{A^+}>x^0_{A^-}$ when $\tilde{b}=-3\delta(a+1)^2/4$.
It follows from Lemma \ref{lm-mono} that $x^0_{A^+}$ decreases and $x^0_{A^-}$ increases as $\tilde{b}$ increases.
Then, there exists a unique function $b=\varphi_1(a, \delta)\in (-4(a-1)^2/3, -(a-1)^2)$ such that $x^0_{A^+}=x^0_{A^-}$.
The statement {\bf{(a)}} is proven.

By the monotonicity of $x^0_{A^+}$ and $x^0_{A^-}$,
we get  $x^0_{A^+}<x^0_{A^-}$ when $b >\varphi_1(a, \delta)$.
Since $E_r$ is a source when $b<-(a+1)^2$ from Lemma \ref{lm-fi-e},
then  an annular region whose $\omega$-limit set lies in itself  can be constructed when $\varphi_1(a, \delta)<b<-(a+1)^2$.
By Poincar\'{e}-Bendixson Theorem, at least one small limit cycle exists when $\varphi_1(a, \delta)<b<-(a+1)^2$.
It follows from Lemma \ref{one-sl} that
system \eqref{SD} exhibits a unique small limit cycle, which is stable.
The statement {\bf{(b)}} is proven.

Since $x^0_{A^+}>x^0_{A^-}$ when $b<\varphi_1(a, \delta)$,
the outermost small limit cycle must be externally unstable if it exists.
By Lemma \ref{one-sl},
system \eqref{SD} exhibits at most one limit cycle when $b<-(a+1)^2$, and the limit cycle is stable if it exists.
Then, system \eqref{SD} exhibits no small limit cycle when $b<\varphi_1(a, \delta)$.
The statement {\bf{(c)}} is proven.

Let $\gamma$ be the grazing  loop of system \eqref{ygf}.
Then, it can be regarded as a limit cycle in the strip $x\in(-\infty, a+1)$ of system  \eqref{F-g-x}.
Thus,
\[
     \int_{\Gamma}{\rm div} (y,-(x-a-1)-\delta x^2y-\tilde by)\mathrm{d}t<0.
\]
Consider the orbit segment
$\gamma_{\varsigma}$ which starts at  $(\varsigma,0)$
and intersect firstly the positive $x$-axis at $(x^\varsigma_{A^+}, 0)$ and $(x^\varsigma_{A^-}, 0)$) as $t>0$ and
 as $t<0$, where $\varsigma<0$.
The integral of divergency of system \eqref{ygf} along $\gamma_{\varsigma}$
\begin{eqnarray*}
     \int_{\Gamma_\varsigma}{\rm div} (y,-(x-a-1)-\delta x^2y-\tilde by)\mathrm{d}t
     &=&\int_{\Gamma_\varsigma, x<0}{\rm div} (y,-(x-a-1)-\delta x^2y-\tilde by)\mathrm{d}t \\
     & &+\int_{\Gamma_\varsigma,x>0}{\rm div} (y,-(x-a-1)-\delta x^2y-\tilde by)\mathrm{d}t.
\end{eqnarray*}
Since $\int_{\Gamma_\varsigma, x<0}{\rm div} (y,-(x-a-1)-\delta x^2y-\tilde by)\mathrm{d}t \rightarrow 0$ as $\varsigma\rightarrow 0$,
we have
\[
\int_{\Gamma_\varsigma}{\rm div} (y,-(x-a-1)-\delta x^2y-\tilde by)\mathrm{d}t<0
\]
as $|\varsigma|$ is sufficiently small. It follows that $x^\varsigma_{A^+}<x^\varsigma_{A^-}$.

For sufficiently small $\varepsilon>0$, one can get that $x^\varsigma_{A^+}<x^\varsigma_{A^-}$
when $\tilde b=\delta\varphi_1(a, \delta)-\delta\varepsilon$.
By the monotonicity of $x^0_{A^+}$ and $x^0_{A^-}$, we have
$x^0_{A^+}>x^0_{A^-}$ when $\tilde b=\delta\varphi_1(a, \delta)-\delta\varepsilon$.
Then,  an annular region whose $\omega$-limit set lies in itself  can be constructed when $\tilde b=\delta\varphi_1(a, \delta)-\delta\varepsilon$.
By Poincar\'{e}-Bendixson Theorem,
system \eqref{ygf} exhibits a crossing limit cycle which intersects the small interval $(\varsigma,0)$ when $\tilde b=\delta\varphi_1(a, \delta)-\delta\varepsilon$.
Moreover, the innermost one in the annular region is internally stable.
If it is internally stable and externally unstable, then  a stable crossing limit cycle will be bifurcated from it when $\tilde b$ become smaller slightly
because vector field of  system \eqref{ygf} is rotated about $\tilde b$.
The statement {\bf{(d)}} is proven.

Furthermore, similar to the proof  of \cite[Proposition 3.3]{CTW}, one can prove that
$\varphi_1(a, \delta)$ is decreasing and $\mathcal{C}^{\infty}$ function with respect to $a>1$.
\end{proof}

\begin{pro}
    Assume that $a>1$ and $a-1$ is sufficiently small.
	For any $\delta>0$, there exists a $\mathcal{C}^{0}$ function  $\varrho_1(a, \delta)$
    such that
     $\varphi_1(a, \delta)<\varrho_1(a, \delta)<-(a+1)^2$
    and
    \begin{description}
		\item{\bf (a)} system \eqref{SD} exhibits a unique crossing limit cycle when $b=\varrho_1(a,\delta)$, which is internally unstable and externally stable;

        \item{\bf (b)}  system \eqref{SD} exhibits exactly two crossing limit cycles when $\varphi_1(a, \delta)\le b<\varrho_1(a,\delta)$, where the inner one is unstable and the outer one is stable.		

        \item{\bf (c)}  system \eqref{SD} exhibits no  crossing limit cycles when $b>\varrho_1(a,\delta)$.
	\end{description}
	\label{pro-2}
\end{pro}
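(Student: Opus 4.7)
The plan is to identify $\varrho_1(a,\delta)$ as the locus of a saddle-node (double) bifurcation of crossing limit cycles: for $b$ just below $\varrho_1$ two hyperbolic crossing cycles coexist (inner unstable, outer stable); at $b=\varrho_1$ they coalesce into a single semi-stable cycle (internally unstable, externally stable); and for $b$ just above $\varrho_1$ no crossing cycle survives. I would first establish this structure for $\delta>0$ sufficiently small via the Melnikov analysis already built up in the paper, then extend it to every $\delta>0$ by rotated vector field continuation in the spirit of Lemma \ref{three-lc}.

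For the small-$\delta$ step, fix $a$ with $a-1$ small so that $1<a<a_*<a_{**}<a_0$. Then Lemmas \ref{lm3-17} and \ref{lm3-18} locate the curve $b=b_3(a)\in(b_2(a),b_1(a))\subset(-\tfrac{3}{2}(a+1)^2,-(a+1)^2)$ on which the first order Melnikov function $M(h)$ in \eqref{M} has a double zero at $h=h_{12}(a,b_3(a))$, while $M(h)$ has two simple zeros in the region $D_5$ (immediately below $b_3(a)$) and no zero in $D_3$ (immediately above). By \cite[Theorem 1.1]{LH}, each simple zero corresponds, for $\delta>0$ small, to a hyperbolic crossing limit cycle of \eqref{SD}, and the double zero corresponds to a semi-stable crossing cycle. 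The sign pattern of $M$ between and beyond the zeros (together with the positive boundedness of all orbits from Lemma \ref{lm-infi}) identifies the outer cycle as stable and the inner one as unstable. This yields $\varrho_1(a,\delta)=b_3(a)+O(\delta)$ as $\delta\to 0^+$. The ordering $\varphi_1(a,\delta)<\varrho_1(a,\delta)<-(a+1)^2$ in the small-$\delta$ regime follows from combining the bounds of Proposition \ref{pro-1} on $\varphi_1$ with the location of $b_3(a)$: the two objects have distinct dynamical origins (grazing vs. saddle-node), so they cannot coincide at $\delta=0$, and the implicit function theorem keeps them apart for small $\delta$.

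For arbitrary $\delta>0$ I would extend $\varrho_1(\cdot,\delta)$ by the continuation procedure used in Lemma \ref{three-lc}: Lemma \ref{lm-mono} says \eqref{SD} forms a rotated vector field in both $\delta$ and $\tilde b=\delta b$, so as $\delta$ increases the two crossing cycles deform monotonically and can only be destroyed by (i) colliding with each other (another saddle-node), (ii) grazing the switching line $x=0$, or (iii) escaping to infinity. Alternative (iii) is excluded by Lemma \ref{lm-infi}; (ii) is excluded because the grazing locus $b=\varphi_1(a,\delta)$ of Proposition \ref{pro-1} lies strictly below $\varrho_1$ (any grazing of a pair of crossing cycles would have to occur on the $\varphi_1$-curve, not at the saddle-node); alternative (i) is precisely the defining condition of $\varrho_1(a,\delta)$. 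The at-most-three-crossing-cycles bound of Lemma \ref{three-lc}(i) prevents spurious fourth cycles from appearing during the continuation, and the continuous dependence of the Poincar\'e map on parameters gives continuity of $\varrho_1$ in $(a,\delta)$. Assertions (a)--(b) then follow from the standard local Poincar\'e-map analysis at a saddle-node of limit cycles combined with the rotated-field monotonicity, while (c) follows because beyond the saddle-node the two cycles have merged and annihilated, with no further mechanism available to create a crossing cycle before the Hopf value $b=-(a+1)^2$ is reached. The main obstacle in this plan is exactly the continuation step: one must rule out the possibility that $\varrho_1(a,\delta)\to\varphi_1(a,\delta)$ as $\delta$ grows, which is where the assumption $a-1\ll1$ is essential, since it confines the parameters to the Melnikov regions $D_5\cup D_3$ in which a unique saddle-node curve exists and stays bounded apart from the grazing curve.
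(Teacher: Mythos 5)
Your small-$\delta$ step is broadly consistent with the paper's Melnikov computations (Lemmas \ref{lm3-9}--\ref{lm3-18}): as $\delta\to0^+$ the double-cycle value tends to $b_3(a)$ and the grazing value to $-\tfrac{5}{4}(a+1)^2$, and for $a$ near $1$ these are ordered as required. The genuine gap is exactly the step you flag as ``the main obstacle'': the proposition asserts the conclusion for \emph{every} $\delta>0$ (with $a-1$ small, allowed to depend on $\delta$), whereas your plan fixes $a$ and tries to continue upward in $\delta$ from the Melnikov regime. This is the wrong quantifier order and cannot be repaired by the rotated-field machinery as you use it: by Lemma \ref{lm-mono} (as exploited in Lemma \ref{three-lc}) the stable outer crossing cycle compresses and the unstable inner one expands as $\delta$ increases, so the pair may annihilate at finite $\delta$, and the ordering $\varrho_1>\varphi_1$ may genuinely fail away from $a=1$ -- the paper's Proposition \ref{pro-3} explicitly allows $\varrho_1<\varphi_1$. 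Your exclusion of alternative (ii) (``any grazing would have to occur on the $\varphi_1$-curve, which lies strictly below $\varrho_1$'') is circular, since that strict ordering for all $\delta$ is part of what has to be proved. A smaller issue: in region $D_3$, ``$M(h)$ has no zeros'' does not by itself give statement (c) for small $\delta$ without additional uniformity near $h=0$ and $h\to+\infty$.

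The paper's proof takes a different route that supplies precisely the missing ingredient. It fixes an arbitrary $\delta>0$ and uses the boundary case $a=1$ from the companion paper \cite{CTW}, where the grazing loop at $b=\varphi_1(1,\delta)$ is externally \emph{unstable} for every $\delta$; by continuous dependence on $a$, for $a=1+\epsilon$ and $b=\varphi_1(a,\delta)$ there exist $\varsigma_1<\varsigma_2<0$ with $x^{\varsigma_1}_{A^+}>x^{\varsigma_1}_{A^-}$ and $x^{\varsigma_2}_{A^+}<x^{\varsigma_2}_{A^-}$ (the latter because the grazing loop is externally stable for $a>1$, Proposition \ref{pro-1}). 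Poincar\'e--Bendixson, the parity of crossing cycles, and the at-most-three bound of Lemma \ref{three-lc} then give exactly two hyperbolic crossing cycles at the grazing value. The semi-stable cycle at a unique $\varrho_1(a,\delta)\in(\varphi_1(a,\delta),-(a+1)^2)$ is obtained by showing that the maximum of $x^{c}_{A^+}-x^{c}_{A^-}$ over the annulus bounded by those two cycles decreases strictly and continuously in $\tilde b$, is positive at $\tilde b=\delta\varphi_1$ and negative at $\tilde b=-\delta(a+1)^2$ (Lemma \ref{lm-b0H}); statements (b) and (c) then follow from the same monotonicity, parity, and Lemma \ref{three-lc}. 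In short, the hypothesis ``$a-1$ sufficiently small'' is used in the paper to import the $a=1$ result valid for all $\delta$, not to place the parameters in a Melnikov region, and without that mechanism your argument only establishes the proposition for $\delta$ below the Melnikov threshold.
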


\begin{proof}
By the proof of \cite[Proposition 3.1]{CTW},  the function $\varphi_1(a, \delta)$ can be extended continuously to $a=1$
and system \eqref{SD} exhibits a grazing  loop surrounding $E_r$ when $a=1$ and $b=\varphi_1(1, \delta)$.
Moreover, the grazing  loop is externally unstable when  $a=1$ and $b=\varphi_1(1, \delta)$.
Then, there exists a value $\varsigma_1<0$ such that $x^{\varsigma_1}_{A^+}>x^{\varsigma_1}_{A^-}$,
where $(x^{\varsigma_1}_{A^+},0)$ and $(x^{\varsigma_1}_{A^-},0)$ are the points where the orbit of system \eqref{ygf}
passing through $(\varsigma_1,0)$ crosses the positive $x$-axis firstly.
From continuous dependence of the solution on parameters, we have $x^{\varsigma_1}_{A^+}>x^{\varsigma_1}_{A^-}$ when $a=1+\epsilon$
and  $\tilde{b}=\delta\varphi_1(a, \delta)$,
where $\epsilon>0$ is sufficiently small.
Since the grazing  loop of system \eqref{SD} is externally stable when $a>1$ by the proof of Proposition \ref{pro-1},
there exists a value $\varsigma_2\in(\varsigma_1, 0)$ such that $x^{\varsigma_2}_{A^+}<x^{\varsigma_2}_{A^-}$ when $a=1+\epsilon$  and  $\tilde{b}=\delta\varphi_1(a, \delta)$.
By Lemma \ref{lm-infi},  all the orbits are positively bounded.
Then,  two annular regions can be constructed when $a=1+\epsilon$  and  $\tilde{b}=\delta\varphi_1(a, \delta)$,
where the outer one's $\omega$-limit set lies in itself and the inner one's $\alpha$-limit set lies in itself.
By Poincar\'{e}-Bendixson Theorem,
system \eqref{ygf} exhibits at least two crossing limit cycles when $a=1+\epsilon$  and $\tilde{b}=\delta\varphi_1(a, \delta)$.
It follows from the externally stability of the  grazing  loop  and positive boundedness of all the orbits that
the number of crossing limit cycles of system \eqref{ygf} is even if
a limit cycle of multiplicity $k$ is regarded as $k$ distinct limit cycles.
Notice that the vector field of system  \eqref{ygf} is rotated with respect 
Then, two limit cycles will be bifurcated from a limit cycle of  even multiplicity when $\tilde{b}$ slightly varies in one side.
By Lemma \ref{three-lc}, system \eqref{ygf} has at most three crossing limit cycles.
Then, system  \eqref{ygf} exhibits exactly two crossing limit cycles when $a=1+\epsilon$  and $\tilde{b}=\delta\varphi_1(a, \delta)$.
Moreover, the outer one is stable and the inner one is unstable.

Denote the intersection point of these two crossing limit cycles and the negative $x$-axis by $(c_1,0)$ and $(c_2,0)$, where $c_1<c_2<0$.
Since $x_{A^+}^c$ and $x_{A^-}^c$ both depend continuously on $c$,
the maximum value of $x_{A^+}^c-x_{A^-}^c$ over the interval $[c_1,c_2]$ exists.
Both $x_{A^+}^c$ and $x_{A^-}^c$ are functions of $\tilde{b}$ and $\delta$.
Here we consider how the  maximum value depends on  $\tilde{b}$, so denote the  maximum value of $x_{A^+}^c-x_{A^-}^c$ over the interval $[c_1,c_2]$ by $M(\tilde{b})$. For any $\eta>0$,
   \begin{eqnarray*}
		M(\tilde{b}-\eta)-M(\tilde{b})=M(\tilde{b}-\eta)-(x_{A^+}^{c_*}-x_{A^-}^{c_*})|_{\tilde b}
		>M(\tilde{b}-\eta)-(x_{A^+}^{c_*}-x_{A^-}^{c_*})|_{\tilde{b}-\eta}
		\ge0,
	\end{eqnarray*}
where $c_*$ is the point such that $x_{A^+}^{c_*}-x_{A^-}^{c_*}$ takes the maximum value on $[c_1,c_2]$.
	Then, $M(\tilde{b})$ decreases continuously as $\tilde{b}$ increases.

Since the  crossing limit cycle for system \eqref{ygf}  passing through $(c_1,0)$ is stable
and the  crossing limit cycle for system \eqref{ygf}  passing through $(c_2,0)$ is unstable
when $a=1+\epsilon$  and $\tilde{b}=\delta\varphi_1(a, \delta)$,
we have
$x_{A^+}^{c}-x_{A^-}^{c}>0$ for any $c\in (c_1,c_2)$  when $a=1+\epsilon$  and $\tilde{b}=\delta\varphi_1(a, \delta)$.
Then, $M(\delta\varphi_1(a, \delta))>0$ when $a=1+\epsilon$.
There are no limit cycles for system \eqref{ygf} by Lemma \ref{lm-b0H} when $\tilde{b}=-\delta (a+1)^2$,
and  all the orbits are positively bounded by Lemma \ref{lm-infi}.
Then,  $x_{A^+}^{c}-x_{A^-}^{c}<0$ for any  $c\in (c_1,c_2)$ when  $a=1+\epsilon$  and  $\tilde{b}=-\delta (a+1)^2$,
implying $M(-\delta (a+1)^2)<0$.
It follows from the monotonicity of $M(\tilde{b})$ that
there exists a unique
$\tilde{b}=\delta\varrho_1(a, \delta)\in( \delta\varphi_1(a, \delta), -\delta (a+1)^2)$
such that $M(\delta\varrho_1(a, \delta))=0$.
Moreover,
    $x_{A^+}^{c}-x_{A^-}^{c}\le0$  for any $c\in [c_1,c_2]$
and there exists a value of $c_*\in (c_1,c_2)$ such that
	$x_{A^+}^{c_*}-x_{A^-}^{c_*}=0$ when $a=1+\epsilon$  and $\tilde{b}=\delta\varrho_1(a, \delta)$.
That implies there is a crossing limit cycle for system \eqref{ygf} when $a=1+\epsilon$  and $\tilde{b}=\delta\varrho_1(a, \delta)$,
which is  internally unstable and externally stable.
Since $x_{A^+}^{0}-x_{A^-}^{0}<0$ when $\tilde b>\delta\varphi_1(a, \delta)$ and all the orbits are positively bounded,
then
the number of crossing limit cycles of system \eqref{ygf} is even if
a limit cycle of multiplicity $k$ is regarded as $k$ distinct limit cycles.
The semi-stable crossing limit cycle must be unique because system \eqref{ygf} has at most three crossing limit cycles by Lemma \ref{three-lc}.
The statement {\bf (a)} holds.

By the statement {\bf (a)} and the monotonicity of $x_{A^+}^{c}$ and $x_{A^-}^c$,
we have
$x_{A^+}^{c_*}-x_{A^-}^{c_*}>0$
when $\tilde b<\delta \varrho_1(a,\delta)$.
Proposition \ref{pro-1} tells us  system \eqref{SD} exhibits  a unique small limit cycle when $\varphi_1(a, \delta)<b<-(a+1)^2$,
which is stable.
Then, $x_{A^+}^{0}-x_{A^-}^{0}<0$ when $\delta\varphi_1(a, \delta)<\tilde b<\delta\varrho_1(a,\delta)$.
Notice that the grazing limit cycle is stable when $\tilde b=\delta\varphi_1(a, \delta)$.
Combining with that all the orbits are positively bounded,
 two annular regions can be constructed when $\delta\varphi_1(a, \delta)\le\tilde b<\delta\varrho_1(a,\delta)$,
where the outer one's $\omega$-limit set lies in itself and the inner one's $\alpha$-limit set lies in itself.
Since the number of crossing limit cycles of system \eqref{SD} is even if
a limit cycle of multiplicity $k$ is regarded as $k$ distinct limit cycles when  $b\ge\varphi_1(a, \delta)$,
and system \eqref{SD} has at most three crossing limit cycles,
system \eqref{SD} exhibits exactly two crossing limit cycles when  $\varphi_1(a, \delta)\le b<\varrho_1(a,\delta)$, where the inner one is unstable and the outer one is stable.
The statement {\bf (b)} holds.

From the stability of two crossing limit cycles of system  \eqref{ygf} with $a=1+\epsilon$  and $\tilde{b}=\delta\varphi_1(a, \delta)$,
one gets $x_{A^+}^{c}-x_{A^-}^{c}<0$  for any $c\in (-\infty, c_1)\cup (c_2,0)$.
Notice that $x_{A^+}^{c}-x_{A^-}^{c}\le0$  for any $c\in [c_1,c_2]$  when $a=1+\epsilon$  and $\tilde{b}=\delta\varrho_1(a, \delta)$.
By the monotonicity of $x_{A^+}^{c}$ and $x_{A^-}^c$,
we know $x_{A^+}^{c}-x_{A^-}^{c}<0$  for any $c\in (-\infty, 0)$ when  $a=1+\epsilon$ and $\tilde{b}>\delta\varrho_1(a, \delta)$.
That means  system \eqref{SD} exhibits no crossing limit cycles when  $a=1+\epsilon$ and ${b}>\varrho_1(a, \delta)$.
The statement {\bf (c)} holds.
\end{proof}

\begin{pro}
	For any $\delta>0$, the function $\varrho_1(a, \delta)$ is well defined for $ 1<a<a_0$.
Moreover,  
$\varrho_1(a, \delta)<-(a+1)^2$.
If $\varrho_1(a, \delta)>\varphi_1(a, \delta)$, then the statements {\bf (a)}-{\bf (c)} in Proposition \ref{pro-2} hold;
if $\varrho_1(a, \delta)<\varphi_1(a, \delta)$, then
    \begin{description}
		\item{\bf (d)} system \eqref{SD} exhibits exactly two crossing limit cycles when $b=\varrho_1(a,\delta)$,
               where the outer one is an internally unstable and externally stable crossing limit cycle, and the inner one is a stable limit cycle;
		
         \item{\bf (e)}  system \eqref{SD} exhibits exactly three crossing limit cycles when $b=\varrho_1(a,\delta)-\varepsilon$, which are stable, unstable and stable
               from the innermost to the outermost;

        \item{\bf (f)}  system \eqref{SD} exhibits a unique crossing limit cycle when $\varrho_1(a,\delta)<b<\varphi_1(a, \delta)$, which is stable;

	\end{description}
if $\varrho_1(a, \delta)=\varphi_1(a, \delta)$, then
    \begin{description}
		\item{\bf (g)} system \eqref{SD} exhibits a unique crossing limit cycle when $b=\varrho_1(a,\delta)$,
               which is internally unstable and externally stable;

        \item{\bf (h)}  system \eqref{SD} exhibits exactly three crossing limit cycles when $b=\varrho_1(a,\delta)-\varepsilon$, which are stable, unstable and stable
               from the innermost to the outermost,		

        \item{\bf (i)}  system \eqref{SD} exhibits no crossing limit cycles when $b>\varrho_1(a,\delta)$;

	\end{description}
    where $\varepsilon>0$ is sufficiently small.  
	\label{pro-3}
\end{pro}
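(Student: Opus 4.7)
The plan is first to extend the definition of $\varrho_1(a,\delta)$ to the full interval $1<a<a_0$ using Lemma \ref{three-lc}, and then to treat the three subcases according to the sign of $\varrho_1-\varphi_1$. By Lemma \ref{three-lc}, for each $(a,\delta)$ with $1<a<a_0$ the set of $b$ yielding exactly three crossing limit cycles is a unique open interval $(c_\delta,d_\delta)\subset(-\infty,-(a+1)^2)$, on which the three cycles are simple and hence hyperbolic. I would define $\varrho_1(a,\delta)$ as the upper double crossing limit cycle bifurcation value: when $\varrho_1>\varphi_1$ this is the value furnished by Proposition \ref{pro-2} (lying above $\varphi_1$, with the three-cycle interval whose upper endpoint is $\varphi_1$ produced by grazing), while when $\varrho_1\le\varphi_1$ it coincides with $d_\delta$ itself. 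The bound $\varrho_1<-(a+1)^2$ then follows at once from Lemmas \ref{lm-bg0} and \ref{lm-b0H}, and continuous dependence on $a$ is obtained from the rotated vector field structure together with the monotonicity of $x_{A^\pm}^c$ in $\tilde b$ and $\delta$ supplied by Lemma \ref{lm-mono}. The case $\varrho_1>\varphi_1$ then reduces verbatim to Proposition \ref{pro-2}, whose proof depends only on the sign of $\varrho_1-\varphi_1$ and on the three-cycle bound of Lemma \ref{three-lc} rather than on smallness of $a-1$.

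For the case $\varrho_1<\varphi_1$, I would begin at $b=\varrho_1-\varepsilon$ inside the three-cycle interval, where Lemma \ref{three-lc} supplies three simple hyperbolic crossing limit cycles. Positive boundedness (Lemma \ref{lm-infi}) forces the outermost to be stable, and alternation of hyperbolic nested cycles then gives the stable/unstable/stable pattern from inner to outer required by statement (e). As $b\uparrow\varrho_1$ a saddle-node of periodic orbits must occur. Since $b<\varphi_1$ at $b=\varrho_1$ rules out any small limit cycle by Proposition \ref{pro-1}(c), and since for $\varrho_1<b<\varphi_1$ the grazing bifurcation at $b=\varphi_1$ produces a unique stable crossing cycle by Proposition \ref{pro-1}(d) which must persist down to $\varrho_1$ (the rotated vector field structure together with Lemma \ref{three-lc} forbids any further bifurcation in this window), the only consistent pairing at $b=\varrho_1$ is that the middle unstable and outer stable cycles coalesce into a semi-stable outer cycle while the inner stable cycle persists. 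The divergence comparison of Lemma \ref{lm-div-a>1} confirms this pairing and identifies the merged cycle as internally unstable and externally stable, yielding statements (d), (e), and (f).

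For the coincidence case $\varrho_1=\varphi_1$, the inner stable cycle of the preceding case degenerates into the grazing loop of Proposition \ref{pro-1}, so only the semi-stable outer cycle survives as a crossing cycle at $b=\varrho_1=\varphi_1$, giving (g). Statement (h) then follows exactly as (e), with the three hyperbolic cycles at $b=\varrho_1-\varepsilon$ produced simultaneously by the splitting of the outer semi-stable pair and the separation of the grazing loop from the switching line $x=0$; statement (i) follows because $b>\varrho_1=\varphi_1$ lies above both the grazing surface and the double-cycle surface, so Proposition \ref{pro-1}(c) rules out small cycles while Lemma \ref{three-lc}(ii) rules out crossing cycles. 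The main obstacle will be twofold: first, establishing the $\mathcal{C}^0$ continuity of $\varrho_1(a,\delta)$ across the codimension-two locus $\varrho_1=\varphi_1$, which requires tracking the simultaneous deformation of the semi-stable cycle and the grazing loop; and second, excluding the alternative (middle unstable plus inner stable) coalescence at $b=\varrho_1$ in the case $\varrho_1<\varphi_1$, which is ruled out by combining the divergence comparison of Lemma \ref{lm-div-a>1} with the monotonicity of $x_{A^\pm}^c$ in $\tilde b$ from Lemma \ref{lm-mono}.
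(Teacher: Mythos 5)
Your overall architecture matches the paper's: define $\varrho_1$ through the double-cycle bifurcation, use Lemma \ref{three-lc} to guarantee the three-cycle interval for $1<a<a_0$, and then split into the three subcases, invoking Proposition \ref{pro-1}, positive boundedness and the monotonicity of $x^{c}_{A^\pm}$ in $\tilde b$. But there are concrete gaps. First, your definition of $\varrho_1$ is circular (you case-split on the sign of $\varrho_1-\varphi_1$ in order to say what $\varrho_1$ is), and the key existence statement -- that at the upper endpoint of the three-cycle interval a semi-stable (internally unstable, externally stable) crossing cycle actually exists -- is only asserted (``a saddle-node of periodic orbits must occur''). The paper gets this for free from an intrinsic construction: the function $M(\tilde b)=\max_{c\in[c_1,c_2]}\bigl(x^{c}_{A^+}-x^{c}_{A^-}\bigr)$ from the proof of Proposition \ref{pro-2} is continuous and strictly monotone in $\tilde b$, so its unique zero defines $\varrho_1$ and simultaneously produces the semi-stable cycle; Lemma \ref{three-lc} is only needed to ensure the two-cycle configuration exists for some $b$ once $a-1$ is no longer small. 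Relatedly, your appeal to Lemma \ref{lm-div-a>1} to decide which pair of cycles merges is off-target: that lemma compares divergence integrals of nested cycles meeting $x=-\sqrt{-b}$ and is used in Proposition \ref{pro-4} to bound $\varrho_2$; the correct tool for identifying the outer pair is the compression/expansion direction in $\tilde b$ from Lemma \ref{lm-mono}, exactly as in the proof of Lemma \ref{three-lc}.

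Second, your justification of statement (i) does not work. Proposition \ref{pro-1}(c) concerns $b<\varphi_1$, so it says nothing for $b>\varrho_1=\varphi_1$ (where, by Proposition \ref{pro-1}(b), a small limit cycle in fact exists -- harmless for (i), but it shows the citation is backwards), and Lemma \ref{three-lc} only controls the three-cycle regime; neither excludes one or two crossing limit cycles for $b>\varrho_1$. The paper proves (i) by noting that at $\tilde b=\delta\varrho_1$ one has $x^{c}_{A^+}\le x^{c}_{A^-}$ for every $c<0$ (the semi-stable cycle being the unique crossing cycle and externally stable), and then the strict monotonicity of $x^{c}_{A^\pm}$ in $\tilde b$ from Lemma \ref{lm-mono} gives $x^{c}_{A^+}<x^{c}_{A^-}$ for all $c<0$ when $b>\varrho_1$, hence no crossing cycles. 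A similar parity-plus-monotonicity argument (even versus odd number of crossing cycles counted with multiplicity, depending on whether $b$ is above or below $\varphi_1$) is what the paper uses to turn your ``at least'' statements in (d), (f) and (g) into ``exactly''; your phrase ``the rotated vector field structure together with Lemma \ref{three-lc} forbids any further bifurcation in this window'' gestures at this but, as written, does not rule out an extra semi-stable crossing cycle in $(\varrho_1,\varphi_1)$ without the additional observation that splitting it would create three crossing cycles outside the unique interval of Lemma \ref{three-lc}(ii). These points are repairable with the tools you already cite, but as written they are gaps rather than proofs.
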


\begin{proof}

From the proof of Proposition \ref{pro-2}, we know $\varrho_1(a, \delta)$ can be well defined if system \eqref{SD} exhibits two crossing simple limit cycles for some
values of $b$,
where the outer one is stable and the inner one is unstable.
From Lemma \ref{three-lc}, there exists a unique open interval of $b$
such that system \eqref{SD} exhibits exactly three simple crossing limit cycles whenever $b$ lies in this interval when $1<a<a_0$.
All the orbits of system \eqref{SD} are positively bounded by Lemma \ref{lm-infi}.
Then, the three crossing limit cycles must be stable, unstable and stable from the innermost to the outermost if they exist.
That means the function $\varrho_1(a, \delta)$ is well defined for $ 1<a<a_0$.
Moreover, system \eqref{SD} exhibits an internally unstable and externally stable crossing limit cycle when $b=\varrho_1(a,\delta)$.



If $\varrho_1(a, \delta)>\varphi_1(a, \delta)$, then system \eqref{ygf} has a unique small limit cycle when $\tilde b=\delta\varrho_1(a, \delta)$,  which is stable.
Notice that  the number of crossing
limit cycles of system \eqref{SD} is even if a limit cycle of multiplicity $k$ is regarded as $k$ distinct limit
cycles when  $b>\varphi_1(a, \delta)$.
Similarly to the proof of Proposition \ref{pro-2}, we can get that the statements {\bf (a)}-{\bf (c)} hold.

If $\varrho_1(a, \delta)<\varphi_1(a, \delta)$, then system \eqref{ygf} has no small limit cycles when $\tilde b=\delta\varrho_1(a, \delta)$,
implying that  $x^{0}_{A^+}>x^{0}_{A^-}$ when $\tilde b=\delta\varrho_1(a, \delta)$.
Since all the orbits are positively bounded by Lemma \ref{lm-infi},
the number of crossing
limit cycles of system \eqref{ygf} is odd if a limit cycle of multiplicity $k$ is regarded as $k$ distinct limit
cycles when $\tilde b=\delta\varrho_1(a, \delta)$.
From Lemma \ref{three-lc}, system \eqref{SD} has at most three crossing limit cycles.
Thus, except the internally unstable and externally stable crossing limit cycle, system \eqref{SD} also has a stable crossing limit cycle which lies in the region enclosed by the
internally unstable and externally stable crossing limit cycle. The statement {\bf (d)} holds.

We still let the intersection point of internally unstable and externally stable crossing limit cycle of system \eqref{ygf} with $\tilde b=\delta\varrho_1(a,\delta)$ and the positive $x$-axis be $(c_*,0)$. Then
$x^{c_*}_{A^+}= x^{c_*}_{A^-}$ and  $x^{c_3}_{A^+}= x^{c_3}_{A^-}$, where $c_*<c_3<0$.
Moreover,  there exist $d_1\in (-\infty, c_*)$ and $d_2\in (c_*, c_3)$
such that $x^{d_1}_{A^+}< x^{d_1}_{A^-}$ and  $x^{d_2}_{A^+}< x^{d_2}_{A^-}$.
From the continuous dependence of the solution on parameters and the monotonicity of $x_{A^+}^{c}$ and $x_{A^-}^c$,
we have
 $x^{d_1}_{A^+}< x^{d_1}_{A^-}$, $x^{c_*}_{A^+}> x^{c_*}_{A^-}$, $x^{d_2}_{A^+}< x^{d_2}_{A^-}$ and $x^{c_3}_{A^+}> x^{c_3}_{A^-}$
when  $\tilde{b}=\delta\varrho_1(a, \delta)-\delta\varepsilon$.
Then,  three annular regions can be constructed.
By Poincar\'{e}-Bendixson Theorem,
system \eqref{ygf} exhibits at least three crossing limit cycles.
It follows from Lemma \ref{three-lc} that system \eqref{SD} exhibits exactly three crossing limit cycles when $b=\varrho_1(a,\delta)-\varepsilon$
and they are simple.
Since all the orbits are positively bounded, these three limit cycles are stable, unstable and stable
from the innermost to the outermost.
The statement {\bf (e)} holds.

Denote the stable crossing limit cycle of system \eqref{ygf} with $\tilde b=\delta\varrho_1(a,\delta)$ intersects the negative $x$-axis at $(c_3,0)$.
Then,  $x^{c_3}_{A^+}\le x^{c_3}_{A^-}=0$ when  $\tilde b=\delta\varrho_1(a,\delta)$.
By the monotonicity of $x^{c}_{A^+}$ and $x^{c}_{A^-}$, we know $x^{c_3}_{A^+}<x^{c_3}_{A^-}$
when  $\tilde b>\delta\varrho_1(a, \delta)$.
It follows from  $x^{0}_{A^+}>x^{0}_{A^-}$ when $\tilde b<\delta\varphi_1(a, \delta)$
that  an annular region whose $\omega$-limit set lies in itself  can be constructed when $\delta\varrho_1(a, \delta)<\tilde b<\delta\varphi_1(a, \delta)$.
By Poincar\'{e}-Bendixson Theorem,
system \eqref{ygf} exhibits at least one crossing limit cycle.
From Lemma \ref{three-lc}, all the values of $b$ in which system \eqref{SD} exhibits exactly three crossing limit cycles form an open interval and all these three crossing
limit cycles are simple.
There are no new crossing limit cycles appearing when $\varrho_1(a, \delta)< b<\varphi_1(a, \delta)$
because system \eqref{SD} exhibits three  crossing limit cycles when $b=\varrho_1(a,\delta)-\varepsilon$ and a semi-stable limit cycle when $b=\varrho_1(a, \delta)$.
The statement {\bf (f)} holds.

If $\varrho_1(a, \delta)=\varphi_1(a, \delta)$, then $x^{0}_{A^+}=x^{0}_{A^-}$ when $\tilde b=\delta\varrho_1(a, \delta)$.
By the  monotonicity of $x^{0}_{A^+}$ and $x^{0}_{A^-}$, we get $x^{0}_{A^+}< x^{0}_{A^-}$  when $\tilde b>\delta\varrho_1(a, \delta)$.
Notice that the grazing  loop  of system \eqref{SD} is stable.
Then,  the number of crossing
limit cycles of system \eqref{ygf} is even if a limit cycle of multiplicity $k$ is regarded as $k$ distinct limit
cycles when  $\tilde b\ge\delta\varrho_1(a, \delta)$.
Thus, the internally unstable and externally stable crossing limit cycle is the unique crossing limit cycle when $b=\varrho_1(a,\delta)$.
The statement {\bf (g)} holds.

Moreover, by Proposition \ref{pro-1}, for any sufficiently small $|\varsigma|$ with $\varsigma<0$ there exists a stable crossing limit cycle of system \eqref{SD} which
intersects the small interval $(\varsigma,0)$  when $b=\varrho_1(a,\delta)-\varepsilon$.
It is similar to  the proof of Proposition \ref{pro-2}, there are two crossing limit cycles bifurcated
from the internally unstable and externally stable crossing limit cycle when $b=\varrho_1(a,\delta)-\varepsilon$,
where the inner one is unstable and the outer one is stable.
Therefore, system \eqref{SD} exhibits three crossing limit cycles when $b=\varrho_1(a,\delta)-\varepsilon$, which are stable, unstable and stable
 from the innermost to the outermost.
 The statement {\bf (h)} holds.

Furthermore,  $x^{c}_{A^+}\le x^{c}_{A^-}$ when  $\tilde b=\delta\varrho_1(a,\delta)$ for all $c\in(-\infty,0)$.
It follows from  monotonicity of $x^{c}_{A^+}$ and $x^{c}_{A^-}$ that $x^{c}_{A^+}< x^{c}_{A^-}$ when  $\tilde b>\delta\varrho_1(a,\delta)$ for all $c\in(-\infty,0)$,
implying that  system \eqref{SD} exhibits no crossing limit cycles when $b>\varrho_1(a,\delta)$.
The statement {\bf (i)} holds.

\end{proof}

\begin{pro}
For any $\delta>0$, there exists a $\mathcal{C}^{0}$ function  $\varrho_2(a, \delta)$  for $ 1<a<a_0$
    such that
     $-3(a+1)^2<\varrho_2(a, \delta)<\min\{\varphi_1(a, \delta), \varrho_1(a, \delta)\}$
    and
    \begin{description}
		\item{\bf (a)} system \eqref{SD} exhibits exactly two crossing limit cycles when $b=\varrho_2(a,\delta)$, where the outer one
           is stable and the inner one is internally stable and externally unstable;

        \item{\bf (b)}  system \eqref{SD} exhibits exactly three crossing limit cycles when $\varrho_2(a,\delta)<b<\min\{\varphi_1(a, \delta), \varrho_1(a, \delta)\}$, which are stable, unstable and stable
        from the innermost to the outermost;

        \item{\bf (c)} system \eqref{SD} exhibits a unique crossing limit cycle when $b<\varrho_2(a,\delta)$, which is stable.

	\end{description}
	\label{pro-4}
\end{pro}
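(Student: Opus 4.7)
The plan is to construct $\varrho_2(a,\delta)$ as the unique value of $b$ at which the two innermost cycles of the three-cycle configuration (stable, unstable, stable from innermost to outermost) furnished by Proposition \ref{pro-3} coalesce into a semi-stable crossing limit cycle. The argument parallels the proof of Proposition \ref{pro-2}, but applied to the inner pair rather than the outer.

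First I would fix $a\in(1,a_0)$ and consider system \eqref{ygf}. By Proposition \ref{pro-3}, for $b$ slightly below $\min\{\varphi_1(a,\delta),\varrho_1(a,\delta)\}$ the system carries exactly three crossing limit cycles $\Gamma_s^{\rm in}\subset\Gamma_u\subset\Gamma_s^{\rm out}$, the inner and outer being stable and the middle unstable. Let these cycles meet the negative $x$-axis at $(c_1,0),(c_2,0),(c_3,0)$ with $c_1<c_2<c_3<0$, and set $\tilde M(\tilde b):=\min_{c\in[c_1,c_2]}(x_{A^-}^c-x_{A^+}^c)$, using the notation of Lemma \ref{lm-mono}. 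The stability pattern gives $\tilde M(\delta b)>0$ at the starting value; moreover, by the rotated-vector-field argument used in Proposition \ref{pro-2} (exploiting the monotonicity in Lemma \ref{lm-mono}), $\tilde M(\tilde b)$ is continuous and monotone in $\tilde b$.

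Next I would show $\tilde M(\tilde b)<0$ for sufficiently negative $\tilde b$. By Lemma \ref{lm-ag1clu} there is at most one crossing limit cycle when $b\le-3(a+1)^2$, and by Lemma \ref{one-sl} no small limit cycle exists when $b\le-4(a+1)^2/3$. Combined with the positive boundedness of orbits (Lemma \ref{lm-infi}), this forces the inner pair $\Gamma_s^{\rm in},\Gamma_u$ to have disappeared by the time $b$ reaches $-3(a+1)^2$, so on the prolongation of the inner annulus one has $x_{A^+}^c>x_{A^-}^c$. By the intermediate value theorem applied to $\tilde M$ together with its strict monotonicity, there is a unique $\tilde b=\delta\varrho_2(a,\delta)\in(-3\delta(a+1)^2,\,\min\{\delta\varphi_1,\delta\varrho_1\})$ where $\tilde M=0$, and at that parameter the inner pair collides into one cycle which is internally stable and externally unstable (since it is approached from inside by the previously-inner stable cycle and from outside by the previously-middle unstable cycle). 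The outer stable cycle persists because it lies strictly outside the annulus in question and continues to exist by a standard annulus construction from Poincar\'e--Bendixson. This gives statement {\bf (a)}.

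For statement {\bf (b)}, the interval $(\varrho_2,\min\{\varphi_1,\varrho_1\})$ is precisely where $\tilde M>0$, and the three cycles persist as simple cycles on this entire interval; by Lemma \ref{three-lc} there cannot be a fourth, so exactly three are present, with the stability pattern inherited from the starting configuration. For statement {\bf (c)}, once $b<\varrho_2(a,\delta)$ the semi-stable cycle splits and vanishes (as in the rotated-vector-field theory), leaving the outer stable cycle; this cycle persists for all further $b$ decreases, yielding exactly one crossing limit cycle. The bound $\varrho_2(a,\delta)>-3(a+1)^2$ is forced by Lemma \ref{lm-ag1clu}, and continuity of $\varrho_2$ in $a$ follows from the continuous dependence of $\tilde M$ on parameters and its strict monotonicity in $\tilde b$, just as for $\varrho_1$ in Proposition \ref{pro-2}.

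The main obstacle will be making the rotated-vector-field bookkeeping precise at the moment of coalescence: one must verify that it is indeed the inner pair (and not the middle--outer pair) that merges as $b$ decreases. The cleanest way is to track the horizontal distances $x_{A^-}^c-x_{A^+}^c$ on each of the three annuli separately using Lemma \ref{lm-mono} and to note that at $b=\varrho_1(a,\delta)-\varepsilon$ (Proposition \ref{pro-3} {\bf (e)} or {\bf (h)}) the outer pair is already strictly separated, so monotonicity prevents it from closing first. A secondary subtlety is the limiting comparison of $\varrho_2$ with $-3(a+1)^2$: one must check that $-3(a+1)^2$ is strictly below the coalescence value so the interval of three cycles is nontrivial, which is where the quantitative bound from Lemma \ref{lm-ag1clu} is essential.
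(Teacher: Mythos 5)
Your overall strategy coincides with the paper's: freeze the three-cycle configuration at $b=\min\{\varphi_1,\varrho_1\}-\varepsilon$, monitor the extremum of the displacement $x_{A^+}^{c}-x_{A^-}^{c}$ over a fixed segment of the negative $x$-axis between the two innermost cycles, use the continuity and monotonicity of Lemma \ref{lm-mono} to locate a unique zero of that extremum, identify the zero with an internally stable, externally unstable crossing cycle, and cap the count with Lemma \ref{three-lc}. One small slip first: with $c_1<c_2<c_3<0$ the innermost cycle meets the negative $x$-axis at $c_3$, not $c_1$, so the segment to monitor is $[c_2,c_3]$ (the paper's choice). On your stated interval $[c_1,c_2]$ the annulus lies between the outermost stable and the middle unstable cycle, orbits there spiral outward, so $x_{A^-}^{c}-x_{A^+}^{c}<0$ and your claim $\tilde M>0$ at the starting value fails; the sign pattern you describe is the one valid on $[c_2,c_3]$, so this is a labeling error rather than a conceptual one, but it must be fixed for the coalescence bookkeeping to make sense.

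The substantive gap is the justification of the sign change and of the bound $\varrho_2>-3(a+1)^2$. You assert that ``by Lemma \ref{lm-ag1clu} there is at most one crossing limit cycle when $b\le-3(a+1)^2$,'' but that lemma only controls crossing limit cycles lying in the strip $x\ge-\sqrt{-b}$; it says nothing about crossing cycles that extend past the line $x=-\sqrt{-b}$, and a three-cycle configuration whose innermost member sits in the strip while the middle and outer cycles reach into $x<-\sqrt{-b}$ is not excluded by it (nor by Lemma \ref{one-sl}, which concerns small cycles only). Hence ``the inner pair has disappeared by the time $b$ reaches $-3(a+1)^2$'' does not follow from the lemmas you cite. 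This is precisely where the paper invokes the divergence-comparison Lemma \ref{lm-div-a>1}: it rules out an internally stable, externally unstable crossing cycle meeting $x=-\sqrt{-b}$ (splitting such a cycle by a small change of $b$ would yield a stable cycle nested inside an unstable one, both meeting the line, contradicting the divergence inequality), so the semi-stable cycle at $b=\varrho_2$ must lie in the strip, where Lemma \ref{lm-ag1clu} forces it to be hyperbolic and stable — a contradiction unless $\varrho_2>-3(a+1)^2$. Moreover, for the mere existence of a parameter where your $\tilde M$ has the opposite sign you do not need any quantitative bound: the paper takes $b^*$ from Lemma \ref{three-lc}(ii) (a value below which fewer than three crossing cycles exist), notes by Lemma \ref{lm-mono} that the outer cycle has moved outside $(c_1,0)$, so at most one cycle can meet $(c_2,c_3)$, and concludes the minimum there is nonnegative. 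Without Lemma \ref{lm-div-a>1} (or the $b^*$ argument) both the sign change of $\tilde M$ and the lower bound $-3(a+1)^2<\varrho_2$ in the statement remain unproved; the remaining parts (b) and (c), which you treat by persistence, also need the paper's parity-of-cycles and single-interval arguments to exclude extra pairs, but those are refinements of what you sketch rather than missing ideas.
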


\begin{proof}
From Propositions \ref{pro-1}-\ref{pro-3}, system \eqref{SD} exhibits exactly three crossing limit cycles when $b=\min\{\varphi_1(a, \delta), \varrho_1(a, \delta)\}-\varepsilon$,
which are stable, unstable and stable from the innermost to the outermost.
Denote the intersection point of these three crossing limit cycles and the negative $x$-axis by $(c_1,0)$, $(c_2,0)$ and $(c_3,0)$, where $c_1<c_2<c_3<0$.
Notice that $x_{A^+}^c-x_{A^-}^c$ could takes on its minimum value on the interval $[c_2,c_3]$, denoted by $m(\tilde{b})$,
because $x_{A^+}^c$ and $x_{A^-}^c$ both depend continuously on $c$.
For any $\eta>0$,
   \begin{eqnarray*}
		m(\tilde{b}+\eta)-m(\tilde{b})=m(\tilde{b}+\eta)-(x_{A^+}^{c_o}-x_{A^-}^{c_o})|_{\tilde b}
		<m(\tilde{b}+\eta)-(x_{A^+}^{c_o}-x_{A^-}^{c_o})|_{\tilde{b}+\eta}
		\le0,
	\end{eqnarray*}
where $c_o$ is the point such that $x_{A^+}^{c_o}-x_{A^-}^{c_o}$ takes the minimum value on $[c_2,c_3]$.
	Then, $m(\tilde{b})$ decreases continuously as $\tilde{b}$ increases.

The  crossing limit cycle for system \eqref{ygf}  passing through $(c_2,0)$ is unstable
and the  crossing limit cycle for system \eqref{ygf}  passing through $(c_3,0)$ is stable
when $\tilde{b}=\delta\min\{\varphi_1(a, \delta), \varrho_1(a, \delta)\}-\delta\varepsilon$.
Then
$x_{A^+}^{c}-x_{A^-}^{c}<0$ for any $c\in (c_2,c_3)$ when $\tilde{b}=\delta\min\{\varphi_1(a, \delta), \varrho_1(a, \delta)\}-\delta\varepsilon$,
implying that
$m(\delta\min\{\varphi_1(a, \delta), \varrho_1(a, \delta)\}-\delta\varepsilon)<0$.

By Lemma \ref{three-lc}, the number of crossing limit cycles of system  \eqref{SD} is less than three when $b$ is not in the open interval $[c,d]$.
Then, there is a value of $b^*<\min\{\varphi_1(a, \delta), \varrho_1(a, \delta)\}-\varepsilon$
such that system \eqref{SD} exhibits at most two crossing limit  cycle when $b=b^*$.
Since  $x_{A^+}^{c}$ increases and $x_{A^-}^c$ decreases as $\tilde b$ decreases by Lemma \ref{lm-mono},
we know
$x_{A^+}^{c_1}-x_{A^-}^{c_1}>0$  when  $\tilde b= \delta b^*$.
It follows from the positive boundedness of all the orbits that system  \eqref{ygf} exhibits at least one crossing limit cycle which passes through the negative $x$-axis
at a point on the interval $(-\infty, c_1)$ when  $\tilde b<\min\{\varphi_1(a, \delta), \varrho_1(a, \delta)\}-\varepsilon$.
Thus, the number of  crossing limit cycles of system \eqref{ygf} intersecting with
 the negative $x$-axis
at a point on the interval $(c_2, c_3)$  is zero or one when $\tilde b= \delta b^*$.
Since both $x_{A^+}^{c_2}-x_{A^-}^{c_2}>0$  and $x_{A^+}^{c_3}-x_{A^-}^{c_3}>0$  when  $\tilde b= \delta b^*$,
we get $x_{A^+}^{c}-x_{A^-}^{c}\ge0$  for  all $c\in (c_2,c_3)$,
implying $m(\delta b^*)\ge0$.

Due to the monotonicity of $M(\tilde{b})$,
there exists a unique
$\tilde{b}=\delta\varrho_2(a, \delta)\in( \delta\min\{\varphi_1(a, \delta), \varrho_1(a, \delta)\}-\delta\varepsilon, \delta b^*]$
such that $m(\delta\varrho_1(a, \delta))=0$.
Moreover,
    $x_{A^+}^{c}-x_{A^-}^{c}\ge0$  for any $c\in [c_2,c_3]$
and there exists a $c_o\in (c_2,c_3)$ such that
	$x_{A^+}^{c_o}-x_{A^-}^{c_o}=0$ when $\tilde{b}=\delta\varrho_2(a, \delta)$.
That implies there is a crossing limit cycle for system \eqref{ygf} when $\tilde{b}=\delta\varrho_2(a, \delta)$,
which is  internally stable and externally unstable.

Because  $x_{A^+}^{0}-x_{A^-}^{0}<0$ when $\tilde b<\delta\varphi_1(a, \delta)$ and all the orbits are positively bounded,
the number of crossing limit cycles of system \eqref{ygf} is odd if
a limit cycle of multiplicity $k$ is regarded as $k$ distinct limit cycles.
Combining that  there is at least one crossing limit cycle passing through the interval $(-\infty, c_1)$ on the negative $x$-axis
when $\tilde b<\min\{\varphi_1(a, \delta), \varrho_1(a, \delta)\}-\varepsilon$ and the number of crossing limit cycles is less than or equal to three,
system \eqref{SD} exhibits exactly two crossing limit cycles when $b=\varrho_2(a,\delta)$, where the outer one is
           is stable and the inner one is internally stable and externally unstable.

From Lemma \ref{lm-div-a>1}, there are no internally stable and externally unstable crossing limit cycles of system \eqref{SD} intersecting with $x=-\sqrt{-b}$,
implying that $c_0>-\sqrt{-b}$.
Lemma \ref{lm-ag1clu} tells us the crossing limit cycle in the strip $x\ge -\sqrt{-b}$ is stable when $b\le -3(a+1)^2$.
So we have $\varrho_2(a,\delta)>-3(a + 1)^2$.
The statement {\bf (a)} holds.

By the statement {\bf (a)},
the internally stable and externally unstable crossing limit cycle of system  \eqref{ygf} passes through the point $(c_o,0)$
and the stable  crossing limit cycle intersects the negative $x$-axis at a point denoted by $(\bar c_1,0)$ when  $\tilde{b}=\delta\varrho_2(a, \delta)$,
where $\bar c_1<c_0$.
Then, $x_{A^+}^{\bar c_1}-x_{A^-}^{\bar c_1}=0$ and $x_{A^+}^{c_o}-x_{A^-}^{c_o}=0$,
and
there exist two values of $\sigma_1$ and  $\sigma_2$,
where $\bar c_1<\sigma_1<c_0<\sigma_2<0$,
such that
and
$x_{A^+}^{\sigma_1}-x_{A^-}^{\sigma_1}>0$
$x_{A^+}^{\sigma_2}-x_{A^-}^{\sigma_2}>0$
when $\tilde{b}=\delta\varrho_2(a, \delta)$.
From the continuous dependence of the solution on parameters and the  monotonicity of $x_{A^+}^{c}$ and $x_{A^-}^c$,
one can get that
$x_{A^+}^{\bar c_1}-x_{A^-}^{\bar c_1}<0$,
$x_{A^+}^{\sigma_1}-x_{A^-}^{\sigma_1}>0$,
$x_{A^+}^{c_o}-x_{A^-}^{c_o}<0$,
$x_{A^+}^{\sigma_2}-x_{A^-}^{\sigma_2}>0$  when $\tilde{b}=\delta\varrho_2(a, \delta)+\delta\varepsilon$
where  $\varepsilon>0$ is  sufficiently small.
Then,  three annular regions can be constructed,
where one's $\omega$-limit set, $\alpha$-limit set, $\omega$-limit set (from the innermost to the outermost)
lies in itself.
Since the maximum value of crossing limit cycles of system  \eqref{SD} is three,
system \eqref{SD} exhibits exactly three crossing limit cycles
when ${b}=\varrho_2(a, \delta)+\varepsilon$, which are stable, unstable and stable
        from the innermost to the outermost;

In case $\varphi_1(a, \delta)\ge \varrho_1(a, \delta)$,
according to the result in Proposition \ref{pro-3},
system \eqref{SD} exhibits exactly three crossing limit cycles
when ${b}=\min\{\varphi_1(a, \delta), \varrho_1(a, \delta)\}-\varepsilon$.
In the case  $\varphi_1(a, \delta)< \varrho_1(a, \delta)$,
system \eqref{SD} exhibits exactly two crossing limit cycles
when $\varphi_1(a, \delta)<{b}<\varrho_1(a, \delta)$ by  Proposition \ref{pro-3}
and there is another crossing limit cycle of system \eqref{SD} which intersects the
negative $x$-axis at a point very close to the origin by  Proposition \ref{pro-1}.
Then, in the case  $\varphi_1(a, \delta)< \varrho_1(a, \delta)$ system \eqref{SD} also exhibits exactly three crossing limit cycles
when ${b}=\min\{\varphi_1(a, \delta), \varrho_1(a, \delta)\}-\varepsilon$.

From Lemma \ref{three-lc}, all the values of $b$ for which three crossing limit cycles of system  \eqref{SD} exist
form an open interval $[c,d]$. Then
system \eqref{SD} exhibits exactly three crossing limit cycles and all the crossing  limit cycles are simple
when $\varrho_2(a,\delta)<b<\min\{\varphi_1(a, \delta), \varrho_1(a, \delta)\}$.
From the positive boundedness of all the orbits of system \eqref{SD},
the limit cycles are stable, unstable and stable from the innermost to the outermost.
The statement {\bf (b)} holds.

From the statement {\bf (a)}  and Lemma \ref{three-lc}, it is easy to see that  $b= \varrho_1(a, \delta)$
is not in the interval $[c,d]$.
Then, the number  of crossing limit cycle of system \eqref{SD} is less than three when $b< \varrho_1(a, \delta)$.
Notice that the
vector field of system \eqref{SD} is rotated about $b$,
implying that two limit cycles can be bifurcated
from any semi-stable limit cycle if $b$ slightly  changes to one side.
The number of crossing limit cycles of system \eqref{SD} is even if
a limit cycle of multiplicity $k$ is regarded as $k$ distinct limit cycles when $b<\varphi_1(a, \delta)$.
Since
$\varrho_2(a,\delta)< \varphi_1(a, \delta)$,
system \eqref{SD} exhibits a unique crossing limit cycle when $b<\varrho_2(a,\delta)$, which is stable.
The statement {\bf (c)} holds.
\end{proof}

\section{Proofs  of Theorems  \ref{mr1} and \ref{mr2} }

In this section, we prove Theorems \ref{mr1} and \ref{mr2} .

\noindent$\displaystyle {\bf Proof ~of~ Theorem ~\ref{mr1}}$
The statement {\bf (a)} can be obtained from Proposition \ref{hopfbif}. The statement {\bf (b)} is directly from Proposition \ref{pro-1}.
The statement {\bf (c)} is the result of Propositions \ref{pro-3} and \ref{pro-4}.
$\hfill{} \Box$

\noindent$\displaystyle {\bf Proof ~of~ Theorem ~\ref{mr2}}$
By Lemma \ref{lm-fi-e}, system  \eqref{SD-x} exhibits a unique equilibrium  $E_r$, which is a source when
$b<-(a+1)^2$   and a sink when $b\geq-(a+1)^2$.
The dynamic behavior of system \eqref{SD} near infinity is given in Lemma \ref{lm-infi}.
The infinite equilibria $I_x^+$, $I_x^-$ are unstable star nodes, and $I_y^+$ and $I_y^-$ are degenerate saddles.

Consider $(a,b, \delta)\in {\cal I} \cup H$.
From Lemma \ref{lm-fi-e}, $E_r$ is a sink.
By Lemmas \ref{lm-bg0} and \ref{lm-b0H},
system \eqref{SD} exhibits no closed orbits.
Then, $E_r$ is the $\omega$-limit set of all the orbits.

Consider $(a,b, \delta)\in {\cal II}$.
From Proposition \ref{pro-1} {\bf(b)}, system \eqref{SD} exhibits  a unique small limit cycle when $\varphi_1(a, \delta)<b<-(a+1)^2$,
which is stable.
For $1<a<a_0$ and $\varrho_1(a,\delta)>\varphi_1(a,\delta)$,  Proposition \ref{pro-3} {\bf(c)} tells us that system \eqref{SD} exhibits no  crossing limit cycles when $b>\varrho_1(a,\delta)$.
For $1<a<a_0$ and $\varrho_1(a,\delta)\le\varphi_1(a,\delta)$, system \eqref{SD} exhibits exactly three crossing limit cycles when $b=\varrho_1(a,\delta)-\varepsilon$
and a semi-stable limit cycle when $b=\varrho_1(a,\delta)$ by  Proposition \ref{pro-3} {\bf(e)} and {\bf(d)}.
Since all the values of $b$ in which system \eqref{SD} exhibits exactly three crossing limit cycles form an open interval and all these three crossing
limit cycles are simple by Lemma \ref{three-lc},  system \eqref{SD} exhibits no  crossing limit cycles when $b>\varphi_1(a,\delta)$.
For $a>a_0$,  we know that the number of crossing limit cycles is no more than one by Lemma \ref{three-lc}.
Since  the number of crossing
limit cycle of system \eqref{SD} is even if a limit cycle of multiplicity $k$ is regarded as $k$ distinct limit
cycles when  $b>\varphi_1(a, \delta)$,
system \eqref{SD} exhibits no  crossing limit cycles when $b>\varphi_1(a,\delta)$ and $a>a_0$.
Then, all the orbits approach the stable small limit cycle when $t\rightarrow +\infty$.

Consider $(a,b, \delta)\in DL$.
By Proposition \ref{pro-1} {\bf(a) (b) (c)} and Proposition \ref{pro-3} {\bf(a) (d) (g)},
system \eqref{SD} exhibits two limit cycles when $b=\varrho_1(a,\delta)$.
and  the outer one is an  internally
unstable and externally stable crossing limit cycle.
Moreover, the inner one is a stable small limit cycle when $b=\varrho_1(a,\delta)>\varphi_1(a,\delta)$,
as shown in  $DL_{11}$ of Fig. \ref{global-2},
the inner one is a stable crossing limit cycle when $b=\varrho_1(a,\delta)<\varphi_1(a,\delta)$,
as shown in $DL_{12}$ of Fig. \ref{global-2},
and
the inner one is a stable grazing limit cycle when $b=\varrho_1(a,\delta)=\varphi_1(a,\delta)$,
as shown in  $P$ of Fig. \ref{global-2}.

Consider $(a,b, \delta)\in \cal{III}$.
By Proposition \ref{pro-1} {\bf(b)} and Proposition \ref{pro-3} {\bf(b)}, system \eqref{SD} exhibits two crossing limit cycles
and a stable small limit cycle.
Moreover, the outer crossing limit cycle is stable and the inner crossing limit cycle is unstable.

Consider $(a,b, \delta)\in G$. From Proposition \ref{pro-1} {\bf(a)},
system \eqref{SD} exhibits a grazing limit cycle when $b=\varphi_1(a,\delta)$.
Moreover, by Proposition \ref{pro-3} {\bf(b)},
there are exactly two crossing limit cycles when $b=\varphi_1(a,\delta)<\varrho_1(a,\delta)$,
where the outer one is stable and the inner one  is unstable,
as in $G_{1}$ of Fig. \ref{global-2}.
The number of crossing
limit cycle of system \eqref{SD} is even if a limit cycle of multiplicity $k$ is regarded as $k$ distinct limit
cycles when  $b=\varphi_1(a, \delta)$.
If there are two simple crossing limit cycles when $b=\varphi_1(a,\delta)>\varrho_1(a,\delta)$,
then at least two crossing limit cycles exist when $b=\varphi_1(a,\delta)-\varepsilon$,
where $\varepsilon>0$ is sufficiently small, which is contradictory with   Proposition \ref{pro-3} {\bf(f)}.
If there is a semi-stable crossing  limit cycle when $b=\varphi_1(a,\delta)>\varrho_1(a,\delta)$,
then it must be  internally
unstable and externally stable because all the orbits are positively bounded.
Thus, it is similar to the bifurcation of a semi-stable crossing limit cycle when $b=\varrho_1(a,\delta)$
 that two crossing limit cycles will be bifurcated from this semi-stable crossing  limit cycle when $b$ changes into $\varphi_1(a,\delta)-\varepsilon$,
which is also contradictory with   Proposition \ref{pro-3} {\bf(f)}.
Therefore,
there are no crossing  limit cycles when $b=\varphi_1(a,\delta)>\varrho_1(a,\delta)$,
as in $G_{2}$ of Fig. \ref{global-2}.

Consider $(a,b, \delta)\in \cal{IV}$.  From Proposition \ref{pro-4} {\bf(b)},
system \eqref{SD} exhibits exactly three crossing limit cycles when $\varrho_2(a,\delta)<b<\min\{\varphi_1(a, \delta), \varrho_1(a, \delta)\}$,
which are stable, unstable and stable from the innermost to the outermost.

Consider $(a,b, \delta)\in DL_2$.  From Proposition \ref{pro-4} {\bf(a)},
system \eqref{SD} exhibits exactly two crossing limit cycles when $b=\varrho_2(a,\delta)$, where the outer one is
         stable and the inner one is internally stable and externally unstable.

Consider $(a,b, \delta)\in \cal{V}$.  From Proposition \ref{pro-4} {\bf(c)},
system \eqref{SD} exhibits a unique crossing limit cycle when $b<\varrho_2(a,\delta)$, which is stable.
$\hfill{} \Box$

\section{Numerical examples and discussions}

The phase portraits and bifurcations of  \eqref{SD}
by numerical simulations are shown in this section.

\begin{exmp}
Choose  $a=4$ and $\delta=0.1$.
$E_r$ is a sink and system \eqref{SD} exhibits no limit cycle when $b=-24.9$,
as shown in {\rm Fig. \ref{simu-1} (a)}.
$E_r$ is a source and  system \eqref{SD} exhibits a stable small limit cycle when $b=-25.7$,
as shown in {\rm Fig. \ref{simu-1} (b)}.
$E_r$ is a source and  system \eqref{SD} exhibits a stable small limit cycle which passes through a very
small neighborhood of the origin when  $b=-26.083$,
as shown in {\rm Fig. \ref{simu-1} (c)}.
$E_r$ is a source and  system \eqref{SD} exhibits a stable crossing limit cycle when  $b=-26.1$,
as shown in {\rm Fig. \ref{simu-1} (d)}.
\end{exmp}

\begin{figure}[!htb]
	\subfloat[$b=-24.9$]
	{\includegraphics[scale=0.25]{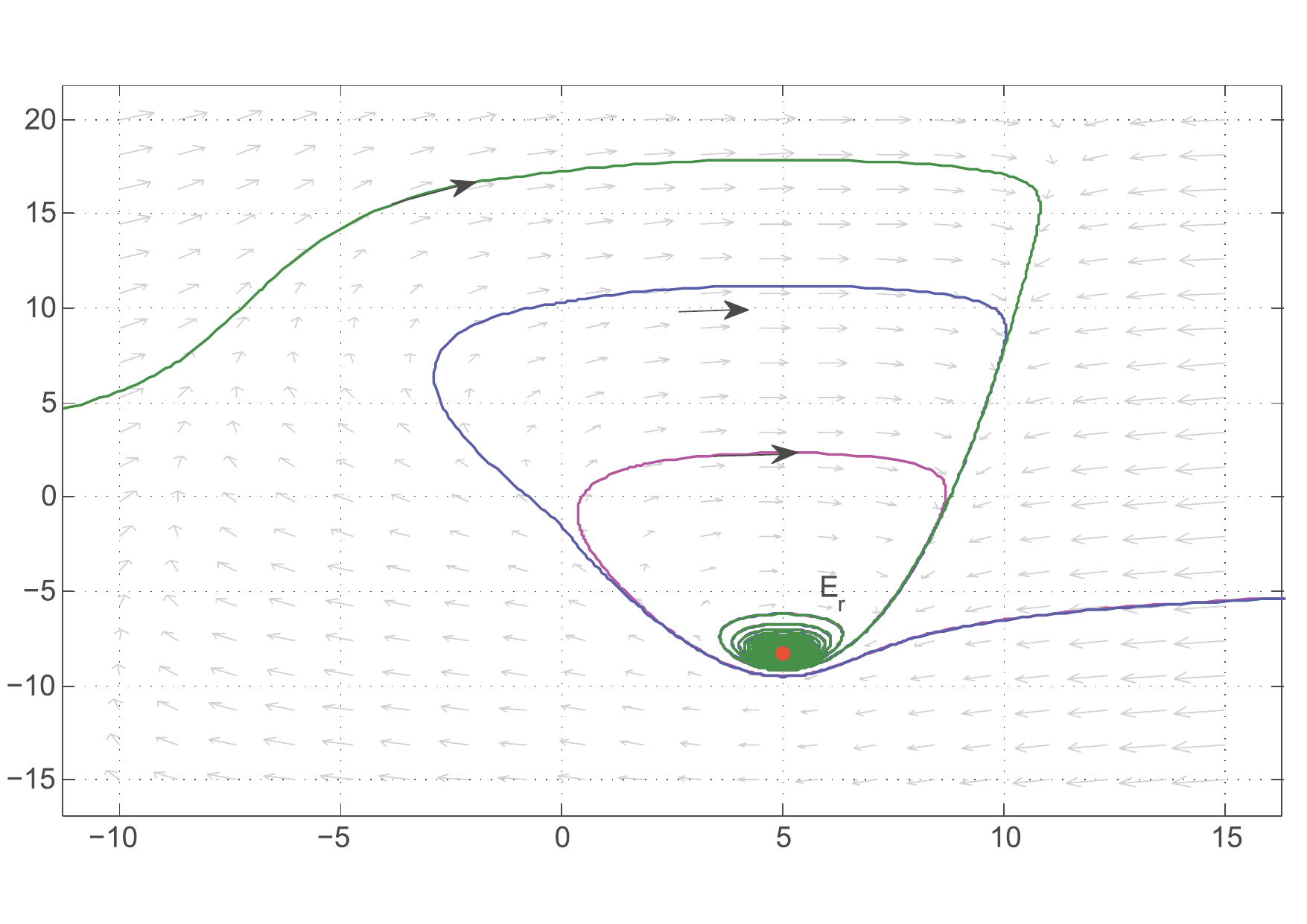}}
     \subfloat[$b=-25.7$]
	{\includegraphics[scale=0.25]{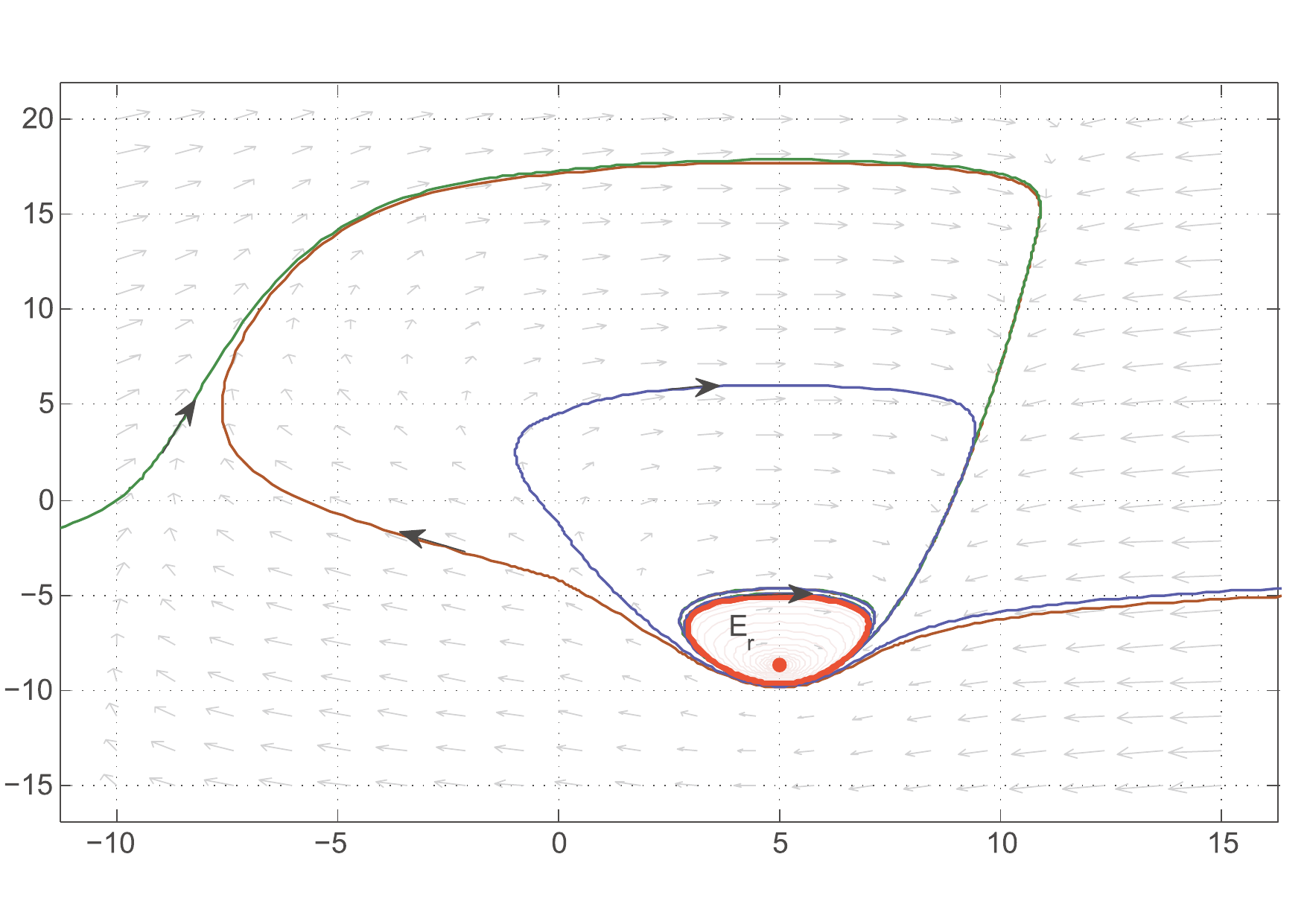}}\\
     \subfloat[$b=-26.083$]
	{\includegraphics[scale=0.25]{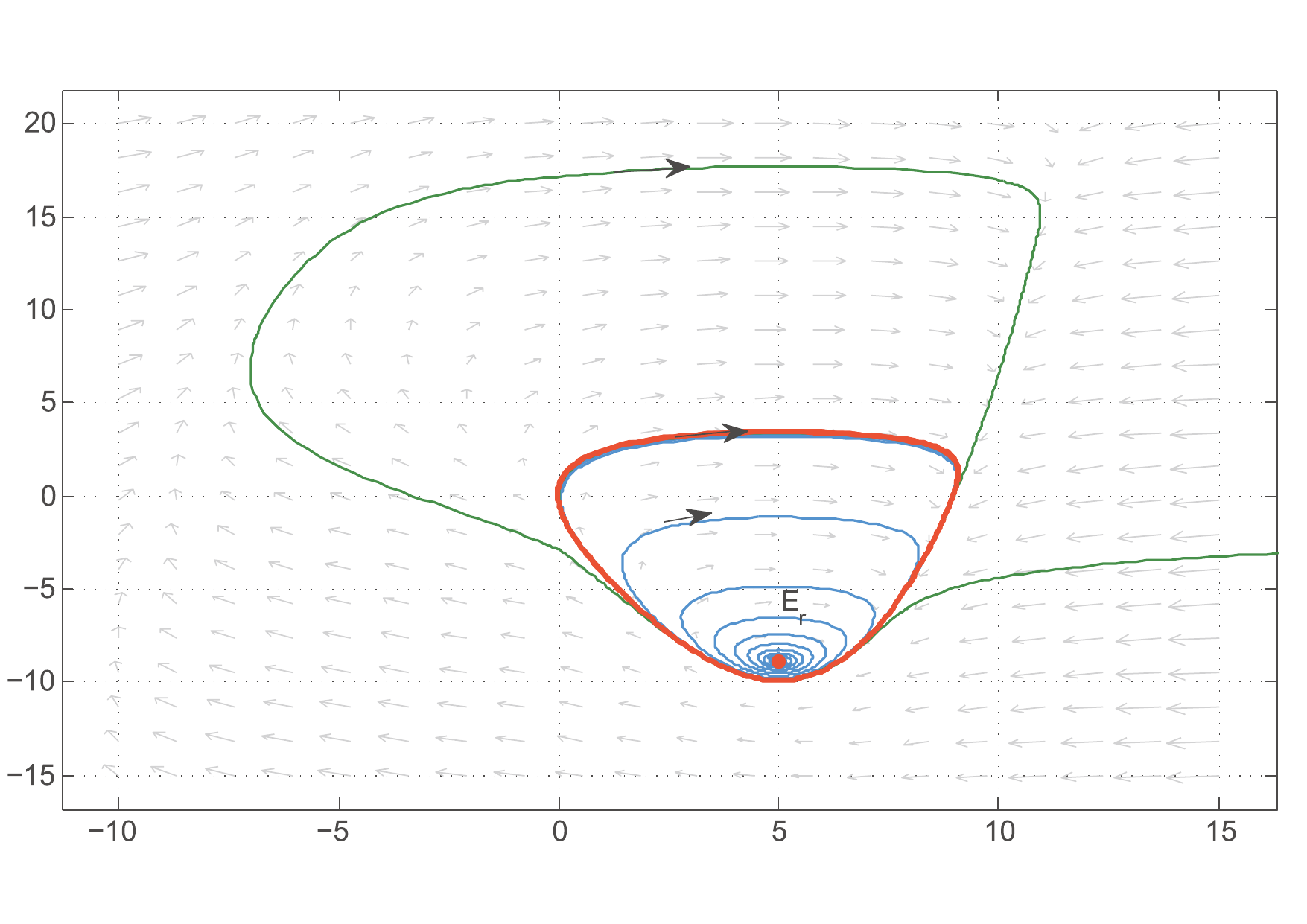}}
	\subfloat[$b=-26.1$]
	{\includegraphics[scale=0.25]{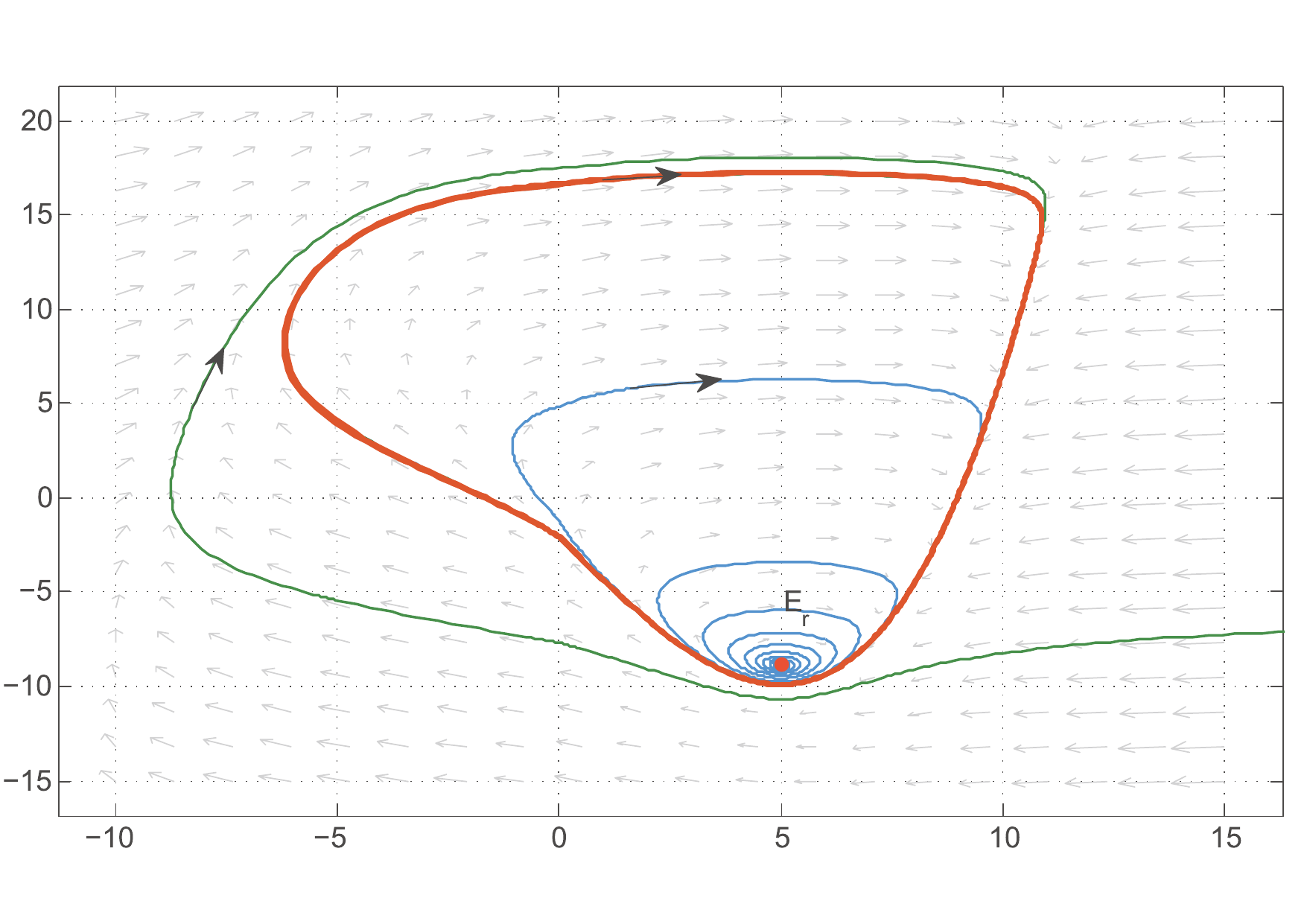}}
	\caption{{\footnotesize Numerical phase portraits with two equilibria when $a=4$, $\delta=0.1$.}}
	\label{simu-1}
\end{figure}

From Example 1, we find $\varphi_1(4,0.1)\thickapprox -26.083$.
The double limit cycle bifurcations $DL_1$ and $DL_2$ do not been observed when $a=4$ and $\delta=0.1$.

\begin{exmp}
Choose  $a=1.2$ and $\delta=0.1$.
System \eqref{SD} exhibits a stable small limit cycle when $b=-5.2$,
as shown in {\rm Fig. \ref{simu-2} (a)}.
Choose  $a=1.2$ and $\delta=0.1$.
System \eqref{SD} exhibits a stable small limit cycle and
a semi-stable (externally stable and internally unstable) crossing limit cycle
when $b=-5.908$,
as shown in {\rm Fig. \ref{simu-2} (b)}.
System \eqref{SD} exhibits a stable small limit cycle which passes through a very
small neighborhood of the origin,
and two crossing limit cycles (the outer is stable and the inner is unstable)
when  $b=-5.93$,
as shown in {\rm Fig. \ref{simu-2} (c)}.
System \eqref{SD} exhibits two crossing limit cycles (the outer is stable and the inner is externally unstable and internally stable) when  $b=-5.9337$,
as shown in {\rm Fig. \ref{simu-2} (d)}.
System \eqref{SD} exhibits a stable crossing limit cycle when  $b=-6$,
as shown in {\rm Fig. \ref{simu-2} (e)}.
\end{exmp}

\begin{figure}[!htb]
	\subfloat[$b=-5.2$]
	{\includegraphics[scale=0.25]{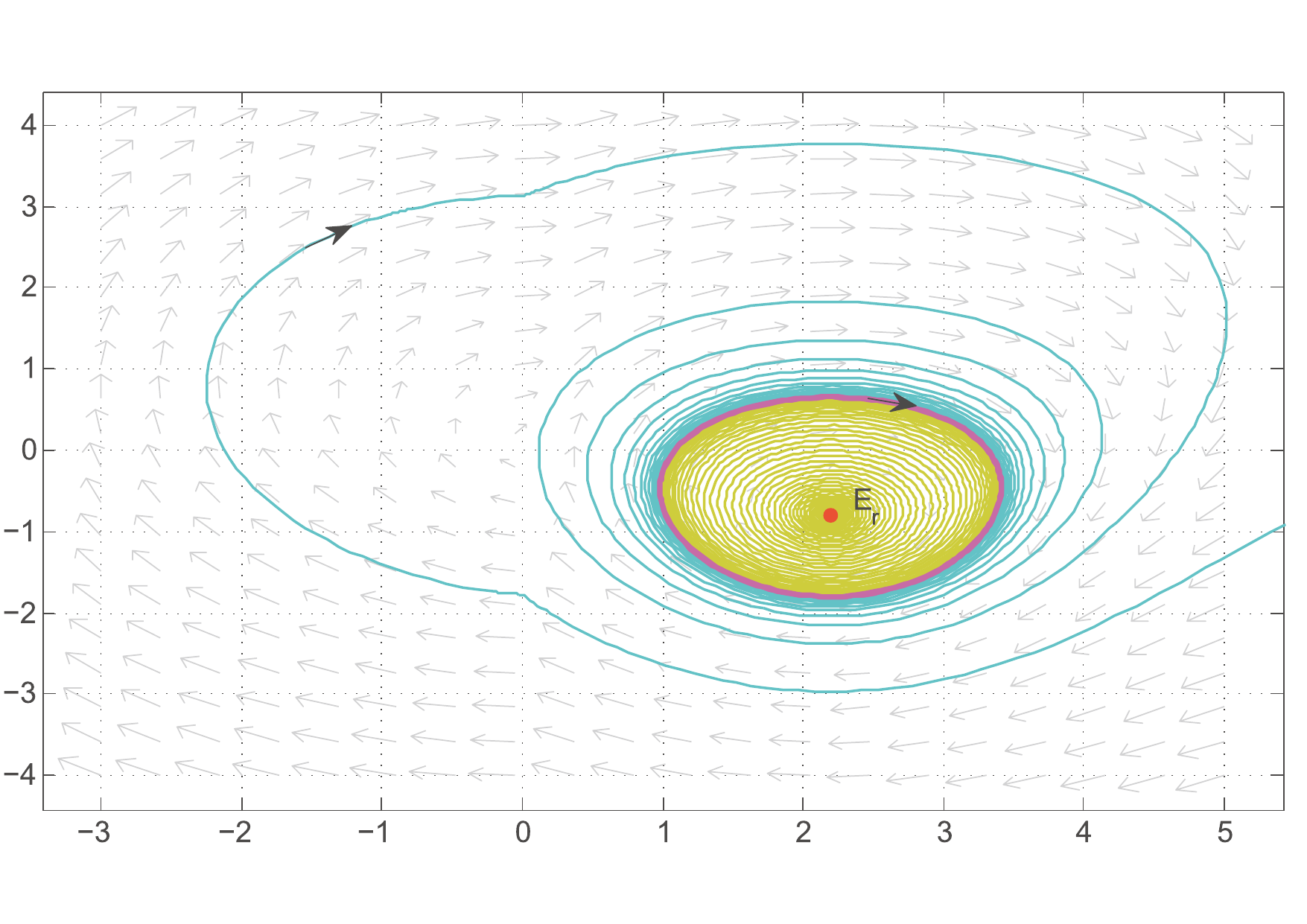}}
     \subfloat[$b=-5.908$]
	{\includegraphics[scale=0.25]{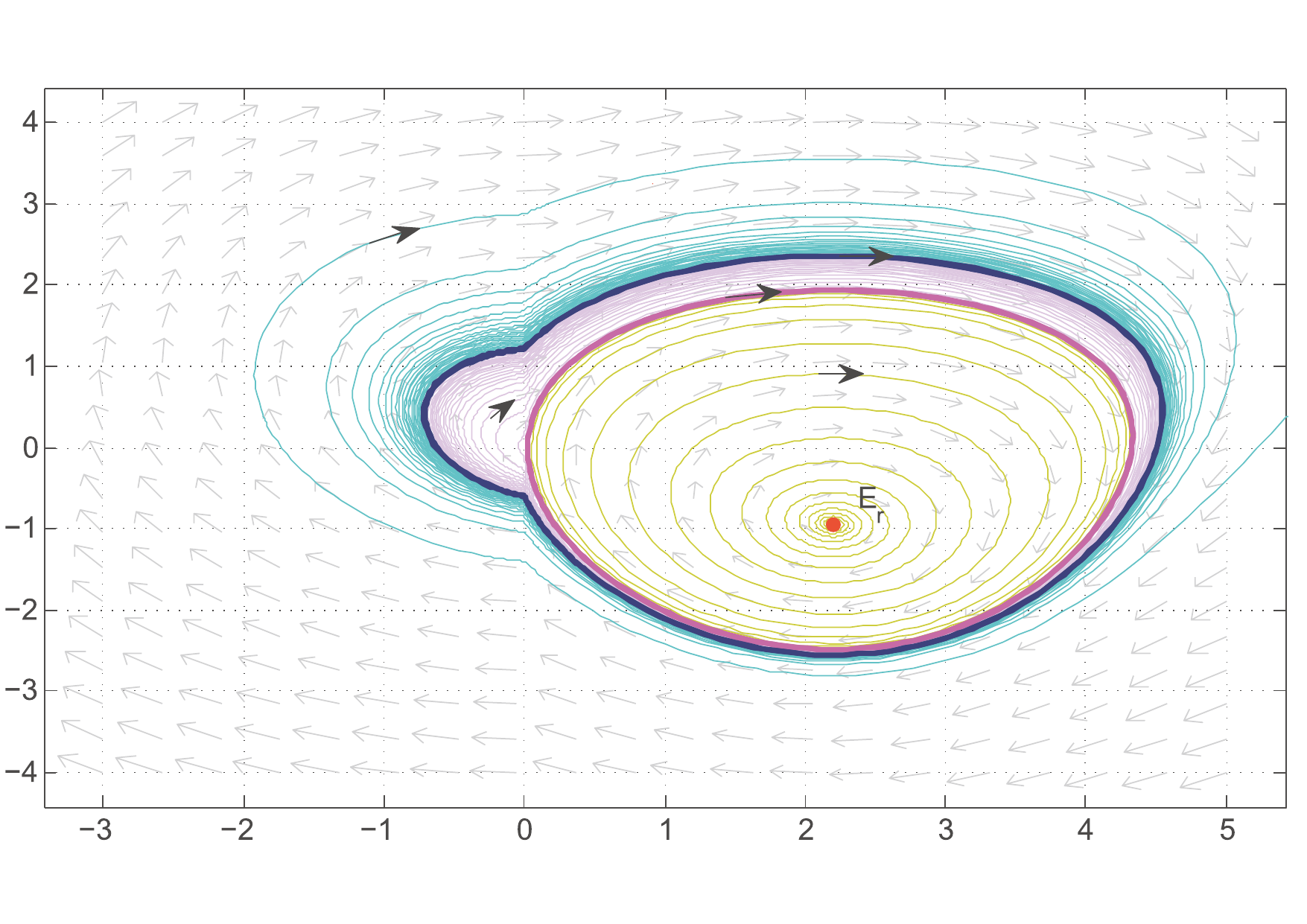}}\\
	\subfloat[$b=-5.93$]
	{\includegraphics[scale=0.25]{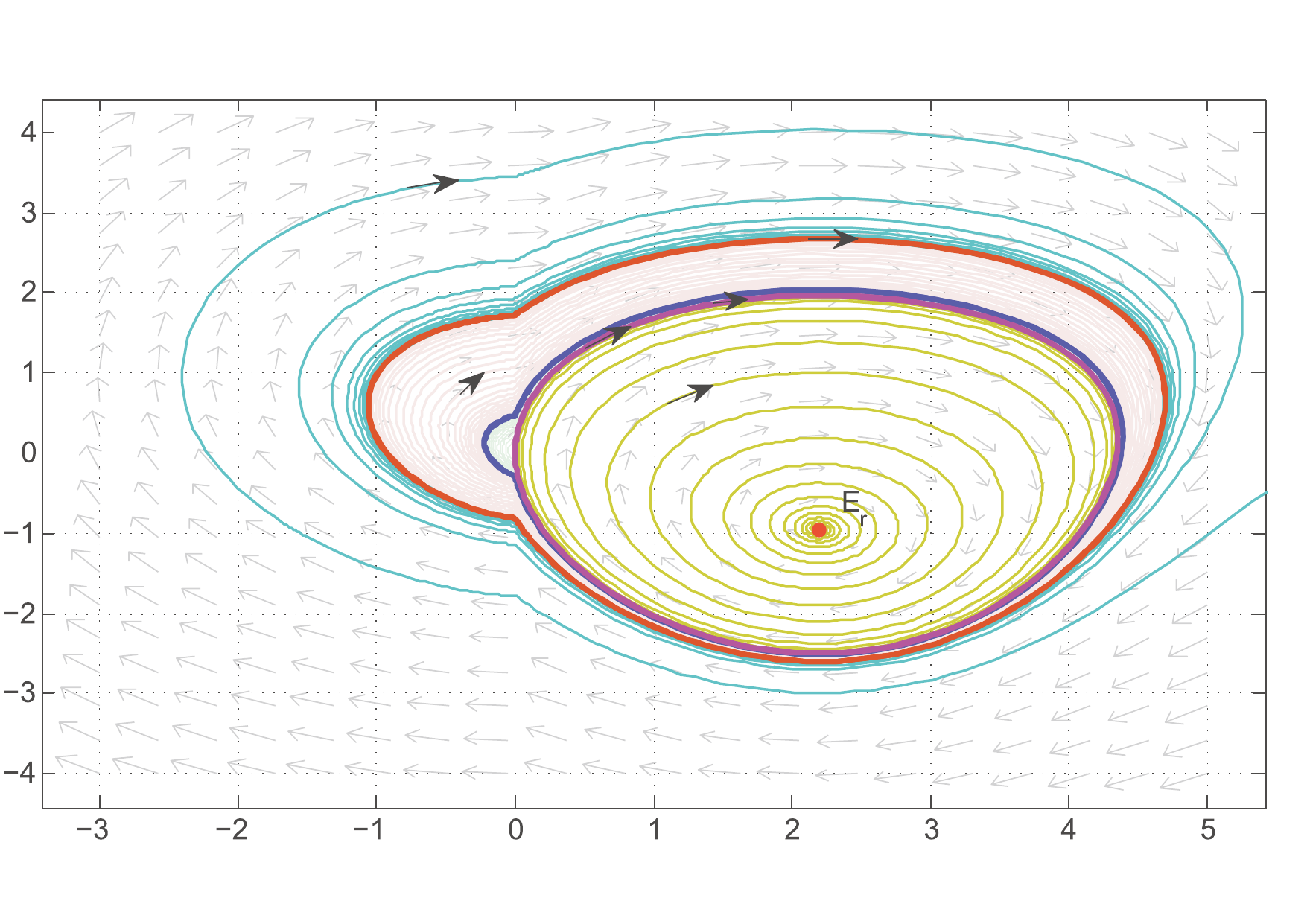}}
     \subfloat[$b=-5.9337$]
	{\includegraphics[scale=0.25]{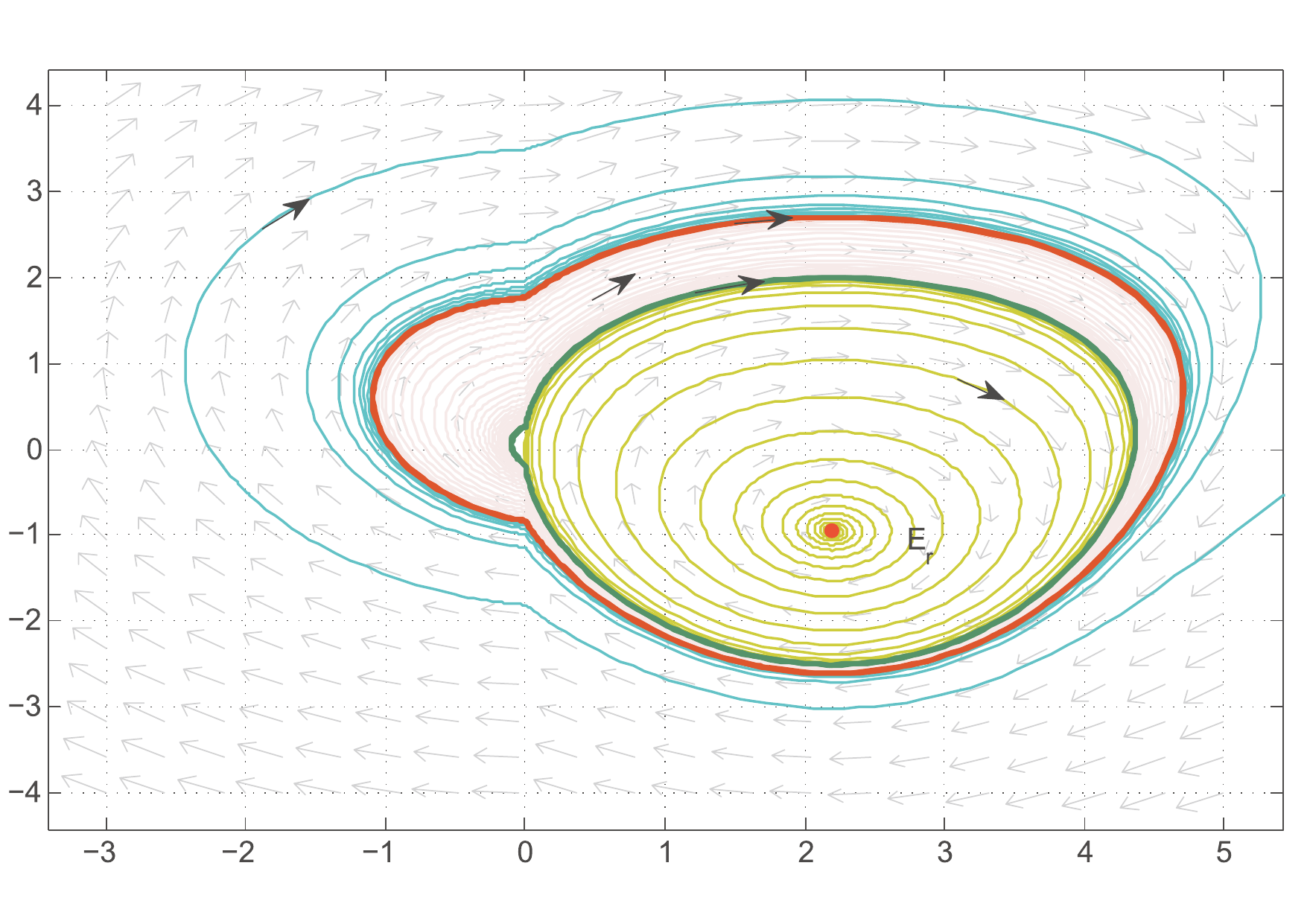}}\\
	\subfloat[$b=-6$]
	{\includegraphics[scale=0.25]{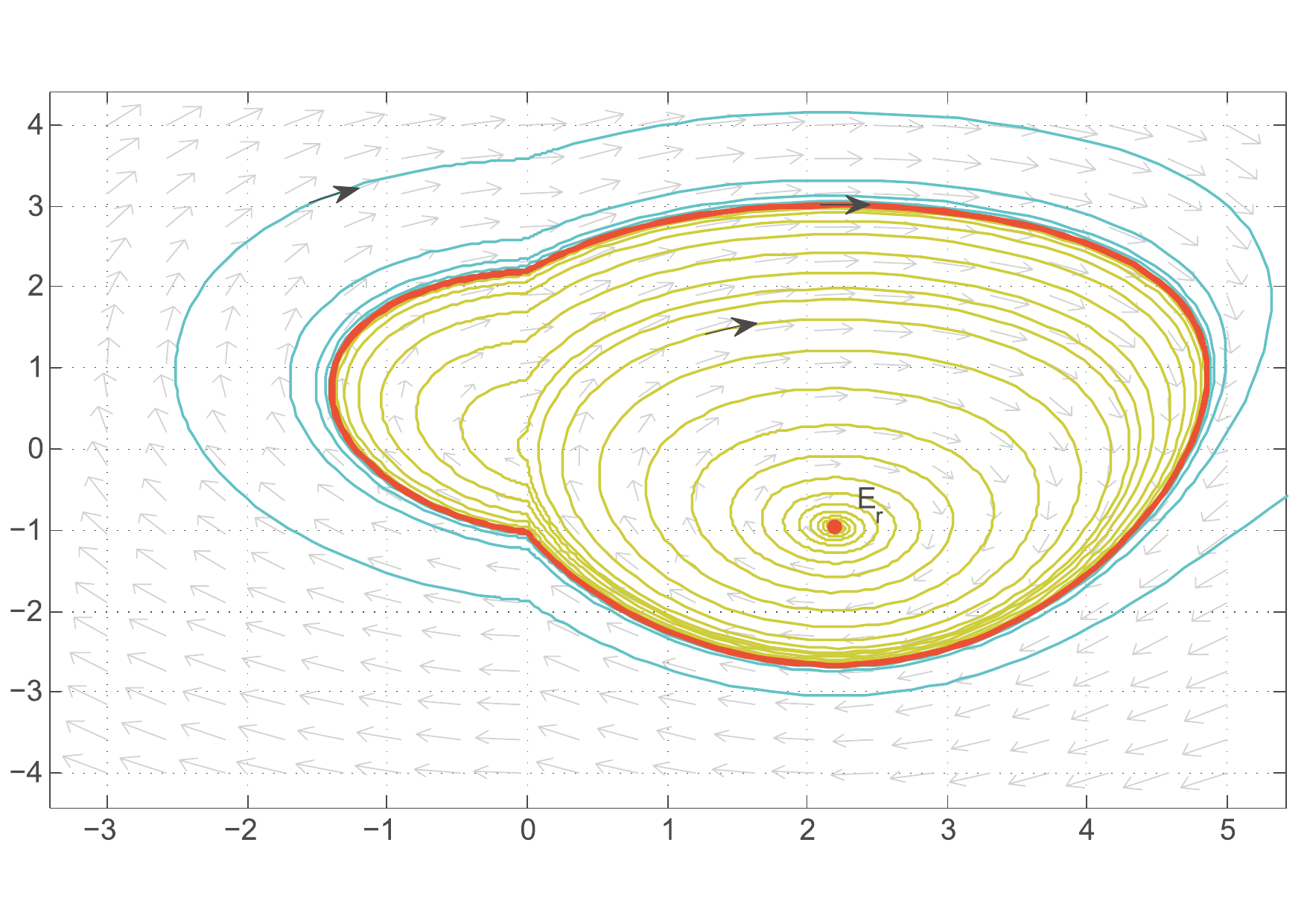}}
	\caption{{\footnotesize Numerical phase portraits with two equilibria when $a=1.2$, $\delta=0.1$.}}
	\label{simu-2}
\end{figure}

From Example 2, we find $\varphi_1(1.2,0.1)\thickapprox -5.93$,
$\varrho_1(1.2,0.1)\thickapprox -5.908$ and $\varrho_2(1.2,0.1)\thickapprox -5.9337$.
As $b$ decreases,
the double limit cycle bifurcation $DL_1$ occurs before the grazing bifurcation
when $a=1.2$ and $\delta=0.1$.
Since $\varphi_1(1.2,0.1)$ and $\varrho_2(1.2,0.1)$ are too close to each other,
the numerical simulation that exhibits three crossing limit cycles is not shown in  {\rm Fig. \ref{simu-2}}.

\begin{exmp}
Choose  $a=1.4$ and $\delta=0.1$.
System \eqref{SD} exhibits a stable small limit cycle which passes through a very
small neighborhood of the origin when $b=-7$,
as shown in {\rm Fig. \ref{simu-3} (a)}.
System \eqref{SD} exhibits at least three crossing limit cycles
when $b=-7.018$,
as shown in {\rm Fig. \ref{simu-3} (b)}.
System \eqref{SD}
 exhibits a stable crossing limit cycle when  $b=-7.1$,
as shown in {\rm Fig. \ref{simu-3} (c)}.
\end{exmp}

\begin{figure}[!htb]
	\subfloat[$b=-7$]
	{\includegraphics[scale=0.25]{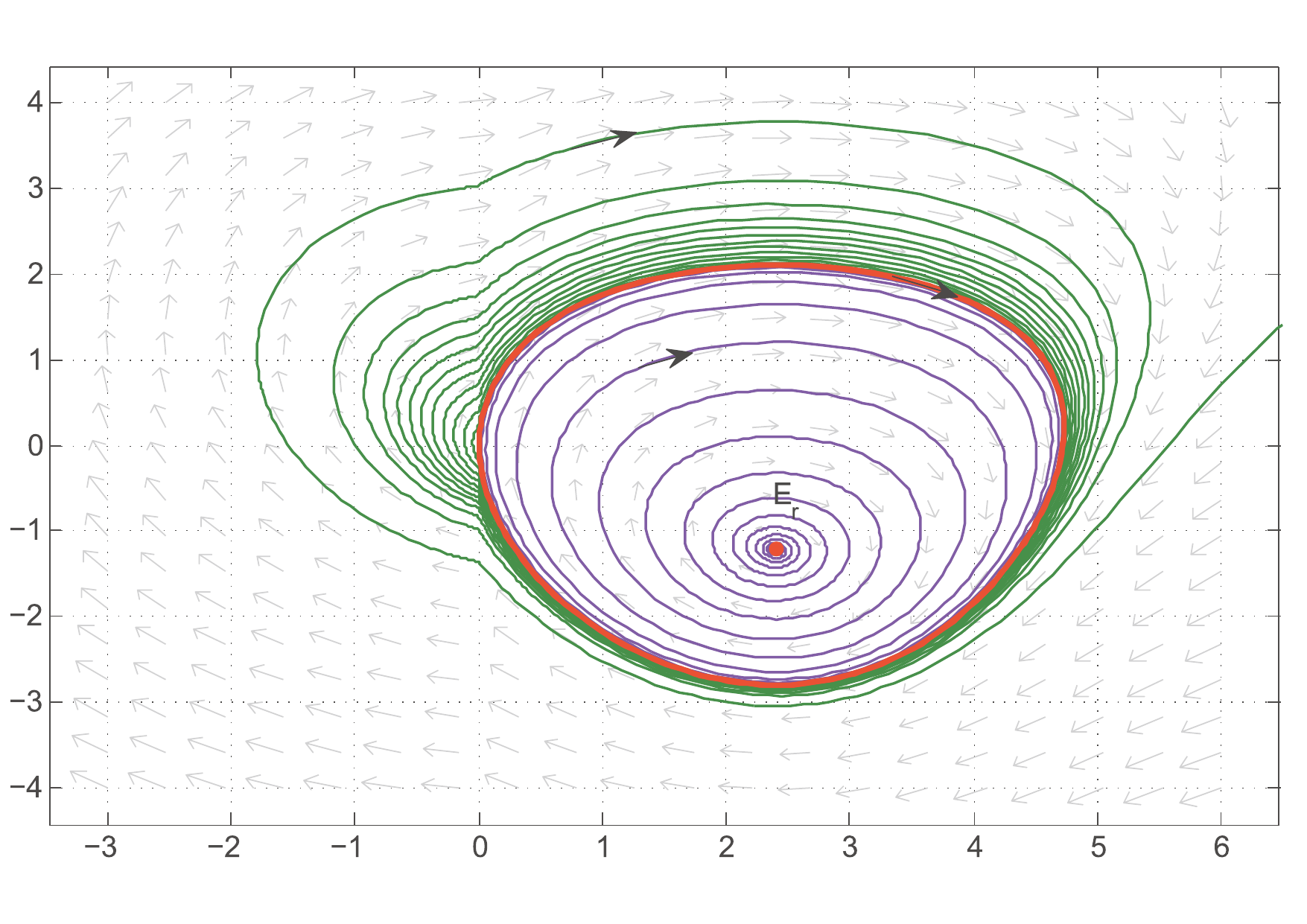}}
     \subfloat[$b=-7.018$]
	{\includegraphics[scale=0.25]{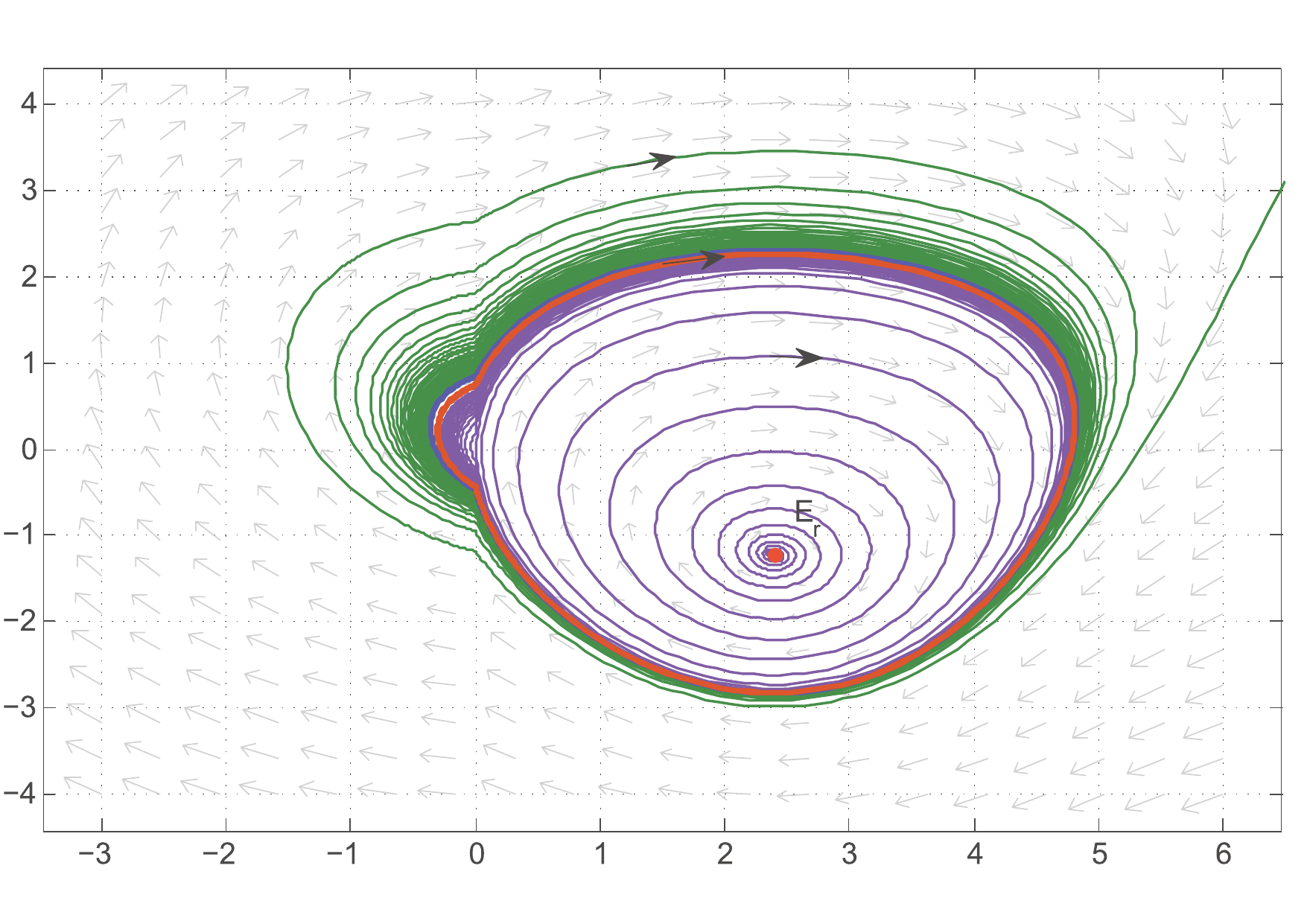}}\\
	\subfloat[$b=-7.1$]
	{\includegraphics[scale=0.25]{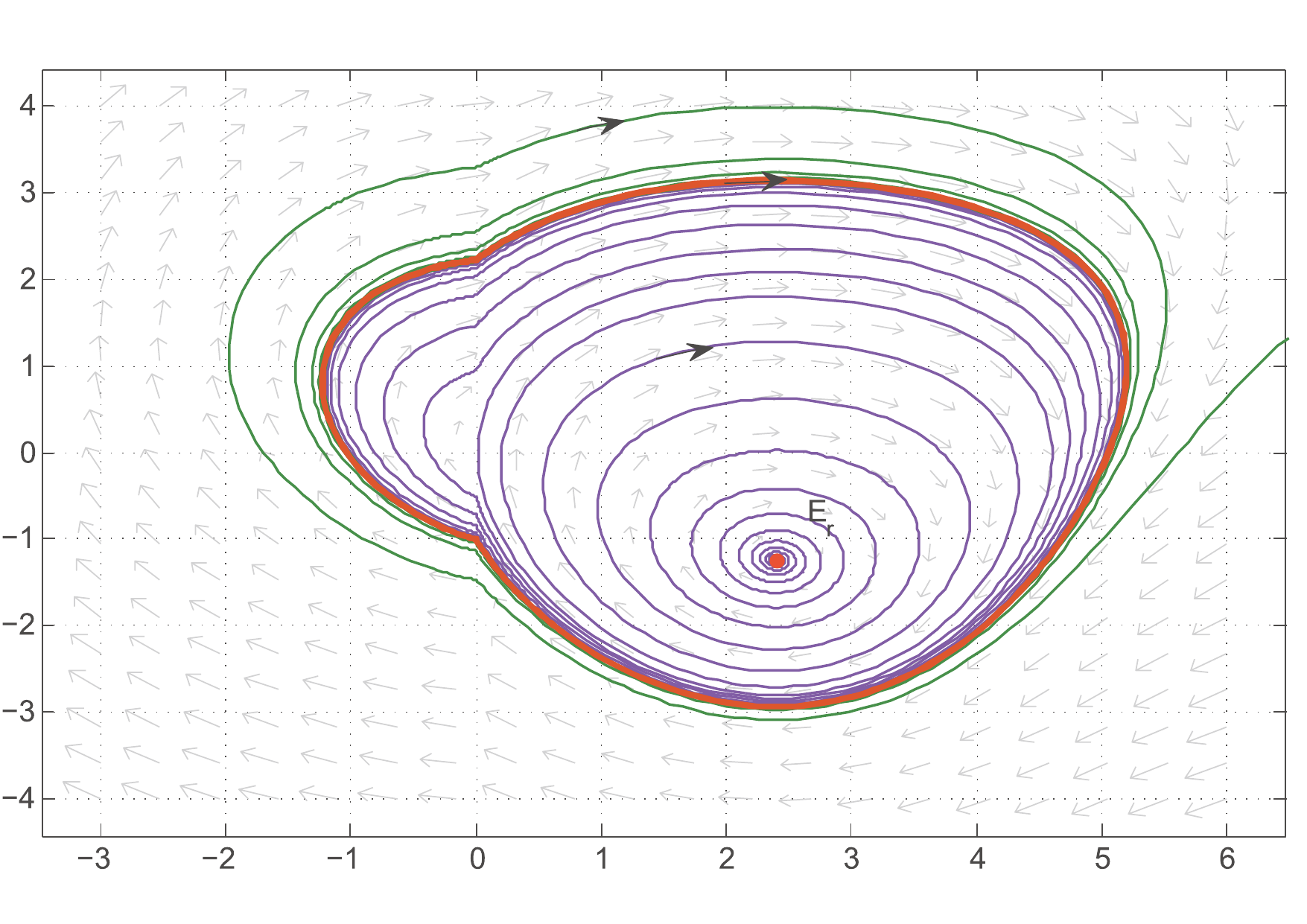}}
	\caption{{\footnotesize Numerical phase portraits with two equilibria when $a=1.4$, $\delta=0.1$.}}
	\label{simu-3}
\end{figure}

From Example 3, we find $\varphi_1(1.4,0.1)\thickapprox -7$,
$\varrho_1(1.4,0.1)\thickapprox \varrho_2(1.4,0.1)\thickapprox -7.018$.
As $b$ decreases,
the double limit cycle bifurcation $DL_1$ occurs after the grazing bifurcation
when $a=1.4$ and $\delta=0.1$.
In {\rm Fig. \ref{simu-3} (b)},
because the crossing limit cycles are too close to each other when $a=1.4$ and $\delta=0.1$, only the outermost and innermost crossing limit cycles are shown.

\section{Conclusions}

The bifurcation diagram and global phase portraits in the Poincar\'e disc of
system \eqref{SD} with $a=0$
are given by \cite{Chen}.
For system \eqref{SD} with $0<a\le 1$, the bifurcation diagram and global phase portraits in the Poincar\'e disc
are given in \cite{CTW}.
As shown in \cite{CTW} system  \eqref{SD} has three equilibria when $0<a<1$ and has two equilibria when $a=1$.
Moreover, system  \eqref{SD} exhibits at most one limit cycle surrounding the same single  equilibrium  and at most
two limit cycles surrounding all equilibria when $0<a\le 1$.
In the case $a>1$,  system  \eqref{SD} has a unique equilibrium.
Although the number of equilibria of  system  \eqref{SD} with $a>1$ is
less than the one of  system  \eqref{SD} with $0<a\le 1$,
Theorem \ref{mr2} tells us the upper bound on the number of limit cycles surrounding the unique equilibrium of system  \eqref{SD} with $a>1$
is three.
That means
the dynamical behavior of system  \eqref{SD} with $a>1$ is more complicated, and many classical techniques for
investigating the number of limit cycles for Li\'enard systems will no longer be applicable.

In this paper, the Melnikov method is used to get the number of crossing limit cycles of system \eqref{SD} with $a>1$ for $\delta>0$
sufficiently small.
To get the number of crossing limit cycles for general $\delta>0$,
the properties of rotated vector fields are used.
As a result, we get that
two  double crossing limit cycle bifurcations $DL_1$ and $DL_2$ will happen when $1<a<a_0$, where $a_0>1$ is a constant.

\begin{figure}[!htb]
	\centering
	\includegraphics[scale=0.27]{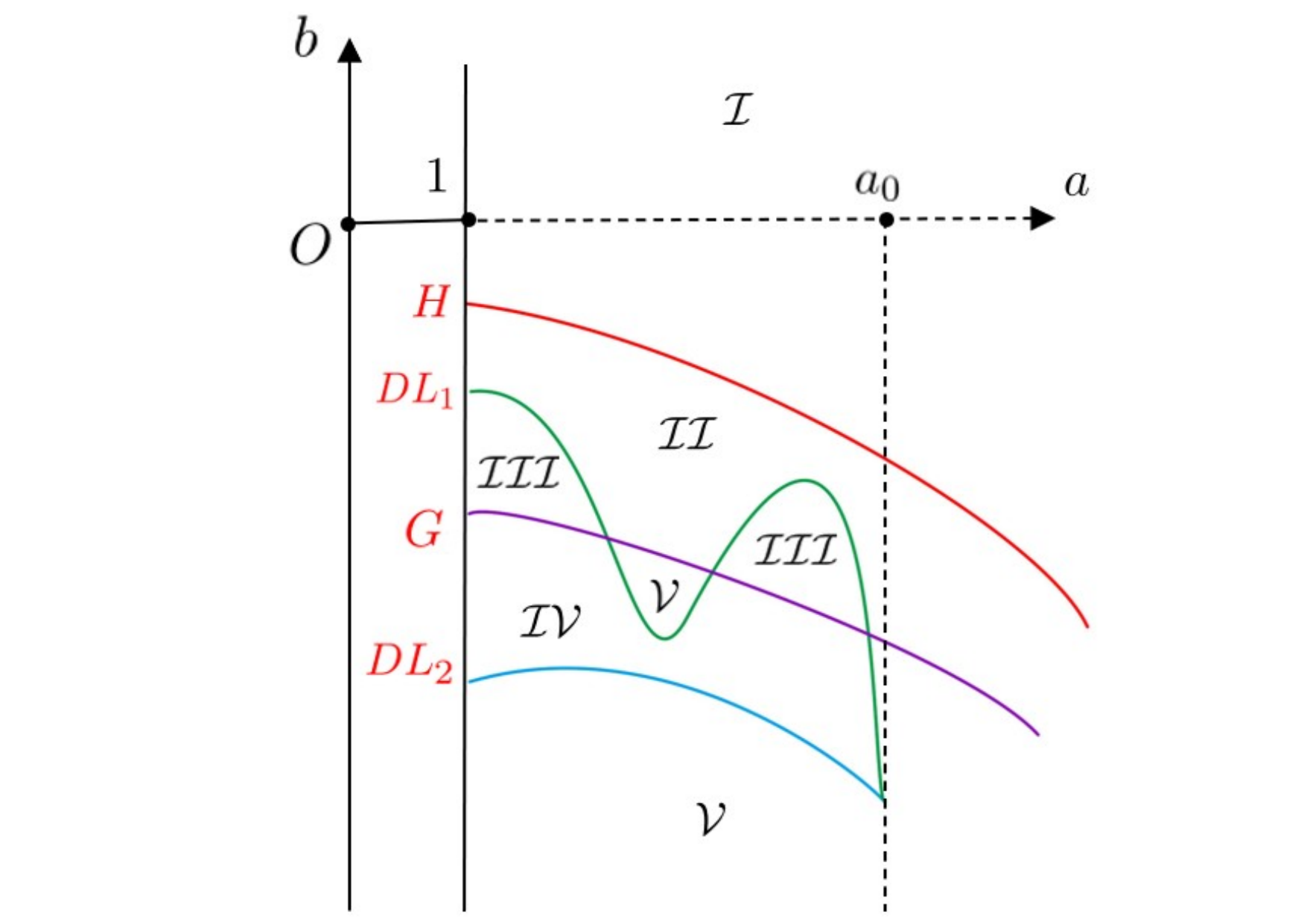}
	\caption{\footnotesize{  $DL_1$ and $G$ intersect more than once.}}
	\label{fig-c}
    \end{figure}

It is worth mentioning that
the curves of the double crossing limit cycle bifurcation $DL_1$  and the grazing bifurcation $G$ intersect at most once
when $\delta$ is a fixed and sufficiently small value.
However, this conclusion may be incorrect for a general fixed $\delta>0$.
Even if they intersect multiple times, there are no new phase portraits for  \eqref{SD} with $a>1$, as shown in Fig. \ref{fig-c}.
Thus, the analysis of global dynamics of  \eqref{SD} is given completely.

\section*{Acknowledgements}

This work is financially supported by the National Key R \& D Program of China (No. 2022YFA1005900).
The first author is supported by National Natural Science Foundation of China (No. 12371182), Hunan Provincial Natural Science Foundation of China (No. 2024JJ4048) and Science and Technology Innovation Program of Hunan Province (No. 2024RC3002).
The second author is supported by the National Natural Science Foundations of China (Nos. 12571190, 12322109 and 12171485), Hunan Basic Science Research Center for Mathematical Analysis (2024JC2002) and  Science and Technology Innovation Program of Hunan Province (No. 2023RC3040).
The third author is supported by the National Natural Science Foundation
of China (No. 12271355).
The fourth author is supported by the National Natural Science Foundation
of China (No. 12471155).

\section*{Appendix}
We first prove that $\frac{\mathrm{d}S_1(h,a)}{\mathrm{d}h}$ has at most one zero on $(0,\frac{4}{5})$. A simple calculation  leads to
\begin{equation}\label{ds1}
\frac{\mathrm{d}S_1(h,a)}{\mathrm{d}h}=\frac{\sqrt{2}\bar S_1(h,a)}{\sqrt{h} \left(a^2-2 a+2 h+1\right)^2 \left(a^2+2 a+2 h+1\right)^2},
\end{equation}
where
\begin{equation*}
\begin{split}
\bar S_1(h,a)=&-5 \pi  a^8+a^7 \left(32 \pi  h+30 \sqrt{2} \sqrt{h}-20 \pi \right)+2 a^6 \left(51 \pi  h-10 \sqrt{2} \sqrt{h}-10 \pi \right)\\
&+2 a^5 \left(60 \sqrt{2} h^{3/2}+84 \pi h^2+82 \pi  h-55 \sqrt{2} \sqrt{h}+10 \pi \right)\\
&+a^4 \left(80 \sqrt{2} h^{3/2}+412 \pi  h^2+234 \pi  h+40 \sqrt{2} \sqrt{h}+50 \pi \right)\\
&+2 a^3 \left(44 \sqrt{2}h^{5/2}+120 \sqrt{2} h^{3/2}+144 \pi  h^3+248 \pi  h^2+108 \pi  h+65 \sqrt{2} \sqrt{h}+10 \pi \right)\\
&+2 a^2 (2 h+1) \left(60 \sqrt{2} h^{3/2}+82 \pi  h^2+33 \pi  h-10\sqrt{2} \sqrt{h}-10 \pi \right)\\
&+2 a (2 h+1) \left(-16 \sqrt{2} h^{5/2}-50 \sqrt{2} h^{3/2}+40 \pi  h^3+8 \pi  h^2-26 \pi  h-25 \sqrt{2} \sqrt{h}-10 \pi \right)\\
&-\pi  (2 h+1)^3 (12h+5)\\
&+\left(-35 a^8+8 a^7 (4 h+5)-2 a^6 (51 h-50)+4 a^5 \left(42 h^2+11 h-30\right)\right.\\
&\left.+a^4 \left(68 h^2-74 h-90\right)+8 a^3 \left(36 h^3-8h^2+17 h+15\right)\right.\\
&\left.+2 a^2 \left(156 h^3+172 h^2+67 h+10\right)+4 a (2 h+1)^2 \left(10 h^2-13 h-10\right)\right.\\
&\left.+(2 h+1)^3 (12 h+5)\right) \arctan\left(\frac{\sqrt{2}\sqrt{h}}{a-1}\right)+\left(5 a^8-4 a^7 (8 h-5)+a^6 (20-102 h)\right.\\
&\left.-4 a^5 \left(42 h^2+41 h+5\right)-2 a^4 \left(206 h^2+117 h+25\right)-4 a^3 \left(72 h^3+124 h^2+54h+5\right)\right.\\
&\left.-2 a^2 \left(164 h^3+148 h^2+13 h-10\right)-4 a (2 h+1)^2 \left(10 h^2-3 h-5\right)\right.\\
&\left.+(2 h+1)^3 (12 h+5)\right) \arctan\left(\frac{\sqrt{2}
   \sqrt{h}}{a+1}\right).
\end{split}
\end{equation*}
It is easy to obtain that
\begin{eqnarray*}
\frac{\mathrm{d}\bar S_1(h,a)}{\mathrm{d}h}&=&2 \left(16 \pi  a^7+51 \pi  a^6+2 a^5 \left(84 \pi  h+35 \sqrt{2} \sqrt{h}+41 \pi \right)+a^4 \left(412 \pi  h+60 \sqrt{2} \sqrt{h}+117 \pi \right)\right.\\
&&\left.+4 a^3 \left(21 \sqrt{2} h^{3/2}+108\pi  h^2+124 \pi  h+30 \sqrt{2} \sqrt{h}+27 \pi \right)\right.\\
&&\left.+a^2 \left(200 \sqrt{2} h^{3/2}+492 \pi  h^2+296 \pi  h+20 \sqrt{2} \sqrt{h}+13 \pi \right)\right.\\
&&\left.+2 a \left(-64 \sqrt{2} h^{5/2}-134 \sqrt{2} h^{3/2}+160 \pi  h^3+84 \pi  h^2-44 \pi  h-55 \sqrt{2} \sqrt{h}-23 \pi \right)\right.\\
&&\left.-3 \pi  (2 h+1)^2 (16 h+7)\right)\\
&&+2 \left(16 a^7-51 a^6+2 a^5 (84 h+11)+a^4 (68 h-37)+a^3 \left(432 h^2-64 h+68\right)\right.\\
&&\left.+a^2 \left(468 h^2+344 h+67\right)+2 a \left(160 h^3-36 h^2-164 h-53\right)\right.\\
&&\left.+3 (2 h+1)^2(16 h+7)\right) \arctan\left(\frac{\sqrt{2} \sqrt{h}}{a-1}\right)\\
&&-2 \left(16 a^7+51 a^6+2 a^5 (84 h+41)+a^4 (412 h+117)+4 a^3 \left(108 h^2+124 h+27\right)\right.\\
&&\left.+a^2\left(492 h^2+296 h+13\right)+2 a \left(160 h^3+84 h^2-44 h-23\right)\right.\\
&&\left.-3 (2 h+1)^2 (16 h+7)\right) \arctan\left(\frac{\sqrt{2} \sqrt{h}}{a+1}\right)\\
&>&2 \left(16 \pi  a^7+51 \pi  a^6+2 a^5 \left(84 \pi  h+35 \sqrt{2} \sqrt{h}+41 \pi \right)+a^4 \left(412 \pi  h+60 \sqrt{2} \sqrt{h}+117 \pi \right)\right.\\
&&\left.+4 a^3 \left(21 \sqrt{2} h^{3/2}+108\pi  h^2+124 \pi  h+30 \sqrt{2} \sqrt{h}+27 \pi \right)\right.\\
&&\left.+a^2 \left(200 \sqrt{2} h^{3/2}+492 \pi  h^2+296 \pi  h+20 \sqrt{2} \sqrt{h}+13 \pi \right)\right.\\
&&\left.+2 a \left(-64 \sqrt{2} h^{5/2}-134 \sqrt{2} h^{3/2}+160 \pi  h^3+84 \pi  h^2-44 \pi  h-55 \sqrt{2} \sqrt{h}-23 \pi \right)\right.\\
&&\left.-3 \pi  (2 h+1)^2 (16 h+7)\right)\\
&&-4 \left(16 a^7+51 a^6+2 a^5 (84 h+41)+a^4 (412 h+117)+4 a^3 \left(108 h^2+124 h+27\right)\right.\\
&&\left.+a^2\left(492 h^2+296 h+13\right)+2 a \left(160 h^3+84 h^2-44 h-23\right)\right.\\
&&\left.-3 (2 h+1)^2 (16 h+7)\right)\frac{\pi}{2}\\
&=&4 \sqrt{2} a \sqrt{h} \left(35 a^4+30 a^3+6 a^2 (7 h+10)+10 a (10 h+1)-64 h^2-134 h-55\right)\\
&>&\frac{8}{125} \sqrt{2} a \sqrt{h} (13895 h+9076)\\
&>&0.
\end{eqnarray*}
Thus, for $a>\frac{7}{5}$, $\bar S_1(h,a)$ is monotonically increasing for $h\in(0,\frac{4}{5})$. Note that
$\bar S_1(0,a)=-5 \pi  (a-1)^2 (a+1)^6<0$, and
\begin{eqnarray*}
\bar S_1\left(\frac{4}{5},a\right)&=&\frac{1}{625} \left(-3125 \pi  a^8+500 \left(15 \sqrt{10}+7 \pi \right) a^7-500 \left(10 \sqrt{10}-77 \pi \right) a^6\right.\\
&&\left.-700 \left(5 \sqrt{10}-231 \pi \right) a^5+50\left(520 \sqrt{10}+6261 \pi \right) a^4+20 \left(4729 \sqrt{10}+20553 \pi \right) a^3\right.\\
&&\left.+260 \left(190 \sqrt{10}+861 \pi \right) a^2-52 \left(1881 \sqrt{10}+325 \pi\right) a-160381 \pi \right)\\
&&+\frac{1}{625} \left(-21875 a^8+41000 a^7+11500 a^6+14200 a^5-66050 a^4+209560 a^3\right.\\
&&\left.+316940 a^2-236600 a+160381\right) \arctan\left(\frac{2
   \sqrt{\frac{2}{5}}}{a-1}\right)\\
&&+\frac{1}{625} \left(3125 a^8-3500 a^7-38500 a^6-161700 a^5-313050 a^4-411060 a^3\right.\\
&&\left.-223860 a^2+16900 a+160381\right) \arctan\left(\frac{2 \sqrt{\frac{2}{5}}}{a+1}\right).
\end{eqnarray*}
Since $\bar S_1^{(6)}(\frac{4}{5},a)<0$ for $a>\frac{7}{5}$, and $\bar S_1^{(5)}(\frac{4}{5},\frac{7}{5})>0$, $\bar S_1^{(5)}(\frac{4}{5},10)<0$,
 $\bar S_1^{(5)}(\frac{4}{5},a)$ has a unique zero $\bar a_5\in(\frac{7}{5},10)$ for $a>\frac{7}{5}$.
One has  $\bar S_1^{(4)}(\frac{4}{5},a)$ increases on $(\frac{7}{5},\bar a_5)$ and then decreases on  $(\bar a_5,+\infty)$.
  Since  $\bar S_1^{(4)}(\frac{4}{5},\frac{7}{5})>0$ and $\bar S_1^{(4)}(\frac{4}{5},10)<0$,
 $\bar S_1^{(4)}(\frac{4}{5},a)$ has a unique zero $\bar a_4\in(\frac{7}{5},10)$ for $a>\frac{7}{5}$.
One has  $\bar S_1^{(3)}(\frac{4}{5},a)$ increases on $(\frac{7}{5},\bar a_4)$ and then decreases on  $(\bar a_4,+\infty)$.
And so on, we have  $\bar S_1^{(i)}(\frac{4}{5},a), i=3,2,1$ has a unique zero  $\bar a_i\in(\frac{7}{5},10)$, and $\bar S_1(\frac{4}{5},a)$ has a unique zero $\bar a\in(\frac{7}{5},10)$ for $a>\frac{7}{5}$.
Thus, when $\frac{7}{5}<a<\bar a$, $\bar S_1\left(\frac{4}{5},a\right)>0$ and $\bar S_1(h,a)$ has one zero on $(0,\frac{4}{5})$,
and when $a\geq\bar a$, $\bar S_1\left(\frac{4}{5},a\right)\leq0$ and  $\bar S_1(h,a)$ has no zero on $(0,\frac{4}{5})$.
So, by \eqref{ds1}, $\frac{\mathrm{d}S_1(h,a)}{\mathrm{d}h}<0$ holds for $h\in(0,\frac{4}{5})$ when $a\geq\bar a$, and $\frac{\mathrm{d}S_1(h,a)}{\mathrm{d}h}$ has one zero on $(0,\frac{4}{5})$ when $\frac{7}{5}<a<\bar a$. Finally, we conclude that $S_1(h,a)$ is either  decreasing on $(0,\frac{4}{5})$ for $a\geq\bar a$, or it decreases to a minimum then increases on $(0,\frac{4}{5})$ for $\frac{7}{5}<a<\bar a$.

{\footnotesize

}

\end{document}